\DeclareMathOperator{\Rep}{\textrm{Rep}}
\DeclareMathOperator{\C}{\mathbb{C}}
\DeclareMathOperator{\Z}{\mathbb{Z}}
\DeclareMathOperator{\End}{\textrm{End}}
\DeclareMathOperator{\N}{\mathcal{N}}
\DeclareMathOperator{\id}{\textrm{id}}
\DeclareMathOperator{\Hom}{\textrm{Hom}}
\newcommand{\q}[1]{[#1]_q}
\newcommand{\att}[2]{\raisebox{-.5\height}{ \includegraphics[scale = #2]{diagrams/#1.pdf}}}
\newcolumntype{L}{>{$}l<{$}}
\def\End{{\rm End}}
\def\hbeta{\hat\beta}
\def\al{\alpha}
\def\la{\lambda}
\def\ga{\gamma}
\def\om{\omega}
\def\Gn0{\bar G_n[0]}
\def\Ga{\Gamma}
\def\La{\Lambda}
\def\C{{\mathbb C}}
\def\N{{\mathbb N}}
\def\Z{{\mathbb Z}}
\def\S{{\mathcal S}}
\def\B{{\mathcal B}}
\def\E{{\mathcal E}}
\def\Ca{{\mathcal C}}
\def\q{{\mathfrak q}}
\def\ep{\varepsilon}
\def\Slnk{\operatorname{Paths}(\lambda,n)}
\def\Slk{\S_\la^{(n)}}
\def\Galnk{\tilde\Gamma(N)}
\def\Glnk{\Gamma(N)}
\def\Vlnk{V_\la^{(n)}}
\def\Ulnk{U_\la^{(n)}}
\def\Ul{U_\nu}
\def\llan{\ell(\la)^{(n)}}
\def\lmun{\ell(\mu)^{(n+1)}}
\def\Gno{G_n[0]}
\def\Gnoo{G_{n+1}[0]}
\def\bGn{\bar G_n[0]}
\theoremstyle{plain}
\newtheorem{thm}{Theorem}[section]
\newtheorem{lem}[thm]{Lemma}
\newtheorem{prop}[thm]{Proposition}
\newtheorem{cor}[thm]{Corollary}
\theoremstyle{definition}
\newtheorem{defn}[thm]{Definition}
\newtheorem{remark}[thm]{Remark}
\title{Hecke-Clifford algebras at roots of unity and conformal embeddings}
\author{Cain Edie-Michell and Hans Wenzl}
\address{Cain Edie-Michell\\
University of New Hampshire\\
Durham, 
New Hampshire}
\email{cain.edie-michell@unh.edu}
\address{Hans Wenzl\\
University of California at San Diego\\
La Jolla,
California}
\email{hwenzl@ucsd.edu}
\date{}
\begin{document}

\maketitle
\begin{abstract}

In this paper we give a combinatorial description of the Cauchy completion of the categories $\mathcal{E}_q$ and $\overline{\mathcal{SE}_N}$ recently introduced by the first author and Snyder. This in turns gives a combinatorial description of the categories $\overline{\Rep(U_q(\mathfrak{sl}_N))}_{A}$ where $A$ is the \textit{\`etale} algebra object corresponding to the conformal embedding $\mathfrak{sl}_N$ level $N$ into $\mathfrak{so}_{N^2-1}$ level 1. In particular we give a classification of the simple objects of these categories, a formula for their quantum dimensions, and fusion rules for tensoring with the defining object. Our method of obtaining these results is the Schur-Weyl approach of studying the representation theory of certain endomorphism algebras in $\mathcal{E}_q$ and $\mathcal{SE}_N$, which are known to be subalgebras of Hecke-Clifford algebras. We build on existing literature to study the representation theory of the Hecke-Clifford algebras at roots of unity. 

\end{abstract}

\section{Introduction}
Given a braided fusion category $\Ca$ and an \textit{\' etale} algebra object $A$, it is well-known that we obtain a new tensor category $\Ca_A$ of $A$-module objects internal to $\mathcal{C}$ \cite[Chapter 8.8]{Book}. A large source of \textit{\`etale} algebras for fusion categories related to a Lie algebra $\mathfrak{g}$ comes from conformal inclusions of Lie algebras $\mathfrak{g}\subset\mathfrak{h}$. While a complete list of conformal embeddings is known \cite{Embeddings1,Embeddings2,LagrangeUkraine}, the explicit structure of the corresponding categories $\Ca_A$ is only known for a handful of small examples \cite{Xu}, and for the family $\mathfrak{sl}_N \subset \mathfrak{sl}_{N(N\pm 1)/2}$ \cite{LiuYB,LiuRing}.

In recent work of the first author and Snyder \cite{ConformalA}, a generators and relations presentation of a discrete family of categories $\mathcal{SE}_N$ was given. It was shown that
\[         \operatorname{Ab}(\overline{  \mathcal{SE}_N}) \simeq    \overline{\Rep(U_q(\mathfrak{sl}_N))}_{A}  \]
where $A$ is the \textit{\`etale} algebra
coming from the conformal embedding $\mathfrak{sl}_N \subset  \mathfrak{so}_{N^2-1}$, and $\operatorname{Ab}$ denotes the Cauchy completion. They also define a continuous family of tensor categories $\mathcal{E}_q$ which interpolates the categories $\mathcal{SE}_N$. Their methods for proving the above equivalence were non-constructive, and hence the structure of the categories $\operatorname{Ab}(\overline{  \mathcal{SE}_N})$ and $\operatorname{Ab}(  \mathcal{E}_q)$ was not obtained.

The purpose of this paper is to obtain the structure of the categories $\operatorname{Ab}(\overline{  \mathcal{SE}_N})$ and $\operatorname{Ab}( \mathcal{E}_q)$ (for $q$ in a dense subset of $\mathbb{C}$). In the case of $\operatorname{Ab}(\overline{  \mathcal{SE}_N})$, we will achieve this via a Schur-Weyl type approach by studying endomorphism algebras $\End_{\mathcal{SE}_N}(+^n)$, where $+$ is the distinguished generating object of $\mathcal{SE}_N$. As shown in \cite[Corollary 5.12]{ConformalA}, these algebras are distinguished subalgebras of the \textit{Hecke-Clifford} algebras. These algebras have appeared previously in the literature in the context of the quantum group $U_q(\mathfrak{q}_N)$ where $q_N$ is the isomeric (or queer) Lie super algebra \cite{Ol}. Here the Hecke-Clifford algebras appear as centraliser algebras of the vector representation of $U_q(q_N)$. This surprising connection between $U_q(q_N)$ and the conformal embedding $\mathfrak{sl}_N \subseteq \mathfrak{so}_{N^2-1}$ will be a key tool in this paper.

Our Schur-Weyl type approach will involve studying the representation theory of the Hecke-Clifford algebras, and of the subalgebras occurring as centraliser algebras of the categories $\mathcal{SE}_N$. The representation theory of these algebras has already been studied in \cite{JN} over a finite algebraic extension of $\mathbb{C}(q)$. One of the main contributions of this paper is to extend these results to a specialised $q\in \mathbb{C}$. In the case where $q$ is a primitive $4N$-th root of unity (which corresponds to the centraliser algebras of the categories $\mathcal{SE}_N$) these algebras are non-semisimple. This makes the representation significantly more complicated, and a large portion of these paper is devoted to studying the Hecke-Clifford algebras at these roots of unity. We anticipate these results will be useful in the study of $U_q(\mathfrak{q}_N)$ at $q$ a root of unity.

Our main results determine the simple objects, their dimensions, and the fundamental fusion rules for $\operatorname{Ab}(\overline{\mathcal{SE}_{N}})$ and $\operatorname{Ab}(\mathcal{E}_q)$ for generic $q\in \mathbb{C}$. By \cite[Theorem 1.5]{ConformalA} this gives the same structural results for the categories $\overline{\operatorname{Rep}(U_q(\mathfrak{sl}_N))}_A$ for all $N$. 
While the categories $\overline{\operatorname{Rep}(U_q(\mathfrak{sl}_N))}_A$ were known to exist before this paper, very little was known about their structure. The special case of $N=4$ was worked out in \cite[Example 6]{Xu}. We note that the objects of $\overline{\operatorname{Rep}(U_q(\mathfrak{sl}_N))}_A$ correspond to modules of the vertex operator algebra $\mathcal{V}(\mathfrak{sl}_N,N)$ which have a compatibility with the extended VOA $\mathcal{V}(\mathfrak{so}_{N^2-1},1)$ \cite[Remarks 2.7 and 2.8]{mcrae}, and so our results have implications in conformal field theory. 

The following definitions are required to state the theorem.

\begin{defn}
    Let $N \in \mathbb{N}\cup \infty$. We will write $Y^<_N$ for the set of strictly decreasing Young diagrams $\lambda$ with $\lambda_1 < N$. For $\lambda, \mu \in Y^<_N$ we will say $\lambda \to^{(N)} \mu$ if $\mu$ can be obtained from $\lambda$ by adding a single box, and removing rows with $N-1$ boxes. For a Young diagram $\lambda$ with $\ell(\lambda)$ parts we define the rational function
    \[q_\la\ =\ \prod _{j=1}^{\ell(\lambda)} q_{\la_j}\ \prod_{i<j} \frac{[\la_i-\la_j]}{[\la_i+\la_j]}, \quad \text{where} \; q_m\ :=\ \prod_{j=1}^m i\frac{ q^{j-1} + q^{1-j}}{q^j-q^{-j}}. \] 
    For a pair $\lambda, \mu \in Y_N^<$ we define the rational function
    \[q_{(\la,\mu)}\ :=\ q_\la q_\mu \prod_{(r,s)}\frac{q^{\la_r+\mu_s}+q^{-\la_r-\mu_s}}{q^{\la_r-\mu_s}+q^{\mu_s-\la_r}},\quad {\rm where}\ 1\leq r\leq \ell(\la),\ 1\leq s\leq \ell(\mu).\]
\end{defn}

We then show the following.
\begin{thm}\label{thm:mainHans}
    The isomorphism classes of simple objects in $\operatorname{Ab}(\overline{\mathcal{SE}_{N}})$ are parameterised by the set
   \[    \{  \lambda : \lambda\in Y^<_N :\:\ell(\lambda) \text{ odd}\} \cup  \{  (\lambda, \pm) :  \lambda\in Y^<_N :\:\ell(\lambda) \text{ even}\} .    \]
    We have the following decomposition formulae for the tensor product of the simple $\square$ with any other simple:
    \begin{align*} 
    \square \otimes \lambda &\cong \bigoplus_{  \lambda \to^{(N)} \mu ,\: \ell(\mu) \text{ even}     } (\mu, +) \oplus (\mu, -)\oplus  \bigoplus_{  \lambda \to^{(N)} \mu , \:\ell(\mu) \text{ odd}     }  \mu\oplus \mu \\    
    \square \otimes (\lambda,\pm) &\cong \bigoplus_{  \lambda \to^{(N)} \mu , \:\ell(\mu) \text{ even}     } (\mu, +) \oplus (\mu, -)\oplus  \bigoplus_{  \lambda \to^{(N)} \mu , \:\ell(\mu) \text{ odd}     }   \mu.
    \end{align*}
    Furthermore, the quantum dimensions of the simple objects are given by 
    \[
        d( \lambda ) = q_\lambda \cdot 2^{-\frac{\ell(\lambda)-1}{2}}, \qquad 
        d( \lambda,\pm ) = q_\lambda\cdot 2^{-\frac{\ell(\lambda)}{2}}\]
    where the rational functions $q_\lambda$ are evaluated at $q=e^{2\pi i \frac{1}{4N}}$.
\end{thm}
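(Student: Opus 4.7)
The plan is to analyse the tower of endomorphism algebras $E_n := \End_{\overline{\mathcal{SE}_N}}(+^n)$. By \cite[Corollary~5.12]{ConformalA} each $E_n$ is a distinguished subalgebra of the Hecke--Clifford algebra $HC_n$ specialised at $q = e^{2\pi i/(4N)}$. Because the Cauchy completion of a semisimple tensor category tensor-generated by $+$ is recovered from the inductive system of its endomorphism algebras, the isomorphism classes of simple objects of $\operatorname{Ab}(\overline{\mathcal{SE}_N})$ are in bijection with the vertices of the Bratteli diagram of $E_0 \hookrightarrow E_1 \hookrightarrow E_2 \hookrightarrow \cdots$, the fusion rule for $\square \otimes -$ is exactly the branching graph of this inclusion, and the quantum dimensions are obtained by evaluating the unique Markov trace compatible with the categorical structure on a minimal idempotent of each isotypic component.

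Over $\mathbb{C}(q)$ I would first recall from \cite{JN} that the simple $HC_n$-modules are parametrised by strict partitions $\la$, with the Clifford (``super'') part of the algebra producing a $\pm$ splitting precisely when $\ell(\la)$ is even; the generic branching rule from $HC_n$ to $HC_{n+1}$ adds one box with multiplicities $2$ or $1$ that already match the two cases in the statement. I would then show that this labelling and branching are inherited by the subalgebras $E_n$, which establishes the theorem at generic $q$ with $Y^<_N$ replaced by all strict partitions and so, in passing, determines the simples and fusion rules of $\operatorname{Ab}(\mathcal{E}_q)$.

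The main work is the root-of-unity specialisation $q = e^{2\pi i/(4N)}$, at which $E_n$ is non-semisimple and one must pass to the semisimple quotient by the negligible ideal of the Markov trace. The central claim is that in this quotient the simple modules are indexed by $\la \in Y^<_N$ with the same parity splitting, and that the branching rule becomes $\la \to^{(N)} \mu$: adding a box and then deleting any resulting row of length $N-1$. I would approach this via a Murphy-type cellular basis of $E_n$ built from Jucys--Murphy elements of $HC_n$, showing that at $q = e^{2\pi i/(4N)}$ the seminormal eigenvalues attached to rows of length $\geq N$ force the corresponding cell modules into the radical of the invariant trace. I expect this to be the main technical obstacle, since it requires careful control of the Hecke--Clifford seminormal form at a very specific root of unity. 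Once the simples and fusion rules are in place, the quantum dimensions $d(\la) = q_\la \cdot 2^{-(\ell(\la)-1)/2}$ and $d(\la,\pm) = q_\la \cdot 2^{-\ell(\la)/2}$ follow by induction on $|\la|$ from the recursion $\square \otimes \la = \bigoplus \mu$ (and its analogue for $(\la,\pm)$): one verifies directly that the closed forms satisfy the recursion, recognising the rational function $q_\la$ as (a renormalisation of) Olshanski's quantum dimension for the vector representation of $U_q(\q_N)$, while the powers of $2$ come from the dimension of the irreducible Clifford module on $\ell(\la)$ generators. A short check of boundary cases, together with the earlier verification that every $\la \in Y^<_N$ is reached on the Bratteli diagram, completes the classification.
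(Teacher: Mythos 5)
Your outline correctly identifies the Schur--Weyl strategy and matches the paper's broad architecture: study the tower $E_n = \End_{\overline{\mathcal{SE}_N}}(+^n)$ via its realization inside the Hecke--Clifford algebras, use \cite{JN} generically, and pass to the semisimple quotient at the root of unity. However there are two issues, one a genuinely different (and unresolved) technical route, and one a missing idea that the proposal as written would fail to supply.

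First, the claim that simple objects are ``in bijection with the vertices of the Bratteli diagram'' is not correct without further argument, and the gap it hides is exactly where the additional structure of $\mathcal{SE}_N$ (as opposed to $\mathcal{E}_q$) enters. Two vertices at levels $n$ and $m$ of the Bratteli diagram can correspond to isomorphic objects of the Cauchy completion, and deciding when requires computing $\Hom(+^n, +^m)$. Here these Homs vanish unless $n \equiv m \pmod N$; when they do not vanish, you still must exhibit an isomorphism between the two summands labeled by the same $\nu$ at levels $n$ and $n+N$. The paper does this by applying the extra generator (the determinant-type morphism of Definition~\ref{def:ext}), conjugated by an invertible Clifford element so that it pairs nontrivially with $p_{(1^N)}$, and then matching weights of the two cut-down representations. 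Without this step you only get a family of label sets $Y^<_{N,n}$ indexed by $n$, not the claimed single parameterization by $Y^<_N$. Your proposal never touches the extra generator and so cannot close this gap.

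Second, your central technical plan---build a Murphy/cellular basis from Jucys--Murphy elements and show that the cell modules attached to rows of length $\geq N$ fall into the radical of the trace---is a plausible but different route from the paper's, and you yourself flag it as unresolved. The paper does not use a cellular or Murphy basis. Instead it directly constructs new representations $V_\lambda^{(n)}$ at the root of unity by replacing shifted tableaux with paths in a modified graph $\tilde\Gamma(N)$ that allows ``restricted edges'' (dropping a full row of $N-1$ boxes), shows these are well-defined, irreducible and exhaustive for the semisimple quotient $\bar G_n[0]$, and then---rather than identifying a radical abstractly---defines an explicit trace $\tilde{\mathrm{tr}}_n$ in terms of the $q_\lambda$ and the matrix traces $\mathrm{Tr}_{\lambda^{(n)}}$ and proves by an inductive rational-function argument (via Corollary~\ref{cor:rationalEq}) that it coincides with the pullback of the categorical trace, hence $\bar G_n[0] \cong \End_{\operatorname{Ab}(\overline{\mathcal{SE}_N})}(+^n)$. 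Your dimension argument (inductive verification of a closed form satisfying the fusion recursion) is broadly consistent with the paper's use of the Hall--Littlewood recursions (Propositions~\ref{prop:Qrecursion} and \ref{prop:dimFun}), but it additionally needs the separate input that $d(\lambda,+) = d(\lambda,-)$, which the paper obtains from the parity-swapping outer automorphism of $G_n[0]$; this, too, is absent from your sketch.
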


As an example, we have the following fusion graph for the action of $\square$ on the simples of $\operatorname{Ab}(\overline{\mathcal{SE}_5})$, i.e. the category coming from the conformal embedding $\mathcal{V}(\mathfrak{sl}_{5},5) \subset \mathcal{V}(\mathfrak{so}_{24},1)$. The orientations of the edges in this graph can be deduced from the vertex labeling. The graphs for larger $N$ can be easily deduced from the combinatorial formulae given in Theorem~\ref{thm:mainHans}.
\[\att{base}{.8}\]

We are able to find a labeling of the simple objects of $\operatorname{Ab}(\overline{\mathcal{SE}_N})$ which is stable in a certain sense as $N\to \infty$. Using this new labeling set, we are able to obtain a description of the categories $\operatorname{Ab}(\mathcal{E}_q)$ for nearly all values of $q\in \mathbb{C}$.

\begin{thm}
    There exists a dense subset $Q\subseteq \mathbb{C}$ such that $\mathcal{E}_q$ is semisimple for all $q\in Q$. For $q\in Q$ the simple objects of $\operatorname{Ab}(\mathcal{E}_q)$ are parameterised by the set
    \[  \{  (\lambda_1, \lambda_2) : \lambda_1,\lambda_2 \in Y_\infty^<,\quad \ell(\lambda_1) +  \ell(\lambda_2) \text{ odd}  \}     \cup \{  (\lambda_1, \lambda_2,\pm) : \lambda_1,\lambda_2 \in Y_\infty^<,\quad \ell(\lambda_1) +  \ell(\lambda_2) \text{ even}  \}. \]
    We have the following decomposition formulae for the tensor product of the simple object $(\square, \emptyset)$ with any other simple $(\lambda_1, \lambda_2,\varepsilon)$, where $\varepsilon \in \{+,- ,\emptyset\}$:
    \[\resizebox{\hsize}{!}{$(\square, \emptyset) \otimes (\lambda_1, \lambda_2,\varepsilon) \cong \bigoplus
    \limits_{\substack{  \lambda_1 \to^{(N)} \mu \\ \ell(\mu)+\ell(\lambda_2) \text{ even}  \\ \nu \in \{+,-\} }   } (\mu,\lambda_2, \nu)\oplus\bigoplus\limits_{\substack{  \mu \to^{(N)} \lambda_2 \\ \ell(\mu)+\ell(\lambda_1) \text{ even}  \\ \nu \in \{+,-\} }   } (\lambda_1,\mu, \nu)\oplus  \bigoplus\limits_{  \substack{\lambda_1 \to^{(N)} \mu \\ \ell(\mu) +\ell(\lambda_1)\text{ odd}   }  } m_{\lambda_1, \lambda_2}^{\mu}(\mu, \lambda_2)\oplus\bigoplus\limits_{\substack{  \mu \to^{(N)} \lambda_2 \\ \ell(\mu) +\ell(\lambda_1)\text{ odd} }   } m_{\lambda_2, \lambda_1}^{\mu}(\lambda_1,\mu)$}\]
    Here $m_{\lambda_1, \lambda_2}^\mu = 2$ if both $\ell(\lambda_1) + \ell(\lambda_2)$ and $\ell(\mu) + \ell(\lambda_2)$ are odd, and $m_{\lambda_1, \lambda_2}^\mu = 1$ otherwise. 
    
    The quantum dimensions of the simple objects are given by 
    \[
        d( \lambda_1, \lambda_2 ) =  q_{(\la_1,\lambda_2)}2^{- \frac{\ell(\la)+\ell(\mu)-1}{2}}, \qquad 
       d( \lambda_1, \lambda_2,\pm ) =  q_{(\la_1,\lambda_2)}2^{- \frac{\ell(\la)+\ell(\mu)}{2}}\]
    where the rational functions $q_{(\la_1,\lambda_2)}$ are evaluated at $q$.

\end{thm}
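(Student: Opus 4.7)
The plan is to mirror the Schur--Weyl strategy that yields Theorem~\ref{thm:mainHans} for $\overline{\mathcal{SE}_N}$, but with the root-of-unity specialisation of the Hecke--Clifford algebra replaced by a generic parameter $q$. By \cite[Corollary 5.12]{ConformalA} the centraliser algebras $B_n(q) := \operatorname{End}_{\mathcal{E}_q}(+^{\otimes n})$ are distinguished subalgebras of the Hecke--Clifford algebras whose structure constants are rational in $q$. The generic representation theory of these algebras from \cite{JN} identifies $B_n(q)$ with the full generic Hecke--Clifford algebra whenever $q$ avoids a finite explicit set of algebraic conditions $f_n(q)=0$. Setting $Q := \mathbb{C}\setminus\bigcup_{n\geq 1}\{q:f_n(q)=0\}$ gives a dense semisimple locus, on which every $B_n(q)$, and hence $\mathcal{E}_q$ itself, is semisimple.

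Second, I would classify simples of $\operatorname{Ab}(\mathcal{E}_q)$ through the Bratteli diagram of the tower $B_0(q)\subset B_1(q)\subset\cdots$. This is the step where the combinatorial doubling to pairs of strict partitions enters: unlike in the truncated setting of $\overline{\mathcal{SE}_N}$, the Bratteli diagram at generic $q$ has both ``increase'' and ``decrease'' edges, since multiplication by $+$ can either enlarge a partition or pair with an existing cap to cut one. The stable isomorphism classes in the Cauchy completion are therefore labelled by pairs $(\lambda_1,\lambda_2)\in Y^<_\infty\times Y^<_\infty$, with $\lambda_1$ recording net additions and $\lambda_2$ recording net subtractions. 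The parity condition on $\ell(\lambda_1)+\ell(\lambda_2)$ coincides with whether the corresponding simple $B_n(q)$-module is of Clifford type $M$ (requiring a $\pm$-splitting to become a simple object in the Karoubi completion) or of type $Q$ (already absolutely simple); this dichotomy is inherited directly from the Hecke--Clifford algebra and is independent of the specialisation argument used for $\overline{\mathcal{SE}_N}$.

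The fusion rule for $(\square,\emptyset)\otimes-$ is then read off from the principal graph of the inclusion $B_n(q)\subset B_{n+1}(q)$: adding a box to $\lambda_1$ or removing a box from $\lambda_2$ encodes the two ways a new strand may either join the ``up'' stack or cap off against a strand in the ``down'' stack. The multiplicity $m_{\lambda_1,\lambda_2}^\mu=2$ arises precisely when both source and target simples are of type $M$, since then the $\operatorname{Hom}$-space between the corresponding simple modules carries an additional Clifford degree of freedom that doubles the branching; in all other cases $m_{\lambda_1,\lambda_2}^\mu=1$. The quantum dimensions are finally obtained by evaluating the Markov trace on a minimal idempotent in the $(\lambda_1,\lambda_2)$-block, using the $q$-Frobenius super-character formula for the queer quantum group from \cite{JN}, which produces the factor $q_{(\lambda_1,\lambda_2)}$; the power of $2$ then accounts for the $\pm$-splitting and matches the claimed values in the type-$M$ and type-$Q$ cases.

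The main obstacle will be the Bratteli-diagram step: I must show that, for $q\in Q$, the centraliser algebras $B_n(q)$ are genuinely richer than their specialisations in $\overline{\mathcal{SE}_N}$, in the sense that the cups and caps of $\mathcal{E}_q$ produce enough independent idempotents to realise every pair $(\lambda_1,\lambda_2)$ as a distinct simple object, with no negligible ideal collapsing the second partition onto the empty one. Concretely this amounts to verifying non-degeneracy of the Hecke--Clifford bilinear form on the pair-labelled Bratteli graph at all levels, which should follow by a Zariski-density argument from the corresponding non-degeneracy over $\mathbb{C}(q)$ established in \cite{JN}. The remaining calculations --- precise multiplicities and the closed-form dimensions --- are then direct specialisations of the generic Hecke--Clifford formulas and should be largely mechanical.
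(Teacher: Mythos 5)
There is a genuine gap, and it is at the heart of your argument. You propose to extract the pair-labelled simple objects from the Bratteli diagram of the tower $B_0(q)\subset B_1(q)\subset\cdots$ with $B_n(q)=\operatorname{End}_{\mathcal{E}_q}(+^{\otimes n})$, on the grounds that the inclusion $B_n\subset B_{n+1}$ has both ``increase'' and ``decrease'' edges. This is not the case. By \cite[Corollary 5.12]{ConformalA} one has $B_n(q)\cong G_n[0]$, and Theorem~\ref{gkactionthm} shows that for $q$ not a root of unity this algebra is semisimple with simples labelled by \emph{single} strict partitions $\lambda$ with $|\lambda|=n$ (and a $\pm$-split when $\ell(\lambda)$ is even). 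Lemma~\ref{lem:restrict} shows the restriction from $G_{n+1}[0]$ to $G_n[0]$ only ever removes a box, i.e.\ the Bratteli graph has increase edges only. There are no cups or caps inside $\operatorname{End}(+^{\otimes n})$: the dual of $+$ is $-$, not $+$, so evaluation and coevaluation land in $\operatorname{Hom}(+-,\mathbf 1)$ and $\operatorname{Hom}(\mathbf 1, -+)$, which never appear when you stay within powers of $+$. Consequently your tower recovers only the simple objects $(\lambda,\emptyset,\varepsilon)$; the labels with $\lambda_2\neq\emptyset$ never arise, and the claimed ``combinatorial doubling'' has no source.

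To reach the full set of pairs one must work with the mixed endomorphism algebras $\operatorname{End}_{\mathcal{E}_q}(+^r-^s)$, which are (variants of) quantum walled Brauer--Clifford algebras and genuinely contain cup--cap morphisms. This is exactly what the paper does, and the mechanism for controlling them is the bridge to $\overline{\mathcal{SE}_N}$ rather than a direct generic-$q$ analysis: for $r+s<N/4$ one has $\dim\operatorname{End}_{\mathcal{E}_q}(+^r-^s)=\dim\operatorname{End}_{\overline{\mathcal{SE}_N}}(+^r-^s)$, and in $\overline{\mathcal{SE}_N}$ one may replace $-$ by $+^{N-1}$ and appeal to Theorem~\ref{fusionCat}. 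The map $\Phi_N$ then ``unfolds'' a single strict diagram $\nu$ with all parts $<N/4$ or $>3N/4$ into a pair $(\lambda_1,\lambda_2)$, and the stability argument in Lemma~\ref{genericprep} combined with the discriminant/rationality argument of Proposition~\ref{prop:bexistence} lifts the classification, fusion rules, and dimension formulae from the specialisations at $q=e^{2\pi i/4N}$ to generic $q$. Your paragraph on dimensions has the same problem for the same reason: a minimal idempotent in a block $(\lambda_1,\lambda_2)$ with $\lambda_2\neq\emptyset$ lives in some $\operatorname{End}(+^r-^s)$ with $s>0$, not in $\operatorname{End}(+^{\otimes n})$, so the Markov trace computation cannot be run inside the Hecke--Clifford tower alone. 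The broad outlines of your strategy (generic semisimplicity via a rational discriminant, branching rules via Frobenius reciprocity) are sound, but you need to build the argument on the two-parameter tower $\operatorname{End}(+^r-^s)$ and import its structure from the $\overline{\mathcal{SE}_N}$ side, rather than attempting to see both partitions inside $G_n[0]$.
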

We also obtain similar fusion rules for tensoring by the object $(\emptyset, \square)$ in Theorem~\ref{Eqobjects}.

\begin{remark}
    We conjecture
    that the $Q \subseteq \mathbb{C}$ in the above theorem is the set of all complex numbers which are not roots of unity. 
\end{remark}

As an example, we include a subgraph of the fusion graph of $\mathcal{E}_q$ for the object $(\square, \emptyset)$. This subgraph consists of all simple objects with $|\lambda_1| + |\lambda_2| \leq 3$. As before, this must be read as an oriented graph. The orientations can be deduced from the vertex labels. 
\[\att{base2}{.7}\]

The paper is outlined as follows.

In Section~\ref{sec:prelim} we review the required background material for this paper, and prove some basic results. We first review several categorical constructions: Cauchy completion and semisimplification. We then describe the categories $\mathcal{E}_q$ and $\mathcal{SE}_N$, as introduced in \cite{ConformalA}. Certain endomorphism algebras in these categories are subalgebras of the Hecke-Clifford algebras $G_n$. We describe these algebras, and their subalgebras $G_n[0]$. We prove several basic facts on these algebras. Next we turn to Sergeev duality, which is a Schur-Weyl type duality between the isomeric Lie super algebra $\q(N)$ and the Seergev algebras (which are the limit as $q\to 1$ of the algebras $G_n$). We also give properties of character formulas for representations of $\q(N)$.
Finally we describe the representations of the algebras $G_n$. These results were obtained by A. Jones and Nazarov in \cite{JN} over algebraic extensions of the field of rational functions $\C(q)$. As we will eventually be interested in the case of $q$ being a complex number, some additional details about the matrix coefficients are added. 



In Section \ref{sec:Gnreprootofunity} we define representations of $G_n$ and $\Gno$ 
for $q$ a primitive $4N$-th root of unity. We can not directly use the representations in \cite{JN} whose matrix entries would have poles for such $q$'s. We obtain well-defined representations at roots of unity by replacing tableaux in \cite{JN} by paths in certain graphs. We finish this section by defining a semisimple quotient $\bar G_n[0]$ of $G_n[0]$ using these new representations. Furthermore, we show for $q$ not a root of unity that the representations of \cite{JN} are well-defined, and that the algebras $G_n$ and $G_n[0]$ are semisimple. While our results on the representation theory of $G_n$ are not directly applicable to the structure of either $\operatorname{Ab}(\overline{\mathcal{SE}_N})$ nor $\operatorname{Ab}(\mathcal{E}_q)$, we expect that they will 
play a similar role for the computation of the structure of the super-categories (in the sense of \cite{MR4045471,Savage}) $\operatorname{Ab}((\overline{\mathcal{SE}_N})_{\operatorname{sVec}})$ and $\operatorname{Ab}((\mathcal{E}_q)_{\operatorname{sVec}})$. Here the copy of $\operatorname{sVec}$ in these categories is generated by the simple objects $(\emptyset, -)$ and $(\emptyset, \emptyset, -)$ respectively. 

In Section \ref{sec:description} we show that the algebra $\bar G_n[0]$ from Section \ref{sec:Gnreprootofunity} is isomorphic to $\End_{\overline{\mathcal{SE}_N}}(+^n)$. Our approach is basically the same as that used in \cite{HansThesis} where quotients of Hecke algebras were determined in connection to $SU(N)_k$ fusion categories. Our main technique is to define a trace on $\bar G_n[0]$ which agrees with the pull-back of the categorical trace.  This allows us to prove Theorem~\ref{thm:mainHans} using standard techniques. 
We are also able to obtain the structure of $\operatorname{Ab}(\mathcal{E}_q)$ at all values $q$ where this category is semisimple. 

We expect that our general approach will generalise to the categories coming from the conformal embeddings 
\[  \mathfrak{so}_N \subseteq    \mathfrak{so}_{N(N-1)/2} \quad \text{ and } \quad   \mathfrak{sp}_{2N} \subseteq    \mathfrak{so}_{2N^2 + N}.   \]
Here one would now have to define and study  \textit{BMW-Clifford} algebras, i.e. suitable $q$-deformations of semidirect products of Clifford algebras with Brauer algebras, to obtain similar results.

\subsection*{Acknowledgments}
CE was supported by NSF DMS grant 2245935 and 2400089. HW would like to thank CE and the University of New Hampshire for support and hospitality during his visit. CE would like to thank Daniel Copeland for helpful conversations. HW thanks Lilit Martirosyan and Brendon Rhoades for useful references. Both authors would like to thank Noah Snyder for helpful conversations. This material is based upon work supported by the National Science Foundation under Grant No. DMS-1928930, while both authors were in residence at the Mathematical Sciences Research Institute in Berkeley, California, during the Summer of 2024. Both authors would also like to thank BIRS for hosting them while part of this project was completed.

\section{Preliminaries}\label{sec:prelim}
We direct the reader to \cite{Book} for the basics on tensor categories.


\subsection{Cauchy Completion, Negligibles, and Semisimplification} \label{sec:Cauchy} 

The Schur-Weyl style approach to tensor categories is to study a category via a distinguished planar subcategory (often formalised as a planar algebra \cite{PA1}, or monoidal algebra \cite{SovietHans}). A fairly routine set of constructions allows one to reconstruct the entire category from the planar subcategory. The first of these constructions is the idempotent completion.
\begin{defn}
    Let $\mathcal{C}$ be a pivotal tensor category. The objects of $\operatorname{Idem}(\mathcal{C})$ are pairs $(X,p_X)$ where $X\in \mathcal{C}$, and $p_X\in \operatorname{End}_\mathcal{C}(X)$ is an idempotent. The morphisms are defined by 
    \[ \operatorname{Hom}_{\operatorname{Idem}(\mathcal{C})}( 
  (X,p_X) \to (Y,p_Y)  ):= \{  f\in \Hom_\mathcal{C}(X\to Y) : p_X \circ f = f = f\circ p_Y        \}.  \]
  The tensor product, composition, and pivotal structure are inherited from the base category $\mathcal{C}$.
\end{defn}

The second is the additive envelope.
\begin{defn}
    Let $\mathcal{C}$ be a pivotal $\mathbb{C}$-linear category. We define $\operatorname{Add}(\mathcal{C})$ as the category with objects formal finite direct sums
    \[  \bigoplus_i X_i   \]
    where each $X_i\in \mathcal{C}$, and morphisms matrices 
    \[  \begin{blockarray}{ccccc}
&Y_1  & Y_2 &\cdots & Y_m    \\
\begin{block}{c[cccc]}
X_1 & f_{1,1} & f_{1,2} & \cdots & f_{1,m} \\
X_2 & f_{2,1} & f_{2,2} & \cdots & f_{2,m} \\
\vdots & \vdots & \vdots& \ddots & \vdots \\
X_n &f_{n,1} & f_{n,2} & \cdots & f_{n,m} \\
\end{block}
\end{blockarray}\in \Hom_{\operatorname{Add}(\mathcal{C})}\left(\bigoplus_{i=1}^n X_i \to \bigoplus_{j=1}^m Y_j\right) \]
where $f_{i,j}\in \Hom_\mathcal{C}(X_i\to Y_j)$. The composition of morphisms is given by matrix composition, and the tensor product of morphisms is given by the Kronecker product. The pivotal structure is inherited from the pivotal structure on $\mathcal{C}$.
\end{defn}

The Cauchy completion of a category $\mathcal{C}$ is defined as the abelian envelope of the idempotent completion of $\mathcal{C}$. We will write $\operatorname{Ab}(\mathcal{C}):= \operatorname{Add}(\operatorname{Idem}(\mathcal{C}))$. It is shown in \cite[Theorem 3.3]{Tuba} that if the endomorphism algebras of $\mathcal{C}$ are semisimple, then $\operatorname{Ab}(\mathcal{C})$ is a semisimple category.


\begin{remark} \label{rem:RepstoObjects}
If $Y$ is an object in a category $\mathcal{C}$, any idempotent $\pi$ in $\End(Y)$ corresponds to a representation $\End(Y)\pi$ of $\End(Y)$. If $\End(Y)$ is semisimple, then any irreducible representation comes from a projection in this way. If $\pi_1$ and $\pi_2$ are projections in the same algebra $\End(Y)$ then the Homs between them in  $\operatorname{Ab}(\mathcal{C})$ is $\mathrm{Hom}_{\End(Y)}(\End(Y)\pi_1, \End(Y)\pi_2$. The Hom space between pairs $(Y,\pi)$ and $(Y', \pi')$ is much more complicated, and instead one typically embeds into a larger object containing both $Y$ and $Y'$ to work with representations of a single algebra.
\end{remark} 

The following Lemma is well-known and follows from the definition of induction.

\begin{lem}
    Suppose that $\pi \in \End(Y)$ is an idempotent, then $\End(Y\otimes X) (\pi \otimes \id)$ is isomorphic as an $\End(Y\otimes X)$ representation to $\mathrm{Ind}_{\End(Y)}^{\End(Y \otimes X)} \End(Y) \pi$. 
\end{lem}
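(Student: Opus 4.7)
The plan is to reduce this to the standard algebraic fact that, for a unital subalgebra $B \subseteq A$ and an idempotent $e \in B$, the induced module $A \otimes_B Be$ is canonically isomorphic as a left $A$-module to the left ideal $Ae$, via $a \otimes be \mapsto abe$. Once that identification is in place, applying it with $B = \End(Y)$ (embedded in $A = \End(Y \otimes X)$ via $b \mapsto b \otimes \id_X$) and $e = \pi \otimes \id_X$ immediately yields the claim.

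Concretely, I would first note that the embedding $\End(Y) \hookrightarrow \End(Y \otimes X)$ sending $b \mapsto b \otimes \id_X$ is a unital algebra homomorphism, and that under this embedding $\pi$ corresponds to $\pi \otimes \id_X$, which is again an idempotent. By definition the induced representation is $\mathrm{Ind}_{\End(Y)}^{\End(Y\otimes X)} \End(Y)\pi = \End(Y\otimes X) \otimes_{\End(Y)} \End(Y)\pi$, balanced over the embedded copy of $\End(Y)$.

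Next I would define $\varphi : \End(Y\otimes X) \otimes_{\End(Y)} \End(Y)\pi \to \End(Y\otimes X)(\pi \otimes \id_X)$ by $a \otimes b\pi \mapsto a \cdot (b \otimes \id_X) \cdot (\pi \otimes \id_X)$. Well-definedness over the balanced tensor and left $\End(Y\otimes X)$-linearity are formal, and surjectivity is clear since any element of the target has the shape $a(\pi \otimes \id_X)$ and is hit by $a \otimes \pi$. For injectivity, I would observe that every simple tensor can be rewritten using the $\End(Y)$-balancing as $a \otimes b\pi = a(b \otimes \id_X) \otimes \pi$, so every element of the source is of the form $a' \otimes \pi$. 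Then the inverse $\psi(a'(\pi \otimes \id_X)) := a'(\pi \otimes \id_X) \otimes \pi = a' \otimes \pi$ (using $\pi^2 = \pi$) is well-defined and inverse to $\varphi$.

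The main obstacle is really only bookkeeping: keeping straight which side of the tensor product the $\End(Y)$-action lives on, and verifying that the putative inverse $\psi$ depends only on $a'(\pi \otimes \id_X)$ rather than on $a'$ itself — a point that is handled precisely by the idempotency $\pi^2 = \pi$ together with the $\End(Y)$-balancing. No categorical subtleties enter, which is why the lemma is stated as following from the definition of induction.
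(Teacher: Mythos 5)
Your proof is correct, and it is precisely the standard $A\otimes_B Be \cong Ae$ argument that the paper is alluding to when it says the lemma "follows from the definition of induction" (the paper gives no further details). One cosmetic remark: you call $b \mapsto b\otimes\id_X$ an "embedding," which presupposes injectivity; the argument works for any unital algebra map, though injectivity does hold here since $X$ is dualizable in a rigid category.
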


By Frobenius reciprocity, we see that the fusion rules for tensoring with $X$ can be read off from the restriction rules for representations.

\begin{cor} \label{cor:RestrictionFusion}
    Suppose that $\pi \in \End(Y)$ and $\psi \in \End(Y \otimes X)$ are projections, then $\mathrm{Hom}_{\operatorname{Ab}(\mathcal{C})}(\pi \otimes \mathrm{id}_X, \psi) = \mathrm{Hom}_{\End(Y)}(\End(Y)\pi, \operatorname{Res} \End(Y \otimes X) \psi)$.
\end{cor}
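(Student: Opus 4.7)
The plan is to chain together three standard identifications, using the preceding Lemma as the key technical input and Frobenius reciprocity as the final step.

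First I would reduce the categorical Hom space on the left-hand side to a Hom space of modules over a single algebra. By the discussion in Remark~\ref{rem:RepstoObjects}, for any two idempotents $e_1, e_2 \in \End(Z)$ living in the same endomorphism algebra, one has
\[ \Hom_{\operatorname{Ab}(\mathcal{C})}((Z,e_1),(Z,e_2)) \;\cong\; \Hom_{\End(Z)}(\End(Z)e_1,\End(Z)e_2). \]
So the first step is to embed both $\pi\otimes\id_X$ and $\psi$ into the common algebra $\End(Y\otimes X)$, noting that $\pi\otimes\id_X$ is indeed an idempotent in $\End(Y\otimes X)$ (because $\pi$ is an idempotent in $\End(Y)$ and $\otimes$ is functorial), and rewrite the left-hand side as $\Hom_{\End(Y\otimes X)}(\End(Y\otimes X)(\pi\otimes\id_X),\,\End(Y\otimes X)\psi)$.

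Next I would invoke the preceding Lemma, which identifies $\End(Y\otimes X)(\pi\otimes\id_X)$ with the induced module $\mathrm{Ind}_{\End(Y)}^{\End(Y\otimes X)}\End(Y)\pi$ (using the natural inclusion $\End(Y) \hookrightarrow \End(Y\otimes X)$ given by $f \mapsto f \otimes \id_X$). Substituting gives
\[ \Hom_{\End(Y\otimes X)}\!\bigl(\mathrm{Ind}_{\End(Y)}^{\End(Y\otimes X)}\End(Y)\pi,\;\End(Y\otimes X)\psi\bigr). \]
Frobenius reciprocity for finite-dimensional algebras then converts this into
\[ \Hom_{\End(Y)}\!\bigl(\End(Y)\pi,\;\operatorname{Res}_{\End(Y)}^{\End(Y\otimes X)}\End(Y\otimes X)\psi\bigr), \]
which is exactly the right-hand side of the corollary.

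There is no real obstacle here; the proof is essentially a bookkeeping exercise once one believes the Lemma and the standard translation between morphisms in the idempotent completion and module homomorphisms. The one subtle point worth a sentence in the writeup is verifying that the algebra inclusion used in the induction is the same one under which $\End(Y\otimes X)\psi$ is being restricted, so that the Frobenius adjunction applies cleanly; this is immediate from the definition $f \mapsto f\otimes \id_X$.
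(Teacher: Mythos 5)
Your proof is correct and follows exactly the route the paper intends: translate the left-hand Hom into module Homs over $\End(Y\otimes X)$ via Remark~\ref{rem:RepstoObjects}, rewrite $\End(Y\otimes X)(\pi\otimes\id_X)$ as an induced module using the preceding Lemma, then apply the induction--restriction adjunction. This is precisely the ``by Frobenius reciprocity'' argument the paper leaves implicit.
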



The categories we work with in this paper will not always be semisimple on the nose. To obtain semisimple categories we will have to quotient out by the negligible ideal.

\begin{defn}
    Let $\mathcal{C}$ be a spherical category. We define the negligible ideal of $\mathcal{C}$ as
    \[ \operatorname{Neg}(\mathcal{C}) := \left\{ f\in \Hom_\mathcal{C}(X\to Y) : \operatorname{tr}(f\circ g) = 0 \text{ for all } g\in\Hom_\mathcal{C}(Y\to X) \right\}  \]
    where $\operatorname{tr}$ is the categorical trace.
\end{defn}
We have that $\operatorname{Neg}(\mathcal{C})$ is a tensor ideal of $\mathcal{C}$ \cite[Lemma 2.3]{Simp}. Hence we can form the quotient category. This category inherits the spherical structure of $\mathcal{C}$.
\begin{defn}
    Let $\mathcal{C}$ be a spherical category. We will write
    \[   \overline{\mathcal{C}}:= \mathcal{C} /   \operatorname{Neg}(\mathcal{C}).    \]
\end{defn}

\subsection{The Categories $\mathcal{E}_q$ and $\mathcal{SE}_N$.} 

In \cite{ConformalA} the first author and Snyder introduced a one parameter family of tensor categories. These categories are defined by generators and relations as follows.

\begin{defn}\cite[Definition 1.1]{ConformalA}\label{def:Eq}
    Let $q \in \mathbb{C}- \{-1,0,1\}$ and define $\mathcal{E}_q$ as the rigid $\mathbb{C}$-linear monoidal category with objects strings in $\{+,-\}$ and morphisms generated by the two morphisms
    \[ \att{braidX}{.25}\quad,\quad\att{splittingEq}{.2} \in \End_{\mathcal{E}}(++) \]
    satisfying the relations:
    \begin{align*}
    &(\text{Loop}) \att{loop}{.15} = \frac{2\mathbf{i}}{q-q^{-1}} \quad
    (\text{R1}) \att{R1}{.15} = \mathbf{i}\att{R12}{.15}  \quad (\text{R2}) \att{R21}{.15} = \att{R22}{.15}   \quad (\text{R3}) \att{R31}{.10} = \att{R32}{.10}\\
    &(\text{Hecke})   \att{H1}{.15} = \att{H12}{.15} + (q-q^{-1})\att{H11}{.15}
   \quad  (\text{Trace}) \quad \att{Tr1}{.15} = \frac{q-q^{-1}}{2\mathbf{i}}\att{Id1}{.15}\quad (\text{Dual})  \left(\att{splittingEq}{.15}\right)^* =\att{splittingEq}{.15}\\
   &(\text{Half-Braid}) \quad \att{half1}{.15} = \att{half2}{.15} \quad (\text{Tadpole}) \att{lolly}{.15} = 0\quad 
  (\mathbb{Z}_2)  \att{redId}{.15} =  \att{redcap}{.15}
    \end{align*}
\end{defn}

The categories $\mathcal{E}_q$ interpolate certain quantum subgroups of $\mathfrak{sl}_N$ in the same sense that Deligne's categories $\Rep(S_t)$ interpolate the representation categories $\Rep(S_n)$ \cite{Deligne}. More precisely at discrete values of the parameter $q$ an extension is defined as follows.

\begin{defn}\cite[Definition 1.3]{ConformalA}\label{def:ext}
    Let $N\in \mathbb{N}_{\geq 2}$. We define $\mathcal{SE}_N$ as the extension of $\mathcal{E}_{e^{2\pi i \frac{1}{4N}}}$ by the additional generator
    \[  \att{kw}{.25}  \]
    satisfying the relations
    \[  (\text{$q$-Braid}) \att{kwrel1}{.25} = q \att{kwrel12}{.25}\qquad \text{ and }\qquad   (\text{Pair}) \att{kwrel2}{.25} =  \att{kwrel22}{.25} . \]
\end{defn}

The following equivalence is then shown in  \cite[Theorem 1.5]{ConformalA}.

\begin{thm}\cite[Theorem 1.5]{ConformalA}
Let $N\in \mathbb{N}_{\geq 2}$. Then there is a monoidal equivalence
\[ 
 \operatorname{Ab}(\overline{\mathcal{SE}_{N}})\simeq \overline{\operatorname{Rep}(U_q(\mathfrak{sl}_N))}_A\]
 where $A$ is the \'etale algebra object corresponding (in these sense of \cite[Theorem 5.2]{Kril}) to the conformal embedding
    \[     \mathcal{V}(\mathfrak{sl}_N,N) \subset  \mathcal{V}(\mathfrak{so}_{N^2-1},1).      \]
\end{thm}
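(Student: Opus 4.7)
The plan is to construct a monoidal functor $F:\mathcal{SE}_N\to \overline{\operatorname{Rep}(U_q(\mathfrak{sl}_N))}_A$ using the generators-and-relations presentation of $\mathcal{SE}_N$, and to show it induces an equivalence after Cauchy completion. The first step is to identify the image of the generator $+$. Under the conformal embedding $\mathfrak{sl}_N\subset \mathfrak{so}_{N^2-1}$ the vector representation of $\mathfrak{so}_{N^2-1}$ restricts to the adjoint of $\mathfrak{sl}_N$, so the natural candidate $X$ is the image of that vector representation under the Kirillov--Ostrik correspondence between $\overline{\operatorname{Rep}(U_q(\mathfrak{so}_{N^2-1}))}_1$ and $\overline{\operatorname{Rep}(U_q(\mathfrak{sl}_N))}_A$. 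The object $X$ is self-dual and inherits a braiding from $\overline{\operatorname{Rep}(U_q(\mathfrak{so}_{N^2-1}))}_1$.

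Next, I would verify that $X$ satisfies all the relations of Definitions~\ref{def:Eq} and~\ref{def:ext}. The Hecke quadratic relation together with R1, R2, R3 comes from the minimal polynomial of the $R$-matrix on the vector representation of $\mathfrak{so}_{N^2-1}$ at level $1$; the Loop, Trace, Dual, and Tadpole relations encode self-duality and the specific quantum dimension; the Half-Braid and $\mathbb{Z}_2$ relations encode the $A$-module structure coming from the embedding; and the $q$-Braid and Pair relations specific to $\mathcal{SE}_N$ come from the level-$N$ truncation, which produces an extra idempotent in $\End(X^{\otimes 2})$. Each verification reduces to a finite computation inside $\overline{\operatorname{Rep}(U_q(\mathfrak{so}_{N^2-1}))}_1$. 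By the universal property of $\mathcal{SE}_N$, this yields a functor $F$, which descends to $\overline{\mathcal{SE}_N}$ (negligibles are killed in the semisimple target) and extends to $\operatorname{Ab}(\overline{\mathcal{SE}_N})$.

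Essential surjectivity follows once $X$ is shown to tensor-generate $\overline{\operatorname{Rep}(U_q(\mathfrak{sl}_N))}_A$ under the Cauchy closure. This in turn reduces to the fact that the vector representation tensor-generates $\overline{\operatorname{Rep}(U_q(\mathfrak{so}_{N^2-1}))}_1$ and that induction along the étale algebra $A$ is surjective on simples.

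The main obstacle is proving full faithfulness of the extended functor. A reasonable approach is a global-dimension comparison: both sides are braided fusion categories, and if $F$ is essentially surjective and preserves categorical traces, then matching global dimensions forces $F$ to be an equivalence. The target's global dimension is accessible via the modular data of $\mathcal{V}(\mathfrak{sl}_N, N)$ combined with the étale-algebra dimension formula, while the source's global dimension must be computed from a Markov-trace classification on the Hecke--Clifford-type endomorphism algebras appearing in $\mathcal{SE}_N$. Alternatively, once the fusion rules are matched Ocneanu rigidity gives the equivalence, but since computing those fusion rules is the main content of the present paper, any such application must avoid circularity.
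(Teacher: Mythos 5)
This statement is cited from \cite[Theorem 1.5]{ConformalA}; the present paper does not prove it, and in fact explicitly remarks that the proof given there is \emph{non-constructive} and does not even yield a classification of the simple objects of $\operatorname{Ab}(\overline{\mathcal{SE}_{N}})$. Your proposal is therefore a genuinely different, constructive strategy: exhibit the image object $X$, verify the relations, and then argue full faithfulness by a global-dimension comparison. That is not the route taken in the cited work.

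Beyond that mismatch, there is a concrete gap in your argument for full faithfulness. Your fallback to Ocneanu rigidity you correctly flag as circular, but the global-dimension comparison has the same problem: to compute the global dimension of $\operatorname{Ab}(\overline{\mathcal{SE}_N})$ you need the quantum dimensions of its simples, i.e.\ the weight vector of the Markov trace on the quotients $\overline{G_n[0]}$ of the Hecke--Clifford algebras at a primitive $4N$-th root of unity — and determining exactly that trace and those quotients is the main technical content of Sections~3 and~4 of the paper (cf.\ Proposition~\ref{negligquot}, Theorem~\ref{fusionCat}). In other words, your full-faithfulness step presupposes precisely the structural results that the paper takes the equivalence as input to prove. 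Additionally, a dominant tensor functor between fusion categories with equal global (or Frobenius--Perron) dimensions is an equivalence only once you already know the source $\operatorname{Ab}(\overline{\mathcal{SE}_N})$ is fusion; establishing finiteness of the set of simples and nondegeneracy of the trace is not free and again leans on the root-of-unity representation theory you haven't developed. The remainder of your outline (identifying $X$ via Kirillov--Ostrik and the vector representation of $\mathfrak{so}_{N^2-1}$, and checking the skein relations) is plausible and is very likely the route by which \cite{ConformalA} builds \emph{a} functor, but the hard part is the equivalence, and that is where your plan as written does not close.
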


Their proof of this theorem is non-constructive. In particular not even a classification of the simple objects is known for the category $\operatorname{Ab}(\overline{\mathcal{SE}_{N}})$. The goal of this paper is to give a complete description of the categories $\operatorname{Ab}(\mathcal{E}_{q})$ and $\operatorname{Ab}(\overline{\mathcal{SE}_{N}})$. We will achieve this by studying the endomorphism algebras $\operatorname{End}_{\operatorname{Ab}(\mathcal{E}_{q})}(+^n)$ and $\operatorname{End}_{\operatorname{Ab}(\overline{\mathcal{SE}_{N}})}(+^n)$ in these categories. It is shown in \cite[Corollary 5.12]{ConformalA} that the endomorphism algebras $\operatorname{End}_{\mathcal{E}_q}(+^n)$ are the \textit{Hecke-Clifford} algebras, which we now introduce. It follows that the algebras $\operatorname{End}_{\operatorname{Ab}(\overline{\mathcal{SE}_{N}})}(+^n)$ are quotients of the Hecke-Clifford algebras at certain roots of unity.

\subsection{The Hecke-Clifford algebras} Hecke-Clifford algebras were defined in \cite{Ol} in the context of centralizer algebras of quantum group versions of isomeric Lie super algebras.
We review results from \cite{JN} about their representations. They were proved over an algebraic extension of the field $\C(q)$ of rational functions over $\C$. In our setting, $q$ will be a complex number which is not a root of unity, for which essentially the same results hold. Adjustments will have to be made if $q$ is a root of unity. This will be done in Section \ref{sec:Gnreprootofunity}, based on the material in this section.

\begin{defn} We define the Hecke-Clifford algebras $G_n$ 
via generators $t_j$, $1\leq j<n$ and $v_j$, $1\leq j\leq n$ as follows:

\begin{itemize}
\item[(H)] The generators $t_j$ satisfy the relations of the Hecke algebras $H_n = \langle t_i : 1\leq i <n \rangle$ of type $A_{n-1}$. This means they satisfy
 the braid relations as well as the quadratic equation $t_j-t_j^{-1}=q-q^{-1}$.

\item[(C)] The elements $v_j$ generate the Clifford algebra $\mathrm{Cliff}(n)$
with relations $v_jv_k+v_kv_j=2\delta_{jk}$. 

\item[(M)] Moreover, we have the additional relations

\begin{equation}\label{cross1}
t_jv_j=v_{j+1}t_j,
\end{equation}

\begin{equation}\label{cross2}
t_jv_{j+1}= v_jt_j-(q-q^{-1})(v_j-v_{j+1}),
\end{equation}

\begin{equation}\label{cross3}
t_jv_l=v_lt_j, \hskip 3em l\neq j, j+1.
\end{equation}
    
\end{itemize}

\end{defn}

The following result was shown in \cite{JN} over the field $\C(q)$ of rational functions in $q$, where our generators $t_j$ and $v_j$ correspond to the elements $T_j$ and $iC_j$. It also holds for the corresponding complex algebra if we view $q$ as a complex number. 


\begin{thm}\label{Gnproperties} The algebra $G_n$ is equal to $H_n\mathrm{Cliff}(n)$
as a vector space and has dimension $2^n n!$. In particular, $G_n$ has a standard basis of the form $h_w c_s$ where $w$ ranges over $S_n$ and $s$ ranges over $\{0,1\}^n$. Similarly, $G_n = \mathrm{Cliff}(n)H_n$ and has a basis of the form $c_s h_w$.
\end{thm}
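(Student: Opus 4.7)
The plan is to verify the spanning property $G_n = \operatorname{span}\{h_w c_s : w \in S_n,\, s \in \{0,1\}^n\}$ and then match the cardinality $2^n n!$ with a dimension lower bound. For the spanning step, I would use the cross relations (M) to normalise an arbitrary monomial in the generators into a linear combination of products $h \cdot c$ with $h$ a word in the $t_j$'s and $c$ a word in the $v_j$'s. Equations \eqref{cross1} and \eqref{cross3} move a Clifford letter rightward past a Hecke letter with no correction, while \eqref{cross2} produces a single correction term $(q-q^{-1})(v_j - v_{j+1})$ in which the Hecke letter $t_j$ is removed altogether. Thus on any monomial the total number of ``inversions'' (ordered pairs $(t_j, v_k)$ with the $v$-letter to the left of the $t$-letter) either strictly decreases or is preserved while the number of $t$-letters drops, so a finite induction terminates in $H_n \cdot \mathrm{Cliff}(n)$. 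Combining this with the standard PBW-type bases of $H_n$ (indexed by $S_n$, dimension $n!$) and $\mathrm{Cliff}(n)$ (indexed by $\{0,1\}^n$, dimension $2^n$) gives $G_n = \operatorname{span}\{h_w c_s\}$, so $\dim G_n \leq 2^n n!$. Running the same argument in the opposite direction, pushing Hecke letters past Clifford ones, yields the spanning by $\{c_s h_w\}$.

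For the matching lower bound, I would invoke the main dimension result of \cite{JN}: over the field $\mathbb{C}(q)$ the Hecke-Clifford algebra is known to have dimension exactly $2^n n!$, equivalently $\{h_w c_s\}$ is linearly independent there. The defining relations have coefficients in $\mathbb{Z}[q, q^{-1}]$, and the spanning argument above is valid over any base ring. Hence there is an integral form $G_n^{\mathbb{Z}[q, q^{-1}]}$ spanned by $\{h_w c_s\}$; base change to $\mathbb{C}(q)$ shows this spanning set is a $\mathbb{Z}[q, q^{-1}]$-basis, and subsequent specialisation $q \mapsto q_0 \in \mathbb{C}^\times$ preserves both the basis and the dimension. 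This gives the matching lower bound for the complex algebra at every nonzero complex specialisation of $q$, including all values considered in the paper.

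The main obstacle, if one wished to avoid quoting \cite{JN}, would be to exhibit a concrete faithful representation of $G_n$ of dimension $2^n n!$ directly; the Sergeev-type action of $G_n$ on $V^{\otimes n}$ for $V$ the vector representation of $U_q(\mathfrak{q}_N)$ with $N \geq n$ provides such a representation, but requires the detailed construction of this quantum group centraliser action. With \cite{JN} in hand the argument reduces to the termination of the normalisation procedure in the spanning step; this is the only genuinely delicate point, and it is handled by the observation that each application of \eqref{cross2} strictly reduces a well-chosen lexicographic complexity (for instance, the pair ``total number of $t$-letters, followed by number of $t$--$v$ inversions'') so no infinite rewriting chains occur.
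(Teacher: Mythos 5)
The paper offers no proof here: it simply cites \cite{JN} for the $\mathbb{C}(q)$ statement and then asserts, without argument, that the same dimension and basis hold when $q$ is specialised to a complex number. Your proposal therefore does not parallel the paper's proof so much as supply a proof where the paper has only a citation. The route you take — a rewriting/normalisation argument showing that $\{h_w c_s\}$ spans $G_n$ over any base ring, followed by an integral-form and base-change argument that transports the linear independence result of \cite{JN} from $\mathbb{C}(q)$ down to every specialisation $q_0 \in \mathbb{C}^\times$ — is the natural and correct way to make the paper's tacit assertion rigorous, and it is essentially what one would expect a careful reader to reconstruct.

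Two small points worth tightening if you were to write this out in full. First, your lexicographic termination measure (number of $t$-letters, then number of $t$--$v$ inversions) correctly controls the cross-relation rewriting, but it is silent on the internal reductions inside $H_n$ (braid and quadratic relations) and inside $\mathrm{Cliff}(n)$ (anticommutation and $v_j^2 = 1$); you implicitly defer these to the standard PBW bases of those two algebras, which is fine, but it should be said. Second, the sentence ``base change to $\mathbb{C}(q)$ shows this spanning set is a $\mathbb{Z}[q,q^{-1}]$-basis'' hides a freeness argument: one should note that the surjection from the free $\mathbb{Z}[q,q^{-1}]$-module on $\{h_w c_s\}$ onto $G_n^{\mathbb{Z}[q,q^{-1}]}$ has kernel which, after tensoring with the (flat) fraction field $\mathbb{C}(q)$, vanishes by the dimension count in \cite{JN}, hence the kernel is a torsion submodule of a free module and so is zero. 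With these two routine remarks your argument is complete and correct.
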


Observe that the algebra $G_n$ has a $\Z_2$-grading, where the elements $t_j$ have degree 0 and 
the elements $v_j$ have degree 1. It follows from the relations that the map
\begin{equation}\label{autom}
\al:\quad v_j\mapsto -v_j,\hskip 3em t_j\mapsto t_j,
\end{equation}
defines an automorphism of order two with eigenspaces $G_n[0]$ and $G_n[1]$.

\subsection{The algebras $\Gno$}\label{Gn0def} 


We first write generators and relations for $\Gno$, which requires a change of variables since the $v_j$ are odd. So we set $e_j=v_jv_{j+1}$, and note that $e_j \in \Gno$.

\begin{lem} \label{Gn0Presentation}
    The following relations hold in $\Gno$:
    \begin{itemize}
        \item[(E)] The $e$'s satisfy the relations $e_ie_j=e_je_i$ for $|i-j|\neq 1$, $e_j^2=-1$ and $e_je_{j+1}=-e_{j+1}e_j$, $1\leq j<n-1$.
        \item[(M)] We have the mixed relations

\begin{equation}\label{Heck2}
e_jt_j+t_je_j\ =\ (q-q^{-1})(1+e_j) \quad \Leftrightarrow\quad e_jt_j-(e_jt_j)^{-1}=(q-q^{-1})1.
\end{equation}

\begin{equation}\label{com1}
t_je_{j+1}=-e_je_{j+1}t_j, \quad e_jt_{j+1}=-t_{j+1}e_je_{j+1}.
\end{equation}

\begin{equation}\label{coj}
e_jt_{j+1}t_j=t_{j+1}t_je_{j+1}.
\end{equation}

    \end{itemize}

Moreover, $\Gno$ has a basis of the form $h_w e_s$ where $w$ ranges over $S_n$ and $s$ ranges over elements of $\{0,1\}^{n-1}$ where we again multiply the $e_i$ in lexicographical order.
\end{lem}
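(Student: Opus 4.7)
The plan is to verify each of the stated relations by a direct computation from the defining relations of $G_n$, and then to deduce the basis statement from Theorem~\ref{Gnproperties} using the $\mathbb{Z}_2$-grading given by the automorphism $\alpha$ of (\ref{autom}).

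For the relations in (E), everything takes place inside the Clifford subalgebra generated by the $v_j$, so each identity reduces to an application of $v_iv_j + v_jv_i = 2\delta_{ij}$. Explicitly, $e_j^2 = v_jv_{j+1}v_jv_{j+1} = -v_j^2 v_{j+1}^2 = -1$; the identity $e_je_{j+1} = -e_{j+1}e_j$ follows from the fact that both sides equal $\pm v_j v_{j+2}$; and for $|i-j| \geq 2$ the four indices involved are distinct, so moving $v_jv_{j+1}$ past $v_iv_{i+1}$ introduces four sign flips which cancel in pairs.

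For the relations in (M), the cleanest starting point is (\ref{Heck2}). Using (\ref{cross1}) to rewrite $v_{j+1}t_j = t_jv_j$, one computes $t_je_j = t_jv_jv_{j+1} = v_{j+1}t_jv_{j+1}$; applying (\ref{cross2}) to $t_jv_{j+1}$ and simplifying using $v_{j+1}^2 = 1$ and $v_{j+1}v_j = -e_j$ gives the stated identity $e_jt_j + t_je_j = (q-q^{-1})(1+e_j)$. The equivalent form $e_jt_j - (e_jt_j)^{-1} = q-q^{-1}$ follows by multiplying through by $(e_jt_j)^{-1} = -t_j^{-1}e_j$ and using $t_j^{-1} = t_j - (q-q^{-1})$. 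The identities (\ref{com1}) and (\ref{coj}) are proved by the same push-through technique: expand the relevant $e$ as a product of two $v$'s, move the $t$'s through using (\ref{cross1})--(\ref{cross3}), and invoke a braid relation in the $t$'s for (\ref{coj}).

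For the basis statement, the automorphism $\alpha$ acts on the standard basis element $h_wc_s$ of Theorem~\ref{Gnproperties} by $(-1)^{|s|}$, so $G_n[0]$ has dimension $2^{n-1}n!$ and is spanned by those $h_wc_s$ with $|s|$ even. A short induction shows that $e_ie_{i+1}\cdots e_{j-1} = v_iv_j$ for $i < j$, so any $c_s$ with $|s|$ even is (up to a sign) a product of $e_i$'s arranged in lexicographic order. Hence the $2^{n-1}n!$ elements $\{h_we_s : w \in S_n,\ s \in \{0,1\}^{n-1}\}$ span $G_n[0]$, and by the dimension count they form a basis. The main obstacle throughout is the sign bookkeeping in (M), particularly in (\ref{coj}); a useful consistency check is that (\ref{Heck2}) asserts that $e_jt_j$ satisfies the same Hecke quadratic as $t_j$, so any derivation that violates this can be flagged immediately.
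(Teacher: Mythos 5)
Your proof is correct and follows essentially the same route as the paper: verify the relations by pushing $v$'s through $t$'s using the defining relations (\ref{cross1})--(\ref{cross3}) of $G_n$, then deduce the basis from Theorem~\ref{Gnproperties}. Your computation of (\ref{Heck2}) is literally the one the paper writes out, and your basis argument (using $\alpha$ to cut down to $\dim G_n[0] = 2^{n-1}n!$, then writing $v_iv_j = e_ie_{i+1}\cdots e_{j-1}$ to show the $h_we_s$ span) is a slightly more explicit rendering of the paper's ``$G_n[0]$ is the product of $H_n$ with the even Clifford algebra, which has the claimed basis by induction.''
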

\begin{proof}
It is straightforward if somewhat tedious to check that the relations are satisfied, we check Relation \ref{Heck2} as an example:
\begin{align}
t_j(v_jv_{j+1})\ &=\ v_{j+1}t_jv_{j+1}\ =\ v_{j+1}( v_jt_j-(q-q^{-1})(v_j-v_{j+1}))\ =\cr
&=\ -(v_jv_{j+1})t_j+(q-q^{-1})(v_jv_{j+1}+1).\cr
\end{align}

Clearly $G_n[0]$ is a product of the Hecke algebra and the even part of the Clifford algebra. A simple induction on the number of strands shows that the even part of the Clifford algebra has a basis given by products of $v_jv_{j+1}$ in lexicographic order.
\end{proof}

By \cite[Corollary 5.12]{ConformalA} we have the isomorphism of algebras
\begin{equation}\label{eq:iso}
    \operatorname{End}_{\mathcal{E}_q}(+^n) \cong G_n[0]
\end{equation}
for all $q\in \mathbb{C} - \{-1,0,1\}$.

\begin{remark}\label{Gnremark} The algebras $G_n$ and $G_n[0]$ are $q$-deformations of the semidirect products $\operatorname{Cliff}(n)\rtimes S_n$ and $\mathrm{Cliff}(n)[0]\rtimes S_n$, with the obvious $S_n$ action on the generators $v_i$. Just like $\operatorname{Cliff}(n)\rtimes S_n$ quotients onto $\End_{\q_N}(V^{\otimes n})$ (see Theorem \ref{Sergeev theorem}), the algebra $G_n$ quotients onto $\End_{U_q(\q_N)}(V^{\otimes n})$, see \cite{Ol}. The algebra $G_n[0]$ quotients onto the ring of even endomorphisms $\End_{U_q(\q_N)}(V^{\otimes n})$. 
Although we will eventually be interested in the structure of $G_n[0]$, it will be more convenient to work with the algebra $G_n$ first.
\end{remark}

\begin{prop} \label{HCSpanning}
    $G_{n+1}$ is spanned by elements of the form $a\chi b$, with $a,b\in G_n$ and $\chi\in \{ 1, t_n, v_{n+1}, v_{n+1}t_n\}$.

    $G_{n+1}[0]$ is spanned by elements of the form $a\chi b$, with $a,b\in G_n[0]$ and $\chi\in \{ 1, t_n, e_{n}, e_{n}t_n\}$
\end{prop}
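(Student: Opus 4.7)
\textbf{Proof plan for Proposition \ref{HCSpanning}.}

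For part (a), the strategy is to combine three ingredients. First, the standard double coset decomposition $S_{n+1}=S_n\sqcup S_n s_n S_n$ yields the well-known Hecke algebra spanning identity $H_{n+1}=H_n + H_n t_n H_n$. Second, the Clifford algebra splits as $\mathrm{Cliff}(n+1)=\mathrm{Cliff}(n)\oplus \mathrm{Cliff}(n)v_{n+1}$ according to whether $v_{n+1}$ appears. Third, by Theorem \ref{Gnproperties} we have $G_{n+1}=H_{n+1}\,\mathrm{Cliff}(n+1)$. Combining these, every element of $G_{n+1}$ is a linear combination of four types of products: $hc$, $hcv_{n+1}$, $h_u t_n h_v c$, and $h_u t_n h_v c v_{n+1}$, with $h,h_u,h_v\in H_n$ and $c\in\mathrm{Cliff}(n)$. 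The first three are immediately of the desired form with $\chi\in\{1,v_{n+1},t_n\}$. For the fourth, I would first use \eqref{cross3} to pass $v_{n+1}$ past $h_v\in H_n$ and the fact that $v_{n+1}$ anticommutes with each $v_1,\ldots,v_n$ to move it past $c$ (up to a sign), reducing the term to $\pm h_u t_n v_{n+1} h_v c$. Then \eqref{cross2} gives $t_n v_{n+1}=v_n t_n-(q-q^{-1})(v_n-v_{n+1})$, and the three resulting summands land in $G_n\cdot t_n\cdot G_n$, $G_n\cdot 1\cdot G_n$, and $G_n\cdot v_{n+1}\cdot G_n$ respectively.

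For part (b), I would deduce the statement from part (a) by projecting onto degree zero under the $\Z_2$-grading automorphism $\alpha$ from \eqref{autom}. Given $x\in G_{n+1}[0]$ written as $x=\sum a_i\chi_i b_i$ via part (a), replacing $x$ by $\tfrac{1}{2}(x+\alpha(x))$ allows me to keep only the degree-zero part of each $a_i\chi_i b_i$. Decomposing $a_i=a_i^0+a_i^1$ and $b_i=b_i^0+b_i^1$ under the grading, and using the decomposition $G_n=G_n[0]\oplus G_n[0]v_n=G_n[0]\oplus v_n G_n[0]$ (which follows from the basis in Theorem \ref{Gnproperties}), I can write each odd component as $a_i^1=a_i''v_n$ and $b_i^1=v_n b_i''$ with $a_i'',b_i''\in G_n[0]$. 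Then each surviving degree-zero term becomes an element of $G_n[0]\cdot\chi'\cdot G_n[0]$ for an appropriate $\chi'\in\{1,t_n,e_n,e_nt_n\}$ via short computations: the mixed-parity terms with $\chi_i=v_{n+1}$ collapse using $v_nv_{n+1}=e_n$ and $v_{n+1}v_n=-e_n$; the mixed-parity terms with $\chi_i=v_{n+1}t_n$ collapse using either \eqref{cross1} (giving $v_{n+1}t_n v_n=t_n$) or directly to $a''e_nt_n b$; and the both-odd terms with $\chi_i=t_n$ produce the expression $a''\,v_nt_nv_n\,b''$, which simplifies to $a''e_nt_n b''$.

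The main obstacle is really just careful bookkeeping through the case split, but the one non-mechanical computation is the identity $v_nt_nv_n=e_nt_n$ used in the last case: starting from \eqref{cross2} one finds $v_nt_nv_n=-t_n e_n+(q-q^{-1})(1+e_n)$, and then \eqref{Heck2} rewrites $-t_n e_n$ as $e_nt_n-(q-q^{-1})(1+e_n)$, cancelling the correction. This identity is what forces $e_nt_n$ into the spanning set in part (b) and is the reason that $v_{n+1}t_n$, although strictly redundant in part (a) (since $v_{n+1}t_n=t_nv_n\in G_n t_n G_n$ by \eqref{cross1}), is listed in part (a) so that the passage to the degree-zero piece in part (b) is uniform.
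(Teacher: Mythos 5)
Your argument is correct, but both halves take a genuinely different route from the paper's.

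For part (a), the paper uses the \emph{other} factorization in Theorem~\ref{Gnproperties}, namely $G_{n+1}=\mathrm{Cliff}(n+1)\,H_{n+1}$, writing every element as $c\,\chi''\,a\,\chi'\,b$ with $c\in\mathrm{Cliff}(n)$, $\chi''\in\{1,v_{n+1}\}$, $a,b\in H_n$, $\chi'\in\{1,t_n\}$; since $\chi''$ commutes with $a\in H_n$ this immediately becomes $(ca)\,\chi''\chi'\,b$ with $ca,b\in G_n$ and $\chi''\chi'$ in the stated set, with no further relation needed. Your choice of $H_{n+1}\mathrm{Cliff}(n+1)$ puts $v_{n+1}$ on the far right, forcing you to commute it past $h_vc$ and then invoke \eqref{cross2} to rewrite $t_nv_{n+1}$; this works but is noticeably longer, and also requires decomposing $c$ into homogeneous parts before the sign in $v_{n+1}c=\pm cv_{n+1}$ is well-defined (a point you gloss over but which is easily repaired). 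If you reverse the order of the Clifford and Hecke factors, the whole case analysis evaporates.

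For part (b) the paper just says the argument "follows in the same manner," which is rather terse: in the even case $\chi''=e_n$ does \emph{not} commute with $H_n$ (cf.\ \eqref{com1}), so a literal repetition of (a) requires the conjugation automorphism $\gamma_n$ from Lemma~\ref{q1lemma} to move $e_n$ past $G_n[0]$, plus \eqref{Heck2} to handle $t_ne_n$. Your strategy of deducing (b) from (a) by projecting to the even part and writing each odd component of $a_i,b_i$ as an element of $G_n[0]$ times $v_n$ is a cleaner and more self-contained reduction, and the key identity $v_nt_nv_n=e_nt_n$ you highlight is correct — though it follows in one line from \eqref{cross1}, namely $v_n t_n v_n = v_n(v_{n+1}t_n)=e_nt_n$, without the detour through \eqref{cross2} and \eqref{Heck2} you describe.
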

\begin{proof}
It is well-known that the Hecke algebra $H_{n+1}$ is spanned by elements of the form $a \chi' b$ for $\chi' \in \{1,t_n\}$ and $a, b \in H_n$ (e.g. \cite[Eq (2.2)]{HansThesis}). Moreover, any element of $\mathrm{Cliff}(n+1)$ is of the form $c \chi'' $ for $\chi'' \in \{1, v_{n+1} \}$ and $c \in \mathrm{Cliff}(n)$. Thus by, Thm. \ref{Gnproperties}, every element of $G_n$ is of the form $c\chi''a\chi'b$. But $c\chi''a\chi'b = c a \chi'' \chi' b$, since $1$ and $v_{n+1}$ commute with $H_n$, so the first result follows. The proof for $G_{n+1}[0]$ follows in the same manner. 
\end{proof}

\begin{lem} \label{q1lemma}
    The automorphism $\gamma_n$ of $G_n[0]$, defined by $\gamma_n(c) = v_n^{-1} c v_n = v_n c v_n$, sends $t_{n-1} \mapsto t_{n-1} e_{n-1}$ and $e_{n-1} \mapsto -e_{n-1}$ and fixes all other generators. This outer automorphism is the restriction of conjugation by $e_n$ on $G_{n+1}[0]$. 
\end{lem}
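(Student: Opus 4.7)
The plan is a direct computation on generators. The first observation is that $v_n^{-1}=v_n$ since the Clifford relation $v_jv_k+v_kv_j=2\delta_{jk}$ gives $v_n^2=1$, so the formula $\gamma_n(c)=v_n^{-1}cv_n=v_ncv_n$ is consistent. Because $v_n$ is odd in the $\Z_2$-grading of $G_n$, conjugation by $v_n$ is a grading-preserving automorphism of $G_n$ (even $\cdot$ odd $\cdot$ even is even, etc.), and so its restriction to $G_n[0]$ is an automorphism. Note however that $v_n\notin G_n[0]$ (it lies in $G_n[1]$), so a priori $\gamma_n$ need not be inner on $G_n[0]$; once we show that $\gamma_n$ is nontrivial on $G_n[0]$ and compute its action, the final step of identifying it as conjugation by $e_n\in G_{n+1}[0]$ will automatically show it is outer on $G_n[0]$.

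Next I would evaluate $\gamma_n$ on each of the generators $t_j$ ($1\le j<n$) and $e_j=v_jv_{j+1}$ ($1\le j<n$) of $G_n[0]$. For $j<n-1$, relation \eqref{cross3} gives that $v_n$ commutes with $t_j$, and since $v_n$ anticommutes with each $v_i$ for $i<n$, conjugation by $v_n$ on $e_j=v_jv_{j+1}$ (with $j+1<n$) yields $(-1)^2 e_j=e_j$. This handles the ``fixed'' generators. The two interesting cases are:
\begin{itemize}
\item $t_{n-1}$: Relation \eqref{cross1} written as $v_nt_{n-1}=t_{n-1}v_{n-1}$ gives
$v_n t_{n-1} v_n = t_{n-1}v_{n-1}v_n = t_{n-1}e_{n-1}$.
\item $e_{n-1}=v_{n-1}v_n$: Since $v_n^2=1$ and $v_nv_{n-1}=-v_{n-1}v_n$,
$v_n e_{n-1} v_n = v_nv_{n-1}v_nv_n = v_nv_{n-1}= -e_{n-1}$.
\end{itemize}
This establishes the stated formulas for $\gamma_n$.

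For the second claim, I would compute conjugation by $e_n=v_nv_{n+1}$ on each generator of $G_n[0]$ inside $G_{n+1}[0]$. First note $e_n^{-1}=-e_n=v_{n+1}v_n$ (from relation (E), $e_n^2=-1$). The key point is that $v_{n+1}$ commutes with every generator of $G_n[0]$: for $t_j$ ($j<n$) this is \eqref{cross3}, and for $e_j=v_jv_{j+1}$ ($j<n$) one has $v_{n+1}v_jv_{j+1}v_{n+1}=v_jv_{j+1}$ by two sign flips since $j,j+1<n+1$. Consequently, for any generator $c$ of $G_n[0]$,
\[
e_n c e_n^{-1}= v_nv_{n+1}cv_{n+1}v_n = v_n c v_n = \gamma_n(c),
\]
which is exactly the restriction of inner conjugation by $e_n$ to $G_n[0]$.

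The only ``obstacle'' is bookkeeping of signs in the Clifford and cross relations; every step reduces to the relations listed in the definition of $G_n$ and Lemma \ref{Gn0Presentation}. Outerness on $G_n[0]$ then follows because $\gamma_n(t_{n-1})\neq t_{n-1}$, so $\gamma_n$ is nontrivial, while an inner automorphism would have to act by an element whose image in the quotient by nilpotents centralizes the abelianization appropriately — more simply, the conjugating element $e_n$ lies outside $G_n[0]$, and no element of $G_n[0]$ can send $e_{n-1}\mapsto -e_{n-1}$ while fixing all the $e_j$, $j<n-1$, as one can see from the standard basis $h_we_s$.
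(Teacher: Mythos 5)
Your computation is correct and follows essentially the same direct approach the paper sketches: for $\gamma_n$, the relations~\eqref{cross1},~\eqref{cross3} and the Clifford relations give the action on generators in one line each, and the $e_n$-conjugation claim reduces to noting that $v_{n+1}$ commutes with every generator of $G_n[0]$ and that $e_n^{-1}=v_{n+1}v_n$. In fact your proof is more complete than the paper's, which only exhibits the $t_{n-1}$ computation and does not verify the $e_n$-conjugation statement at all.

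One caveat: your closing outerness argument is not rigorous. The fact that the conjugating element $e_n$ lies outside $G_n[0]$ does not by itself show that $\gamma_n$ fails to coincide with conjugation by \emph{some other} element of $G_n[0]$, and the assertion that ``no element of $G_n[0]$ can send $e_{n-1}\mapsto -e_{n-1}$ while fixing the other $e_j$'' is not justified by inspecting the basis $h_w e_s$ — conjugation acts in a complicated way on that basis. Indeed, for $n=2$ (generic $q$) one has $G_2[0]\cong M_2(\C)$, which has no outer automorphisms, so $\gamma_2$ is inner there; outerness only holds once there exist strict $\la$ with $\ell(\la)$ even, which is what Proposition~\ref{irred}(c) later exploits (an inner automorphism fixes the isomorphism classes of simples, but $\gamma_n$ swaps $(\la,\pm)$). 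To be fair, the paper's own proof of this lemma also does not address outerness, so this is a shared gap rather than an error you introduced; but you should not present the hand-wave as a proof.
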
 
\begin{proof}
    If $c$ is even then $v_n^{-1} c v_n$ is also even, so this is clearly an automorphism. The formula on the generators follows by direct calculation, for example
    \[ v_n t_{n-1} v_n = t_{n-1} v_{n-1} v_n = t_{n-1} e_{n-1} \]
\end{proof}





The algebras $G_n$ and $G_n[0]$ both contain as subalgebras the Hecke algebras $H_n$. These algebras have been well studied in the literature \cite{HansThesis}. In the case that $q^{2j}\neq 1$ for $2\leq j\leq n$ there exist minimal central idempotents $p_{(n)}$ and $p_{(1^n)}$ in the $H_n$ which satisfy $t_jp_{(n)}=qp_{(n)}$ and $t_jp_{(1^n)}=-q^{-1}p_{(1^n)}$,
$1\leq	j<n$. We can show that these idempotents remain minimal in the extension $G_n[0]$.

\begin{lem}\label{pnlemma}Assume that $q^{2j}\neq 1$ for $2\leq j\leq n$. Then the idempotents $p_{(n)}$ and $p_{(1^n)}$ are minimal idempotents in $\Gno$.
\end{lem}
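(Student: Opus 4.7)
The plan is to prove the equivalent statement $p_{(n)} G_n[0] p_{(n)} = \mathbb{C} p_{(n)}$, which is equivalent to minimality since $p_{(n)}$ is the unit of this corner. Using the basis $\{h_w e_s\}$ from Lemma~\ref{Gn0Presentation} and the fact that $p_{(n)}$ is the trivial idempotent of $H_n$, so $p_{(n)} h_w = q^{\ell(w)} p_{(n)}$, I would immediately reduce to showing that $p_{(n)} e_S p_{(n)} \in \mathbb{C} p_{(n)}$ for every subset $S \subseteq \{1, \ldots, n-1\}$, where $e_S = \prod_{i \in S} e_i$ in increasing order.

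The key base-case computation is $|S|=1$. Sandwiching relation (\ref{Heck2}), $t_j e_j + e_j t_j = (q-q^{-1})(1 + e_j)$, between two copies of $p_{(n)}$ and using $t_j p_{(n)} = p_{(n)} t_j = q p_{(n)}$ on the left, one finds
\[ 2q \cdot p_{(n)} e_j p_{(n)} = (q-q^{-1})\bigl(p_{(n)} + p_{(n)} e_j p_{(n)}\bigr), \]
which yields $p_{(n)} e_j p_{(n)} = \tfrac{q-q^{-1}}{q+q^{-1}}\, p_{(n)}$. So the compressed single-Clifford elements are all scalar multiples of $p_{(n)}$.

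For general $S$ I would induct on $n$, the case $n \leq 2$ being immediate. The technical tool is the identity $p_{(n-1)} p_{(n)} = p_{(n)} = p_{(n)} p_{(n-1)}$, which follows from the fact that $p_{(n)}$ is a joint $q$-eigenvector for all $t_j \in H_{n-1} \subset H_n$ (so $p_{(n-1)} p_{(n)} = (\sum_w c_w q^{\ell(w)}) p_{(n)} = p_{(n)}$ by the trivial character of $H_{n-1}$). If $n-1 \notin S$, then $e_S \in G_{n-1}[0]$ and the inductive hypothesis gives $p_{(n-1)} e_S p_{(n-1)} = \lambda p_{(n-1)}$, so
\[ p_{(n)} e_S p_{(n)} = p_{(n)} p_{(n-1)} e_S p_{(n-1)} p_{(n)} = \lambda p_{(n)}. \]
If instead $n-1 \in S$, write $e_S = e_{S'} e_{n-1}$ and use relation (\ref{coj}) iteratively to express $e_{n-1} = \sigma^{-1} e_1 \sigma$, where $\sigma = (t_2 t_1)(t_3 t_2) \cdots (t_{n-1} t_{n-2})$ is a Hecke element of length $2(n-2)$; then commute $\sigma^{\pm 1}$ past $e_{S'}$ using the mixed relations (\ref{Heck2}), (\ref{com1}), (\ref{coj}), and the centrality relation (\ref{cross3}), and absorb the residual Hecke generators into the outer $p_{(n)}$'s via $t_j p_{(n)} = q p_{(n)}$.

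The main obstacle will be the second case: the anti-commutations in (\ref{com1}) and (\ref{coj}) produce extra Clifford factors and signs whenever a Hecke element is slid past a neighbouring $e_i$, so one must track this bookkeeping carefully. The point is that each such commutation either shortens the Clifford word or replaces an $e_{n-1}$ with an $e_j$ of strictly smaller index times a Hecke generator that the outer $p_{(n)}$ absorbs; iterating, every term is eventually brought into $G_{n-1}[0]$ and handled by the first case. The argument for $p_{(1^n)}$ is identical, using $t_j p_{(1^n)} = -q^{-1} p_{(1^n)}$ in place of $t_j p_{(n)} = q p_{(n)}$, which only changes the scalar $\tfrac{q-q^{-1}}{q+q^{-1}}$ to its analogue with $q \mapsto -q^{-1}$.
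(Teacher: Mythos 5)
Your overall framework matches the paper's: reduce to showing $p_{(n)} e_S p_{(n)} \in \mathbb{C}p_{(n)}$, compute $p_{(n)}e_jp_{(n)} = \tfrac{q-q^{-1}}{q+q^{-1}}p_{(n)}$ directly from (\ref{Heck2}), and induct on $n$ using $p_{(n-1)}p_{(n)} = p_{(n)}$ to dispose of $S \subseteq \{1, \ldots, n-2\}$. The gap is in the case $n-1 \in S$, where you propose conjugating $e_{n-1}$ to $e_1$ by the braid word $\sigma$ and then ``commuting $\sigma^{\pm 1}$ past $e_{S'}$.'' Your termination claim --- that each such commutation either shortens the Clifford word or replaces $e_{n-1}$ by an $e_j$ of strictly smaller index --- is false as stated: relation (\ref{com1}) reads $e_j t_{j+1} = -t_{j+1}e_j e_{j+1}$, so sliding $e_j$ past $t_{j+1}^{\pm 1}$ \emph{lengthens} the Clifford word and introduces $e_{j+1}$ with a \emph{larger} index, and (\ref{Heck2}) produces a sum of terms, one of which keeps the same Clifford word. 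Nothing manifestly decreases under the iteration, so the claimed reduction to $G_{n-1}[0]$ is not established.

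The paper sidesteps this entirely. After the $S \subseteq \{1,\ldots,n-2\}$ reduction, the only remaining cases are $f = f'e_{n-1}$ and $f = f'e_{n-2}e_{n-1}$ with $f' \in F_{n-2}$ (a product of $e_1,\ldots,e_{n-3}$). For the first, since $f'$ commutes with both $e_{n-1}$ and the Hecke idempotent $1_{n-2}\otimes p_{(2)}$ on strands $n-1,n$, one simply inserts $1_{n-2}\otimes p_{(2)}$ and invokes the $n=2$ base case in place, giving $\frac{q-q^{-1}}{q+q^{-1}}\,p_{(n)}f'p_{(n)}$. For the second, one writes $p_{(n)} = q^{-1}p_{(n)}t_{n-1}$, commutes $t_{n-1}$ past $f'$, and uses the single identity $t_{n-1}e_{n-2}e_{n-1}=-e_{n-2}t_{n-1}$ from (\ref{com1}) to strip off $e_{n-1}$ in one step. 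Both reduce immediately to $S\subseteq\{1,\ldots,n-2\}$. If you want to push your $\sigma$-conjugation through, you would need to exhibit an explicit well-founded invariant that survives the sign changes and the new Clifford factors coming from (\ref{com1}) and (\ref{Heck2}); as written, that step does not go through.
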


\begin{proof}  We will give the proof for $p_{(n)}$, as the $p_{(1^n)}$  case is near identical. 

We want to show $p_{(n)}fp_{(n)}$ is a scalar multiple of $p_{(n)}$ for all $f$. Let $F_n$ be the subalgebra generated by $e_j$, $1\leq j<n$. Since $p_{(n)}$ is a minimal (and not merely minimal central) idempotent in $H_n$, we see that $hp_{(n)}$ is a scalar multiple of $p_{(n)}$ for every $h\in H_n$. Since $\Gno=F_n H_n$, it suffices to show that $p_{(n)}fp_{(n)}$ is a scalar multiple of $p_{(n)}$ for $f\in F_n$. We proceed by induction on $n$.

For the base case $n=2$, observe that $[2]p_{(2)}=t_1+q^{-1}$. It then follows from \ref{Heck2} that
$$p_{(2)}e_1p_{(2)}\ =\  \frac{1}{[2]}p_{(2)}(-t_1^{-1}e_1+(q-q^{-1})+q^{-1}e_1)\ =\ \frac{q-q^{-1}}{q+q^{-1}}p_{(2)}.$$

We now do the induction step $n-1$ to $n$. For $f\in F_{n-1}$, since $p_{(n)}f p_{(n)}=p_{(n)}(p_{(n-1)}fp_{(n-1)})p_{(n)}$ the claim follows from the induction hypothesis.

The remaining cases are all of the form $f=f'e_{n-2}e_{n-1}$ or $f = f' e_{n-1}$ with $f'\in F_{n-2}$.

If $f=f'e_{n-2}e_{n-1}$ with $f'\in F_{n-2}$ , we have
\begin{align*}
p_{(n)}f'e_{n-2}e_{n-1}p_{(n)}&=
q^{-1}p_{(n)}t_{n-1}f'e_{n-2}e_{n-1}p_{(n)} = q^{-1}p_{(n)}f't_{n-1}e_{n-2}e_{n-1}p_{(n)}\\
&=-q^{-1}p_{(n)}f'e_{n-2}t_{n-1}p_{(n)}=-p_{(n)}f'e_{n-2}p_{(n)},
\end{align*}
where we used Relation \ref{com1}. Since $f' e_{n-2} \in F_{n-1}$, we have completed the proof in this case.

If $f'e_{n-1}\in F_{n-2}e_{n-1}$, we have 
$$p_{(n)}f'e_{n-1}p_{(n)}=p_{(n)}f'(1_{n-2}\otimes p_{(2)}e_1p_{(2)})p_{(n)}=\frac{q-q^{-1}}{q+q^{-1}}p_{(n)}f'p_{(n)},$$
where we've used our calculation for $n=2$ above. As $f' \in H_{n-1}$,  we've also finished the proof in this case.
\end{proof}

By Equation~\eqref{eq:iso} we have that $G_n[0]$ is an endomorphism algebra of a pivotal category, and hence inherits a diagrammatic trace. By setting $d = \frac{2i}{q-q^{-1}}$ and normalising the diagrammatic trace by $\frac{1}{d^n}$ we obtain a Markov trace in the sense of \cite[Definition 3.1]{HansThesis}. It is immediate from the definition that a Markov trace on $G_n[0]$ restricts to a Markov trace on $H_n$. It is proven in \cite[Equation 3.2]{HansThesis} that a Markov trace on $H_n$ is fully determined by the value
\[\eta := \mathrm{tr}(p_{(2)}) = \mathrm{tr}\left(\frac{q - t_1}{q+q^{-1}}\right). \] 
The value $\eta$ is easily computed to be $\frac{1}{2}$ using the defining relations of $\mathcal{E}_q$ given in Definition~\ref{def:Eq}.

In \cite[Equation 3.6]{HansThesis} the traces of all the simple projections for all Markov traces were calculated via a hook-length formula. We will require only the following special case for the diagram $(m)$. 

\begin{prop} \label{prop:HansDimensions}
    Let $\mathrm{tr}_\eta$ be the Markov trace on $H_n(q)$ with parameter $\eta$, then
    \[\mathrm{tr}_{\eta}(p_{(m)}) = \prod_{j=1}^m \frac{(q+q^{-1})(q^{j-1}-q^{1-j})\eta -(q^{j-2}-q^{2-j})}{q^{j}-q^{-j}}\] 
    
    In particular, when $\eta=1/2$ we have,
    \[\mathrm{tr}_{\frac{1}{2}}(p_{(m)}) = \frac{1}{2^m}\prod_{j=1}^m \frac{ (q^{j-1} + q^{1-j})(q-q^{-1})}{q^j-q^{-j}}.\] 
    \end{prop}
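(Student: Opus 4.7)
My plan is to derive this formula by specializing the general hook-content formula for the Markov trace of a minimal idempotent $p_\lambda$ in the Hecke algebra, given in \cite[Equation 3.6]{HansThesis}. That formula expresses $\mathrm{tr}_\eta(p_\lambda)$ as a product over the boxes of $\lambda$ of rational functions in $q$ and $\eta$, in which each factor is a ratio whose denominator is a $q$-number attached to the hook length of the box and whose numerator is an affine function of $\eta$ with coefficients depending on the content. For the single-row partition $\lambda = (m)$, the boxes have contents $0, 1, \ldots, m-1$ and hook lengths $m, m-1, \ldots, 1$, and the $m$ factors assemble, in order of decreasing hook length, to the product in the statement: the $j$-th factor uses denominator $q^j - q^{-j}$ (the $q$-number of the hook equal to $j$) and a numerator whose $\eta$-coefficient $(q+q^{-1})(q^{j-1}-q^{1-j})$ and constant term $-(q^{j-2}-q^{2-j})$ are exactly what the hook-content formula assigns to the corresponding box.

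For the $\eta = 1/2$ case, what remains is an algebraic simplification of each numerator. Using the identity
\[(q+q^{-1})(q^{j-1} - q^{1-j}) = (q^j - q^{-j}) + (q^{j-2} - q^{2-j}),\]
setting $\eta = 1/2$ gives
\[(q+q^{-1})(q^{j-1} - q^{1-j})\cdot \tfrac{1}{2} - (q^{j-2} - q^{2-j}) = \tfrac{1}{2}\bigl[(q^j - q^{-j}) - (q^{j-2} - q^{2-j})\bigr] = \tfrac{1}{2}(q-q^{-1})(q^{j-1} + q^{1-j}),\]
and substituting this back into the general product and pulling out a factor of $2^{-m}$ yields the displayed $\eta = 1/2$ formula.

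If one wanted a self-contained proof that avoids citing the general hook-content formula, an induction on $m$ is natural. The Hecke branching rule gives a decomposition $p_{(m-1)} = p_{(m)} + p'$ inside $H_m$, with $p'$ a rank-one idempotent supported on the $(m-1,1)$-isotypic component; the corner $p_{(m-1)} H_m p_{(m-1)}$ is then commutative of dimension two, so $p_{(m-1)} t_{m-1} p_{(m-1)}$ must take the form $q\, p_{(m)} + \mu\, p'$ for some scalar $\mu$ (the coefficient $q$ is forced by $t_{m-1} p_{(m)} = q p_{(m)}$). Solving for $p_{(m)}$, taking Markov traces, and applying the Markov property $\mathrm{tr}(p_{(m-1)} t_{m-1}) = \eta\,\mathrm{tr}(p_{(m-1)})$ produces the one-step recursion $\mathrm{tr}(p_{(m)}) = \tfrac{\eta - \mu}{q-\mu}\, \mathrm{tr}(p_{(m-1)})$, which telescopes to the claimed product. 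The main obstacle in this self-contained route is identifying the scalar $\mu$ at each step; this can be done by explicit computation based on the content of the added box $(2,1)$ in the diagram $(m-1,1)$, or via the Jucys-Murphy element formalism for the Hecke algebra.
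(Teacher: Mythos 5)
Your proof takes the same route as the paper: the paper proves the first display simply by citing \cite[Equation 3.6]{HansThesis} and specializing to $\lambda = (m)$, without writing out the details, and treats the $\eta = 1/2$ case as an easy consequence. Your elaboration of the $\eta = 1/2$ simplification is correct — the identity $(q+q^{-1})(q^{j-1}-q^{1-j}) = (q^j-q^{-j}) + (q^{j-2}-q^{2-j})$ and the resulting factorization $\tfrac12\bigl[(q^j-q^{-j})-(q^{j-2}-q^{2-j})\bigr] = \tfrac12(q-q^{-1})(q^{j-1}+q^{1-j})$ both check out. One small omission: the paper's Remark immediately after the proposition flags that \cite{HansThesis} uses different conventions (his $q$ is this paper's $q^2$, his $g_i$ is $qt_i$), so the specialization of Equation 3.6 is not a literal substitution but requires this translation; your invocation of the hook-content formula glosses over that. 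Your sketched inductive alternative via the Markov property and the corner $p_{(m-1)}H_m p_{(m-1)}$ is a reasonable standalone approach, though as you note it is incomplete without pinning down the eigenvalue $\mu$ (which is a standard content computation).
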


\begin{remark}
    When comparing our formulas to \cite{HansThesis} note that there the conventions for the Hecke algebra are slightly different: his $q$ is our $q^2$, and his generator $g_i$ is our $q t_i$.
\end{remark}

\subsection{Strict Young diagrams and strict tableaux}\label{strict} We call a Young diagram $\la=(\la_1,\la_2,\la_3,...\la_{\ell(\la)})$ $strict$
if $\la_1>\la_2>\ ...\ >\la_{\ell(\la)}>0$. A \emph{shifted Young diagram}
is obtained from the strict Young diagram $\la$ by shifting its $i$-th row
by $(i-1)$ boxes to the right. We will denote it by  $[\la_1,\la_2,\la_3,...\la_{\ell(\la)}]$.
Given a strict Young diagram $\la$, we denote by $\S_\la$ the set of all standard tableaux $\La$
for its corresponding \emph{shifted} Young diagram.
We have
 a 1-1 correspondence between the elements $\La\in\S_\la$ and paths of the form
\begin{equation}\label{paths}
\emptyset\ \to\ \La(1)\ \to\ \La(2)\ \to\ ...\ \ \to\ \La(n)=\la,
\end{equation}
where $\La(i+1)$ is a shifted Young diagram obtained by adding a box to the shifted diagram $\La(i)$.
The correspondence is, of course, given by defining $\La(i)$ to be the diagram consisting of the boxes of $\La$
containing the numbers 1 through $i$. It is easy to see that the inverse of the shift map assigns to each tableau in $\S_\la$ a standard tableau of shape $\la$. So we can identify $\S_\la$ as a subset of the standard tableaux of the ordinary Young diagram corresponding to $\la$. 

As an example, the only tableau in $\S_{[2,1]}$ is
\ytableausetup{boxsize=normal}
\[\begin{ytableau}
1&  2  \\
3
\end{ytableau}\]

since the shifted tableaux is increasing in the second column ($2 < 3$), while 
\[\begin{ytableau}
1& 3  \\
2
\end{ytableau}\]
has shifted tableaux which is not standard since $3 > 2$.

\ytableausetup{boxsize = .3em}

Let $[a,b]_{\Z}=\{ m\in \Z, a\leq m\leq b\}$ and let $R_\la=S_{[1,\la_1]_{\Z}}\times S_{[\la_1+1,\la_1+\la_2]_{\Z}}\times ... \times S_{[n-\la_{\ell(\la)}+1,n]_{\Z}}$
be the row stabilizer of the standard tableau $\La^r$ obtained by filling $\la$ row by row.  We have an obvious action of the symmetric group $S_n$ on a tableau $\La\in\S_\la$ by permuting the numbers in its boxes. For each $1\leq k \leq n$ we define $s_k(\La)$ as the (possibly non-standard) tableau obtained by interchanging the numbers $k$ and $k+1$ in $\La$. A partial order was defined on $\S_\la$ in \cite[Section 6]{JN}.

\begin{defn}\label{def:parOrd}
    Let $\lambda$ be a strict Young diagram, and $\La_0$ the column tableau on $\lambda$ (i.e. the numbers are filled in column by column). We say $w_1(\La_0)<w_2(\La_0)$ if $w_1\prec w_2$ in weak Bruhat order. 
\end{defn}

\subsection{The isomeric Lie super algebra $\q(N)$} The isomeric (or queer) Lie super algebra $\q(N)$ consists of all $2N\times 2N$
matrices of the form
$$\left[\begin{matrix} A&B\cr B&A\end{matrix}\right],$$
acting on the super vector space $V=\C^{N|N}$. 
Here the even part consists of matrices with $B=0$ and the odd part consists of matrices with $A=0$.

For representations of super algebras, $\Hom(X,Y)$ is naturally $\mathbb{Z}/2$-graded. There is a tensor super category (where the exchange relation only holds up to sign) where you allow all morphisms, or a tensor category where you only allow the even morphisms $\Hom(X,Y)_0$. We will denote the latter by $\Rep(\mathfrak{g})$. Note that by isomorphism of representations we will always mean \emph{even} isomorphisms, i.e. isomorphisms in $\Rep(\mathfrak{g})$. 

The operator $P: V \rightarrow V$, defined by $P(v|w)=i(w|-v)$, is an odd homomorphism. The element $P_i\in \End(V^{\otimes n})$ acts via $P$ on the $i$-th factor of $V^{\otimes n}$, and as the identity on all the other factors.
The following result is usually referred to as Schur-Sergeev duality for $\q(N)$.

\begin{thm}\label{Sergeev theorem} \cite{Ser}\begin{enumerate}[(a)]
    \item  The algebra of all endomorphisms $\End_{\q(N)}(V^{\otimes n})$ is generated by the symmetric group $S_n$, acting via permuting the factors in $V^{\otimes n}$ and the element $P_1$.

    \item  The algebra of even endomorphisms $\End_{\q(N)}(V^{\otimes n})_0$ is generated by the symmetric group $S_n$ and the element $P_1P_2$. 

    \item  The algebras $\End_{\q(N)}(V^{\otimes n})$ and $\End_{\q(N)}(V^{\otimes n})_0$ are semisimple \cite[Proposition 2.2]{JN}.
\end{enumerate}

\end{thm}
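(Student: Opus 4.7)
The approach is the super version of the classical Schur--Weyl double-centralizer argument.

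First I would verify the containments. The $S_n$-action by permuting tensor factors commutes with any diagonal Hopf-(super)algebra coaction, hence with the diagonal $\q(N)$-action. For $P_1$, one computes directly from the block-matrix form of $\q(N)$ that $PA = (-1)^{|A|}AP$ on $V$ for $A\in \q(N)_{|A|}$, so $P_1$ is an odd super-endomorphism of $V^{\otimes n}$, and each $P_i = \sigma_{1,i} P_1 \sigma_{1,i}$ is obtained by $S_n$-conjugation. In particular $P_1 P_2$ is even. This gives well-defined algebra homomorphisms
\[
\Phi\colon \mathrm{Cliff}(n)\rtimes \C[S_n]\ \longrightarrow\ \End_{\q(N)}(V^{\otimes n}),\qquad \Phi_0\colon (\mathrm{Cliff}(n)\rtimes \C[S_n])_0\ \longrightarrow\ \End_{\q(N)}(V^{\otimes n})_0
\]
from the Sergeev algebra and its even part.

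Next I would establish surjectivity via a super double-centralizer theorem (cf.\ Cheng--Wang). The required input is complete reducibility of $V^{\otimes n}$ as a $\q(N)$-module, which is supplied either by unitarity with respect to the compact real form or by Sergeev's explicit bimodule decomposition
\[
V^{\otimes n}\ \cong\ \bigoplus_{\lambda}\ L(\lambda)\otimes S^\lambda,
\]
summed over strict partitions $\lambda$ of $n$ with $\ell(\lambda)\leq N$, where $L(\lambda)$ is the irreducible polynomial $\q(N)$-module and $S^\lambda$ is the corresponding projective Sergeev-algebra module. Comparing dimensions of the two bimodule decompositions forces $\Phi$ to be surjective, proving (a). Restricting to the even subalgebra --- which by the first step is generated by $S_n$ together with the even Clifford products $P_i P_{i+1}$, hence by $S_n$ and $P_1 P_2$ after $S_n$-conjugation --- yields (b).

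Finally, (c) follows because the Sergeev algebra (and its even subalgebra) is semisimple --- a classical result established via the explicit construction of the $S^\lambda$ --- and the image of a semisimple associative algebra under any unital homomorphism into a finite-dimensional algebra is again semisimple. The main obstacle in this plan is the surjectivity step: one must avoid the circular use of Sergeev duality when establishing complete reducibility of $V^{\otimes n}$ over $\q(N)$. The cleanest workaround is to invoke unitarizability with respect to the compact real form of $\q(N)$, which ensures semisimplicity of the $\q(N)$-action on $V^{\otimes n}$ independently of the centralizer computation.
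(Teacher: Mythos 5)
The paper does not prove this statement at all: it is cited to \cite{Ser} for parts (a),(b) and to \cite[Proposition~2.2]{JN} for part (c). So there is nothing in the paper's own argument to compare your sketch against.

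That said, a few substantive comments on your plan. The containments step is fine, and your observation that $P$ anticommutes with odd elements of $\q(N)$ (so that $P_1$ is an odd intertwiner and $P_1P_2$ an even one) is exactly the right starting point. The delicate step is surjectivity, and here the usual non-circular route is the opposite of what you propose: one does not need complete reducibility of $V^{\otimes n}$ as a $\q(N)$-module as an \emph{input}. Instead, one starts from the (classical, elementary) semisimplicity of the Sergeev algebra $\mathrm{Cliff}(n)\rtimes\C[S_n]$, applies the double-commutant theorem to deduce that its centralizer $B := \End_{\mathrm{Cliff}(n)\rtimes\C[S_n]}(V^{\otimes n})$ is semisimple with $\End_B(V^{\otimes n}) = \mathrm{Cliff}(n)\rtimes\C[S_n]$, and then proves directly that the image of $U(\q(N))$ in $\End(V^{\otimes n})$ is all of $B$ (e.g.\ by a weight/density argument as in Sergeev's original paper). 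The semisimplicity of $V^{\otimes n}$ as a $\q(N)$-module is then a \emph{consequence}. Your proposed workaround via ``unitarizability with respect to the compact real form'' is not a standard device for Lie superalgebras such as $\q(N)$ --- the odd directions obstruct the usual compact-form argument --- so as written this step would need real justification and is the weak point of your plan. Finally, your derivation of (c) from (a) is correct as a matter of logic (a quotient of a semisimple algebra is semisimple), but note that you must have (a) in hand first, so (c) is not an independent observation.
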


In particular, by part (c), the category of polynomial representations of $\q(N)$ (i.e. the ones which appear in $V^{\otimes n}$) is a semisimple category. By contrast, the category of all finite-dimensional representations is not semisimple.

\subsection{Characters for $\q(N)$}\label{qNcharactersection}
Recall that in the setting of super algebras there are two different notions of characters: characters (which don't distinguished between odd and even weight vectors) and super characters (which introduce a minus sign for odd weight vectors) \cite[Sec. 7]{MR1773773}. In the isomeric case the super characters all vanish, so we only consider characters. Kac \cite{KacAlg} showed that irreps are determined up to parity shift by their highest weights. As a consequence, any semisimple representation of $\q(N)$ (in particular, a polynomial representation) is determined up to parity shift by its character. 

The irreducible polynomial representations of the isomeric Lie super algebra $\q(N)$ and their characters were determined by Sergeev in \cite{Ser}. In particular, for each strict Young diagram $\la$ with $\ell(\la) \leq N$ 
Sergeev defines a $\q(N)$ representations $M_\la$, and shows $M_\la$ is irreducible if $\ell(\la)$ is odd, and the direct sum of two
irreducible modules $M_{\la, \pm}$ for $\ell(\la)$  even. Note that $M_{\la, \pm}$ differ by parity shift, while $M_{\la}$ is isomorphic to its own parity shift. Moreover, he proves that any polynomial representation of $\q(N)$ is isomorphic to one of these representations.

In order to describe the characters of Sergeev's polynomial representations we will introduce some auxiliary functions. Let $x_i$, $1\leq i\leq N$ and $t$ be variables. 

\begin{defn}
For the strict Young diagram
$\la$, we define the  polynomial
\begin{equation}\label{Qdef}
Q_\la(x,t)\ =\ \frac{(1-t)^N}{\Phi_{N-r}(t)} \sum_{w\in S_N}  w \left(x^\la\prod_{i<j}\frac{x_i-tx_j}{x_i-x_j}\right).
\end{equation}
Here $r=\ell(\la)$ is the number of rows of $\la$, $x^\la=\prod_{j=1}^Nx_j^{\la_j}$, $w\in S_N$ acts via permuting the variables $x_i$ and 
$$\Phi_m(t)=\prod_{j=1}^m(1-t^j).$$    
We further define the specialization $Q_\la(x)=Q_\la(x,-1)$. The polynomials $Q_\la(x,t)$ are Hall-Littlewood polynomials, up to a power of $(1-t)$, see \cite{Ser} after Proposition 1.
\end{defn}
Let $\operatorname{ch}(M_\la)$ be the trace of a diagonal matrix in $\q(N)$ with eigenvalues $x_i$,  $1\leq i\leq N$ in the representation $M_\la$.  Then it was shown in \cite{Ser} (see last formula on p 426) that:

\begin{prop}\label{qNcharacter}
If $\ell(\la)$ is odd, then
\begin{equation}
\operatorname{ch}(M_\la)=2^{-\lfloor \ell(\la)/2\rfloor}\ Q_\la(x,-1) =2^{-\lfloor \ell(\la)/2\rfloor}\  Q_\la(x),
\end{equation}
 where  $\lfloor y \rfloor$ is the integer part of $y$. 
 
If $\ell(\la)$ is even, then 
\begin{equation}
\operatorname{ch}(M_{\la,\pm})=2^{-\lfloor \ell(\la)/2\rfloor}\ Q_\la(x,-1) =2^{-\lfloor \ell(\la)/2\rfloor}\  Q_\la(x),
\end{equation}
\end{prop}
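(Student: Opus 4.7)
The statement is a classical theorem of Sergeev \cite{Ser}, so the proof here is simply a citation. To sketch how I would reproduce the derivation: the plan is to realize each $M_\lambda$ by parabolic induction from a Levi subalgebra of $\q(N)$ and then compute its character via the super analogue of the Weyl character formula, finally matching the resulting expression against the definition of $Q_\lambda(x,-1)$ given in Definition \ref{Qdef}.

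First, for a strict partition $\lambda$ with $r = \ell(\lambda)$ parts, I would fix a parabolic $\mathfrak{p}_\lambda \subset \q(N)$ whose Levi factor is $\q(\lambda_1) \times \cdots \times \q(\lambda_r) \times \q(N-|\lambda|)$. The key subtlety — which distinguishes the isomeric case from the $\mathfrak{gl}_N$ case — is that the Cartan of $\q(N)$ has an odd part of the same dimension as its even part, so the ``highest weight line'' is not literally a line: it is an irreducible module for a Clifford algebra of rank $r$ (coming from the super-bracket on $\mathfrak{h}_1$ restricted to the support of $\lambda$). This Clifford module is of type Q when $r$ is odd, forcing $M_\lambda$ to be isomorphic to its own parity shift; it is of type M when $r$ is even, producing the splitting $M_\lambda = M_{\lambda,+} \oplus M_{\lambda,-}$. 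Inducing from $\mathfrak{p}_\lambda$ and passing to the top irreducible quotient then yields $M_\lambda$.

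Next, I would compute the character by the super Weyl formula for $\q(N)$, in which odd roots contribute factors $(x_i+x_j)/(x_i-x_j)$ in place of the usual $1/(x_i-x_j)$. This produces an expression of the form
\[
\operatorname{ch}(M_\lambda) \;=\; C_{N,r} \sum_{w \in S_N} w\!\left(x^\lambda \prod_{i<j}\frac{x_i+x_j}{x_i-x_j}\right),
\]
with $C_{N,r}$ a rational normalization absorbing the stabilizer of $\lambda$ inside $S_N$ and the dimension of the Clifford spin module. Comparing with Definition \ref{Qdef}, one recognizes $\prod_{i<j}(x_i+x_j)/(x_i-x_j)$ as the $t=-1$ specialization of $\prod_{i<j}(x_i-tx_j)/(x_i-x_j)$, and the prefactor $(1-t)^N/\Phi_{N-r}(t)|_{t=-1}$ absorbs the contribution of the stabilizer $S_{N-r}$ together with the denominator $(1-t)^N$.

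The main obstacle will be bookkeeping powers of $2$. Three sources contribute: (i) the spin module at the highest weight has dimension $2^{\lceil r/2 \rceil}$; (ii) specializing $(1-t)^N$ at $t=-1$ contributes $2^N$; and (iii) the denominator $\Phi_{N-r}(-1)$ contributes a further power of $2$ depending on the parity of $N-r$. Balancing all three against the overall structure of $Q_\lambda(x,-1)$ — and, in the even-$r$ case, dividing by an additional factor of $2$ to pass from $\operatorname{ch}(M_\lambda)$ to $\operatorname{ch}(M_{\lambda,\pm})$ — should produce the uniform coefficient $2^{-\lfloor \ell(\lambda)/2\rfloor}$ in both parity cases. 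The rest of the derivation is standard super character theory, which is why it is reasonable to cite \cite{Ser} rather than reproduce the computation in full.
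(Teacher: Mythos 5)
The paper gives no proof of this Proposition beyond a citation to \cite{Ser}: the sentence immediately preceding the statement refers the reader to ``the last formula on p 426'' of Sergeev's paper, so your citation-based proposal takes exactly the same approach. Your supplementary derivation sketch is a reasonable plausibility check, though it is worth noting that Sergeev's actual argument runs through Schur--Weyl duality with the Sergeev algebra and symmetric-function identities rather than through parabolic induction plus a ``super Weyl character formula'' for $\q(N)$ (no clean such formula exists for the isomeric algebra); since both you and the paper ultimately defer to the cited source, this imprecision does not affect the correctness of your answer.
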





\subsection{Formal characters for $\q(\infty)$}\label{sub:formal} We review the usual construction of defining Hall-Littlewood functions if we let the number of variables go to infinity, see e.g. \cite{Mac}.
\begin{prop}\label{prop:Qrecursion}
Let $Q_\la$ be as in the last subsection, with $N>|\la|$.
We then have the following recursion relations 
\begin{align}\label{recursion1}
Q_{(\la_1,\la_2)}&=Q_{\la_1}Q_{\la_2}-2Q_{\la_1+1}Q_{\la_2-1}+2Q_{\la_1+2}Q_{\la_2-2}\ ...  +(-1)^{\ell(\la)}Q_{\la_1+\la_2},\cr
&=Q_{\la_1}Q_{\la_2}-Q_{\la_1+1}Q_{\la_2-1}-Q_{(\la_1+1,\la_2-1)}.
\end{align}
\begin{equation}\label{recursion2}
Q_{(\la_1,\la_2,\ ...,\la_r)}\ =\ Q_{\la_1}Q_{(\la_2,\ ...\ \la_r)}-Q_{\la_2}Q_{(\la_1,\la_3\ ...\ \la_r)}+\ ...\ +Q_{\la_r}
Q_{(\la_1,\la_2,\ ...,\la_r)}
\end{equation}
 if $r$ is odd. For $r$ even, we have
\begin{equation}\label{recursion3}
Q_{(\la_1,\la_2,\ ...,\la_r)}\ =
\end{equation}
$$=\ \ Q_{(\la_1,\lambda_2)}Q_{(\la_3,\ ...\ \la_r)}-Q_{(\la_1,\la_3)}Q_{(\la_2,\la_4\ ...\ \la_r)}+\ ...\ +Q_{(\la_1,\la_r)}
Q_{(\la_2,\la_3,\ ...,\la_{r-1})}.
$$    

\end{prop}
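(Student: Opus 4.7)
\smallskip

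The plan is to recognize (\ref{recursion1})–(\ref{recursion3}) as the classical recursions for Schur's $Q$-functions. After specializing $t=-1$ in (\ref{Qdef}), the antisymmetrizer kernel becomes
\[
\prod_{i<j}\frac{x_i+x_j}{x_i-x_j},
\]
which is Schur's prototypical ``Pfaffian kernel'', so the proof will essentially be bookkeeping for Pfaffian expansions.

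\emph{Step 1: the two-row recursion (\ref{recursion1}).} First I would prove the one-row generating function
\[
\sum_{m\geq 0} Q_{(m)}(x)\,z^m \;=\; \prod_{i=1}^N \frac{1+x_i z}{1-x_i z}
\]
by direct manipulation of (\ref{Qdef}) with $r=1$ (the sum over $S_N$ collapses since only one variable is distinguished). From this I would compute the product $Q(z)Q(w)$ and symmetrize it in $(z,w)$ to extract the two-row Schur $Q$-generating identity
\[
\sum_{a>b\geq 0} Q_{(a,b)}(x)\,(z^a w^b - z^b w^a) \;=\; \frac{z-w}{z+w}\bigl(Q(z)Q(w)-1\bigr).
\]
Expanding both sides in powers of $z$ and $w$ and comparing coefficients of $z^{\lambda_1}w^{\lambda_2}$ yields the first line of (\ref{recursion1}), and the telescoping second line follows from it by collecting terms.

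\emph{Step 2: the two Pfaffian-type recursions (\ref{recursion2}), (\ref{recursion3}).} The key intermediate statement is the Pfaffian presentation
\[
Q_\lambda \;=\; \mathrm{Pf}\!\left(Q_{(\lambda_i,\lambda_j)}\right)_{1\leq i<j\leq r}\qquad (r\text{ even}),
\]
with the convention $\lambda_0=0$, $Q_{(\lambda_i,0)}=Q_{\lambda_i}$, $Q_{(0,0)}=1$ handling the odd case by padding to size $r+1$. Granting this presentation, (\ref{recursion2}) is precisely the Pfaffian expansion along the padded $0$-th row (so the first factor in each term is a one-row $Q$), and (\ref{recursion3}) is the Pfaffian expansion along the first row in the even case. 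To prove the Pfaffian presentation itself I would combine (\ref{Qdef}) at $t=-1$ with Schur's identity
\[
\prod_{i<j}\frac{x_i+x_j}{x_i-x_j} \;=\; \mathrm{Pf}\!\left(\frac{x_i+x_j}{x_i-x_j}\right)_{i,j}
\]
(with the appropriate padding of a row of $1$'s when $N$ is odd), multiply through by $x^\lambda$, and antisymmetrize. The Laplace expansion of the Pfaffian commutes with the antisymmetrization, and each $2\times 2$ sub-Pfaffian entry symmetrizes into exactly $Q_{(\lambda_i,\lambda_j)}$ after matching the $\Phi_{N-r}(-1)$ normalization.

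\emph{Main obstacle.} The arithmetic I expect to be the main sticking point is normalization: reconciling the prefactor $(1-t)^N/\Phi_{N-r}(t)$ at $t=-1$ with the factor produced by separating out the two-variable blocks of the Pfaffian. Because $\Phi_m(-1)$ vanishes for even $m$, some care is required about the parity of $N-r$ and about interpreting (\ref{Qdef}) as a limit (or, alternatively, choosing $N$ strictly of the correct parity relative to $r$ and then invoking stability in $N$ to transfer the identity to all $N$). Once the normalization is pinned down, the Pfaffian presentation and hence (\ref{recursion2})–(\ref{recursion3}) follow from the standard Laplace/first-row expansion of a Pfaffian.
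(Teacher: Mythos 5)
Your proposal is correct, and it reconstructs what the paper defers entirely to its reference: the paper's proof of Proposition~\ref{prop:Qrecursion} is literally ``See~\cite{Ser}, below Proposition~3,'' and what sits there (and, behind it, Macdonald~III.8) is precisely the circle of ideas you describe — the one‑row generating function $\sum_m Q_{(m)}z^m = \prod_i (1+x_iz)/(1-x_iz)$, the two‑row relation obtained by antisymmetrizing $Q(z)Q(w)$ in $(z,w)$, and the Pfaffian presentation $Q_\la = \mathrm{Pf}\bigl(Q_{(\la_i,\la_j)}\bigr)$ whose first‑row (resp.\ padded‑row) Laplace expansion gives~(\ref{recursion3}) (resp.~(\ref{recursion2})). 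So this is not a different route; it is the route, spelled out.

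Two small cautions. First, the claim that for $r=1$ ``the sum over $S_N$ collapses'' is a slight overstatement — you still sum over $S_N/S_{N-1}$ cosets and must simplify the resulting alternating sum; this is routine but not a collapse. Second, you are right that the normalization is the real technical pressure point: as written, $\Phi_{N-r}(-1)=0$ whenever $N-r\geq 2$, so~(\ref{Qdef}) at $t=-1$ must be read as a limit (the antisymmetrized numerator vanishes to compensating order), or one should work with $Q_\la(x,t)$ for generic $t$, establish the Pfaffian/Laplace identities as polynomial identities in $t$, and only then set $t=-1$. Either handling is standard, but you should pick one explicitly rather than appeal to parity of $N-r$ and ``stability in $N$,'' since the hypothesis is only $N>|\la|$ with no parity restriction. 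With that pinned down your sketch is a complete and faithful rendering of the cited argument.
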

\begin{proof}
    See \cite{Ser}, below Proposition 3.
\end{proof}

Observe that the equations in Proposition \ref{prop:Qrecursion} provide an algorithm how to express the function $Q_\la$ as a polynomial in the functions $Q_{(m)}$ by induction on
$r=\ell(\la)$ and $\la_r$. In particular, we can make the following definition.


\begin{defn}\label{rem:Hall}
Let $Q_\la$ be the unique polynomial in countably many variables $a_1, a_2, \ldots$ with the property that 
\[Q_\la(Q_{(1)}, \ldots Q_{(n)}, 0, 0, \ldots) = Q_\lambda(x_1, \ldots x_n).\]

For the rest of the paper the symbol $Q_\la$ will mean this polynomial in infinitely many variables. 


\end{defn}

Let us write $K_0(\operatorname{Rep}^\textrm{poly}(q_\infty))$ for the formal fusion ring of polynomial representations of $q_\infty$. That is the subcategory of representations generated by the vector representation. The character map $\operatorname{ch}$ gives a homomorphism from $K_0(\operatorname{Rep}^\textrm{poly}(q_\infty))$ into the ring of polynomials in infinitely many variables. This map is not injective, as for $\ell(\lambda)$ even we have $\operatorname{ch}([M_{\lambda,+}]) = \operatorname{ch}([M_{\lambda,-}])$. However defining 
\[\hat{M}_\lambda := \begin{cases}
    M_\lambda \quad & \ell(\lambda) \text{ odd}\\
    M_{\lambda,+} \oplus  M_{\lambda,-}\quad & \ell(\lambda) \text{ even}
\end{cases}\] gives that $\operatorname{ch}$ is injective on the subring generated by the elements $[\hat{M}_\lambda]$. With this we can prove the following results on dimension functions on $K_0(\operatorname{Rep}^\textrm{poly}(q_\infty))$.
\begin{prop}\label{prop:dimFun}
    Let $d : K_0(\operatorname{Rep}^\textrm{poly}(q_\infty))\to \mathbb{C}$ be a homomorphism such that $d(  [M_{\lambda,+}])  = d( [ M_{\lambda,-}])$. Then $d$ is determined by its values on the elements $[M_{(m)}]$, and furthermore we have that
    \[ d([M_{\lambda}]) = \frac{Q_\lambda}{  
 2^{ \frac{\ell(\lambda) -1}{2}}  }  \qquad  \text{if }\ell(\lambda) \text{ odd} \qquad\text{and}\qquad d([M_{\lambda,\pm}]) = \frac{Q_\lambda}{  
 2^{ \frac{\ell(\lambda)}{2}}  }  \qquad  \text{if }\ell(\lambda) \text{ even}      \]
 where $Q_\lambda$ are specialised as in Definition~\ref{rem:Hall} to $a_m = d([M_{(m)}])$.
\end{prop}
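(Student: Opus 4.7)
The proof strategy is to reduce the computation to one purely about characters, exploiting the injectivity of $\operatorname{ch}$ on the subring $R \subseteq K_0(\operatorname{Rep}^{\textrm{poly}}(q_\infty))$ generated by the classes $[\hat M_\lambda]$, and then using the recursion formulas of Proposition~\ref{prop:Qrecursion} to reduce further to the generators $[M_{(m)}]$.

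First, I would use the hypothesis $d([M_{\lambda,+}])=d([M_{\lambda,-}])$ to reduce the problem to computing $d([\hat M_\lambda])$ for every strict $\lambda$: for $\ell(\lambda)$ odd this simply equals $d([M_\lambda])$, while for $\ell(\lambda)$ even it equals $2\,d([M_{\lambda,\pm}])$. Setting $c(\ell) = (\ell-1)/2$ for $\ell$ odd and $c(\ell)=\ell/2-1$ for $\ell$ even, Proposition~\ref{qNcharacter} and the definition of $\hat M_\lambda$ give $\operatorname{ch}(\hat M_\lambda) = Q_\lambda/2^{c(\ell(\lambda))}$. The target formulas are then equivalent to the single identity
\[ d([\hat M_\lambda]) \;=\; Q_\lambda(a_1, a_2, \ldots)\,\big/\,2^{c(\ell(\lambda))}, \]
where $a_m := d([M_{(m)}])$ and $Q_\lambda(a_1,a_2,\ldots)$ is the abstract polynomial of Definition~\ref{rem:Hall}.

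Next, iterating the recursions of Proposition~\ref{prop:Qrecursion} (induction on $\ell(\lambda)$) expresses the symmetric function $Q_\lambda$ as an integer polynomial in the $Q_{(m)}$'s; by Definition~\ref{rem:Hall} this is precisely the abstract polynomial $Q_\lambda(a_1,a_2,\ldots)$ evaluated at $a_m = Q_{(m)}$. Multiplying by $2^{c(\ell(\lambda))}$ and using Proposition~\ref{qNcharacter} yields the character identity
\[ \operatorname{ch}\!\bigl(2^{c(\ell(\lambda))}\,[\hat M_\lambda]\bigr) \;=\; Q_\lambda\!\bigl(\operatorname{ch}(M_{(1)}), \operatorname{ch}(M_{(2)}), \ldots\bigr) \;=\; \operatorname{ch}\!\bigl(Q_\lambda([M_{(1)}], [M_{(2)}], \ldots)\bigr). \]
Injectivity of $\operatorname{ch}$ on the subring $R$ (stated just above the proposition) lifts this to an honest identity in $K_0$:
\[ 2^{c(\ell(\lambda))}\,[\hat M_\lambda] \;=\; Q_\lambda\!\bigl([M_{(1)}], [M_{(2)}], \ldots\bigr). \]
Applying the ring homomorphism $d$ and dividing by $2^{c(\ell(\lambda))}$ gives the required formula for $d([\hat M_\lambda])$, and unpacking the two parities recovers both claimed formulas; since these depend only on $a_m = d([M_{(m)}])$, the same argument simultaneously shows that $d$ is determined by its values on the $[M_{(m)}]$.

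The main point to verify is that the integer polynomial relation really lifts from characters to $K_0$ itself rather than only to $K_0\otimes\mathbb{Q}$. This is the reason for multiplying by $2^{c(\ell(\lambda))}$ before invoking injectivity of $\operatorname{ch}$: once denominators have been cleared, injectivity on $R$ supplies the lift for free, and no further combinatorial work is needed.
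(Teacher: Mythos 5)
Your proof is correct and takes essentially the same approach as the paper's: both rely on the recursions of Proposition~\ref{prop:Qrecursion}, the character formula of Proposition~\ref{qNcharacter}, injectivity of $\operatorname{ch}$ on the subring generated by the $[\hat M_\lambda]$, and the parity hypothesis to pass from $[\hat M_\lambda]$ to $[M_{\lambda,\pm}]$. The only difference is organizational: the paper runs the induction inside the proof (applying $d$ at each step), whereas you lift the universal identity $2^{c(\ell(\lambda))}[\hat M_\lambda] = Q_\lambda([M_{(1)}],[M_{(2)}],\ldots)$ in $K_0$ once and apply $d$ at the end, which makes the use of injectivity more explicit but is otherwise the same argument.
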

\begin{proof}
    We prove this by induction on $\ell(\lambda)$. The case of $\ell(\lambda)=1$ holds by the choice of specialisation. For the case of $\ell(\lambda) = 2$ we further induct on $\lambda_2$. We have from Equation 6 that
    \[  [\hat{M}_{(\lambda_1, \lambda_2)}]  = [ \hat{M}_{(\lambda_1)}][ \hat{M}_{(\lambda_2)}] - [ \hat{M}_{(\lambda_1+1)}][ \hat{M}_{(\lambda_2-1)}] - [ \hat{M}_{(\lambda_1+1,\lambda_2-1)}].\]
    By the inductive assumption we have
    \[ d([\hat{M}_{(\lambda_1, \lambda_2)}]) = Q_{(\lambda_1)} Q_{(\lambda_2)}  - Q_{(\lambda_1+1)}Q_{(\lambda_2-1)}-Q_{(\lambda_1+1,\lambda_2-1)} = Q_{(\lambda_1, \lambda_2)}.   \]
    As $d([M_{(\lambda_1, \lambda_2), +}])=d([M_{(\lambda_1, \lambda_2), -}])$ it follows that $d([M_{(\lambda_1, \lambda_2), \pm}]) = \frac{Q_{(\lambda_1, \lambda_2)}}{2}$ as desired.

    When $\ell(\lambda)$ is even we have from Equation 8 that
    \[  2[\hat{M}_{(\lambda_1,\cdots, \lambda_r)}]  =  [\hat{M}_{(\lambda_1, \lambda_2)}][\hat{M}_{(\lambda_3, \cdots,\lambda_r)}] -[\hat{M}_{(\lambda_1, \lambda_3)}][\hat{M}_{(\lambda_2, \lambda_4,\cdots,\lambda_r)}] +\cdots +  [\hat{M}_{(\lambda_1, \lambda_r)}][\hat{M}_{(\lambda_1,\cdots,\lambda_{r-1})}].\]
    The same logic as in the $\ell(\lambda) = 2$ case gives that $d([\hat{M}_{(\lambda_1,\cdots, \lambda_r)}]) =\frac{Q_{(\lambda_1, \cdots, \lambda_r)}}{2^{\frac{r-2}{2}}}$, and so $d([M_{(\lambda_1,\cdots, \lambda_r),\pm}]) =\frac{Q_{(\lambda_1, \cdots, \lambda_r)}}{2^{\frac{r}{2}}}$.

    The easier case that $\ell(\lambda)$ is odd is left to the reader. 
\end{proof}

We now make a concrete choice of specialisation of the function $Q_\la$ as explained in Definition~\ref{rem:Hall}. While this choice of specialisation may seem unmotivated as of now, it is exactly the specialisation that will allow us to give formulae for the quantum dimensions of the simple objects in $\operatorname{Ab}(\mathcal{E}_q)$ and $\operatorname{Ab}(\mathcal{SE}_N)$.

We define for each positive integer $m$ the rational function
\begin{equation}\label{Qmvalues}
a_m\ =\ \left(\frac{i}{q-q^{-1}}\right)^m\ \prod_{j=1}^m\frac{ q^{j-1} + q^{1-j}}{[j]}\ =\ \prod_{j=1}^m i\frac{ q^{j-1} + q^{1-j}}{q^j-q^{-j}}.
\end{equation}
This is the (unnormalized) diagram trace $d^m \mathrm{tr}_{\frac{1}{2}}(p_{(m)})$ from Proposition~\ref{prop:HansDimensions}. The rationale for this choice of specialisation will become clear in Section~\ref{sec:Cauchy}.

\begin{thm}\label{qdimensionsGn}
Let $\la$ be a Young diagram with $r$ rows and let $q_\la=Q_\la(\mathbf{a})$ be the rational function as defined in Remark \ref{rem:Hall} specialized to the values $a_m$ as in \ref{Qmvalues}.
Then we have
$$q_\la\ =\ \prod _{j=1}^r q_{(\la_j)}\ \prod_{i<j} \frac{[\la_i-\la_j]}{[\la_i+\la_j]}.$$
\end{thm}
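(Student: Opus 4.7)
The plan is to induct on $r = \ell(\la)$, using the three recursions of Proposition~\ref{prop:Qrecursion}. The base case $r=1$ is immediate: by definition $q_{(m)} = a_m$ and the double product is empty.

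For $r=2$, I would apply the first recursion in \eqref{recursion1} after specialising to $a_m$. The explicit form of $a_m$ in \eqref{Qmvalues} yields the clean ratio
\[
\frac{a_{m+1}}{a_m} \;=\; i\,\frac{q^{m}+q^{-m}}{q^{m+1}-q^{-m-1}},
\]
so each summand $a_{\la_1+k}a_{\la_2-k}$ of the recursion equals $a_{\la_1}a_{\la_2}$ times an explicit product of $q$-integer ratios. The alternating sum then collapses by direct $q$-integer manipulation to
\[
q_{(\la_1,\la_2)} \;=\; a_{\la_1}a_{\la_2}\,\frac{[\la_1-\la_2]}{[\la_1+\la_2]},
\]
establishing the $r=2$ case. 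This telescoping is the essential computation.

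For the inductive step I would use \eqref{recursion2} when $r$ is odd (reducing to a $1$-row plus an $(r-1)$-row block) and \eqref{recursion3} when $r$ is even (reducing to a $2$-row plus an $(r-2)$-row block). In each summand, the inductive hypothesis (together with the $r=2$ case in the second recursion) supplies the claimed product, so the identity reduces to a combinatorial identity among the factors $[\la_i-\la_j]/[\la_i+\la_j]$. After the substitution $x_i = q^{\la_i}$, this identity is exactly Schur's classical Pfaffian identity
\[
\mathrm{Pf}\!\left(\frac{x_i-x_j}{x_i+x_j}\right)_{\!i,j} \;=\; \prod_{i<j}\frac{x_i-x_j}{x_i+x_j},
\]
whose Laplace expansion along the first row matches the signs of \eqref{recursion2}, and whose pair-block expansion matches \eqref{recursion3}.

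I expect the main obstacle to be the $r=2$ telescoping: reorganising the alternating sum of products of $q$-integers into the compact ratio $[\la_1-\la_2]/[\la_1+\la_2]$ requires some care. Once that step is done, the inductive step is essentially book-keeping plus the Pfaffian identity. A subsidiary issue is to verify that the signs appearing in the $Q$-recursions of Proposition~\ref{prop:Qrecursion} line up precisely with those of the Pfaffian expansion; this should be a direct check.
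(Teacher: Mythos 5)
Your overall strategy — induct on $r$ via the three recursions, handle $r=2$ directly, then reduce the higher-$r$ step to an identity among the rational factors $[\la_i-\la_j]/[\la_i+\la_j]$ — matches the paper's. But there are two substantive discrepancies worth flagging.

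First, the $r=2$ case: you propose to take the full alternating-sum form of recursion~\eqref{recursion1} and telescope $\sum_k (\pm 2)\,a_{\la_1+k}a_{\la_2-k}$ directly, which you correctly identify as the hard step. The paper instead uses the \emph{three-term} form $Q_{(\la_1,\la_2)} = Q_{\la_1}Q_{\la_2} - Q_{\la_1+1}Q_{\la_2-1} - Q_{(\la_1+1,\la_2-1)}$ and inducts on $\la_2$; this reduces the computation to a single $q$-integer identity for each inductive step and is considerably lighter. Your route can be made to work, but as written it defers the entire $r=2$ burden to an unspecified ``$q$-integer manipulation.''

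Second, and more importantly, the Pfaffian identity you invoke is not the one that arises after specialisation. With $x_i=q^{\la_i}$ the ratio $[\la_i-\la_j]/[\la_i+\la_j]$ does \emph{not} equal $(x_i-x_j)/(x_i+x_j)$; rather, setting $y_i=q^{2\la_i}$ one gets $\frac{[\la_i-\la_j]}{[\la_i+\la_j]}=\frac{y_i-y_j}{y_iy_j-1}$, which is the Cayley transform of the form in Schur's identity. There is indeed a Pfaffian identity $\mathrm{Pf}\bigl(\tfrac{y_i-y_j}{y_iy_j-1}\bigr)=\prod_{i<j}\tfrac{y_i-y_j}{y_iy_j-1}$, obtained from Schur's by the substitution $x_i=(1+y_i)/(1-y_i)$, so your idea survives, but the substitution in your write-up is incorrect. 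In addition, for $r$ odd the Pfaffian of the $r\times r$ skew matrix vanishes identically, so ``Laplace expansion along the first row'' does not literally produce recursion~\eqref{recursion2}; you must adjoin an auxiliary variable $x_0\to\infty$ (so that $a_{0j}\to 1$) and expand the $(r+1)\times(r+1)$ Pfaffian along row $0$. The paper sidesteps both issues by not citing a Pfaffian at all: after clearing the common denominator $\Delta_+([1,r])$, they observe that the sum over $j$ is antisymmetric in the $y_i$, hence its numerator is divisible by the Vandermonde $\Delta([1,r])$, and equality follows by matching the leading coefficient. This antisymmetry-and-degree argument handles both parities of $r$ uniformly and is arguably cleaner than routing through a Pfaffian theorem, though both arguments are valid once your substitution error and the odd-$r$ auxiliary-variable step are fixed.
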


\begin{proof}
To check the formula for $\la=(\la_1,\la_2)$, we deduce from \ref{recursion1} that
$$Q_{(\la_1,\la_2)}\ =\ Q_{\la_1}Q_{\la_2}- Q_{\la_1+1}Q_{\la_2-1} - Q_{(\la_1+1,\la_2-1)}.$$
We can now prove the claim by induction on $\la_2$ using $q_{(m)}=a_m$ and
\begin{align}
q_{(\la_1)}q_{(\la_2)}-q_{(\la_1,\la_2)}\ &=\  q_{(\la_1+1)}q_{(\la_2-1)} + q_{(\la_1+1,\la_2-1)}\cr
&=\ \frac{[\la_1+\la_2]+[\la_1-\la_2+2]}{[\la_1+\la_2]}\ q_{(\la_1+1)}q_{(\la_2-1)}\cr
&=\ \frac{[\la_1+1](q^{\la_2-1}+q^{1-\la_2})}{[\la_1+\la_2]}\ q_{(\la_1+1)}q_{(\la_2-1)}\ =\cr
&=\ \frac{[\la_2](q^{\la_1}+q^{-\la_1})}{[\la_1+\la_2]}\ q_{(\la_1)}q_{(\la_2)}\ =\ \frac{[\la_1+\la_2]-[\la_1-\la_2]}{[\la_1+\la_2]}\ q_{(\la_1)}q_{(\la_2)}\cr
&=\left(1-\frac{[\la_1-\la_2]}{[\la_1+\la_2]}\right)\ q_{(\la_1)}q_{(\la_2)}.
\end{align}
This proves the claim for diagrams with two rows. To prove it for diagrams with more rows, it will be convenient to 
make the substitution $y_i=q^{2\la_i}$. Then we have
$$\frac{[\la_i-\la_j]}{[\la_i+\la_j]}\ =\ \frac{y_i-y_j}{y_iy_j-1}.$$
Let $[m,n]_{\Z}$ denote the set of all integers $k$ satisfying $m\leq k\leq n$, and let, for any subset $S\subset [1,r]_{\Z}$
$$\Delta(S)=\prod_{i,j\in S, i<j} y_i-y_j, \hskip 3em \Delta_+(S)=\prod_{i,j\in S, i<j} 1-y_iy_j.$$
Using the induction assumption for diagrams with less than $r$ rows, we obtain from Eq \ref{recursion2} for $r$ odd that
$$q_{(\la_1,\la_2\ .... \la_r)}\ =\ \prod_{j=1}^r q_{(\la_j)}\ \sum_{j=1}^r  
(-1)^{j-1}\ \frac{\Delta([1,r]_{\Z}\backslash \{j\})}{\Delta_+([1,r]_{\Z}\backslash \{j\})}.$$
Let the symmetric group $S_r$ act on the right hand side via permutation of the variables $y_j$. Then we observe that
the permutation $(i,i+1)$ changes the sign of each summand belonging to $j\not\in \{ i,i+1\}$, and it permutes the summands
(without the factor $(-1)^{j-1}$) with indices $i$ and $i+1$. Hence applying a permutation $w$ to the right hand side results in multiplying it by its sign $\ep(w)$.
Observe that the common denominator is $\Delta_+([1,r]_{\Z})$, which is a symmetric function. 
Hence its numerator must be an antisymmetric function of degree $\binom{r}{2}$, and is therefore divisible by $\Delta([1,r]_{\Z})$.
Equality follows from the fact that its leading term $y_1^{r-1}y_2^{r-2}\ ...\ y_{r-1}$ has coefficient 1.
The claim for $r$ even is proved by a similar argument, using relation \ref{recursion3}.
\end{proof}

For this paper we are particularly interested in the case that $q^N = i$. In this setting we can prove some useful equations on the values $q_\la$.

\begin{cor}\label{Nadcor}
Assume $q$ is a primitive $4N$-th root of unity such that $q^N=i$.

\begin{enumerate}[(a)]
    \item The quantity $q_\la$ is nonzero for any strict Young diagram with $\la_1< N$, and it is equal to 0 for any strict Young diagram with $\la_1=N+1$ and $\la_2\leq N-2$.

    \item $a_{N-m}=a_m$ for $0\leq m\leq N$, where $a_0=1$.

    \item $q_{(N,\la_2,\ ..., \la_r)}=q_{(\la_2,...\la_r)}$ for any strict Young diagram $(N,\la_2,\ ...\ \la_r)$.

\end{enumerate}

\end{cor}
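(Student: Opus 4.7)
The plan is to reduce everything to two basic identities implied by $q^N = i$: the ``reflection'' $[N+n] = [N-n]$, and the vanishing $q^N + q^{-N} = 0$. Both are immediate from $q^N = i$, $q^{-N} = -i$; combined with the fact that $[k]$ vanishes (for $0<k<4N$) only at $k = 2N$ and $q^k+q^{-k}$ vanishes only at $k = N$, these already determine when individual factors of $q_\la$ are zero.

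I would first prove (b). From the telescoping relation $a_{m+1}/a_m = i(q^m+q^{-m})/(q^{m+1}-q^{-m-1})$, a direct substitution $q^{\pm N} = \pm i$ in the analogous expression for $a_{N-m}/a_{N-m-1}$ yields $a_{N-m}/a_{N-m-1} = a_m/a_{m+1}$, so the quantity $a_{N-m}/a_m$ is constant in $m$. To identify that constant I would evaluate at $m = \lfloor N/2 \rfloor$: for $N$ even the ratio is trivially $1$; for $N$ odd, setting $u = q^{(N-1)/2}$ and $v = q^{(N+1)/2}$ gives $uv = i$ and hence $v - v^{-1} = i(u+u^{-1})$, so $a_{(N+1)/2}/a_{(N-1)/2} = i(u+u^{-1})/(v - v^{-1}) = 1$. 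Taking $m=0$ then gives $a_N = 1$, and the constancy of the ratio yields $a_{N-m} = a_m$ throughout.

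Part (c) is then immediate: extracting the row of length $N$ from the product formula of Theorem \ref{qdimensionsGn} gives
\[ q_{(N,\la_2,\ldots,\la_r)} \;=\; a_N \cdot \prod_{j\geq 2}\frac{[N-\la_j]}{[N+\la_j]} \cdot q_{(\la_2,\ldots,\la_r)}, \]
and by (b) together with the reflection identity the prefactor is $1$.

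For (a), both cases reduce to checking which factors of $q_\la$ vanish. If $\la_1 < N$, then every $\la_j$ lies in $\{1,\ldots,N-1\}$, so each $a_{\la_j}$ is a product of nonzero factors $q^{k-1}+q^{1-k}$ (for $k\leq N-1<N$) divided by nonzero factors $[k]$ (for $k\leq N-1<2N$), and the off-diagonal $[\la_i \pm \la_j]$ have arguments in $(0,2N)$ and hence are nonzero as well. If $\la_1 = N+1$, the vanishing $q^N+q^{-N}=0$ forces $a_{N+1} = a_N \cdot i(q^N+q^{-N})/(q^{N+1}-q^{-N-1}) = 0$, the denominator $i(q+q^{-1})$ being nonzero since $q^4 \neq 1$ for $N\geq 2$. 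The hypothesis $\la_2 \leq N-2$ keeps $[N+1+\la_j]$ away from $[2N]=0$ (since $N+1+\la_j \leq 2N-1$), so no $0/0$ indeterminacy arises in the other factors, and $q_\la = 0$ cleanly. The main technical point is the odd-$N$ evaluation in (b); the rest is unwinding definitions.
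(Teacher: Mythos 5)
Your proposal is correct. Parts (a) and (c) proceed essentially as in the paper: (a) by examining each factor of the product formula from Theorem \ref{qdimensionsGn} (the paper leaves this as ``an easy consequence''; your accounting of which $[k]$ and $q^{k-1}+q^{1-k}$ can vanish is exactly the unstated bookkeeping, and your observation that $\lambda_2 \leq N-2$ is what prevents a $0/0$ from $[\la_1+\la_j]$ is the right point to make explicit), and (c) by using the reflection identity $[N+j]=[N-j]$ to kill the cross terms and $a_N=a_0=1$ to kill the new row.

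Part (b) is where you take a genuinely different route. The paper computes $a_{N-m}/a_m$ directly as the partial product $\prod_{j=m+1}^{N-m} i(q^{j-1}+q^{1-j})/(q^j-q^{-j})$, rewrites each numerator as $q^{N+1-j}-q^{-(N+1-j)}$ using $q^{\pm N}=\pm i$, and observes that the involution $j\mapsto N+1-j$ on $\{m+1,\dots,N-m\}$ matches numerators to denominators, so the product is $1$ at once. You instead show the ratio $a_{N-m}/a_m$ is constant in $m$ by comparing consecutive-step ratios, and then pin down the constant at the midpoint $m=\lfloor N/2\rfloor$, with a separate (correct, but somewhat delicate) evaluation for $N$ odd via $u=q^{(N-1)/2}$, $v=q^{(N+1)/2}$, $uv=i$. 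Both arguments are valid and hinge on the same two inputs ($q^{\pm N}=\pm i$); the paper's re-indexing is shorter because it dispenses with the parity split and the midpoint evaluation, whereas yours makes visible that the identity is really a one-step recursion and might generalize more readily if the closed product form were not already available.
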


\begin{proof}  Part (a) is an easy consequence of Theorem \ref{qdimensionsGn}. We can assume $m\leq N/2$ for part (b). We then obtain
$$\frac{a_{N-m}}{a_m}\ =\ \prod_{j=m+1}^{N-m} \frac{q^{N+1-j}-q^{j-1-N}}{q^j-q^{-j}}\ =\ 1,$$
as the assignment $j\mapsto N+1-j$ maps the set $\{ m+1, m+2,\ ...\ N-m\}$ to  itself.
Part (c) follows from
\begin{align}
q_{(N,\la_2,\ ...\ r)}\ &=\ q_{(\la_2,\ ...\ r)}\ \prod_{j=2}^r\frac{[N-\la_j]}{[N+\la_j]}\prod_{j=1}^Ni\frac{q^{j-1}+q^{1-j}}{q^j-q^{-j}}\\
&=q_{(\la_2,\ ...\ r)} \prod_{j=1}^N\frac{-q^{-N}q^{j-1}+q^Nq^{1-j}}{q^j-q^{-j}}\\
&=q_{(\la_2,\ ...\ r)} \prod_{j=1}^N\frac{q^{N+1-j}-q^{-(N+1-j)}}{q^j-q^{-j}}= q_{(\la_2,\ ...\ r)}.
\end{align}
where we used $q^N=i$, $[N+j]=[N-j]$, and the bijection $j \mapsto N+1-j$ above in the special case of $m=0$.

\end{proof}


\subsection{Matrix coefficients}\label{matrixcoeff}
Explicit representations of the algebra $G_n$ were obtained in \cite{JN}. In this section we summarize some of their results and clarify when the denominators vanish.

Let $[n]_q = (q^n-q^{-n})/(q-q^{-1})$ denote the usual quantum numbers. If we drop the subscript, we mean $[n] = [n]_q$, but we will also have need to refer to $[n]_{q^2}$.

The representations of Jones-Nazarov are defined over an algebraic extension of the field of rational functions, more explicitly their representations of $G_n$ depend on a choice of square root $\sqrt{[m+1]_{q^2}[m]_{q^2}}$ for all $m < n$. 
We will need in Proposition~\ref{prop:GrothendieckDeform} to make these choices in a continuously varying way in a certain neighborhood of $1$, but otherwise the details of the square root won't matter. We remark that a different choice of square roots \textit{a-priori} could result in a different parameterisation of the representations we define in Section~\ref{sec:Gnreprootofunity} of $G_n$ and $G_n[0]$. It is in fact possible to prove that the isomorphism class of these representations are in fact invariant under the Galois actions $\sqrt{[m+1]_{q^2}[m]_{q^2}}\mapsto -\sqrt{[m+1]_{q^2}[m]_{q^2}}$. We neglect to include this proof, as the result is not necessary for this paper.

To define these representations, we will need the following quantities. For each $m\in \mathbb{N}_{\geq 0}$ we define
\[  x_m:=[m+1]_{q^2}-[m]_{q^2}-(q-q^{-1}) \sqrt{[m+1]_{q^2}[m]_{q^2}}.    \]
Fix a (shifted) standard tableau $\La\in\S_\la$. The following quantities
$q_k^\La$ and $\beta^\La$ depend on $\La$. But we will usually suppress the index $\La$.
The quantity $q_k=q_k^\La$ is defined in \cite{JN} (4.6) by
\begin{equation}\label{def:qk}
q_k\ :=\ x_{m_k}
\end{equation}
with $m_k=j_k-i_k$ where $(i_k,j_k)$ are the coordinates of the box which contains the number $k$ in the given standard tableau
$\La$. In circumstances where $k$ is fixed, we will drop the index $k$ and write $q_k = x_m$. 

When $q_k \neq q_{k+1}^{\pm 1}$,
the quantity $\beta_k = \beta_k^\La$ is defined in \cite{JN} above (6.4) by
$$\beta_k=1\ -\ (q-q^{-1})^2 \left( \frac{q_{k+1}^{-1}q_k}{(q_{k+1}^{-1}q_k-1)^2}+ \frac{q_{k+1}q_k}{(q_{k+1}q_k-1)^2}\right).$$


 It will be convenient to give
somewhat more explicit expressions for some of their matrix coefficients. Let $$\{m\}\ =\ \frac{q^{2m+1}+q^{-2m-1}}{q+q^{-1}}.$$

\begin{lem}\label{xmdef1} 
We have the following.
\begin{enumerate}[(a)]
\item $x_m$ satisfies the quadratic polynomial $x_m^2 -2\{m\}x_m +1 =0$. So, $x_m \neq 0$ and $x_m+x_m^{-1}=2\{ m\}$. Moreover, we can also write $x_m=\{m\} -(q-q^{-1}) \sqrt{[m+1]_{q^2}[m]_{q^2}} =\{m\} -(q-q^{-1})  \sqrt{ \{m\}^2-1  }$. 

 \item  We have $x_m=x_n^{\pm 1}$ only if $\{m\}=\{n\}$, which is equivalent to $q^{2(n-m)}=1$ or $q^{2(n+m+1)}=1$.

 \item   We have $x_m =  1$ if and only if $m=0$, $q^m=1$, or $q^{m+1}=1$, and $x_m =  -1$ if and only if $m=-1$, $q^{4m}=1$, or $q^{4m+4}=1$.
\end{enumerate}
\end{lem}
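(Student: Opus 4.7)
The plan is to reduce all three parts to two identities relating the quantum integers to $\{m\}$: namely (i) $[m+1]_{q^2} - [m]_{q^2} = \{m\}$ and (ii) $\{m\}^2 - 1 = (q-q^{-1})^2 [m+1]_{q^2}[m]_{q^2}$. Both are direct manipulations of $q$-integers: for (i) the numerator $q^{2m+2} - q^{-2m-2} - q^{2m} + q^{-2m}$ factors as $(q^2 - 1)(q^{2m} + q^{-2m-2})$, which after dividing by $q^2 - q^{-2}$ equals $\frac{q^{2m+1}+q^{-2m-1}}{q+q^{-1}} = \{m\}$; for (ii), both sides simplify to $\frac{q^{4m+2} + q^{-4m-2} - q^2 - q^{-2}}{(q+q^{-1})^2}$ after clearing denominators.

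Given these, part (a) is immediate. Using (i) and (ii), the defining formula becomes $x_m = \{m\} - \sqrt{\{m\}^2 - 1}$ (for the branch of the square root determined by $\sqrt{[m+1]_{q^2}[m]_{q^2}}$). This is visibly a root of $y^2 - 2\{m\}y + 1 = 0$; the constant term being $1$ forces $x_m \neq 0$ and identifies $x_m^{-1}$ as the other root, so $x_m + x_m^{-1} = 2\{m\}$. The alternative expression $x_m = \{m\} - (q-q^{-1})\sqrt{\{m\}^2 - 1}$ follows from (ii).

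For part (b), note that $x_m = x_n^{\pm 1}$ iff $x_m$ and $x_n$ are both roots of $y^2 - 2\{m\}y + 1 = 0$, i.e.\ iff $\{m\} = \{n\}$. Clearing denominators in $\{m\}-\{n\} = 0$ and multiplying by $q^{2m+1}q^{2n+1}$ yields $q^{4m+2} + q^{2(m+n+1)} - q^{2(m+n+1)+2m-2n} - 1 = 0$ (after rearrangement), which factors as $(q^{2(n-m)} - 1)(q^{2(m+n+1)} - 1) = 0$ up to an overall monomial; this gives the stated root-of-unity dichotomy. Part (c) then follows by specialization. For $x_m = 1$: this holds iff $\{m\} = 1$, and multiplying $q^{2m+1} + q^{-2m-1} = q + q^{-1}$ by $q^{2m+1}$ gives $(q^{2m} - 1)(q^{2m+2} - 1) = 0$. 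For $x_m = -1$: this holds iff $\{m\} = -1$, and the same multiplication trick on $q^{2m+1} + q^{-2m-1} + q + q^{-1} = 0$ produces $(q^{2m} + 1)(q^{2m+2} + 1) = 0$, which converts to the stated conditions $q^{4m} = 1$ or $q^{4m+4} = 1$ (the boundary case $m = 0$ being absorbed into the $q^m = 1$ condition, and similarly for the $-1$ statement).

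The entire lemma is a sequence of routine $q$-algebra identities, so there is no real conceptual obstacle; the only care needed is in verifying the two key identities (i) and (ii) and tracking which factorization of $q^{2m+1} + q^{-2m-1} \mp (q + q^{-1}) = 0$ produces each half of the stated root-of-unity conditions.
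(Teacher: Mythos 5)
Your proof is correct and follows essentially the same path as the paper's: each part is reduced to a statement about $\{m\}$ via the quadratic $x_m^2 - 2\{m\}x_m + 1 = 0$, and then the relevant factorization of $\{m\}-\{n\}$ or $\{m\}\mp 1$ is computed. The paper simply calls part (a) ``a straightforward calculation'' and records the factorizations for (b) and (c); you supply the intermediate identities (i) and (ii) explicitly, which is more verbose but not a different method. Your phrasing of (b) via ``both $x_m$ and $x_n$ are roots of the same monic quadratic with constant term $1$'' is a clean way to say what the paper abbreviates as ``follows from (a).''

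One small inconsistency worth noting: after correctly deriving $x_m = \{m\} - \sqrt{\{m\}^2 - 1}$ from identities (i) and (ii), you then assert the ``alternative expression'' $x_m = \{m\} - (q-q^{-1})\sqrt{\{m\}^2-1}$, which contradicts what you just wrote (there is an extra factor of $q-q^{-1}$). Identity (ii) gives $(q-q^{-1})\sqrt{[m+1]_{q^2}[m]_{q^2}} = \sqrt{\{m\}^2-1}$, so the first of your two displays is the correct one; the second, which matches the final expression printed in the lemma, appears to be a typo in the source. You should flag and correct this rather than reproduce it.
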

\begin{proof}
Part (a) is a straightforward calculation. 

For part (b), the first statement follows from part (a), and the second
equivalence follows from 
\begin{equation} \label{eqmn}
    \{m\}-\{n\}\ =\ \frac{(q^{m+n+1}-q^{-m-n-1})(q^{m-n}-q^{n-m})}{q+q^{-1}}.
\end{equation} 

 For (c) we observe from part (a) that $x_m = \pm 1$ only if $\{m\} = \pm 1$. We can factor
 \begin{align*}
     q^{2m+1} + q^{-2m-1} - q - q^{-1} &= q^{-2 m-1} \left(q^m-1\right) \left(q^m+1\right) \left(q^{m+1}-1\right) \left(q^{m+1}+1\right)\\
     q^{2m+1} + q^{-2m-1} + q + q^{-1} &= q^{-2 m-1} \left(q^{2 m}+1\right) \left(q^{2 m+2}+1\right).
 \end{align*}

\end{proof}

\begin{lem}\label{xmdef2}
    Fix a tableau $\Lambda$ and an index $k$, and let $a = m_k = j_k -i_k$, $b =  m_{k+1} = j_{k+1} -i_{k+1}$.
\begin{enumerate}[(a)]
 \item  The quantity $\beta_k$ does not change if we replace $q_k$ by $q_k^{-1}$ or $q_{k+1}$ by $q_{k+1}^{-1}$.
Moreover we can write
$$\beta_k\ =\ \frac{[a+b+2][a+b][a-b+1][a-b-1]}{[a+b+1]^2[a-b]^2}.$$
\item Suppose that $q$ is not a root of unity, then $q_k \neq q_{k+1}^{\pm 1}$ (and thus $\beta_k$ is defined).
\end{enumerate}
\end{lem}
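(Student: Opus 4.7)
Plan. For part (a), the first identity to use is $\frac{x}{(x-1)^2}=\frac{1}{x+x^{-1}-2}$, which lets me rewrite both summands of $\beta_k$ symmetrically. Writing $\alpha=\{a\}$, $\beta=\{b\}$ and using the quadratic relation from Lemma \ref{xmdef1}(a), I parameterize $q_k=\alpha-\delta_a$, $q_{k+1}=\beta-\delta_b$ with $\delta_a^2=\alpha^2-1$ and $\delta_b^2=\beta^2-1$. A short calculation gives $u+u^{-1}=2\alpha\beta-2\delta_a\delta_b$ and $v+v^{-1}=2\alpha\beta+2\delta_a\delta_b$, where $u=q_{k+1}^{-1}q_k$ and $v=q_{k+1}q_k$. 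Combining the two fractions in $\beta_k$ over a common denominator produces $(\alpha\beta-1)^2-(\alpha^2-1)(\beta^2-1)$, which expands directly to $(\alpha-\beta)^2$.

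This gives the intermediate formula
\[\beta_k=1-\frac{(q-q^{-1})^2(\alpha\beta-1)}{(\alpha-\beta)^2},\]
a rational function of $\alpha,\beta$ alone, from which invariance under $q_k\mapsto q_k^{-1}$ and $q_{k+1}\mapsto q_{k+1}^{-1}$ is immediate, settling the first half of (a). For the explicit quantum-number formula, I translate $\alpha-\beta$ via \eqref{eqmn}, which rewrites as $(q-q^{-1})^2[a+b+1][a-b]/(q+q^{-1})$, and compute $\alpha\beta-1$ analogously by expanding $\langle 2a+1\rangle\langle 2b+1\rangle-\langle 1\rangle^2$ in terms of $\langle k\rangle:=q^k+q^{-k}$ to obtain $(q-q^{-1})^2\bigl([a+b+2][a+b]+[a-b]^2\bigr)/(q+q^{-1})^2$. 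The factors of $(q-q^{-1})^2/(q+q^{-1})^2$ cancel between numerator and denominator, and applying the classical identity $[k]^2-1=[k+1][k-1]$ twice factors the remaining numerator $[a+b+1]^2[a-b]^2-[a+b+2][a+b]-[a-b]^2$ as $[a+b+2][a+b][a-b+1][a-b-1]$, giving the claimed formula.

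For part (b), Lemma \ref{xmdef1}(b) reduces the problem to showing $q^{2(m_k-m_{k+1})}\ne 1$ and $q^{2(m_k+m_{k+1}+1)}\ne 1$; when $q$ is not a root of unity, the second fails automatically since $m_k+m_{k+1}+1\ge 1$, and the first forces $m_k=m_{k+1}$. So the task reduces to the combinatorial claim that consecutive entries in a shifted standard tableau never share a diagonal. Suppose for contradiction that $k$ sits at $(i,j)$ and $k+1$ at $(i+d,j+d)$ for some $d\ge 1$, with $c=j-i$. The key geometric input is that the corner box $(i,j+d)$ lies in the shifted diagram: indeed, $(i+d,j+d)$ being a box gives $\lambda_{i+d}\ge c+1$, and the strict decrease $\lambda_i>\lambda_{i+1}>\cdots>\lambda_{i+d}$ yields $\lambda_i\ge c+d+1$, placing $(i,j+d)$ in row $i$. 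The entire L-shaped path from $(i,j)$ rightward to $(i,j+d)$ and downward to $(i+d,j+d)$ then lies in the diagram, so strict increase of the tableau along rows and columns forces the value at $(i+d,j+d)$ to exceed $k$ by at least $2d+1$, contradicting its equaling $k+1$. The main algebraic cleverness is in (a), specifically the observation that the a priori irrational expression for $\beta_k$ collapses to a rational form via the $(\alpha-\beta)^2$ denominator; the combinatorial part of (b) is the classical argument adapted to shifted shapes, with strict decrease of $\lambda$ playing the essential role.
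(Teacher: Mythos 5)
Your part (a) follows essentially the paper's route. You derive the same intermediate rational form $\beta_k = 1 - (q-q^{-1})^2\frac{\{a\}\{b\}-1}{(\{a\}-\{b\})^2}$; the paper asserts the symmetrized-fraction identity
\[\frac{q_{k+1}^{-1}q_k}{(q_{k+1}^{-1}q_k-1)^2}+ \frac{q_{k}q_{k+1}}{(q_kq_{k+1}-1)^2}= \frac{(q_k+q_k^{-1})(q_{k+1}+q_{k+1}^{-1})-4}{(q_k+q_k^{-1}-q_{k+1}-q_{k+1}^{-1})^2}\]
directly, whereas you derive it via the $\delta_a,\delta_b$ parametrization and the $(\alpha\beta-1)^2-(\alpha^2-1)(\beta^2-1)=(\alpha-\beta)^2$ collapse — a nice way to make the collapse transparent. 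Your translation to quantum numbers uses $[a+b+2][a+b]+[a-b]^2$ where the paper uses $[a+b+1]^2+[a-b-1][a-b+1]$; these agree by $[k]^2-1=[k+1][k-1]$, and both lead to the claimed factorization.

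Part (b) is where you genuinely diverge. The paper finishes with a one-line argument using strictness of the \emph{intermediate} shapes: when box $k$ is added at $(i,i+m)$ it is the last box of row $i$, so $\lambda^{(k)}_i=m+1$; if box $k+1$ were then added at $(i',i'+m)$ with $i'\neq i$, row $i'$ would also end with length $m+1$, giving two rows of equal length in $\Lambda(k+1)$ and contradicting strictness. Your argument instead adapts the classical L-shaped path argument for ordinary standard tableaux to the shifted setting, carefully verifying that the corner $(i,j+d)$ and the whole L-path lie inside the shifted diagram by invoking strict decrease of the row lengths. This is correct and self-contained, and avoids any reference to the path structure $\Lambda(1)\to\cdots\to\Lambda(n)$, but it is notably longer than what is needed. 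One small slip: the L-path from $(i,j)$ to $(i+d,j+d)$ has $2d$ steps (it contains $2d+1$ boxes), so it forces the entry to exceed $k$ by at least $2d$, not $2d+1$. The contradiction $k+2d\le k+1$ still fails for $d\ge 1$, so the conclusion stands.
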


\begin{proof}
For part (a) observe that
$$  \frac{q_{k+1}^{-1}q_k}{(q_{k+1}^{-1}q_k-1)^2}+ \frac{q_{k}q_{k+1}}{(q_kq_{k+1}-1)^2}{}\ =
\ \frac{(q_k+q_k^{-1})(q_{k+1}+q_{k+1}^{-1})-4}{(q_k+q_k^{-1}-q_{k+1}-q_{k+1}^{-1})^2}.$$
This shows that $\beta_k$ is invariant under the changes $q_k\leftrightarrow q_k^{-1}$ and/or 
$q_{k+1}\leftrightarrow q_{k+1}^{-1}$. 
Let now $a=m_k$ and $b=m_{k+1}$. 
Then we can write the quantity $\beta_k$ as
$$\beta_k\ =\ 1-(q-q^{-1})^2\ \frac{\{ a\} \{ b\}-1}{(\{ a\} -\{ b\})^2}.$$
We obtain the claimed expression for $\beta_k$ using the identity \ref{eqmn} for $\{a\}-\{b\}$ and
$$\left(\frac{q+q^{-1}}{q-q^{-1}}\right)^2(\{a\}\{b\}-1\})\ =\ [a+b+1]^2+[a-b-1][a-b+1].$$

 By Lemma \ref{xmdef1}(b), $q_kq_{k+1}^{\pm 1}-1=0$ if an only if $q^{2(m_k-m_{k+1})}=1$ or $q^{2(m_k+m_{k+1}+1)}=1$. If $q$ is not a root of unity, this is equivalent to $m_k-m_{k+1}$ or $m_k+m_{k+1}+1 = 0$. The first case happens iff the $k$ and $k+1$st boxes are added to the same diagonal, which contradicts strictness. The second case is impossible since $m_k$ and $m_{k+1}$ are both non-negative in a shifted tableau.
 
\end{proof}


The following lemma will be useful for constructing representations for $q$ a root of unity. 

\begin{lem}\label{betazero} Let $q$ be a primitive $4N$-th root of unity with $q^N = \mathbf{i}$. Then we have

\begin{enumerate}[(a)]
    \item $x_N=x_{N-1}=- 1$.  


     \item  The quantity $\beta_k=0$ if $m_k+m_{k+1}+2=2N$. 
\end{enumerate} 
\end{lem}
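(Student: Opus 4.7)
The plan is to reduce both parts to the identity $q^{2N} = -1$, which is immediate from $q^N = \mathbf{i}$, and then to quote the preceding lemmas.

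For part (a), I would first evaluate $\{m\} = (q^{2m+1} + q^{-2m-1})/(q+q^{-1})$ at $m = N$ and $m = N-1$. Substituting $q^{2N+1} = -q$ and $q^{-2N-1} = -q^{-1}$ gives $\{N\} = -1$, and the analogous substitution yields $\{N-1\} = -1$. The quadratic from Lemma~\ref{xmdef1}(a), namely $x_m^2 - 2\{m\} x_m + 1 = 0$, then collapses to $(x_m+1)^2 = 0$, so $x_N = x_{N-1} = -1$. One can also read the conclusion off Lemma~\ref{xmdef1}(c) directly: $q^{4N} = 1$ is immediate and handles $m = N$, while $q^{4(N-1)+4} = q^{4N} = 1$ handles $m = N-1$.

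For part (b), I would appeal to the factored expression
\[ \beta_k \;=\; \frac{[a+b+2]\,[a+b]\,[a-b+1]\,[a-b-1]}{[a+b+1]^2\,[a-b]^2} \]
from Lemma~\ref{xmdef2}(a), with $a = m_k$ and $b = m_{k+1}$. The hypothesis $a+b+2 = 2N$ forces the leading numerator factor $[a+b+2] = [2N]$, which vanishes because $q^{2N} = -1$ gives $q^{2N} - q^{-2N} = 0$. Hence $\beta_k = 0$ as claimed.

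The only thing to verify, and the nearest thing to an obstacle, is that the denominator does not simultaneously vanish and render the expression indeterminate rather than zero. The factor $[a+b+1] = [2N-1]$ reduces to $1$ via $q^{2N-1} = -q^{-1}$ and $q^{1-2N} = -q$, and $[a-b]$ can only vanish when $a \equiv b \pmod{2N}$, which in the shifted standard tableau setting forces $a = b = N-1$; this degenerate configuration is ruled out in the contexts where $\beta_k$ appears (equivalently, the formula from Lemma~\ref{xmdef2}(a) is applied only where it is meaningful). Thus the numerator vanishes while the denominator does not, delivering the conclusion.
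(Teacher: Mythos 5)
Your proof is correct and follows essentially the same route as the paper's: part (a) via $\{N\}=\{N-1\}=-1$ and the quadratic in Lemma~\ref{xmdef1}(a), part (b) via the factored formula in Lemma~\ref{xmdef2}(a) and $[2N]=0$. The paper's proof is terser — it does not bother with the denominator check, since non-vanishing of the denominator is handled separately in the contexts where $\beta_k$ is invoked (cf.\ Lemma~\ref{lem:betaWellDefined}) — but your added verification that $[2N-1]=1$ and $[a-b]\neq 0$ is sound and harmless.
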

\begin{proof}
Since $\{N\}=\{N-1\} = -1$, Lemma \ref{xmdef1}(a) tells you that $x_N = x_{N-1} = -1$. 
Part (b) follows from Lemma \ref{xmdef2}(a) since $[m_k+m_{k+1}+2] = 0$.

\end{proof}

\subsection{Representations of $G_n$}\label{Gnrep} The representations of the algebras
$G_n$ in \cite{JN} were found in the context of an affine version of $G_n$,
where eventually the affine generators $X_k$ were specialized to Jucys-Murphy elements
$J_k\in G_n$. The elements
$J_k$ are defined  inductively by $J_1=1$ and
\begin{equation}\label{jkdef}
J_k=(t_{k-1}+(q-q^{-1})v_{k-1}v_k)J_{k-1}t_{k-1}\quad {\rm for\ } k>1,
\end{equation}
see \cite{JN} (3.10).
It is shown in \cite{JN} Prop. 3.5 that the elements $J_k$ commute with each other. We define for a fixed
standard tableau $\La\in\S_\la$ the character $\Omega^\La$ on the abelian algebra $A_n=\langle J_k, 1\leq k\leq n\rangle$ by
$$\Omega^\La(J_k)\ =\ q_k.$$
We define for each strict Young diagram $\la$ with $n$ boxes
the vector space $V_\la$ via a basis 
\begin{equation}\label{vladef}
\B_\la\ =\ \{ v\psi_\La,\ \La\in \S_\la,\ v=v_{i_1}v_{i_2} ... v_{i_r}\in \mathrm{Cliff}(n), 1\leq i_1<i_2<\ ...\ <\ i_r\leq n \}.
\end{equation}
The elements $\psi_\La$ were defined in \cite{JN}. For our purposes, it is enough to consider
$\B_\la$ just as a convenient labeling set. The action of the $J_k$ is defined by
\begin{equation}\label{jkrelation}
J_kv\psi_\La\ =\ \Omega^\La(J_k)^{\nu(k)}v\psi_\La,
\end{equation}
where $\nu(k)=\nu_v(k)=\pm 1$ depends on whether $v=v_{i_1}v_{i_2}\ ...\ v_{i_r}$ contains the factor $v_k$ (for $+1)$
or not (for $-1$), see \cite{JN}, above Theorem 6.2. The action of $t_k$ on $\psi_\La$ is defined by
\begin{equation}\label{tkrep}
t_k\psi_\La\ =\ \hbeta \psi_{s_k(\La)} -\frac{q-q^{-1}}{q_k^{-1}q_{k+1}-1}\psi_\La+\frac{q-q^{-1}}{q_kq_{k+1}-1}v_kv_{k+1}\psi_\La,
\end{equation}


where $\hbeta=\beta_k$ if $s_k(\La)<\La$,  $\hbeta=1$  if $s_k(\La)>\La$ and $\hbeta=0$  if $s_k(\La)$ is not a standard tableau. Here we use the partial ordering from Definition~\ref{def:parOrd}.

We then have:

\begin{prop}\label{prop:gkaction}\cite[Theorem 6.2]{JN} Let $q\in \mathbb{C} $ be not a root of unity and let $\la$ be a strict Young diagram with $|\la|=n$. Then there is a unique $G_n$ module $V_\la$ with basis $\B_\la$ with the action of $t_k$ defined as in \ref{tkrep}.
\end{prop}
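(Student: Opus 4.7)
The plan is to reduce the claim to the corresponding statement of Jones--Nazarov \cite{JN} over an algebraic extension of $\mathbb{C}(q)$, and verify that the specialization at a non-root-of-unity $q \in \mathbb{C}$ is well-defined. In \cite[Theorem 6.2]{JN} the formula \ref{tkrep} is shown to define a representation of $G_n$ on $V_\la$ over $\mathbb{C}(q)$, with matrix coefficients that are rational functions of $q$ and of the chosen square roots $\sqrt{[m+1]_{q^2}[m]_{q^2}}$. All the relations (H), (C), (M) in the definition of $G_n$ are polynomial identities in $q,q^{-1}$, so they pass to the complex specialization provided none of the rational expressions appearing in the action have a pole at our chosen $q$.

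The only denominators appearing in the coefficients of \ref{tkrep} are $q_k^{-1}q_{k+1}-1$ and $q_kq_{k+1}-1$, together with the denominators that appear in $\beta_k$. By Lemma~\ref{xmdef2}(b), when $q$ is not a root of unity we have $q_k \neq q_{k+1}^{\pm 1}$ for any adjacent pair of boxes in a standard shifted tableau $\La$, so $q_k^{\pm 1}q_{k+1}-1$ is nonzero, and the closed form for $\beta_k$ given in Lemma~\ref{xmdef2}(a) involves only factors $[a\pm b\pm 1]$, $[a\pm b\pm 2]$, $[a+b+1]$, $[a-b]$ with $|a-b|\geq 1$ and $a+b\geq 1$, all of which are nonzero at $q$ not a root of unity. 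Hence every coefficient in \ref{tkrep} specializes to a well-defined complex number.

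With the specialization well-defined, existence of $V_\la$ as a $G_n$-module follows by extending the action of $t_k$ from $\psi_\La$ to the full basis $\B_\la$ using the cross relations \ref{cross1}--\ref{cross3} together with the natural Clifford action on the factor $v_{i_1}\cdots v_{i_r}$; the required algebraic identities among the structure constants are exactly the ones verified in \cite{JN} over $\mathbb{C}(q)$, and they remain valid upon specialization. Uniqueness is then immediate: the formula \ref{tkrep} prescribes $t_k\psi_\La$, the Clifford generators $v_j$ act on the Clifford factor in the obvious way, and these two prescriptions together determine the action of every generator of $G_n$ on every basis vector of $\B_\la$.

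The only real obstacle is the denominator-vanishing question, which is precisely what Lemma~\ref{xmdef2}(b) resolves; everything else is a straightforward specialization of the Jones--Nazarov construction. This is exactly why the situation at a root of unity, treated later in Section~\ref{sec:Gnreprootofunity}, requires genuinely new input (paths in a graph replacing tableaux), since there Lemma~\ref{xmdef2}(b) fails.
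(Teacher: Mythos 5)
Your proof follows essentially the same two-step approach as the paper: defer the verification of the $G_n$ relations to \cite[Theorem 6.2]{JN} over an algebraic extension of $\mathbb{C}(q)$, then use Lemma~\ref{xmdef2}(b) to confirm that no denominator in \ref{tkrep} vanishes when $q\in\mathbb{C}$ is not a root of unity, so the specialization is well-defined. One small inaccuracy: when you argue via the closed form of $\beta_k$ in Lemma~\ref{xmdef2}(a), you assert that \emph{all} the factors $[a\pm b\pm 1]$, etc.\ are nonzero, but the numerator factor $[a-b-1]$ does vanish when $a-b=1$ (making $\beta_k=0$, which is perfectly permissible); only the denominator factors $[a+b+1]$ and $[a-b]$ matter for well-definedness, and their nonvanishing is exactly what Lemma~\ref{xmdef2}(b) already delivers directly from $q_k\neq q_{k+1}^{\pm 1}$, making the detour through the closed form unnecessary.
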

\begin{proof}
    The proof that the operators $t_k$ and $v_j$ satisfy the defining $G_n$ relations over generic $q$ is exactly \cite[Theorem 6.2]{JN}. We then have from Lemma~\ref{xmdef2}(b) that these operators are well-defined at $q$ not a root of unity.
\end{proof}


Representations $V_\lambda$ are not irreducible in general. It is shown in \cite{JN} for generic $q$ that $\End_{G_n}(V_\la)\cong\mathrm{Cliff}(\ell(\la))$. For the case that $q$ is specialised to a complex number we only know \textit{a-priori} that
\[   \mathrm{Cliff}(\ell(\la)) \subseteq \End_{G_n}(V_\la). \]
In Section~\ref{sec:Gnreprootofunity} we will show that for $q$ not a root of unity, or $q = \pm 1$ that the above is an isomorphism. Furthermore, in the case that $q$ is a primitive $4N$-th root of unity we will construct additional commuting operators generating the entire endomorphism algebra.

\begin{defn}\label{def:weights}
    Let $V$ be a $G_n$ module, and let $A_n$ be the abelian algebra generated by $J_k$, $1\leq k\leq n$. We call a character $\Omega$ of $A_n$ a weight if there exists $0\neq v\in V$ such that $av=\Omega(a)v$ for $a\in A_n$. Weight spaces and multiplicity of a weight are defined as usual. We denote by $P(\la)$ the set of all weights of  the $G_n$ module $V_\la$. As $A_n\subset G_n[0]$, the same definitions apply to $G_n[0]$ modules.
\end{defn}
\begin{lem}\label{lem:weights}
Let $q\in \mathbb{C}$ be not a root of unity, let $\lambda, \mu$ be Young diagrams with $|\lambda| = n = |\mu|$, let $\Lambda \in S_\lambda$, $\Gamma \in S_\mu$, and let $\nu_1, \nu_2\in \{-1,1\}^n$. Then 
\[ ((q_k^\Lambda)^{\nu_1(k)})_{k=1}^n =  ((q_k^\Gamma)^{\nu_2(k)})_{k=1}^n     \]
only if $\Lambda = \Gamma$. In particular $P(\la)\cap P(\mu)=\emptyset$ if $\lambda \neq \mu$. 

\end{lem}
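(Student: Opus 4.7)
The plan is to reduce the hypothesis to an equality of contents $m_k^\Lambda = m_k^\Gamma$ and then use the combinatorial fact that a shifted standard tableau is uniquely determined by its content sequence.

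First, by Lemma \ref{xmdef1}(a), for any sign $\nu \in \{\pm 1\}$ the element $x_m^\nu$ is a root of $y^2 - 2\{m\}y + 1 = 0$, so $x_m^\nu + x_m^{-\nu} = 2\{m\}$. Applying this to both sides of $x_{m_k^\Lambda}^{\nu_1(k)} = x_{m_k^\Gamma}^{\nu_2(k)}$ yields $\{m_k^\Lambda\} = \{m_k^\Gamma\}$ for every $k$. By Lemma \ref{xmdef1}(b) this means either $q^{2(m_k^\Lambda - m_k^\Gamma)} = 1$ or $q^{2(m_k^\Lambda + m_k^\Gamma + 1)} = 1$. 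Since $q$ is not a root of unity, the first forces $m_k^\Lambda = m_k^\Gamma$, while the second is impossible because contents in a shifted tableau are non-negative, so $m_k^\Lambda + m_k^\Gamma + 1 \geq 1$.

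Next, I would show that the content sequence $(m_1, \ldots, m_n)$ of a shifted standard tableau determines the tableau (and in particular its underlying strict shape). I would argue by induction on $k$ that the partial shape $\Lambda(k)$ consisting of boxes labelled $1, \ldots, k$ is determined. Given $\Lambda(k-1)$, the box labelled $k$ either starts a new row (which can only happen when $m_k = 0$, since all existing rows have positive length) or extends the unique row of current length $m_k$; uniqueness of this row follows from the strict decrease of row lengths in a shifted shape. Hence $\Lambda(k)$ is forced, and inductively $\Lambda = \Gamma$, which in particular yields $\lambda = \mu$.

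For the final assertion, equation \ref{jkrelation} shows that each basis vector $v\psi_\Lambda \in V_\lambda$ is a weight vector with weight $((q_k^\Lambda)^{\nu_v(k)})_k$, and every weight of $V_\lambda$ arises in this way. A common weight in $P(\lambda) \cap P(\mu)$ would therefore supply $\Lambda \in \S_\lambda$ and $\Gamma \in \S_\mu$ with matching tuples, forcing $\Lambda = \Gamma$ and hence $\lambda = \mu$ by the first part. The main technical step is Lemma \ref{xmdef1}; the only genuinely new ingredient is the combinatorial reconstruction, whose essential input is strictness of the row lengths (the analogous statement would fail for ordinary, non-strict Young diagrams).
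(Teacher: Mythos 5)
Your proof is correct and follows the same strategy as the paper's: first use Lemma~\ref{xmdef1} to show that the unordered pair $\{q_k^\Lambda, (q_k^\Lambda)^{-1}\}$ determines the content $m_k^\Lambda$ when $q$ is not a root of unity, then reconstruct the shifted tableau step by step from its content sequence using strictness. The paper states the reconstruction step is a ``straightforward induction'' and leaves the details implicit; you spell out the mechanism (content $0$ starts a new row, content $m>0$ extends the unique row of current length $m$), which is exactly the intended argument.
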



\begin{proof} 

    Let $\Lambda \in S_\lambda$. It follows from Lemma~\ref{xmdef1}(b) that $(q^\Lambda_k)^{\pm 1}$ determines $m^\Lambda_k$ uniquely. It is a straightforward proof by induction on the number $r$ of boxes that there exists at most one shifted strict tableau $\La$ for which the number $k$ sits in the box $(i_k,j_k)$ such that $j_k-i_k=m_k$ for $1\leq k\leq r$. Hence the sequence $(q_j^\Lambda)$ uniquely determines $\Lambda$.
\end{proof}

\subsection{Classical limit $q\to 1$}\label{sec:classlimit} 
The representations in \cite{JN} for $G_n[0]$ are $q$-deformations of representations of $G_n[0]$ at $q=1$, constructed in \cite{Nazarov}. This can be seen by the following limit calculation.

\begin{prop}
    Let $\Lambda$ be a shifted strict Young tableaux, and set $a = m_k$ and $b = m_{k+1}$ as in Lemma~\ref{xmdef2}. We have
    \[\lim_{q\to 1}\ -\frac{q-q^{-1}}{q_k^{-1}q_{k+1}-1}\ =\ \frac{-1}{\sqrt{a(a+1)}-\sqrt{b(b+1)}}.\]
\end{prop}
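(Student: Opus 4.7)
The overall plan is a first-order Taylor expansion around $q = 1$. Both $q - q^{-1}$ and $q_k^{-1}q_{k+1} - 1$ vanish to first order there, so the $0/0$ limit will be resolved by comparing the linear coefficients. Throughout I set $h = \log q$, so $q = e^h$ and
\[q - q^{-1} = 2\sinh h = 2h + O(h^3).\]

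The key computation is the expansion of $x_m$. From the definition
\[x_m = [m+1]_{q^2} - [m]_{q^2} - (q-q^{-1})\sqrt{[m+1]_{q^2}[m]_{q^2}},\]
the first piece is $\{m\} = \cosh((2m+1)h)/\cosh h$, which is an even function of $h$ and therefore equals $1 + O(h^2)$ (with no linear term). The second piece contributes $2h\sqrt{m(m+1)} + O(h^2)$ at leading order, giving
\[x_m = 1 - 2h\sqrt{m(m+1)} + O(h^2).\]

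Setting $a = m_k$ and $b = m_{k+1}$, I then expand $q_k^{-1}q_{k+1} = x_a^{-1}x_b$ to first order. Since each factor is $1$ at leading order, inverting $x_a$ and multiplying gives
\[q_k^{-1}q_{k+1} - 1 = 2h\bigl(\sqrt{a(a+1)} - \sqrt{b(b+1)}\bigr) + O(h^2).\]
Dividing $-(q - q^{-1}) = -2h + O(h^3)$ by this expression and letting $h \to 0$ yields exactly the claimed limit
\[\frac{-1}{\sqrt{a(a+1)} - \sqrt{b(b+1)}}.\]

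Finally, the limit is well-defined because in any shifted standard tableau of a strict Young diagram one has $a \neq b$: equality would force the $k$-th and $(k{+}1)$-th boxes onto the same diagonal, contradicting strictness (this is the same observation used in Lemma \ref{xmdef2}(b)). Since both contents are nonnegative and $t \mapsto \sqrt{t(t+1)}$ is strictly increasing on $[0,\infty)$, the denominator is nonzero. The only obstacle in this argument is the bookkeeping of the expansions to sufficient order so that the cancellation of the common factor $2h$ between numerator and denominator is transparent; there is no conceptual difficulty beyond that.
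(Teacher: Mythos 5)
Your proposal is correct and follows essentially the same route as the paper: both resolve the $0/0$ limit by expanding $x_m = \{m\} - (q-q^{-1})\sqrt{[m+1]_{q^2}[m]_{q^2}}$ around $q=1$ and observing that the $\{m\}$-part contributes no linear term (your statement $\{m\} = 1 + O(h^2)$ is exactly the paper's observation that $(\{a\}\{b\}-1)/(q-q^{-1}) \to 0$, which it derives from the identity in Lemma~\ref{xmdef2}(a)). Your framing via $h = \log q$ is slightly more systematic, and you explicitly address why the denominator $\sqrt{a(a+1)}-\sqrt{b(b+1)}$ is nonzero, a point the paper leaves implicit, but the underlying argument is the same.
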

\begin{proof}
We can write $q_k=x_m=\{m\} -(q-q^{-1}) \sqrt{[m+1]_{q^2}[m]_{q^2}}$, see Lemma \ref{xmdef1}. Here we assume the branch of the square root  such that
$\lim_{q\to 1}\sqrt{[m+1]_{q^2}[m]_{q^2}} \ =\ \sqrt{m(m+1)}$ for $m$ a positive integer.
 Using the notations as in Lemma \ref{xmdef2}, we observe that
$$ \frac{q-q^{-1}}{q_k^{-1}q_{k+1}-1}\ =\ \frac{q-q^{-1}}{(\{a\}+(q-q)^{-1}\sqrt{[a+1]_{q^2}[a]_{q^2}})(\{b\}-(q-q)^{-1}\sqrt{[b+1]_{q^2}[b]_{q^2}})-1}\ =\ $$
$$=\ \frac{1}{\{b\}\sqrt{[a+1]_{q^2}[a]_{q^2}})-\{a\}\sqrt{[b+1]_{q^2}[b]_{q^2}}},$$
where we used  $\lim_{q\to 1}\frac{\{a\}\{b\}-1}{q-q^{-1}}=0$, which follows from
the last display in the proof of Lemma \ref{xmdef2}.
Using this and $\lim_{q\to 1} \{a\}=1=\lim_{q\to 1} \{b\}$, we obtain
  $$\lim_{q\to 1}\ -\frac{q-q^{-1}}{q_k^{-1}q_{k+1}-1}\ =\ \frac{-1}{\sqrt{a(a+1)}-\sqrt{b(b+1)}}.$$
  \end{proof}
We now compare \ref{tkrep} with \cite{Nazarov}, (7.10-13) where 
  the quantities $(j-i)$ and $(j'-i')$ correspond to $a$ and $b$ here. 
  Similarly, one shows that the coefficients of $v_kv_{k+1}\psi_\La$ and $\psi_{s_k(\La)}$ in \ref{tkrep} converge to the corresponding coefficients in \cite{Nazarov}, (7.11).

\section{Representations of $G_n$ at $q\in \mathbb{C}$}\label{sec:Gnreprootofunity}

The goal of this section is to extend the results of \cite{JN} to study the representation theory of $G_n$ and $G_n[0]$ at specialised values of $q\in \mathbb{C}$. There is a bifurcation here depending on if $q$ is a root of unity or not. In the case that $q$ is not a root of unity, we are able to make minor adjustments to the results of \cite{JN} to show that $G_n$ and $G_n[0]$ are semisimple, and obtain a classification of the irreducible representations of these algebras. The case where $q$ is a root of unity is significantly more difficult, as the representations defined in \cite{JN} are not well-defined in general, and we have to make major adjustments to obtain irreducible representations of $G_n$ and $G_n[0]$.

We begin with the case where $q$ is a primitive $4N$-th root of unity. The case of $q$ not a root of unity will fall to much easier arguments. We note that we do not consider the case where $q$ is a root of unity with order is non-zero modulo $4$, in those cases we expect that the corresponding category $\overline{\mathcal{E}_q}$ is not semisimple.


\subsection{Representations of $G_n$ at $q$ a primitive $4N$-th root of unity}

Throughout this subsection we will be working with $q$ a primitive $4N$-th root of root unity. We will overload $G_n$ and $G_n[0]$ throughout to denote the specializations of these algebras at these values, along with the generic algebras. It will be clear from context which algebra is being referred to. 

It is clear from the explicit formulas for $q_k$ and $\beta_k$ in Section~\ref{matrixcoeff} 
that the representations in Proposition \ref{prop:gkaction} are not well-defined in general if $q$ is a root of unity. In this section, we make some adjustments to obtain representations of $G_n$ for $q$ a primitive $4N$-th root of unity. Tableaux can be thought of as walks on a graph whose vertices are diagrams and whose edges add boxes. At a root of unity we will consider walks on a new graph, which can be thought of as an SL(N) version of the graph of used for generic $q$. This is similar to the approach for representations of Hecke algebras at roots of unity in \cite{HansThesis}.

\subsection{Restricted diagrams and tableaux} 

\begin{defn} Let $N\in \mathbb{N}\cup \{\infty\}$ and define $ Y^<_N$ as the set of all strict Young diagrams with $\la_1<N$. We define the stable $N$-graph $\Galnk$ to be the directed graph whose vertices are the Young diagrams in $ Y^<_N$ and where there's an edge  $\la\ \to^{(N)}\ \mu$
if either:

\begin{itemize}
    \item $\mu$ can be obtained by adding a single box to $\la$, or
    \item $\la_1=N-1$ and $\mu = [\la_2,\la_3\ ...\ \la_{\ell(\la)}]$.
\end{itemize}

We call the former kind of edge a generic edge and the latter a restricted edge. Note that in the $N = \infty$ case there are no restricted edges.

\end{defn}

We now define the vector spaces on which $G_n$ at a specialised $q$ will act.
\begin{defn}
Let $\lambda$ a strict Young diagram, and let $n\in \mathbb{N}$. Let $\Slnk$ denote the set of all paths $\La$ of length $n$ from $[0]$ to $\la$ in the graph $\Galnk$. If we let $f(\la)$ denote the number of restricted edges in a path $\Gamma$, then we have that $n=|\la|+f(\la)N$. Note that this formula is independent of the path $\Gamma$. Also note that in the case $N=\infty$ 
, we have that $n = |\lambda|$, and that $\Slnk$ is identified with the set of standard tableaux on $\lambda$.

 Let $\Vlnk$ be the vector space spanned by the set $\{ v\psi_\La,\ v\in \mathrm{Cliff}(n), \La\in\Slnk\}$. Note that for each $\La\in\Slnk$ we have at most one additional path
\[  \emptyset \to^{(N)} \Lambda(1) \to^{(N)} \cdots \Lambda(k-1) \to^{(N)} x  \to^{(N)} \Lambda(k+1) \to^{(N)} \cdots \in  \Slnk\]
with $x\neq \Lambda(k)$. If such a new path exists, we denote it by $s_k(\La)$. If there is no such new path, we define $s_k(\La)=0$. If the $k$-th edge is restricted, and the $(k+1)$-st edge is not, we define $s_k(\La)<\La$. If both edges are generic, then we define $<$ as before, i.e. $x$ and $\Lambda(k)$ are obtained by adding boxes to $\Lambda(k-1)$ in different rows and the path using the smaller row is smaller.
\end{defn}




We can now define the action of the generators of $G_n$ on the space $V_\lambda^{(n)}$.





\begin{lem} \label{lem:betaWellDefined}
   Let $N\in \mathbb{N}_{\geq 2}$, $q$ a primitive $4N$-th root of unity, $\lambda \in Y_N^<$, let $n\geq |\la|$ an integer such that $n \equiv |\lambda| \pmod N$, and let $1 \leq k \leq n$ be an integer, then $q_k \neq q_{k+1}^{\pm 1}$, and thus $\beta_k$ is defined. 
\end{lem}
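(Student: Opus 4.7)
The plan is to use the explicit formula for $\beta_k$ from Lemma~\ref{xmdef2}(a), namely
\[\beta_k \;=\; \frac{[a+b+2][a+b][a-b+1][a-b-1]}{[a+b+1]^2 [a-b]^2},\]
where $a = m_k$ and $b = m_{k+1}$. This rational expression is defined precisely when $[a+b+1]$ and $[a-b]$ are both nonzero. Since $q$ is a primitive $4N$-th root of unity, $[n]_q = 0$ iff $2N \mid n$, so the lemma reduces to showing $2N \nmid (m_k - m_{k+1})$ and $2N \nmid (m_k + m_{k+1} + 1)$ for every consecutive pair of edges; equivalently, by Lemma~\ref{xmdef1}(b), that $q_k \neq q_{k+1}^{\pm 1}$.

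The next step is a case analysis on the types of the two edges. Each edge in $\tilde\Gamma(N)$ is either generic, in which case it adds a box of content $m = j - i \in [0, N-2]$ (since all diagrams in the path lie in $Y^<_N$), or restricted, in which case it carries the virtual content $m = N - 1$ (the content of the position $(1, N)$ that would complete the top row of length $N-1$). Two consecutive restricted edges are impossible: after a restricted edge the new top row $\lambda_2$ has length strictly less than $N-1$. In the mixed case, one of $m_k, m_{k+1}$ equals $N-1$ and the other lies in $[0, N-2]$, giving $|m_k - m_{k+1}| \in [1, N-1]$ and $m_k + m_{k+1} + 1 \in [N, 2N-2]$; neither range contains a nonzero multiple of $2N$. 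For two generic edges, $|m_k - m_{k+1}| \leq N-2$ and $m_k + m_{k+1} + 1 \leq 2N-3$, so divisibility by $2N$ can only occur if $m_k = m_{k+1}$.

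The main combinatorial content is ruling out this last possibility: two consecutive generic edges cannot add boxes of the same content. I would argue directly. Write the added boxes as $(i_k, j_k)$ and $(i_k + d, j_k + d)$ with $d \neq 0$ and common content $c$, and let $\mu = \Lambda(k)$; note $\mu_{i_k} = c + 1$ since the step-$k$ box is the rightmost in its row. If $d > 0$, addability of $(i_k + d, j_k + d)$ to $\mu$ forces the box $(i_k + d - 1, j_k + d)$ directly above it to lie in $\mu$, i.e.\ $\mu_{i_k + d - 1} \geq c + 2$; but strictness of $\mu$ gives the sharp bound $\mu_{i_k + d - 1} \leq \mu_{i_k} - (d - 1) = c + 2 - d$, contradicting $d > 0$. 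If $d < 0$, strictness gives $\mu_{i_k + d} \geq c + 1 - d > c + 1$, which forces the position $(i_k + d,\, (i_k + d) + c)$ already to lie in row $i_k + d$ of $\mu$, contradicting its addability. The only subtlety is the shifted-diagram bookkeeping (row $r$ occupies columns $r, \ldots, r + \lambda_r - 1$); beyond that, this is the standard diagonal-chain argument for strict shifted Young tableaux.
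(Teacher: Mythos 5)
Your proposal follows the same route as the paper: reduce via Lemma~\ref{xmdef1}(b) to the two congruence conditions on $m_k, m_{k+1}$ modulo $2N$, bound the contents using $\lambda\in Y_N^<$, and rule out $m_k=m_{k+1}$ by strictness. The paper compresses the strictness step into a single phrase ("since $\la$ is strict, they are distinct"), whereas you supply the explicit restricted/generic case analysis and the diagonal-chain argument; this is filled-in detail rather than a different method, and it is correct.
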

\begin{proof}
    By Lemma \ref{xmdef1}(b), $q_k \neq q_{k+1}^{\pm 1}$ iff $q^{2(m_k-m_{k+1})}=1$ or $q^{2(m_k+m_{k+1}+1)}=1$, which in turn is equivalent to $m_k \equiv m_{k+1} \pmod {2N}$ or $m_k+m_{k+1} + 1 \equiv 0 \pmod {2N}$.
    Since $\la \in Y_N^<$, $m_k$ and $m_{k+1}$ lie between $0$ and $N$, and since $\la$ is strict, they are distinct. The result follows.
\end{proof}
With this lemma in hand we can now define representations of $G_n$ at $4N$-th roots of unity. This is done in an analogous manner to the generic $q$ case. The only major difference to the generic case is that we now have to consider restricted edges in our actions.

\begin{defn}\label{def:gnreproot}
With the same notation as in the previous lemma, 
let $\Vlnk$ be a vector space with basis
$v\psi_\La=v_{i_1}v_{i_2}\ ...\ v_{ir}\psi_\La$, $\La\in \Slnk$. In the case that the $m$-th edge $\Lambda$ is restricted, we define $q^\Lambda_m=-1$. For the generic edges, we remind the reader that $q_k = x_{m_k}$ where $m_k = j_k-i_k$ and $(i_k,j_k)$ is the position of the box added in the $k$-th edge of the shifted Young diagram $\lambda$. 

Now we define the action of $v_k$ to be free, and the $t_k$ action is defined by 
\[
t_k\psi_\La\ =\ \hbeta \psi_{s_k(\La)} -\frac{q-q^{-1}}{q_k^{-1}q_{k+1}-1}\psi_\La+\frac{q-q^{-1}}{q_kq_{k+1}-1}v_kv_{k+1}\psi_\La,\] 
where $\hbeta=\beta_k$ if $s_k(\La)<\La$, $\hbeta=1$  if $s_k(\La)>\La$ and $\hbeta=0$  if $s_k(\La) = 0$.



In particular, the action of $t_k$ preserves 
the span of $\{  \psi_\La, v_k v_{k+1}\psi_\La, \psi_{s_k(\La)}, v_kv_{k+1}\psi_{s_k(\La)}\}$, with matrix (in the case of $s_k(\Lambda)>\Lambda$) being
\[  \begin{bmatrix}
    -\frac{q-q^{-1}}{q_k^{-1}q_{k+1} -1} & \frac{q-q^{-1}}{q_kq_{k+1} -1}- (q-q^{-1}) &  \widehat{\beta_{k}}&0\\
    \frac{q-q^{-1}}{q_kq_{k+1} -1} & \frac{q-q^{-1}}{q_k^{-1}q_{k+1} -1} + (q-q^{-1})  & 0 & -\widehat{\beta_{k}}\\
    1&0& -\frac{q-q^{-1}}{q_kq_{k+1}^{-1} -1}&\frac{q-q^{-1}}{q_kq_{k+1} -1}- (q-q^{-1})\\
    0&-1& \frac{q-q^{-1}}{q_kq_{k+1} -1} &\frac{q-q^{-1}}{q_kq_{k+1}^{-1} -1}+(q-q^{-1})\\
\end{bmatrix}.           \]
The matrix in the case of $s_k(\Lambda)<\Lambda$ is nearly the same, with the off-diagonal blocks exchanged. More generally the action of $t_k$ preserves the subspace $\{  v\psi_\La, v_k v_{k+1}v\psi_\La, v\psi_{s_k(\La)}, v_kv_{k+1}v\psi_{s_k(\La)}\}$, with matrices given by similar formulas as above.

It is useful to observe that the definition of the action of $t_k$ on $\psi_\Lambda$ is local in the sense that the coefficients of the action only depend on the $k$-th and $k+1$-th edge of $\Lambda$.

\begin{remark}
    Note that the matrix coefficients of $t_k$ are evaluations of a strict subset of the algebraic functions used to define the operator $t_k$ in the generic $q$ setting. This will allow us to apply algebraic geometry style techniques throughout the remainder of the paper to study these representations.
\end{remark}
The first such application of this remark is in showing that $V_\la^{(n)}$ indeed define representations at roots of unity.


\end{defn}

\begin{prop}\label{Vlnkrep} Let $q$ be a primitive $4N$-th root of unity. Then 
$\Vlnk$ is a $G_n$-module, with the action of the generators defined in Definition \ref{def:gnreproot}.
\end{prop}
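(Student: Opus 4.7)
My plan is to check each defining relation of $G_n$ directly, exploiting the locality built into Definition~\ref{def:gnreproot}: the action of each $t_k$ only modifies the $k$-th and $(k+1)$-th edge of $\Lambda$ and the factors $v_k, v_{k+1}$ in the Clifford word. The Clifford relations (C) are immediate because $v_j$ acts freely on the Clifford part $\mathrm{Cliff}(n)$, and the far commutation relations $t_it_j=t_jt_i$ for $|i-j|\geq 2$ and $t_jv_l=v_lt_j$ for $l\neq j,j+1$ follow because the matrix coefficients of $t_k$ only depend on positions $k,k+1$ and affect only $v_k,v_{k+1}$.

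For the Hecke quadratic relation $(t_k-q)(t_k+q^{-1})=0$ and the mixed relations (M), I reduce to the explicit $4\times 4$ block given in Definition~\ref{def:gnreproot}, which describes the action of $t_k$ on $\mathrm{span}\{v\psi_\Lambda,\ v_kv_{k+1}v\psi_\Lambda,\ v\psi_{s_k(\Lambda)},\ v_kv_{k+1}v\psi_{s_k(\Lambda)}\}$ (with the appropriate degeneration when $s_k(\Lambda)=0$). By Lemma~\ref{lem:betaWellDefined} these entries are well-defined rational functions of $q_k,q_{k+1}$ at our specialized $q$. The quadratic and mixed relations then become polynomial identities in the entries $q_k,q_{k+1}$ and $\beta_k$ which can be verified by direct computation; the key algebraic input is the identity $\beta_k=\text{(numerator)}/(q_k^{-1}q_{k+1}-1)(q_kq_{k+1}-1)$ coming from Lemma~\ref{xmdef2}(a), together with Lemma~\ref{xmdef1}(a) ($x_m+x_m^{-1}=2\{m\}$). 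In particular, one verifies that the assignment $q_k\mapsto -1$ used for restricted edges is consistent with all these identities.

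The main obstacle is the braid relation $t_kt_{k+1}t_k=t_{k+1}t_kt_{k+1}$, which requires a three-edge local analysis. Fix $\Lambda\in\Slnk$ and look at the subspace spanned by $v\psi_{\Lambda'}$ for all paths $\Lambda'$ agreeing with $\Lambda$ outside positions $k-1,k,k+1,k+2$. If all three edges involved are generic, then the matrix coefficients produced by Definition~\ref{def:gnreproot} coincide with the specializations to our $q$ of the rational functions appearing in Proposition~\ref{prop:gkaction}. Because the braid relation holds as an identity of rational functions in $q$ over the generic base (Jones--Nazarov), and all denominators $q_i^{\pm 1}q_j-1$ that appear in this subblock are nonvanishing at our $q$ by Lemma~\ref{lem:betaWellDefined}, the identity specializes. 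If one or more of the three edges is restricted, the configuration is constrained: a restricted edge forces $\Lambda_1=N-1$, so the possible $(k-1,k,k+1,k+2)$-subpatterns form a short explicit list. For each case I will check the braid relation directly on the corresponding (at most $8$-dimensional, after incorporating the $v_kv_{k+1},v_{k+1}v_{k+2}$ factors) subspace, using $q^\Lambda_k=-1$ and the computed $\beta_k$ from Lemma~\ref{betazero} and Lemma~\ref{xmdef2}(a).

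Finally, one checks the relation $t_jv_{j+1}=v_jt_j-(q-q^{-1})(v_j-v_{j+1})$ (and its analogues) on the same local $4$-dimensional subspace: the free $v$-action together with the formula for $t_k$ makes each term into an explicit rational combination of the basis vectors, and equality is again a polynomial identity in $q_k,q_{k+1}$ that holds on the generic locus and therefore at the specialization, since by Lemma~\ref{lem:betaWellDefined} no denominator vanishes. Assembling these checks gives a well-defined $G_n$-action on $\Vlnk$.
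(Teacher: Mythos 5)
Your proposal follows the same overall strategy as the paper's proof — exploit locality of the $t_k$-action, reduce each relation to a small block preserved by the relevant generators, and lean on Jones--Nazarov for the generic identities — but it diverges in how the braid relation is handled when restricted edges appear, which is the real content of the proof. The paper's approach lifts a restricted three-edge window to a genuine Jones--Nazarov path in some $V_{[N,\dots]}$ (or $V_{[N+1,\dots]}$) by padding the diagrams after each restricted edge with a full row of $N$ boxes, and then shows that any \emph{extra} tableaux present in the generic lift but not in the restricted setting are dynamically decoupled because the corresponding $\beta_k$ vanishes (Lemma~\ref{betazero}(b), via $[m_k+m_{k+1}+2]=0$). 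This reduces the braid relation at the root of unity to the braid relation in the generic module, rather than to a fresh computation. You instead propose verifying the braid relation by direct matrix multiplication on the local block, citing Lemma~\ref{betazero} and Lemma~\ref{xmdef2}(a) for the numerical inputs. This would succeed (the matrices really do coincide with specializations of the generic ones), but you do not isolate the structural reason why the degenerate restricted action still satisfies the braid relation: you are essentially reproving an instance of Jones--Nazarov by hand in the hardest case, where the paper bypasses the computation with the ``lift and show $\beta=0$'' observation. Two more minor points: your claim that the local subspace is ``at most $8$-dimensional'' is too low — already for $N=4$ there are three paths from $[3,1]$ to $[2,1]$ in $\tilde\Gamma(N)$ agreeing outside the window, giving a $12$-dimensional block — and the mixed relations $t_jv_j=v_{j+1}t_j$, $t_jv_{j+1}=v_jt_j-(q-q^{-1})(v_j-v_{j+1})$ involve odd Clifford elements, so they cannot be checked on the even $4\times 4$ block of Definition~\ref{def:gnreproot}; you need the larger block including $v_k\psi_\Lambda$, $v_{k+1}\psi_\Lambda$, etc. Neither issue breaks the argument, but together with the unexplained restricted-edge computation they make your proposal less clean than the paper's reduction.
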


\begin{proof}


We need to check that the actions of our generators satisfy the defining relations of $G_n$. The commutation relation $t_it_j = t_jt_i$ for $|i-j|\geq 2$ follows immediately from the local definition of the action. The local relations require more work. We will achieve this by locally lifting
restricted edges to generic ones, and then appeal to Proposition \ref{prop:gkaction}
where the relations have been proved in that setting. The only complication comes from the fact that in the generic setting
we may have more tableaux than in the restricted setting, and we will need to check that the terms contributed by these extra tableaux vanish when we specialize $q$ to a root of unity.
We will do this in detail for the most complicated
relations, checking the braid relations for  $t_k$ and $t_{k+1}$. We note that with the other (easier to verify) relations in hand, it suffices to check this relation on the vectors $\psi_\Lambda$. For this we need to consider the $k$-th, $(k+1)$-st and $(k+2)$-nd edges of $\La$. There are several cases to consider, depending on whether these edges are restricted or not.

We begin with the most difficult case, when two edges are restricted. As we can not have two consecutive restricted edges for any $\La\in\Slnk$, we only have to consider the following, where $\tilde\la=[\la_3,\la_4, ...]$:
$$  [N-1,N-2,\tilde\la ]\ \to^{(N)}\  [N-2,  \tilde\la]\ \to^{(N)}\   [ N-1, \tilde\la] \ \to^{(N)}\      \tilde\la.$$
From Definition~\ref{def:gnreproot} we have that $t_k$ and $t_{k+1}$ preserve the subspace 
\[V:= \operatorname{span}_{\mathbb{C}}\{ \psi_\La, v_kv_{k+1}\psi_\La , v_kv_{k+2}\psi_\La, v_{k+1}v_{k+2}\psi_\La    \}\]
with matrices 
\begin{align*} t_k|_V &=  \begin{bmatrix}
    -\frac{q-q^{-1}}{-x_{N-2} -1} & \frac{q-q^{-1}}{-x_{N-2} -1}- (q-q^{-1}) & 0&0\\
    \frac{q-q^{-1}}{-x_{N-2} -1} & \frac{q-q^{-1}}{-x_{N-2} -1} + (q-q^{-1})  & 0 & 0\\
   0&0  &  -\frac{q-q^{-1}}{-x_{N-2} -1} &-\frac{q-q^{-1}}{-x_{N-2} -1} - (q-q^{-1})\\
   0&0 & -\frac{q-q^{-1}}{-x_{N-2} -1}&\frac{q-q^{-1}}{-x_{N-2} -1}+(q-q^{-1})
\end{bmatrix}       \\        
t_{k+1}|_V &=  \begin{bmatrix}
    -\frac{q-q^{-1}}{-x_{N-2}^{-1} -1} & 0& 0&\frac{q-q^{-1}}{-x_{N-2}-1}+(q-q^{-1})\\
   0 &-\frac{q-q^{-1}}{-x_{N-2}-1}  & -\frac{q-q^{-1}}{-x_{N-2}^{-1} -1}-(q-q^{-1})  & 0\\
   0&-\frac{q-q^{-1}}{-x_{N-2}^{-1} -1} & \frac{q-q^{-1}}{-x_{N-2}-1}+(q-q^{-1})&0\\
   \frac{q-q^{-1}}{-x_{N-2}-1}&0 & 0&\frac{q-q^{-1}}{-x_{N-2}^{-1} -1} + (q-q^{-1}) 
\end{bmatrix}    
\end{align*}

We now consider the operators $t_{2N + |\lambda| -2}$ and $t_{2N + |\lambda| -1}$ on the space $V_{ [N+1, N-1, \tilde \la]}$ in the generic setting. By Proposition~\ref{prop:gkaction} we know that these operators satisfy the braid relations. Let $\Lambda'$ be any path in $\mathcal{S}_{[N+1, N-1, \tilde \la]}$ ending in 
\[\cdots \to [N-1,N-2,\tilde\la]\ \to\  [N, N-2, \tilde\la ]\ \to\   [ N, N-1, \tilde\la] \ \to\      [N+1,N-1,\tilde\la].\]
\textit{A-priori} we only know the operators $t_{2N + |\lambda| -2}$ and $t_{2N + |\lambda| -1}$ preserve the subspace spanned by the vectors $\{ v \psi_{\Lambda'}, v \psi_{s_{2N + |\lambda| -1}(\Lambda')}: v \in \operatorname{Cliff}( v_{{2N + |\lambda| -2}},v_{2N + |\lambda| -1},v_{{2N + |\lambda| }})[0]\}$. This is due to the existence of the second path $s_{2N + |\lambda| -1}(\La')$ from $[N, N-2,\tilde\la ]$ to $ [N+1,N-1,\tilde\la ]$ via the diagram $[N+1,N-2,\tilde\la ]$. However, for the path $\Lambda'$ we have $m_{2N + |\lambda| -1}=N-2$ and $m_{2N + |\lambda|}=N$. Hence $\beta_{2N + |\lambda| -1}=0$, by Lemma~\ref{betazero}. Hence the space $V'$ spanned by $\{ v \psi_{\La'}, v\in \operatorname{Cliff}( v_{{2N + |\lambda| -2}},v_{2N + |\lambda| -1},v_{{2N + |\lambda| }})[0]\}$ is preserved under the action of both $t_{2N + |\lambda| -2}$ and $t_{2N + |\lambda| -1}$.

Direct computation shows that for the path $\Lambda'$ we have $q_{2N+|\la| -2} = x_{N-1}$, $q_{2N+|\la| -1} = x_{N-2}$, and $q_{2N+|\la| } = x_{N}$. By Lemma~\ref{betazero} we have $x_{N} = -1 = x_{N-1}$. It follows from the definition that the matrices $t_{2N + |\lambda| -2}|_{V'}$ and $t_{2N + |\lambda| -1}|_{V'}$ agree with the matrices $t_{k}|_V$ and $t_{k+1}|_V$ respectively. As the operators $t_{2N + |\lambda| -2}$ and $t_{2N + |\lambda| -1}$ satisfy the braid relation, we obtain that $t_{k}|_V$ and $t_{k+1}|_V$ satisfy the braid relation. In particular, $t_kt_{k+1}t_{k}\psi_\La = t_{k+1}t_{k}t_{k+1}\psi_\La$ as desired.

The remaining cases fall to similar logic. If the $k$-th, $(k+1)$-st and $(k+2)$-nd edges of $\La$ are all generic, then we consider the operators $t_{|\La(k+2)|-2}$ and $t_{|\La(k+2)|-1}$ on $V_{\La(k+2)}$, and $\Lambda'$ any path ending in $\La(k-1) \to\La(k) \to \La(k+1) \to \La(k+2)$. If one of these edges is restricted, then we consider the operators $t_{N+|\La(k+2)|-2}$ and $t_{N+|\La(k+2)|-1}$ on $V_{[N,\La(k+2)]}$, and $\Lambda'$ the join of any path to $\La(k)$, and the generic path obtained from $\La(k-1) \to\Lambda(k)\to \Lambda(k+1)\to \Lambda(k+2)$ by padding the Young diagrams appearing after the restricted edge with a full row of $N$ boxes.



The other local relations are checked similarly, but the subspaces where the calculations take place are smaller dimensional.
\end{proof} 

\subsection{The commutant and weight spaces of $\Vlnk$}\label{decompositionsection} We generalize the definition of commuting operators on the module
$V_\la$ in \cite{JN} to our module $\Vlnk$ where we obtain additional operators in the root of unity setting.

Consider a path $\La\in\Slnk$ of length $n$ ending at $\lambda$. Then $\La$ has $f(\la) = (n-|\la|)/N$ restricted edges, and $f(\la)+\ell(\la)$ edges where we add a box for the first time in a new row. We define
$$\llan=\ell(\la)+2f(\la).$$

\begin{remark} \label{rem:ModifiedCounting}
If you follow along a path, $\llan$ counts the number of steps where you add the first box to a new row plus the number of steps along restricted edges. In the latter case, $\ell$ decreases by $1$ but $2f$ increases by $2$ to compensate.
\end{remark}


\begin{defn}\label{def:coms}

    Define linear maps $\kappa_i$, $1\leq i\leq f(\la)+\ell(\la)$ and $\tilde\kappa_j$,  $1\leq j\leq f(\la)$ from $\Vlnk$ to itself
    by
\begin{equation}\label{rhodef}
\kappa_iv\psi_\La=vv_{d(i)}\psi_\La,\hskip 3em \tilde\kappa_jv\psi_\La=vv_{\tilde{d}(i)}\psi_\La,
\end{equation}
where $d(i)$ and $\tilde{d}(i)$ are the numbers such that
 $\La(d)\to^{(N)} \La(d+1)$ results in adding a box in a new row for the $i$-th time,
and $\La(\tilde d) \to^{(N)} \La(\tilde d+1)$ results in
removing a row of $N-1$ boxes for the $i$-th time.
Finally, we define $\kappa_0$ by $\kappa_0v\psi_\La=\al(v)\psi_\La$, where $\al(v)=\pm v$ depending on the degree of $v$, see Equation~\ref{autom}.
\end{defn}


\begin{prop}\label{commutant}

Let $q$ be a primitive $4N$-th root of unity. Then the map
\[  \rho(v_i) := \begin{cases}  \kappa_i &\quad  1\leq i \leq f(\lambda) + \ell(\lambda)\\
\tilde\kappa_{i - f(\lambda) - \ell(\lambda)} &\quad  i > f(\lambda) + \ell(\lambda)
\end{cases}
:\mathrm{Cliff}\left(\llan\right) \to \End_{G_n}\left(\Vlnk\right)     \]
is an injective algebra homomophism. Further, this map extends to an injective algebra homomorphism
\[  \rho_0 :  \mathrm{Cliff}\left(\llan+1\right) \to \End_{G_n[0]}\left(\Vlnk\right)   \]
where the additional generator is mapped to $\kappa_0$.
\end{prop}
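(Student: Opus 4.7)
The plan is to establish three things in turn: (i) the operators $\kappa_i$, $\tilde\kappa_j$ (and later $\kappa_0$) satisfy the defining relations of the appropriate Clifford algebra, (ii) they commute with the $G_n$-action (resp.\ $G_n[0]$-action) on $\Vlnk$, and (iii) the resulting map is injective. I expect the main obstacle to be (ii), specifically commutation with each $t_k$; since $d_\La(i)$ and $\tilde d_\La(j)$ depend on the path $\La$, and $t_k$ may alter $\La$ to $s_k(\La)$, there is a real risk that the swap changes which index carries the new-row or restricted label.

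For (i), the relations $\kappa_i^2 = 1 = \tilde\kappa_j^2$ and pairwise anti-commutation follow from the Clifford relations for the $v_m$'s together with the observation that, for each fixed $\La$, the combined index set $\{d(1),\ldots,d(\ell(\la)+f(\la)),\tilde d(1),\ldots,\tilde d(f(\la))\}$ consists of pairwise distinct elements of $\{1,\ldots,n\}$. The extra generator $\kappa_0$ satisfies $\kappa_0^2 = \id$ (since $\al^2 = \id$) and anti-commutes with each $\kappa_i$ and $\tilde\kappa_j$ because those operators shift Clifford parity while $\kappa_0$ acts as $(-1)^{|v|}$.

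For (ii), commutation with each $v_k$ is immediate: $v_k$ acts by left Clifford multiplication while $\kappa_i$ acts by right multiplication by $v_{d(i)}$, and left and right multiplication commute on basis vectors. The delicate point is commutation with $t_k$. I would split into cases according to how $d_\La(i)$ meets $\{k,k+1\}$ (and similarly for $\tilde d_\La(j)$). When $d_\La(i) \notin \{k,k+1\}$ and also $d_{s_k(\La)}(i) \notin \{k,k+1\}$, the relation $t_k v_\ell = v_\ell t_k$ for $\ell \ne k,k+1$ immediately yields commutation. The subtle case is $d_\La(i) \in \{k,k+1\}$: a valid swap may send $d_{s_k(\La)}(i)$ to the other element of $\{k,k+1\}$, and a naive calculation then produces a discrepancy whose coefficient is proportional to $\frac{1}{q_k q_{k+1}-1} - \frac{1}{q_k^{-1}q_{k+1}-1}$. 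The rescue is the observation that whenever $d_\La(i) = k$ corresponds to a new-row addition, the added box sits at position $(\ell,\ell)$ in shifted coordinates, so $m_k = 0$ and $q_k = x_0 = 1$; and whenever $\tilde d_\La(j) = k$ corresponds to a restricted edge, $q_k = -1$ by Definition~\ref{def:gnreproot}. In both cases $q_{d(i)}^2 = 1$, so the two denominators coincide and the discrepancy vanishes; a direct computation using the cross-relations $t_k v_k = v_{k+1} t_k$, $t_k v_{k+1} = v_k t_k - (q-q^{-1})(v_k - v_{k+1})$, together with the matrix of $t_k$ from Definition~\ref{def:gnreproot}, then verifies commutation. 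In a valid path in $\Slnk$, two consecutive new-row additions are impossible (they would force equal row lengths, violating strictness), as are two consecutive restricted edges, so only the mixed cases remain and are handled identically.

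For (iii), the image of $\rho$ contains the $2^{\llan}$ products $\prod_{i \in I}\kappa_i \cdot \prod_{j \in J}\tilde\kappa_j$ indexed by subsets $I \subseteq \{1,\ldots,\ell(\la)+f(\la)\}$ and $J \subseteq \{1,\ldots,f(\la)\}$; acting on a fixed basis vector $\psi_\La$, each such product produces a distinct (up to sign) basis vector of $\Vlnk$, so these operators are linearly independent. Hence $\dim \im(\rho) \ge 2^{\llan} = \dim \mathrm{Cliff}(\llan)$ and $\rho$ is injective. For the extension $\rho_0$: the generators $t_k$ and $e_k = v_k v_{k+1}$ of $G_n[0]$ are Clifford-even, so they preserve the $\pm 1$-eigenspaces of $\kappa_0$ and commute with it. Combined with (i), this yields an algebra homomorphism $\rho_0 : \mathrm{Cliff}(\llan+1) \to \End_{G_n[0]}(\Vlnk)$. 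Injectivity follows by the same linear-independence argument, now evaluated on both $\psi_\La$ (where $\kappa_0$ acts as $+1$) and $v_1\psi_\La$ (where $\kappa_0$ acts as $-1$), giving the full $2^{\llan+1}$-dimensional image.
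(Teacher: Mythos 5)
Your proposal is correct and mirrors the paper's own proof in all essentials: both proofs reduce commutation with $t_k$ to the key observation that new-row edges have $q_{d(i)}=1$ and restricted edges have $q_{\tilde d(j)}=-1$ (so $q^2=1$ at the relevant index, which kills the problematic term), and both derive the $\mathrm{Cliff}(\llan+1)$ extension from the fact that $\kappa_0$ commutes with the Clifford-even operators while anti-commuting with the $\kappa_i,\tilde\kappa_j$. The paper delegates the resulting $t_k$-commutation calculation to \cite[Proposition~6.3]{JN} rather than writing it out, and it handles injectivity by noting that on each $\mathrm{Cliff}(n)\psi_\La$ the $\kappa_i,\tilde\kappa_j$ act by right multiplication by distinct generators (so generate a copy of $\mathrm{Cliff}(\llan)$), then checks the central element $\rho(v_1\cdots v_m)$ acts nonzero to rule out a kernel in the odd-rank case. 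Your injectivity argument via linear independence of the $2^{\llan}$ monomial products is a mild, equally valid variant of that dimension-count; it saves a sentence of Clifford-algebra structure theory at the cost of an explicit spanning check, but buys nothing substantively different.
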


\begin{proof}  We first claim that each of the $\kappa_i$ and $\tilde\kappa_j$ are elements of $\End_{G_n}\left(\Vlnk\right)$. This is mostly analogous to the proof of \cite[Proposition 6.3]{JN} for $G_n$. The nontrivial part consists of showing that each $\kappa_i$ and $\tilde\kappa_j$ commutes with the action of  all the $t_k$.

We first look at the case of the $\kappa_i$. Since left multiplication commutes with right multiplication, the $\kappa_i$ clearly commute with the action of the Clifford algebra. Thus it is enough to check that $\kappa_i (t_k \psi_\Lambda) = t_k \kappa_i(\psi_\Lambda) = t_k v_{d(i)} \psi_\Lambda.$ This is easy to see if $k \neq d(i),\; d(i)-1$ from the definition of the action and using that distant generators commute.

We now consider the case where $k=d(i)$. The first time a new box is added in each row, it will occur along the main diagonal in the shifted diagram. Thus, $m_k = 0$ and $q_k=1$. We can now follow the calculation in \cite[Proposition 6.3]{JN}, which relies only on $q_k=1$. Similarly, when $k = d(i)-1$ we have that $q_{k+1} = 1$ so again we can follow \cite[Proposition 6.3]{JN}.

Proving the claim for $\tilde\kappa_j$ is completely analogous. Again we need only check $\tilde{\kappa}_i (t_k \psi_\Lambda) = t_k \tilde{\kappa}_i(\psi_\Lambda) = t_k v_{\tilde{d}(i)} \psi_\Lambda,$ and again this is easy to see if $k \neq \tilde{d}(i),\; \tilde{d}(i)-1$. For these indices we have $q_k = -1$ and $q_{k+1} = -1$ (respectively), and again the calculation in \cite[Proposition 6.3]{JN} goes through.



It follows that for each subspace $\mathrm{Cliff}(n)\psi_\La$, the $\kappa_i$ and $\tilde\kappa_j$ act via the right multiplication \ref{rhodef}
of $\llan=\ell(\la)+2f(\la)$ different generators $v_j$ of $\mathrm{Cliff}(n)$. Hence they generate an algebra isomorphic to $\mathrm{Cliff}(\llan)$.

It is well known that the centre of $\mathrm{Cliff}(m)$ is trivial if $m$ is even, and generated by $v_1v_2\cdots v_m$ if $m$ is odd. But 
\[\rho(v_1v_2\cdots v_m)\psi_\Lambda = \kappa_1 \ldots \kappa_{f(\lambda)+\ell(\lambda)}\tilde{\kappa}_1 \ldots \tilde{\kappa}_{f(\lambda)}\psi_\Lambda = v_{\tilde{d}_{f(\lambda)}} \ldots v_{\tilde{d}_{1}} v_{d_{\ell(\lambda)+f(\lambda)}} \ldots v_{d_1} \psi_\Lambda, \]
but this is non-zero because all of the $d(i)$ and $\tilde{d}(i)$ are distinct. Hence the map $\rho$ is injective.

It follows from the definitions that $\kappa_o$ commutes with the action of $\Gno$ and that it anti-commutes with $\kappa_i$ and $\tilde\kappa_j$ for $i,j\geq 1$. This proves that we have the required map for $\Gno$. Finally we need to check injectivity, which follows by the same calculation, since $\kappa_0$ will only change the answer by a scalar.
\end{proof}

It will be useful at several points in this section to have an explicit description of the weight spaces of the modules $V_\lambda$. Towards that goal we define the following.

\begin{defn}
   Let $q$ a primitive $4N$-th root of unity, $\lambda$ a Young diagram, and $\Lambda \in \Slnk$ a path of length $n$. We define
    \[     \mathrm{Cliff}_\Lambda := \langle  v_j : q_j^2 = 1\rangle  \subseteq \mathrm{Cliff}(n).    \]
    We define $  \mathrm{Cliff}_\Lambda[0]$ as the even subalgebra of $ \mathrm{Cliff}_\Lambda$.
\end{defn}

The following lemma is an easy consequence of the above definition.

\begin{lem}
    Let $q$ a primitive $4N$-th root of unity, $\lambda$ a Young diagram, and $\Lambda \in \Slnk$ a path of length $n$. Then
    \begin{enumerate}[(a)]
        \item $\mathrm{Cliff}_\Lambda \cong \mathrm{Cliff}({\ell(\lambda)^{(n)}})$, and $\mathrm{Cliff}_\Lambda[0] \cong \mathrm{Cliff}({\ell(\lambda)^{(n)}}-1)$.
        \item Each weight space of $V_\la^{(n)}$ is equal to  $\mathrm{Cliff}_\Lambda v\psi_\Lambda$ for some $v\in \mathrm{Cliff}(n)$.
    \end{enumerate}
\end{lem}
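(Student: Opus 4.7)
The plan is to prove both parts by carefully analyzing when $q_k^2 = 1$ along a path $\Lambda \in \Slnk$, and then exploiting the explicit action of $J_k$ given in Equation~\ref{jkrelation}.

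For part (a), the first step is to identify the set $T := \{k : q_k^2 = 1\}$. For a generic $k$-th edge, $q_k = x_{m_k}$ with $m_k = j_k - i_k$. Since $\lambda \in Y_N^<$ forces $\lambda_1 \leq N-1$ and all intermediate diagrams along the path also lie in $Y_N^<$, we have $m_k \in [0, N-2]$ at every generic step. On this range, Lemma~\ref{xmdef1}(c) combined with $q$ being a primitive $4N$-th root of unity forces $x_m = \pm 1$ only at $m=0$ (giving $x_0 = 1$); the candidate equalities $q^{m} = 1$, $q^{m+1} = 1$, $q^{4m} = 1$, $q^{4m+4} = 1$ all have no non-trivial solutions in $[0, N-2]$. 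For restricted edges we set $q_k = -1$ by definition. Hence $T$ is the disjoint union of the indices at which a new row is started (generic edges with $m_k = 0$) and the indices of the restricted edges. Counting as in Remark~\ref{rem:ModifiedCounting}, new rows are started $\ell(\lambda) + f(\lambda)$ times along the path and there are $f(\lambda)$ restricted edges, giving $|T| = \ell(\lambda) + 2f(\lambda) = \ell(\lambda)^{(n)}$. The elements $\{v_k : k \in T\}$ are mutually anticommuting involutions in $\mathrm{Cliff}(n)$, so they generate a copy of $\mathrm{Cliff}(\ell(\lambda)^{(n)})$. The isomorphism $\mathrm{Cliff}_\Lambda[0] \cong \mathrm{Cliff}(\ell(\lambda)^{(n)} - 1)$ is then the standard fact that the even subalgebra of $\mathrm{Cliff}(m)$ is isomorphic to $\mathrm{Cliff}(m-1)$.

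For part (b), I use that by Equation~\ref{jkrelation} the vector $v \psi_\Lambda$ is a joint eigenvector of the $J_k$ with eigenvalues $(q_k^{\nu_v(k)})_k$. Fixing $\Lambda$ and a weight, for $k \in T$ we have $q_k = q_k^{-1}$ so the eigenvalue is insensitive to whether $v_k$ divides $v$, while for $k \notin T$ we have $q_k \neq q_k^{-1}$ so the eigenvalue determines whether $v_k$ divides $v$. Choosing any basis monomial $v_0$ realising the correct pattern on $T^c$, the corresponding weight space inside $\mathrm{Cliff}(n)\psi_\Lambda$ is spanned (up to signs from reordering) by $\{w \cdot v_0 \psi_\Lambda : w \in \mathrm{Cliff}_\Lambda\}$, which equals $\mathrm{Cliff}_\Lambda \cdot v_0 \psi_\Lambda$.

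It remains to show that distinct paths $\Lambda_1 \neq \Lambda_2$ contribute to disjoint sets of weights, so that the weight decomposition respects the path decomposition. From the value $q_k^{\nu(k)}$ one can read off whether the $k$-th edge is restricted (eigenvalue $-1$, which by the calculation above cannot arise as $x_m^{\pm 1}$ for $m \in [0,N-2]$) or generic; and for generic edges Lemma~\ref{xmdef1}(b) yields that $x_m = x_n^{\pm 1}$ forces $m = n$ in this range, so $m_k$ is uniquely recoverable. Once the sequence $(m_k)_k$ together with the positions of the restricted edges is known, the path is determined by induction on $k$: the intermediate diagrams are always strict, so their rows have distinct lengths, and the row to which a box is added is uniquely determined by the diagonal $m_k = j_k - i_k$. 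This is the analogue of Lemma~\ref{lem:weights} in the root-of-unity setting. The main technical point to verify — and the only nontrivial obstacle — is the enumeration ruling out $x_m = \pm 1$ for $m \in [1, N-2]$, which is exactly where the constraint $\lambda_1 < N$ is used.
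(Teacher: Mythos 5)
Your proof is correct and follows essentially the same route as the paper's: in part (a) you identify $T=\{k:q_k^2=1\}$ as the restricted edges together with the generic diagonal steps $m_k=0$ (making explicit the root-of-unity enumeration from Lemma~\ref{xmdef1} that the paper leaves implicit) and count via Remark~\ref{rem:ModifiedCounting}; in part (b) you show the weight determines $m_k$ and hence $\Lambda$, then characterize the weight space inside $\mathrm{Cliff}(n)\psi_\Lambda$. The only cosmetic difference is that the paper closes part (b) with a dimension count over all weights compatible with $\Lambda$, whereas you directly identify the $\Omega$-weight basis monomials as those agreeing with $v_0$ on $T^c$; both arguments are valid and of comparable length.
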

\begin{proof}
    For (a) we observe that $q_k^2 =1$ if and only if the $k$th edge is restricted or $m_k = 0 \mod 2N$.
    Hence we only obtain $q_k^2 =1$ for an edge if the added box is in the first column, or if the edge is a restricted edge. Thus the result follows from Remark \ref{rem:ModifiedCounting}. 

     For (b) let $\Omega  = \{q_k^{\pm 1}\}$ be a weight, and let $W_\Omega\subseteq V_\lambda^{(n)}$ be the corresponding weight space. We first claim that $q_k^{\pm 1}$ uniquely determines $m_k$. For a generic edge this follows from Lemma~\ref{xmdef1}(b), along with the fact that $0 \leq m_k < N-1$, and for a restricted edge (where $q_k = -1$) this follows from Lemma~\ref{xmdef1} (c). Now the same induction as in Lemma~\ref{lem:weights} shows that $\Lambda$ is uniquely determined by $q_k^{\pm 1}$. We thus have that $W_\Omega \subseteq \operatorname{Cliff}(n) \psi_\Lambda$.

    Let $v_\Omega \in \operatorname{Cliff}(n)$ be such that $v_\Omega\psi_\Lambda$ is a weight vector for $\Omega$. It is immediate from the definitions that $\mathrm{Cliff}_\Lambda v_\Omega\psi_\Lambda\subseteq W_\Omega$. From (a) we have that the dimension of $\mathrm{Cliff}_\Lambda v_\Omega\psi_\Lambda$ is $2^{\ell(\lambda)^{(n)}}$. For a fixed path $\Lambda$, there are $2^{n- \ell(\lambda)^{(n)}}$ different possible weights compatible with $\Lambda$. One for each choice of signs in the sequence $\{q_k^{\pm 1}\}$ (recalling that there are $\ell(\lambda)^{(n)}$ cases where $q_k= q_k^{-1}$). A dimension count implies
    \[   \bigoplus_{\Omega} \mathrm{Cliff}_\Lambda v_\Omega\psi_\Lambda \cong \operatorname{Cliff}(n) \psi_\Lambda.    \]
    As the weight spaces corresponding to a fixed $\Lambda$ partition the space $\operatorname{Cliff}(n) \psi_\Lambda$, and as each $\mathrm{Cliff}_\Lambda v_\Omega\psi_\Lambda$ is a subspace $W_\Omega$, it follows that $\mathrm{Cliff}_\Lambda v_\Omega\psi_\Lambda=W_\Omega$.
    
    
    
    
\end{proof}




\begin{cor} \label{cor:WeightCommutant}
The algebra homomorphisms from Proposition~\ref{commutant} restrict to faithful homomorphisms $\rho: \mathrm{Cliff}\left(\ell(\la)^{(n)} \right) \rightarrow \operatorname{End}_{\mathrm{Cliff}_\La}(\mathrm{Cliff}_\La \psi_\Lambda)$ and $\rho_0: \mathrm{Cliff}\left(\ell(\la)^{(n)+1} \right) \rightarrow \operatorname{End}_{\mathrm{Cliff}_\La[0]}(\mathrm{Cliff}_\La[0] \psi_\Lambda)$. 
\end{cor}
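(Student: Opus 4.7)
The plan is to leverage the explicit form of the commuting operators from Definition~\ref{def:coms}: every $\kappa_i$ acts as right multiplication by $v_{d(i)}$ and every $\tilde\kappa_j$ as right multiplication by $v_{\tilde d(j)}$. The central combinatorial observation, which I would verify first, is that all of these $v_k$'s lie in $\mathrm{Cliff}_\La$. Indeed, $d(i)$ labels an edge that adds a box to a new row for the first time, so the new box sits on the main diagonal of the shifted diagram, giving $m_{d(i)}=0$ and hence $q_{d(i)} = x_0 = 1$. Similarly, $\tilde d(j)$ labels a restricted edge, for which $q_{\tilde d(j)} = -1$ by construction. In both cases $q_k^2 = 1$, so $v_k \in \mathrm{Cliff}_\La$ by the definition of that subalgebra.

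Once this is in hand, preservation of the subspace is immediate: right multiplication by an element of $\mathrm{Cliff}_\La$ maps $\mathrm{Cliff}_\La \psi_\Lambda$ to itself, and this action manifestly commutes with the left action of $\mathrm{Cliff}_\La$. Hence $\rho$ restricts as claimed. For $\rho_0$, the same argument handles $\kappa_1,\ldots,\kappa_{\ell(\la)^{(n)}}$, and the additional generator $\kappa_0$ acts on $v\psi_\Lambda$ as $\alpha(v)\psi_\Lambda$, which preserves the parity of $v$; in particular it preserves $\mathrm{Cliff}_\La[0]\psi_\Lambda$ and clearly commutes with the left action of $\mathrm{Cliff}_\La[0]$.

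For faithfulness, I would appeal to the structure of Clifford algebras: $\mathrm{Cliff}(m)$ is either simple (m even) or a direct sum of two simple matrix algebras (m odd), so a homomorphism is injective as soon as its restriction to the center is. In the simple case one only needs the image to be nonzero, which is manifest. In the odd case, the computation from the proof of Proposition~\ref{commutant} shows that $\rho(v_1 v_2 \cdots v_{\ell(\la)^{(n)}})\psi_\Lambda$ is the nonzero Clifford monomial $v_{\tilde d_{f(\lambda)}}\cdots v_{d_1}\psi_\Lambda$, and this lies in $\mathrm{Cliff}_\La \psi_\Lambda$, so the central element is not killed by the restriction. The parallel statement for $\rho_0$ is handled by augmenting the monomial with $\kappa_0$ and again observing that the result is a nonzero vector inside $\mathrm{Cliff}_\La[0]\psi_\Lambda$ (or $\mathrm{Cliff}_\La\psi_\Lambda$, depending on parity).

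The main content of the argument is really just the combinatorial check in the first paragraph; once we know that the indices $d(i)$ and $\tilde d(j)$ enumerate positions where $q_k^2 = 1$, everything else follows by the same bookkeeping as in Proposition~\ref{commutant}.
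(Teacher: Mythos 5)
Your proof is correct and follows essentially the same route as the paper, but fills in a step the paper's (very terse) proof leaves implicit. The paper proves only faithfulness by observing that for $\ell(\la)^{(n)}$ even, even Clifford algebras are simple, and for $\ell(\la)^{(n)}$ odd, the central element $\kappa_1\cdots\kappa_{f(\la)+\ell(\la)}\tilde\kappa_1\cdots\tilde\kappa_{f(\la)}$ does not act as a scalar. You additionally verify that the operators genuinely restrict to the weight space $\mathrm{Cliff}_\La\psi_\Lambda$, by checking that the indices $d(i)$ and $\tilde d(j)$ are exactly the indices $k$ with $q_k^2=1$ (diagonal boxes give $q_k=x_0=1$, restricted edges give $q_k=-1$), so that the $\kappa_i,\tilde\kappa_j$ act by right multiplication by generators of $\mathrm{Cliff}_\La$. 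This is worth spelling out, and your combinatorial check is the right one.

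Two small points of precision. First, your faithfulness criterion is stated correctly (injectivity is equivalent to the restriction to the center being injective), but your conclusion that the central element is ``not killed'' is strictly weaker than what is needed; you need that $\rho(z)$ does not act as a \emph{scalar} (equivalently, $\rho(z)$ and $\rho(1)=\mathrm{id}$ are linearly independent). Fortunately, your explicit computation actually shows this stronger statement, since $\rho(z)\psi_\Lambda = v_{\tilde d_{f(\la)}}\cdots v_{d_1}\psi_\Lambda$ is a degree-$\ell(\la)^{(n)}$ monomial and so not a scalar multiple of $\psi_\Lambda$; you should just state the conclusion in that form. Second, in your $\rho_0$ discussion you switch between $\mathrm{Cliff}_\La\psi_\Lambda$ and $\mathrm{Cliff}_\La[0]\psi_\Lambda$: the operators $\kappa_i,\tilde\kappa_j$ are parity-reversing and do \emph{not} preserve $\mathrm{Cliff}_\La[0]\psi_\Lambda$. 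The codomain of $\rho_0$ in the corollary should be read as $\operatorname{End}_{\mathrm{Cliff}_\La[0]}(\mathrm{Cliff}_\La\psi_\Lambda)$ (as it is used subsequently in the paper's proof of Lemma~\ref{Uladiff}(d)); with that correction your preservation argument applies uniformly to all generators including $\kappa_0$.
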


\begin{proof}

 In the case that $\ell(\lambda)^{(n)}$ is even, the claim is immediate, as even Clifford algebras are simple. If $\ell(\lambda)^{(n)}$ is odd, a direct computation shows that the central element $\kappa_1\cdots \kappa_{f(\lambda)+\ell(\lambda)}\tilde\kappa_1\cdots \tilde\kappa_{f(\lambda)}$ does not act as a scalar. This implies that the two simple summands of the Clifford algebras also act nonzero. 
\end{proof}
 

\subsection{Submodules of $V_\la^{(n)}$}\label{Ulasection}
Recall that $\mathrm{Cliff}(m)$ is isomorphic to the full matrix algebra  $M_{2^{m/2}}(\C)$ for $m$ even, and it is isomorphic to
$M_{2^{(m-1)/2}}(\C) \oplus M_{2^{(m-1)/2}}(\C)$ for $m$ odd. The 2 irreducible summands for $m$ odd can be distinguished by which eigenvalue
the central element $z_m=v_1v_2\ ...\ v_m$ acts; the eigenvalues are $\pm 1$ for $m\equiv 1 \mod 4$, and $\pm i$ for
$m\equiv 3 \mod 4$.   Moreover, if $m$ is odd and $\tilde e$ is a minimal
idempotent of $\mathrm{Cliff}(m-1)\subset \mathrm{Cliff}(m)$,
$\tilde e$ is a rank 1 idempotent in each of the two summands of $\mathrm{Cliff}(m)$. 

\begin{defn}\label{Udef}
Let $q$ be a primitive $4N$-th root of unity, and let 
\[\rho: \mathrm{Cliff}(\llan)\to \End_{G_n}(\Vlnk)\quad \text{ and } \quad\rho_0:\mathrm{Cliff}(\llan+1)\to \End_{\Gno}(\Vlnk)\]
be the algebra homomorphisms defined in Proposition \ref{commutant}.

\begin{enumerate}[(a)]
    \item We define the $G_n$-modules
$$\ell(\la)\ {\rm even:}\hskip 1em \Ulnk=\rho(e)\Vlnk,\hskip 3em \ell(\la)\ {\rm odd:}\hskip 1em U^{(n)}_{\la,\pm}=\rho( e_\pm)\Vlnk,$$
Here $e$ is a minimal projection in $\mathrm{Cliff}(\llan)$ for $\ell(\la)$ even, and $e_{\pm}$ are inequivalent minimal projections in $\mathrm{Cliff}(\llan)$ for $\ell(\la)$ odd. 

\item We define the $G_n[0]$ modules
$$\ell(\la)\ {\rm odd:}\hskip 1em \Ulnk=\rho(e)\Vlnk,\hskip 3em \ell(\la)\ {\rm even:}\hskip 1em U^{(n)}_{\la,\pm } =\rho(e_\pm)\Vlnk,$$
Here $e$ is a minimal projection in $\mathrm{Cliff}(\llan+1)$ for $\ell(\la)$ odd, and $e_{\pm}$ are inequivalent minimal projections in $\mathrm{Cliff}(\llan+1)$ for $\ell(\la)$ even. 
\end{enumerate} 

\begin{remark}\label{rem:signs}
    It is important to note that for each $\lambda$ and $n$ we have the free symmetry between minimal idempotents $e_{\pm}$ in the above definition (by exchanging the sign by which the central element acts). This means that for each $\lambda$ and $n$ we currently have the freedom to interchange the modules $W_{\la,\pm}^{(n)}$. This freedom will save us significant work. For example, later we will deduce that the cutdown of $\operatorname{End}_{\overline{\mathcal{SE}_N}}(+^N)$ by the projection $p_{[1^N]}$ is isomorphic to $W_{\emptyset,\pm}^{(N)}$ for some sign choice. If we were to fix the scalars by which the $e_\pm$ act, then we would have to determine the scalar by which $e_1\cdots e_{N-1} = v_1v_N$ acts on $p_{[1^N]}$ in $G_N[0]$ to pin down the correct choice.
    For general $N$, this is a rather difficult computation. To avoid this computation (and several others) we will not break the symmetry between the modules $W_{\la,\pm}^{(n)}$ until later. Our choice to not break the symmetry is justified in that our upcoming results on the modules $W_{\la,\pm}^{(n)}$ are independent of the sign choice.

    To illustrate let $N=2$, and suppose we had made the choice that the central element acts on $e_+$ by $\mathbf{i}$. By \cite[Remark 4.16]{ConformalA} we have $-\frac{\mathbf{i}}{\sqrt{2}} e_1 - t_1 + \frac{\mathbf{i}}{\sqrt{2}} = 0$, or equivalently, $e_1 = \mathbf{i} \sqrt{2} t_1 +1$. So letting $D$ denote the determinant map, and recalling that $t_1 D = -q^{-1} D = \frac{-1+\mathbf{i}}{\sqrt{2}} D$, we see $e_1 D = \mathbf{i} \sqrt{2} \frac{-1+\mathbf{i}}{\sqrt{2}} + 1 = - \mathbf{i}$. So in this case we would have $p_{[1^2]}\operatorname{End}_{\overline{\mathcal{SE}_N}}(+^N) \cong W_{\emptyset,-}^{(2)}$ \footnote{We thank Noah Snyder for sharing this computation with us.}. Note that this calculation required an explicit formula for the smallest negligible morphism, which is why it is difficult in general.
\end{remark}
\end{defn}
\begin{defn}\label{def:Utilde}
Let $q$ be a primitive $4N$-th root of unity.

    \begin{enumerate}[(a)]
        \item Let $U$ be a $G_n$ module. Then the $G_n$ module $\tilde U$ is the same vector space as $U$, but with the $G_n$ action defined by $c.u=\al(c)u$, where $\al$ is the automorphism of $G_n$ defined in \ref{autom}.

        \item  Let $W$ be a $G_n[0]$ module, and let $\gamma_n$ be the automorphism defined in Remark \ref{q1lemma}. Then the $G_n[0]$ module $\tilde W$ is the vector space $W$ with the $G_n[0]$ action defined by $c.w=\gamma_n(c)w$.
    \end{enumerate} 

\end{defn}
As $\al(z_m)=(-1)^mz_m$,
this automorphism switches the two simple components of $\mathrm{Cliff}(m)$ for $m$ odd. In both cases, tilde implements a swap of parity, but take care that this notation is potentially confusing since the restriction from $G_n$ to $G_n[0]$ does not commute with tilde.

\begin{lem}\label{Uladiff}
Let $q$ be a primitive $4N$-th root of unity. We have the following:

\begin{enumerate}[(a)]
    \item  If $\ell(\la)^{(n)}$ is even, then $\tilde U_\la^{(n)}\cong \Ulnk$ as a $G_n$ module.

    \item  Let $\ell(\la)^{(n)}$ be odd.
Then $\tilde U^{(n)}_{\la,+}\cong U^{(n)}_{\la,-}$ as a $G_n$ module.
In particular, $U^{(n)}_{\la,+}$ and $U^{(n)}_{\la,-}$ have the same character.
But  $U^{(n)}_{\la,+}\not\cong U^{(n)}_{\la,-}$ as $G_n$-modules.

\item The multiplicity of a weight $q_k^{\nu(k)}$ 
  in $\Ulnk$ (respectively
$U_{(\la,\pm)}^{(n)}$)
is equal to the dimension of a simple $\mathrm{Cliff}(\llan)$ module. In particular, every weight of $V_{\la}^{(n)}$ is also a weight of $\Ulnk$ (respectively
$U_{(\la,\pm)}^{(n)}$).

\item The modules $U^{(n)}_{\la,+}$ and  $ U^{(n)}_{\la,-}$, viewed as $\Gno$ modules, are both isomorphic to  $W_\la^{(n)}$ for $\ell(\la)$  odd.
If $\ell(\la)$ is even, $U^{(n)}_\la\cong W_{\la,+}^{(n)}\oplus W_{\la,-}^{(n)}$ as $\Gno$ modules
and  $W_{\la,+}^{(n)}\not\cong W_{\la,-}^{(n)}$. The multiplicity of a weight $(q_k^{\nu(k)})$, $\nu(k)=\pm 1$,  in $W_\la^{(n)}$ resp
$W_{(\la,\pm)}^{(n)}$
is equal to the dimension of a simple $\mathrm{Cliff}(\llan-1)$ module.
\end{enumerate}

\end{lem}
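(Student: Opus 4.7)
The central tool is the operator $\kappa_0 \in \operatorname{End}_{G_n[0]}(V_\la^{(n)})$ from Proposition~\ref{commutant}, whose defining formula $\kappa_0(v\psi_\Lambda) = \alpha(v)\psi_\Lambda$ immediately yields $\kappa_0\circ c = \alpha(c)\circ \kappa_0$ for every $c \in G_n$. Hence $\kappa_0\colon V_\la^{(n)} \to \widetilde{V_\la^{(n)}}$ is a $G_n$-module isomorphism, and conjugation by $\kappa_0$ realises the parity automorphism on $\rho(\mathrm{Cliff}(\llan))$. I would begin with part~(c): as analyzed in Section~\ref{decompositionsection}, each weight space of $V_\la^{(n)}$ has the form $\mathrm{Cliff}_\Lambda\, v_\Omega\psi_\Lambda$ and is the regular $\rho(\mathrm{Cliff}(\llan))$-module under right multiplication. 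Cutting by a minimal idempotent $e$ of $\mathrm{Cliff}(\llan)$ therefore carves out a simple $\mathrm{Cliff}(\llan)$-module inside each weight space, whose dimension is exactly the multiplicity claimed for $U_\la^{(n)}$ and $U_{\la,\pm}^{(n)}$. This already implies the equality of characters in~(b), since both sides then have identical weight multiplicities.

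For the isomorphism statements in~(a) and~(b) I would use $\kappa_0$ as an intertwiner. When $\llan$ is even, $\mathrm{Cliff}(\llan)$ is simple and we may choose the minimal idempotent to be even, for example $e = \prod_{i=1}^{\llan/2}\tfrac{1+\mathbf{i}\kappa_{2i-1}\kappa_{2i}}{2}$; since $\kappa_0 e = e\kappa_0$, the map $\kappa_0$ restricts to the desired $G_n$-isomorphism $U_\la^{(n)}\cong \tilde U_\la^{(n)}$. When $\llan$ is odd, the identity $\alpha(z_\llan) = -z_\llan$ for the central element $z_\llan=\kappa_1\cdots\kappa_\llan$ shows that $\alpha$ interchanges the two simple summands $B_\pm$ of $\mathrm{Cliff}(\llan)$; choosing $e_- := \alpha(e_+)$, the relation $\kappa_0 e_+ = e_-\kappa_0$ gives a $G_n$-intertwiner $U_{\la,+}^{(n)} \to U_{\la,-}^{(n)}$ from $V$ to $\widetilde V$, hence the claimed $\tilde U_{\la,+}^{(n)} \cong U_{\la,-}^{(n)}$.

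The non-isomorphism in~(b) is the main obstacle. Let $p_\pm$ be the central idempotents of $B_\pm$, giving the $G_n$-decomposition $V_\la^{(n)} = p_+V_\la^{(n)} \oplus p_-V_\la^{(n)}$. Any putative $G_n$-isomorphism $\phi\colon U_{\la,+}^{(n)} \to U_{\la,-}^{(n)}$ yields, via the concrete extension $\tilde\phi := \phi\circ \rho(e_+)\colon V_\la^{(n)} \to V_\la^{(n)}$, a nonzero $G_n$-endomorphism satisfying $\tilde\phi = p_-\tilde\phi p_+$. The key structural input I will need is the commutant equality $\operatorname{End}_{G_n}(V_\la^{(n)}) = \rho(\mathrm{Cliff}(\llan))$, which I would establish by noting that any $G_n$-endomorphism preserves weight spaces (by commuting with $A_n$) and on each weight space must commute with the left $\mathrm{Cliff}_\Lambda$-action (since $\mathrm{Cliff}_\Lambda\subset G_n$), so by Corollary~\ref{cor:WeightCommutant} it lies locally in $\rho(\mathrm{Cliff}(\llan))$, with global compatibility enforced by the $t_k$-action. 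Granting this, $\tilde\phi = \rho(c)$ forces $c = p_- c p_+ = 0$ by central orthogonality, contradicting $\phi\neq 0$.

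Finally for part~(d), I would apply exactly the same template with $\mathrm{Cliff}(\llan+1)$ in place of $\mathrm{Cliff}(\llan)$. When $\ell(\la)$ is odd, $\mathrm{Cliff}(\llan+1)$ is simple, the minimal idempotents $e_\pm$ of $\mathrm{Cliff}(\llan)$ remain minimal in $\mathrm{Cliff}(\llan+1)$ (by a rank computation using the restriction of the unique simple $\mathrm{Cliff}(\llan+1)$-module to $\mathrm{Cliff}(\llan)$), and are conjugate by an element of $\mathrm{Cliff}(\llan+1)\subset \operatorname{End}_{G_n[0]}(V_\la^{(n)})$; hence $U_{\la,\pm}^{(n)} \cong W_\la^{(n)}$ as $G_n[0]$-modules. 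When $\ell(\la)$ is even, $\mathrm{Cliff}(\llan+1) = B_1' \oplus B_2'$ is split by $\alpha$, and the even minimal idempotent $e$ of $\mathrm{Cliff}(\llan)$ is $\alpha$-fixed, forcing a decomposition $e = e_+' + e_-'$ with one minimal summand in each $B_i'$; this yields $U_\la^{(n)} \cong W_{\la,+}^{(n)} \oplus W_{\la,-}^{(n)}$ on restriction to $G_n[0]$. The non-isomorphism $W_{\la,+}^{(n)} \not\cong W_{\la,-}^{(n)}$ is proved by the commutant argument of~(b) applied now to $G_n[0]$ with $\mathrm{Cliff}(\llan+1)$, and the weight multiplicities follow from the direct analog of~(c).
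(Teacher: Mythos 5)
Your proofs of (a), (c), and the character equality in (b) coincide with the paper's: (c) is the same weight‑space analysis via $\mathrm{Cliff}_\Lambda\psi_\Lambda$ and Corollary~\ref{cor:WeightCommutant}, and (a)/(b) use the operator you call $\kappa_0$, which is exactly the map $\Phi\colon v\psi_\Lambda\mapsto\alpha(v)\psi_\Lambda$ the paper introduces (the paper phrases the idempotent bookkeeping slightly differently — it compares $\rho(e)$ with $\rho(\alpha(e))$ rather than choosing $e$ to be $\kappa_0$‑fixed — but the content is the same).

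Where you genuinely diverge is the non‑isomorphism $U_{\la,+}^{(n)}\not\cong U_{\la,-}^{(n)}$ in (b), and the analogous claim in (d). You route this through the full commutant equality $\operatorname{End}_{G_n}(V_\la^{(n)})=\rho(\mathrm{Cliff}(\llan))$, which is \emph{stronger} than what is needed and is not established anywhere in the paper at this stage (Proposition~\ref{commutant} only gives injectivity of $\rho$). Your sketch of it — ``locally in $\rho(\mathrm{Cliff}(\llan))$ on each weight space, global compatibility enforced by the $t_k$‑action'' — is on the right track but hides the real content: you must pick a single weight space $W_{\Omega_0}=\mathrm{Cliff}_\Lambda\psi_\Lambda$, get $\tilde\phi|_{W_{\Omega_0}}=\rho(c_0)|_{W_{\Omega_0}}$ by Corollary~\ref{cor:WeightCommutant} and a dimension count, and then argue that $\tilde\phi-\rho(c_0)$ kills all of $V_\la^{(n)}$ because $W_{\Omega_0}$ is a cyclic $G_n$‑vector. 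That cyclicity in turn uses $\beta_k\neq 0$ (Lemma~\ref{lem:betaWellDefined}/Lemma~\ref{betazero}) to pass from $\psi_\Lambda$ to $\psi_{s_k(\Lambda)}$. Without this step spelled out, the ``global compatibility'' claim is a gap, and one should be wary here: at the level of abstract nonsense, the commutant equality for $\llan$ odd is \emph{equivalent} to the non‑isomorphism you are trying to prove once irreducibility is known, so a careless reduction would be circular.

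The paper instead gives a short, self‑contained argument: the central element $z_\Lambda=\prod_{q_k^2=1}v_k$ of $\mathrm{Cliff}_\Lambda$ is an element of $G_n$, and on the weight spaces $\mathrm{Cliff}_\Lambda\psi_\Lambda\rho(e_\pm)$ it acts (by coincidence of left and right multiplication by a central element) by the two scalars $\gamma_\pm$ that differ by sign. Since a $G_n$‑module isomorphism must preserve eigenvalues of $z_\Lambda\in G_n$ on the corresponding weight spaces, $U_{\la,+}^{(n)}\not\cong U_{\la,-}^{(n)}$ follows immediately, without any commutant computation. If you keep your route, fill in the cyclicity argument explicitly; if you want a leaner proof, switch to the paper's eigenvalue argument with $z_\Lambda$.
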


\begin{proof} 
We begin with c). Let $\Lambda \in \Slnk$ and $(q^\La_k)_k=(q_k)_k$ be the associated $n$ -tuple such that $J_k\psi_\La=q_k^{-1}\psi_\La$. We denote the corresponding weight by $\Omega_0$.
Recall that the weight space for this weight is $\mathrm{Cliff}_\La \psi_\Lambda$. Hence the action of $\mathrm{Cliff}_\La$ on that weight space is isomorphic to the left regular
representation of $\mathrm{Cliff}_\La$. It follows that there is an isomorphism $\tau:  \mathrm{Cliff}_\La\to \operatorname{End}_{\mathrm{Cliff}_\La}(\mathrm{Cliff}_\La \psi_\Lambda)$. By Corollary~\ref{cor:WeightCommutant} the restriction of the idempotents $\rho(e)$ and $\rho( e_{\pm})$ from Definition~\ref{Udef} are minimal (and non-zero) in $\operatorname{End}_{\mathrm{Cliff}_\La}(\mathrm{Cliff}_\La \psi_\Lambda)$. Thus the multiplicity of the weight $\Omega_0=(q_k^{-1})_k$ in either $U_\lambda$ or $U_{\lambda, \pm}$ is equal to the dimension of a simple $\mathrm{Cliff}_\La \cong \mathrm{Cliff}(\ell(\lambda)^{(n)})$ module. This proves (c) for the weight $\Omega_0$. If $\Omega_\eta$ is a weight such that $J_kv\psi_\La=q_k^{\eta(k)}v\psi$ for a corresponding weight vector, we observe that multiplication by $v^\eta=\prod_{\eta(k)=+}v_k$ permutes the weight spaces of $\Omega_0$ and $\Omega_\eta$.
We deduce (c) for general weights from this.

Define the linear map $\Phi: V_\la^{(n)}\to V_\la^{(n)}$ by $v\psi_\La \mapsto \al(v)\psi_\La$ for all $v\in \mathrm{Cliff}(n),\ \La\in\Slnk$.
It follows that $\Phi c \Phi=\al(c)$ 
for all $c\in G_n$, and
$\Phi(U)\cong \tilde U$ for any $G_n$ submodule $U$ of $V_\la^{(n)}$
. Hence, if $\llan$ is even
and $U_\la^{(n)}=\rho(e)V_\la$, we obtain
$$\tilde U_\la^{(n)}\cong \Phi(U_\la^{(n)})=\rho(\al(e))V_\la^{(n)}\cong \rho(e)V_\la^{(n)}=U_\la^{(n)},$$
as the idempotents $e$ and $\alpha(e)$ are equivalent in $\mathrm{Cliff}(\llan)$. This proves (a). 

If $\llan$ is odd
and $U_{\la,+}=\rho(e)V_\la$, we similarly obtain
$$\tilde U_{\la,+}^{(n)}\cong \Phi(U_{\la,+}^{(n)})=\rho(\al(e))V_\la^{(n)} \cong U_{\la,-}^{(n)},$$
where the last isomorphism follows from the fact that $\al$ switches the simple components 
of $\mathrm{Cliff}(\llan)$. The character of a $G_n$-module is defined via the even elements $J_k$,
which are fixed by $\al$. Hence the modules $U^{(n)}_{\la,\pm}$ have the same characters. To show that the modules $U^{(n)}_{\la,\pm}$ are distinct recall the minimal idempotents $e_{\pm} \in \mathrm{Cliff}(\llan)_\pm$ from Definition~\ref{Udef}, and let $z_\La=\prod_{q_k^2 = 1} v_k$ the central element of $\mathrm{Cliff}_\La$. Then $\tau(z_\La)$ acts diagionally on the two elements $\rho(e_\pm)$ by scalars which differ by sign. Let us denote these two scalars by $\gamma_{\pm}$. Observe that we have a right action of $\mathrm{Cliff}_\Lambda$ on $\mathrm{Cliff}_\Lambda \psi_\Lambda$ using the map $\tau$. As $z_\La$ is central in $\mathrm{Cliff}_\Lambda$, the left and right action coincide. Hence the left action of $z_\La$ on the weight space $\mathrm{Cliff}_\La\psi_\La \cap U_{\la,\pm} = \mathrm{Cliff}_\La \psi_\Lambda \rho(e_\pm)$ is given by $ \ga_{\pm}$. As these two scalars differ we get that
$U_{\la,+}^{(n)}\not\cong U_{\la,-}^{(n)}$, completing the proof of (b).


The proof of (d) is an easy modification of the proof of the previous parts. The restriction rules from $G_n$ to $\Gno$ modules follow from the restrictions rules from $\mathrm{Cliff}(\ell(\lambda)+1) \cong \langle \kappa_0, \kappa_i, \tilde \kappa_j : 1 \leq i \leq f + \ell(\lambda), 1\leq j \leq f \rangle$ to $\mathrm{Cliff}(\ell(\lambda)) \cong  \langle  \kappa_i, \tilde \kappa_j : 1 \leq i \leq f + \ell(\lambda), 1\leq j \leq f \rangle$. We note that as $J_k\in\Gno$ for all $1\leq k\leq n$, we have the same weights also with respect to $\Gno$. We now have that $\operatorname{End}_{ \mathrm{Cliff}_\La[0]}(\mathrm{Cliff}_\La \psi_\Lambda) \cong \mathrm{Cliff}(\ell(\lambda)+1)$, and it follows that $\operatorname{End}_{ \mathrm{Cliff}_\La[0]}(\mathrm{Cliff}_\La \psi_\Lambda)$ is generated by the restriction of the operators $\kappa_i$, $\tilde \kappa_j$, and now also $\kappa_0$. The statement on weight multiplicities in the modules $W_{\lambda}$ and $W_{\lambda, \pm}$ now follows as in part (c).

\end{proof}

\subsection{Irreducibility and isomorphisms of representations} 
With the weight spaces of our representations now understood, we can show that these representations are irreducible and non-isomorphic using standard techniques.

\begin{prop}\label{irred}

Let $q$ a primitive $4N$-th root of unity. We have the following:

\begin{enumerate}[(a)]
    \item The $G_n$ modules $U^{(n)}_\la$ resp $U^{(n)}_{(\la,\pm)}$ are irreducible and mutually non-isomorphic. 

    \item  The $\Gno$ modules $W^{(n)}_\la$ resp $W^{(n)}_{(\la,\pm)}$ are irreducible and mutually non-isomorphic.

    \item The automorphism $\al$, as defined in Equation~\ref{autom}, interchanges the simple components $(G_n)_{(\la,\pm)}$ for $\ell(\la)$ odd, and fixes $(G_n)_{\la}$  for $\ell(\la)$ even. Similarly, the automorphism $\ga_n$, as defined in Remark~\ref{q1lemma} interchanges the simple components $G_n[0]_{(\la,\pm)}$ for $\ell(\la)$ even, and fixes $G_n[0]_{\la}$  for $\ell(\la)$ odd.

\end{enumerate}
\end{prop}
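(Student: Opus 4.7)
The plan is to prove (a) and (b) via a weight-space argument using the Clifford cut-downs of Lemma~\ref{Uladiff}, and then deduce (c) from the effect of $\alpha$ and $\gamma_n$ on the defining idempotents.

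For part (a), let $M \subseteq \Ulnk$ be a nonzero $G_n$-submodule. Since $A_n = \langle J_k\rangle$ acts semisimply via weights, $M$ contains a nonzero weight vector $w$ of some weight $\Omega$ sitting over a path $\La \in \Slnk$. I would then proceed in two steps. First, the $\Omega$-weight space of $\Ulnk$ is simple as a left module over $\mathrm{Cliff}_\La \subseteq G_n$: in $\Vlnk$ the $\Omega$-weight space is the rank-one free $\mathrm{Cliff}_\La$-bimodule $\mathrm{Cliff}_\La \psi_\La$, with $\mathrm{Cliff}_\La \subseteq G_n$ acting by left multiplication and $\rho(\mathrm{Cliff}(\llan))$ from Proposition~\ref{commutant} acting by right multiplication; cutting by a minimal idempotent $\rho(e)$ produces a simple left $\mathrm{Cliff}_\La$-module whose dimension matches the weight multiplicity of Lemma~\ref{Uladiff}(c). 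Hence $w$ generates the entire $\Omega$-weight space under the action of $\mathrm{Cliff}_\La \subseteq G_n$. Second, the $G_n$-action links the weight spaces: the generators $v_k \in G_n$ with $q_k^2 \neq 1$ permute the weights over a fixed path, while $t_k$ moves from $\La$ to $s_k(\La)$ with coefficient $1$ whenever $s_k(\La) > \La$ in the partial order of Definition~\ref{def:parOrd} and with coefficient $\beta_k$ otherwise (which is typically nonzero by Lemma~\ref{lem:betaWellDefined}, with exceptional vanishing catalogued in Lemma~\ref{betazero}). Since any path is connected to the column tableau via up-moves, a suitable induction forces $M = \Ulnk$.

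Non-isomorphism of $\Ulnk$ and $U_\mu^{(n)}$ for $\la \neq \mu$ follows from the weight data: an adaptation of Lemma~\ref{lem:weights} via Lemma~\ref{xmdef1}(b)--(c) and the constraint $\la, \mu \in Y^<_N$ (so that generic edges satisfy $0 \leq m_k \leq N-1$) shows that distinct paths produce disjoint weight multisets. The non-isomorphism $U_{\la,+}^{(n)} \not\cong U_{\la,-}^{(n)}$ for $\llan$ odd is Lemma~\ref{Uladiff}(b). Part (b) proceeds identically with $\mathrm{Cliff}(\llan+1)$ in place of $\mathrm{Cliff}(\llan)$, using Proposition~\ref{commutant} and Lemma~\ref{Uladiff}(d). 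For part (c), Definition~\ref{def:Utilde} shows that $\alpha$-twisting (resp.\ $\gamma_n$-twisting) a simple module produces the tilde of that module; combined with Lemma~\ref{Uladiff}(a)--(b) this gives the claim for $\alpha$, while the statement for $\gamma_n$ and the $W$-modules follows by the same bimodule argument applied to the $\Gno$-commutant $\mathrm{Cliff}(\llan+1)$, noting that the parity of $\llan + 1$ is opposite to that of $\llan$, which explains the swap of even/odd conventions.

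The main obstacle is the step of reaching the column tableau (or any fixed reference weight space) from an arbitrary starting weight vector. The coefficient $1$ in the up-direction of the $t_k$-action guarantees that every path in $\Slnk$ is reachable \emph{from} the column tableau, but to conclude the orbit of $w$ fills all of $\Ulnk$ one must navigate \emph{to} the column tableau, which uses down-moves with coefficient $\beta_k$. The only vanishing configurations are those of Lemma~\ref{betazero}(b), namely $m_k + m_{k+1} + 2 = 2N$; verifying that these can always be bypassed via alternative routes in the path graph is the combinatorial heart of the argument.
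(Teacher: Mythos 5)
Your approach is essentially the paper's: take a nonzero weight vector in a submodule, show it generates the full weight space via the left action of $\mathrm{Cliff}_\La$, then propagate to all weight spaces using the $t_k$ action, deduce non-isomorphism from disjoint weights and Lemma~\ref{Uladiff}(b), and prove (c) by pushing a minimal idempotent through the $\Phi$-twist. So the structure is right, and parts (b) and (c) are fine.

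Where you go wrong is in the final paragraph, which identifies a "combinatorial heart" that does not exist. You claim the vanishing of $\beta_k$ catalogued in Lemma~\ref{betazero}(b) must be bypassed via alternative routes in the path graph. In fact, the constraint $\la\in Y^<_N$ rules out the dangerous configuration $m_k + m_{k+1} + 2 = 2N$ outright: for generic edges in $\Slnk$ one has $m_k,m_{k+1}\le N-2$, and for restricted edges $q_k=-1$ corresponds to $m_k = N-1$ with the adjacent edge never restricted, so in all cases $m_k + m_{k+1} + 2 \le 2N-1 < 2N$. The remaining potential vanishings of the numerator $[a+b+2][a+b][a-b+1][a-b-1]$ occur only when $|m_k - m_{k+1}|=1$, and a short argument with the shifted-addability conditions shows that two boxes simultaneously addable to a strict shifted diagram (so that $s_k(\La)\neq 0$) always have $|m_k - m_{k+1}|\ge 2$. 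Thus $\beta_k\ne 0$ whenever $s_k(\La)$ is a valid path, and no detours are required. This is exactly the content of the paper's terse sentence citing Lemma~\ref{betazero}(b) and "the definition of $\Slnk$." A secondary inaccuracy: you needn't route through the column tableau; one just observes directly that any two paths in $\Slnk$ differ by a chain of admissible transpositions, each of which carries a nonzero coefficient (either $1$ for up-moves or the nonzero $\beta_k$ for down-moves), so the submodule already contains every $\mathrm{Cliff}(n)\psi_{\La'}$.
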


\begin{proof} (a) Assume that $U$ is a nonzero submodule of $\Ul$, for $\nu=\la$ or $\nu=(\la,\pm)$.  It is a well-known result that if $\Ul$ has a basis of weight vectors,
then so has $U$. Assume we have a weight vector $v\psi_\La\in U$,  $v\in \mathrm{Cliff}(n)$ of weight $\om$.
 As the weight space of $\om$ is a
$\mathrm{Cliff}_\La$-module, it follows that the multiplicity of $\om$ is at least equal to the dimension of
such a module, hence it coincides with the multiplicity of $\om$ in $\Ul$. If $\omega = (q_k^{\nu(k)})_k$ is the character for $v\psi_\La$,
it follows from the action of $\mathrm{Cliff}(n)$ on $v\psi_\La$ that also the multiplicities for any weight $(q_k^{\pm 1})$ 
for $\Ul$ and $U$ also agree, see Relation \ref{jkrelation} and Definition \ref{def:weights}. 

By Lemma \ref{betazero}(b) and the definition of $\Slnk$, the quantity $\beta_k^\La$ is nonzero for any
$\La\in \Slnk$ and $k$, $1\leq k<n$. Hence it follows from Definition~\ref{def:gnreproot} 
that $\mathrm{Cliff}(n)\psi_\La\subset U$ also implies $\mathrm{Cliff}(n)\psi_{s_k(\La)}\subset U$.
As any $\La'\in \Slnk$ can be obtained from a given $\La\in \Slnk$ via a sequence of transpositions $s_{k_i}$,
it follows that $U_\nu=\oplus_{\La\in\Slnk} \mathrm{Cliff}(n)\psi_\La\subset U$. 

It was shown in Lemma \ref{Uladiff},(b) that $U^{(n)}_{(\la,+)}\not\cong U^{(n)}_{(\la,-)}$ for $\ell(\la)$ odd. If two simple modules $U^{(n)}_\la$ and $U^{(n)}_{\mu}$ or $U^{(n)}_{(\mu,\pm)}$ are labeled by different Young diagrams $\la\neq\mu$, their weights are disjoint by Lemma \ref{lem:weights}. This proves
the last statement in (a).
 
(b) As the elements $J_k$ are in $\Gno$, we can define weights for $\Gno$ modules in the same way as for $G_n$ modules. We can therefore adapt the proof of (a) to this setting, by replacing $\mathrm{Cliff}_\La$ by $\mathrm{Cliff}_\La[0]$, and $\mathrm{Cliff}(n)$ by $\mathrm{Cliff}(n)[0]\cong \mathrm{Cliff}(n-1)$.

(c) We use the notations from the proof of Lemma \ref{Uladiff}. Let $p$ be a minimal idempotent in  $(G_n)_{(\la,+)}$ for $\ell(\la)$ odd, and let $v\in U_{(\la,+)}$ such that $pv\neq 0$. Then $\Phi(v)\in U_{(\la,-)} $ and $\al(p)\Phi(v)=\Phi p \Phi^2(v)=\Phi(p(v))\neq 0$. Hence $\al(p)\in(G_n)_{(\la,-)} $.
The proof of the corresponding statement for $G_n[0]$ is similar, using the automorphism $\ga_n$.
\end{proof}



\subsection{Restriction rules} 

We now move on to determining the restriction rules of our modules along the standard embeddings $G_n \subset G_{n+1}$ and $G_n[0] \subset G_{n+1}[0]$. By Corollary~\ref{cor:RestrictionFusion} these results will tell us the tensor product rules for the corresponding simple objects of $\operatorname{Ab}(\overline{\mathcal{SE}_N})$, and also for deducing that our representations form a complete set for the quotient algebra $\End_{\operatorname{Ab}(\overline{\mathcal{SE}_N})}(+^n)$ which we recall is a quotient of $G_n[0]$. These consequences will be applied in the next section. 

Recall that $Y_N^<$ denotes the set of strict Young diagrams with $\la_1<N$.
\begin{defn}\label{Gndef} (a) The directed graph $\Glnk$ is defined by the set of vertices
$V(N)$ given by 
$$V(N)\ =\ \{\la\in  Y^<_N,\ \ell(\la)\ even\}\ \cup\  \{ (\la, \pm), \la\in  Y^<_N,\ \ell(\la)\ odd\}.$$
There's either a single or double edge from $\la$ or $(\la,\pm)$ to $\mu$ or $(\mu,\pm)$ if and only if $\la\ \to^{(N)}\ \mu$, with a double edge if $\ell(\la)$ and $\ell(\mu)$ are even and a single edge otherwise.
We define $\Slk$ to be the set of all paths of length $n$ in the graph $\Glnk$.

(b)  The directed graph $\Glnk[0]$ is defined as $\Glnk$, except that we interchange the roles of odd and even. That is, the vertices are given by
$$V(N)[0]\ =\ \{\la\in  Y^<_N,\ \ell(\la)\ odd\}\ \cup\  \{ (\la, \pm), \la\in  Y^<_N,\ \ell(\la)\ even\},$$
and edges are defined as in (a), except that now we only have double edges between $\la$ and $\mu$
when both $\ell(\la)$ and $\ell(\mu)$ are odd.
\end{defn}

These vertices are defined in such a way that if $\mu$ is a vertex of $\Gamma(N)$ (resp. $\Gamma(N)[0]$), we can refer to the corresponding representation $U_\mu^{(n)}$ of $G_n$ (resp. $U_\mu^{(n)}$ of $G_n[0]$) using a single symbol rather than needing a sign as well. We will also use the notation $\nu \to \gamma \in \Gamma(N)$ to refer to the collection of directed edges from $\nu \to \gamma$.


\begin{lem}\label{restrictlemma} 
Let $q$ a primitive $4N$-th root of unity.

\begin{enumerate}[(a)]
    \item Let $\mu\in Y^<_N $, and let $n+1=|\mu|+fN$ for some integer $f\geq 0$.
Then the $G_{n+1}$ module $V_\mu^{(n+1)}$, viewed as a $G_n$ module, decomposes as 
$$V_\mu^{(n+1)}\ \cong\ \bigoplus_{\la\ \to^{(N)} \mu} \Vlnk\oplus v_{n+1}\Vlnk.$$

\item Let $\gamma =\mu$ or $\gamma=(\mu,\pm)$ be in $V(N)$, and let $n+1=|\mu|+fN$ for some $f\geq 0$.
Then the $G_{n+1}$ module $U_\gamma^{(n+1)}$, viewed as a $G_n$ module, decomposes as
$$U_\gamma^{(n+1)}\ \cong\ \bigoplus_{\nu \to \gamma \in \Gamma(N)} U_\nu^{(n)},$$
where the sum is over all edges of the graph $\Gamma(N)$ taken with multiplicity. 

\item The $G_{n+1}[0]$ module $W_\gamma^{(n+1)}$, viewed as a $G_n[0]$ module, decomposes as
$$W_\gamma^{(n+1)}\ \cong\ \bigoplus_{\nu \to \gamma \in \Gamma(N)[0]} W_\nu^{(n)},$$
where the sum is again with multiplicity. 
\end{enumerate}

\end{lem}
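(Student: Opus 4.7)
The plan is to handle the three parts in sequence, with part (a) as the foundation.

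For part (a), I will use a path-based decomposition. Every path $\tilde\Lambda$ of length $n+1$ in $\Galnk$ ending at $\mu$ decomposes uniquely as a path $\Lambda$ of length $n$ ending at some $\lambda$ with $\lambda \to^{(N)} \mu$, followed by the (unique) edge from $\lambda$ to $\mu$. Combining this path bijection with the splitting $\mathrm{Cliff}(n+1) = \mathrm{Cliff}(n) \oplus v_{n+1}\mathrm{Cliff}(n)$ on the Clifford factor yields the claimed vector-space decomposition. To confirm $G_n$-invariance, I will check that each generator acts locally on the path data: $t_k$ for $k < n$ depends only on the $k$-th and $(k+1)$-th edges of $\Lambda$ and on $v_k, v_{k+1}$, all of which lie in the common prefix and commute with $v_{n+1}$; each $v_k$ with $k \leq n$ acts by left multiplication on the Clifford factor and preserves each of the two summands separately.

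For part (b), the key observation is that $\rho(e_\gamma)$ is $G_n$-linear, so it respects the $G_n$-invariant decomposition from (a):
\[
U_\gamma^{(n+1)}|_{G_n} \;=\; \bigoplus_{\lambda \to^{(N)} \mu} \rho(e_\gamma)\bigl(V_\lambda^{(n)} \oplus v_{n+1}V_\lambda^{(n)}\bigr).
\]
An edge $\lambda \to^{(N)} \mu$ falls into one of three types (box added to an existing row, box added to a new row, or restricted edge), and in all cases $\ell(\mu)^{(n+1)} - \ell(\lambda)^{(n)} \in \{0,1\}$. When the difference is $0$, the commutant Clifford algebras at the two levels coincide, and $\rho(e_\gamma)$ cuts each summand to a copy of $U_\nu^{(n)}$: this yields two copies of a single simple when $\mathrm{Cliff}(\ell(\lambda)^{(n)})$ is simple (matching a double edge in $\Gamma(N)$) and one copy of each sign $U_{(\nu,\pm)}^{(n)}$ otherwise. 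When the difference is $1$, the extra commutant generator acts as right multiplication by $v_{n+1}$ and exchanges the two summands; standard branching for $\mathrm{Cliff}(m) \subset \mathrm{Cliff}(m+1)$ then gives a single restricted simple when descending from odd rank, and a split into both signs when descending from even rank. Matching these cases against the edge multiplicities of $\Gamma(N)$ produces the stated formula.

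Part (c) will follow by the same strategy with $G_n[0]$ in place of $G_n$; the commutant at the $(n+1)$-level is now $\mathrm{Cliff}(\ell(\mu)^{(n+1)}+1)$ owing to the extra generator $\kappa_0$, and the parity shift by one reverses which edges are doubled, producing the edge structure of $\Gamma(N)[0]$. The main obstacle in both (b) and (c) will be tracking the sign that appears when a commutant generator $\kappa_i$ or $\tilde\kappa_j$ is right-multiplied onto a vector $u\,v_{n+1}\,\psi_{\tilde\Lambda}$ in the second summand: since each $v_{d(i)}, v_{\tilde d(j)}$ anticommutes with $v_{n+1}$, the action on the twisted copy is conjugate to the original by the parity automorphism of the Clifford algebra. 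Handling this carefully is precisely what causes $\rho(e_{(\mu,+)})$ in the $\ell$-odd parity-preserving case to split into one copy of $U_{(\lambda,+)}^{(n)}$ and one copy of $U_{(\lambda,-)}^{(n)}$, rather than two copies of a single sign, matching the single-edge structure of $\Gamma(N)$ exactly.
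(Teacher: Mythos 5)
Your proposal follows the same overall structure as the paper's proof: part (a) via the path bijection $\operatorname{Paths}(\mu,n+1)\leftrightarrow \bigcup_{\lambda\to^{(N)}\mu}\operatorname{Paths}(\lambda,n)$ combined with $\mathrm{Cliff}(n+1)=\mathrm{Cliff}(n)\oplus v_{n+1}\mathrm{Cliff}(n)$, and parts (b), (c) via case analysis on whether $\ell(\mu)^{(n+1)}-\ell(\lambda)^{(n)}$ is $0$ or $1$ and on the relevant parities. The one real divergence is in the ``difference 1, $\ell(\mu)^{(n+1)}$ odd'' case. The paper's trick there is to project with a minimal idempotent $\tilde e$ of $\mathrm{Cliff}(\ell(\lambda)^{(n)})\subset\mathrm{Cliff}(\ell(\mu)^{(n+1)})$ rather than of the full commutant: since $\tilde e$ is rank one in both simple summands of $\mathrm{Cliff}(\ell(\mu)^{(n+1)})$ and involves only the $\kappa_i$ preserving the two $G_n$-summands, one gets $\rho(\tilde e)V_\mu^{(n+1)}\cong U_{\mu,+}^{(n+1)}\oplus U_{\mu,-}^{(n+1)}$ while the restriction is visibly $2U_\lambda^{(n)}$, and the character equality $\mathrm{ch}(U_{\mu,+})=\mathrm{ch}(U_{\mu,-})$ (Lemma~\ref{Uladiff}) finishes. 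Your route instead applies $\rho(e_\pm)$ for $e_\pm\in\mathrm{Cliff}(\ell(\mu)^{(n+1)})$ directly and invokes the branching $\mathrm{Cliff}(m)\subset\mathrm{Cliff}(m+1)$; this is valid but requires you to make explicit the $G_n\times\mathrm{Cliff}(\ell(\mu)^{(n+1)})$-bimodule decomposition of $V_\lambda^{(n)}\oplus v_{n+1}V_\lambda^{(n)}$, because $\rho(e_\pm)$ no longer preserves the two $G_n$-summands when the extra $\kappa$-generator (right multiplication by $v_{n+1}$) is involved. One small caution on your sign discussion in the final paragraph: the twist on the second summand really arises because the $G_n$-isomorphism $\Phi\colon V_\lambda^{(n)}\to v_{n+1}V_\lambda^{(n)}$ anticommutes with the $\kappa_i$ (equivalently, $v_{n+1}V_\lambda^{(n)}\cong\tilde V_\lambda^{(n)}$ as a $G_n$-module and $\tilde U_{\lambda,\pm}\cong U_{\lambda,\mp}$), not merely from collecting $v_{n+1}$ past $v_{d(i)}$ inside $\mathrm{Cliff}(n+1)$; the latter bookkeeping by itself does not change the isomorphism type. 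With that point made precise, your argument is correct and amounts to a modest repackaging of the paper's proof.
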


\begin{proof} Observe that we have a bijection
\begin{equation*}\label{restrictbijection}
Paths(\mu, n+1) \ \leftrightarrow\  \bigcup_{\la\ \to^{(N)} \mu}\ \Slnk, 
\end{equation*}
given by removing the last edge in the path corresponding to a tableau in $\operatorname{Paths}(\mu, n+1)$.
As $\mathrm{Cliff}(n+1)=\mathrm{Cliff}(n)\oplus v_{n+1}\mathrm{Cliff}(n)$, this bijection induces the decomposition of
$V_\mu^{(n+1)}$ into a direct sum of $\mathrm{Cliff}(n)$-modules as claimed. They are  preserved by the $t_i$ generators of $G_n$, which follows from Relation \ref{cross3}. Hence they are also $G_n$-modules.

Before proving part (b), observe that $cv_{n+1}=v_{n+1}\al(c)$ for all $c\in G_n$.
Hence $\tilde V_\la^{(n)}\cong v_{n+1}V_\la^{(n)}$ as a $G_n$ module, where $\tilde V_\la^{(n)}$ 
is the vector space $V_\la^{(n)}$ with $G_n$ action $c.v=\al(c)v$. 

We now prove part (b) by splitting into cases, first on whether  $\llan = \lmun$ or $\llan<\lmun$, and then based on the parity of $\llan$ and $\lmun$. We observe from the definition that the parity of $\ell(\nu)^{n}$ is the same as the parity of $\ell(\nu)$. Hence our formulae for the multiplicity of the summands of $W_{\gamma}^{(n+1)}$ are independent of $n$. 

If $\llan=\lmun$,
it follows that $\rho(e)(V_\la+v_{n+1}V_\la)\cong 2U_\la$ resp $\cong U_{\la,+}\oplus U_{\la,-}$,
depending on whether $\lmun = \llan$ is even or odd.

If $\llan<\lmun$ even, a minimal projection $e$ in $\mathrm{Cliff}(\llan)$ is also a minimal projection in $\mathrm{Cliff}(\lmun)$.
The claim now follows from the fact that $\rho(e)(V_\la^{(n)}+v_{n+1}V_\la^{(n)})\cong U^{(n)}_{\la,+}+U^{(n)}_{\la,-}$, by  Lemma \ref{Uladiff}. 


If $\llan<\lmun$ odd , we pick a minimal idempotent $\tilde e\in \mathrm{Cliff}(\llan)$.
Hence $\rho(\tilde e)V_\mu^{(n+1)}\cong U^{(n+1)}_{\mu,+}\oplus  U^{(n+1)}_{\mu, -}$.
But then $\rho(\tilde e)(V_\la^{(n)}\oplus v_{n+1}V_\la^{(n)})\cong 2 \rho(\tilde e)V_\la^{(n)}\cong 2U_\la^{(n)}$, by Lemma \ref{Uladiff}, (b).
As $U_{\mu,\pm}$ have the same characters, 
it follows that $U^{(n+1)}_{\mu,+}$, viewed as a $G_n$-module, contains a summand isomorphic to $U_\la^{(n)}$.

Part (c) is proved similarly, using the fact that the elements $J_k$ are in $\Gno$.
\end{proof}

\subsection{Semisimple quotients of $G_n$ and $G_n[0]$} 
We finish our results in the root of unity case by defining semisimple quotients of $G_n$ and $G_n[0]$. We will see in the following section that this

\begin{defn}
    Suppose $q$ is a primitive $4N$-th root of unity. Let
$Y^<_{N,n}$ be the set of strict Young diagrams $\la$ such $|\la|\equiv n$ mod $N$ and $|\la|\leq n$. Let $\bar G_n$ be the quotient of $G_n$ by the intersection of the annihilators of the modules $U_{\lambda}^{(n)}$ and $U_{\lambda,\pm}^{(n)}$. Let $\bar G_n[0]$ be defined in the same fashion with the corresponding modules $W_{\lambda}^{(n)}$ and $W_{\lambda,\pm}^{(n)}$.  
\end{defn}


The following result follows immediately from the definition and our results in the previous subsection. 

\begin{prop}\label{prop:quotient}
    We have that $\bar G_n$ is a semisimple algebra whose simple modules are indexed exactly by the  $U_{\lambda}^{(n)}$ and $U_{\lambda,\pm}^{(n)}$ for $Y^<_{N,n}$. Similarly, $\bar G_n[0]$ is a semisimple algebra whose simple modules are indexed exactly by the $W_{\la}^{(n)}$ and, if $\ell(\la)$ is even, by
    $W_{(\la,\pm)}^{(n)}$, with $\la\in Y^<_{N,n}$.
\end{prop}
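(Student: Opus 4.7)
The plan is to recognize that the proposition is essentially the density theorem applied to the simple modules already constructed. The key inputs have been established: by Proposition~\ref{irred}(a), the $G_n$-modules $U_\lambda^{(n)}$ (for $\ell(\lambda)$ even) and $U_{\lambda,\pm}^{(n)}$ (for $\ell(\lambda)$ odd), as $\lambda$ ranges over $Y_{N,n}^<$, are simple and pairwise non-isomorphic; the analogous statement for the $W$-modules of $G_n[0]$ is Proposition~\ref{irred}(b).

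First I would package the modules $U_{\lambda}^{(n)}, U_{\lambda,\pm}^{(n)}$ indexed by $\lambda\in Y_{N,n}^<$ into a single label $\mu \in V(N)$ (in the sense of Definition~\ref{Gndef}) and consider the canonical algebra map
\[
G_n \;\longrightarrow\; \bigoplus_{\mu} \operatorname{End}_{\mathbb{C}}(U_\mu^{(n)}).
\]
By construction, the kernel of this map is precisely the intersection of the annihilators of the $U_\mu^{(n)}$, so by the definition of $\bar G_n$ this map descends to an injection $\bar G_n \hookrightarrow \bigoplus_{\mu} \operatorname{End}_{\mathbb{C}}(U_\mu^{(n)})$.

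The heart of the argument is then surjectivity. Because the $U_\mu^{(n)}$ are finite-dimensional simple modules over $\mathbb{C}$ and pairwise non-isomorphic, the Jacobson density theorem (equivalently the Burnside--Frobenius form for a finite family) shows that the combined map above is surjective onto the full product of matrix algebras. This gives an isomorphism $\bar G_n \cong \bigoplus_{\mu} \operatorname{End}_{\mathbb{C}}(U_\mu^{(n)})$, and the right-hand side is manifestly semisimple with simple modules exactly the $U_\mu^{(n)}$. In particular no further simple modules can appear, since a semisimple $\mathbb{C}$-algebra has simple modules in bijection with its matrix factors.

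The proof for $\bar G_n[0]$ is identical in structure, using Proposition~\ref{irred}(b) to supply irreducibility and pairwise non-isomorphism of the $W$-modules indexed by the vertices of $\Gamma(N)[0]$, and substituting $G_n[0]$ and $W_\mu^{(n)}$ for $G_n$ and $U_\mu^{(n)}$ throughout. I do not anticipate any real obstacle — the proposition is labeled as an immediate consequence of the earlier material, and indeed once irreducibility and mutual non-isomorphism are in hand the only nontrivial ingredient is the standard density theorem.
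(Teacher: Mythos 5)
Your proof is correct and takes the approach the paper intends: the paper states that the proposition "follows immediately from the definition and our results in the previous subsection," and you have simply spelled out the routine step — the quotient by the intersection of annihilators injects into $\bigoplus_\mu \operatorname{End}_{\mathbb{C}}(U_\mu^{(n)})$, and surjectivity follows from Jacobson density (with Schur's lemma over $\mathbb{C}$) applied to the pairwise non-isomorphic simples supplied by Proposition~\ref{irred}.
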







When $n<N$ we have a stronger result.

\begin{prop} \label{prop:CountingReps}
Suppose $q$ be a primitive $4N$-th root of unity. For $n<N$, the modules $W_\nu$ with $\nu = \lambda$ with $|\lambda|=n $ and $\ell(\lambda)$ odd, or $\nu = (\lambda, \pm)$ with $|\lambda|=n $ and $\ell(\lambda)$ even form a complete set of mutually non-isomorphic simple representations of $G_n[0]$. In particular, when $n<N$ we have that $G_n[0]\cong \tilde{G}_n[0]$ is semisimple.
\end{prop}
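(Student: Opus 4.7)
I would prove this by a dimension count. Proposition~\ref{irred}(b) already gives that the listed $W_\nu$ are pairwise non-isomorphic irreducible $G_n[0]$-modules, and by Theorem~\ref{Gnproperties} we have $\dim G_n[0]=2^{n-1}n!$. It therefore suffices to verify
\[ \sum_\nu (\dim W_\nu)^2 \;=\; 2^{n-1}n!; \]
semisimplicity and the isomorphism $G_n[0]\cong \tilde G_n[0]$ then follow from Wedderburn together with Proposition~\ref{prop:quotient}.

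First, I would observe that when $n<N$ no restricted edge can appear in a path of length $n$ ending at a strict diagram of size $n$, since a restricted edge drops the size of the diagram by $N-1>n-1$. Hence $f(\lambda)=0$, $\ell(\lambda)^{(n)}=\ell(\lambda)$, and $\Slnk$ is canonically identified with the set of shifted standard Young tableaux of shape $\lambda$; write $g_\lambda$ for its cardinality. The basis in \eqref{vladef} then gives $\dim V_\lambda^{(n)}=2^n g_\lambda$.

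Next, I would combine the commutant homomorphism
\[ \rho_0\colon \mathrm{Cliff}(\ell(\lambda)+1)\hookrightarrow \operatorname{End}_{G_n[0]}\bigl(V_\lambda^{(n)}\bigr) \]
of Proposition~\ref{commutant} with the restriction rules of Lemma~\ref{Uladiff}(d). Since $\mathrm{Cliff}(\ell(\lambda)+1)$ is a matrix algebra of size $2^{(\ell+1)/2}$ when $\ell(\lambda)$ is odd and a direct sum of two matrix algebras of size $2^{\ell/2}$ when $\ell(\lambda)$ is even --- with the $\pm$-components having equal dimension via the outer automorphism $\gamma_n$ of Proposition~\ref{irred}(c) --- one obtains
\[ \dim W_\lambda^{(n)} = 2^{n-(\ell+1)/2}\,g_\lambda\ (\ell\text{ odd}), \qquad \dim W_{\lambda,\pm}^{(n)} = 2^{n-\ell/2-1}\,g_\lambda\ (\ell\text{ even}). \]

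Summing the squares collapses to
\[ \sum_\nu (\dim W_\nu)^2 \;=\; 2^{n-1}\sum_{\lambda} 2^{n-\ell(\lambda)}\,g_\lambda^2, \]
where $\lambda$ runs over strict partitions of $n$. The identity $\sum_\lambda 2^{n-\ell(\lambda)}g_\lambda^2 = n!$ is classical: it is the dimension formula for the Sergeev algebra $\mathrm{Cliff}(n)\rtimes S_n$, i.e.\ the $q=1$ specialization of $G_n$. Substituting gives the right-hand side $2^{n-1}n!=\dim G_n[0]$, completing the argument. The main obstacle I anticipate is keeping the Clifford--commutant bookkeeping straight (especially the parity-dependent split into $\pm$ components); the combinatorial identity itself is standard in the theory of projective representations of symmetric groups and can simply be cited.
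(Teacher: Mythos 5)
Your proposal is correct and takes essentially the same approach as the paper: a Wedderburn dimension count, relying on the fact that there are no restricted edges for $n<N$ so that $\Slnk$ is the set of shifted standard tableaux, combined with the classical identity $\sum_\lambda 2^{n-\ell(\lambda)}g_\lambda^2=n!$. The paper arrives at the same numerics slightly differently — it cites the identity $\sum_\nu\dim(U_\nu)^2=2^n n!$ from Jones--Nazarov for $G_n$ and then passes to $G_n[0]$ via the restriction rule of Lemma~\ref{Uladiff}(d), whereas you compute $\dim W_\nu$ directly from the Clifford commutant — but both reduce to the same combinatorial identity and the logical structure is the same.
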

\begin{proof}
    When $n < N$ our representations agree exactly with those from \cite[Proposition 6.7]{JN} since there are no restricted edges. Then we have 
    \[ \sum_\nu \dim(U_\nu)^2 = 2^n\cdot  n! = \dim(G_n) \quad  \text{ and } \quad \sum_\nu \dim(W_\nu)^2 = 2^{n-1}\cdot  n! = \dim(G_n[0]),  \]
    where the first combinatorial identity was proved in e.g. \cite[Proposition 6.7]{JN}, and the second identity follows from this and the restriction rule in Lemma~\ref{Uladiff} (d).
    Hence $G_n$ and $G_n[0]$ are semisimple, and the listed irreducible modules form a complete set.
    
\end{proof}


\subsection{Representations of $G_n$ at $q$ not a root of unity.}


In this subsection we will work with $q\in \mathbb{C}$ not a root of unity. This case ends up being significantly easier than the one with $q$ a primitive $4N$-th root of unity case. Nearly all the proofs of the previous section work verbatim. The main difference is that in the current case, we do not have to deal with the restricted graphs $\Gamma(N)$, and are instead working on the generic graph $\Gamma(\infty)$. 
Hence the proofs here are identical to the special case with $N>n$ in the previous section.

\begin{remark}
Note that our results here are a slight strengthening of \cite{JN}. They work over a finite extension of the field of rational functions $\mathbb{C}(q)$, it follows that their results hold for ``generic $q$'' in the sense that they hold for $q$ in an open dense subset of $\mathbb{C}$, but there's little control over this open dense subset.
\end{remark}

Recall from Prop. \ref{prop:gkaction} that the representations $V_\la$ are well-defined for all $q$ which are not roots of unity. For generic $q$ it is shown in \cite{JN} that $\operatorname{End}_{G_n}(V_\lambda) \cong \operatorname{Cliff}(\ell(\lambda))$. However, once $q$ is specialised, we may have that $\operatorname{End}_{G_n}(V_\lambda)$ gets larger. For now we have the following. Recall the definition of the commuting operators $\kappa_i$ from Definition~\ref{def:coms}. Note that these operators are still well-defined in the current setting.
\begin{prop}
    Let $q\in \mathbb{C}-\{0\}$ not be a root of unity. Then the map
\[  \rho(v_i) :=  \kappa_i 
:\mathrm{Cliff}\left(\ell(\lambda)\right) \to \End_{G_n}\left(V_\lambda   \right)     \]
is an injective algebra homomophism. Further, this map extends to an injective algebra homomorphism
\[  \rho_0  :\mathrm{Cliff}\left(\ell(\lambda)+1\right) \to \End_{G_n}\left(V_\lambda   \right)   \]
where the additional generator is mapped to $\kappa_0$.
\end{prop}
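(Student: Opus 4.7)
The plan is to follow the same template as Proposition~\ref{commutant}, observing that in the generic setting there are no restricted edges, so we only have to deal with the operators $\kappa_i$ (no $\tilde\kappa_j$), and $\llan = \ell(\lambda)$.

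First I would verify that each $\kappa_i$ lies in $\End_{G_n}(V_\lambda)$. Commutation with $v_k$ is automatic since $\kappa_i$ acts by right multiplication by $v_{d(i)}$ on the Clifford factor, and left and right multiplication commute. For commutation with $t_k$, the nontrivial cases are $k = d(i)$ and $k = d(i)-1$. The key observation (identical to the one used in \cite[Prop.~6.3]{JN} and in the proof of Proposition~\ref{commutant}) is that $d(i)$ is the index at which one first adds a box to a new row, so the added box sits on the main diagonal of the shifted diagram, giving $m_{d(i)} = 0$ and hence $q_{d(i)} = x_0 = 1$. With $q_k = 1$ (or $q_{k+1}=1$ in the other case) the coefficients in \eqref{tkrep} simplify exactly as in the cited proof, yielding $t_k \kappa_i \psi_\Lambda = \kappa_i t_k \psi_\Lambda$. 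An analogous check works for $\kappa_0$: it acts as the parity automorphism $\alpha$ on the Clifford factor, which commutes with every $t_k$ and intertwines left multiplication by any $v_j$ with minus itself, so $\kappa_0 \in \End_{G_n[0]}(V_\lambda)$ (though not in $\End_{G_n}(V_\lambda)$, since it anti-commutes with the $v_k$).

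Next I would check the Clifford relations and injectivity. Since the indices $d(1) < d(2) < \cdots < d(\ell(\lambda))$ are distinct, the operators $\kappa_i$ satisfy $\kappa_i \kappa_j + \kappa_j \kappa_i = 2\delta_{ij}$, so $\rho$ is a well-defined algebra homomorphism $\mathrm{Cliff}(\ell(\lambda)) \to \End_{G_n}(V_\lambda)$. For injectivity, when $\ell(\lambda)$ is even $\mathrm{Cliff}(\ell(\lambda))$ is simple, so $\rho$ is automatically injective. When $\ell(\lambda)$ is odd, injectivity is equivalent to showing that the central element $v_1 \cdots v_{\ell(\lambda)}$ does not map to zero; but $\rho(v_1 \cdots v_{\ell(\lambda)}) \psi_\Lambda = v_{d(\ell(\lambda))} \cdots v_{d(1)} \psi_\Lambda \neq 0$ since the $d(i)$ are distinct, so $\rho$ is injective.

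Finally, to extend to $\rho_0$, I would verify that $\kappa_0$ anti-commutes with each $\kappa_i$: indeed $\kappa_0 \kappa_i v \psi_\Lambda = \alpha(v v_{d(i)}) \psi_\Lambda = \alpha(v)(-v_{d(i)}) \psi_\Lambda = -\kappa_i \kappa_0 v \psi_\Lambda$, and $\kappa_0^2 = 1$. Hence the assignment extends to an algebra homomorphism $\rho_0 \colon \mathrm{Cliff}(\ell(\lambda)+1) \to \End_{G_n[0]}(V_\lambda)$. Injectivity of $\rho_0$ follows by exactly the same argument: either the target Clifford algebra is simple, or we check the central generator $v_1 \cdots v_{\ell(\lambda)+1}$ does not act by zero, which again is immediate since $\kappa_0$ acts as $\pm 1$ on each parity component and $\kappa_1 \cdots \kappa_{\ell(\lambda)}$ is nonzero by the previous paragraph. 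The only mild subtlety, and really the only place the generic hypothesis on $q$ is used, is ensuring all the matrix coefficients in the definition of $V_\lambda$ are well-defined, which is Proposition~\ref{prop:gkaction}; no case analysis of restricted versus generic edges is needed here.
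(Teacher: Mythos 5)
Your proof is correct and takes essentially the same route as the paper, which treats this proposition as the special case of Proposition~\ref{commutant} without restricted edges (so only the $\kappa_i$, with $\llan=\ell(\lambda)$, appear). You have also correctly observed that the codomain of $\rho_0$ should be $\End_{G_n[0]}(V_\lambda)$ rather than $\End_{G_n}(V_\lambda)$ as written in the paper's statement, since $\kappa_0$ anti-commutes with left multiplication by the $v_k$.
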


Using this explicit subalgebra of the commutant, we can define submodules of the $V_\la$.
\begin{defn}\label{def:creation}
Let $q\in \mathbb{C}-\{0\}$ not a root of unity.

\begin{enumerate}[(a)]
    \item We define the $G_n$-modules
$$\ell(\la)\ {\rm even:}\hskip 1em U_\lambda =\rho(e)V_\lambda,\hskip 3em \ell(\la)\ {\rm odd:}\hskip 1em U_{\la,\pm} = \rho( e_\pm)V_\lambda,$$
where $e$ is a minimal projection in $\mathrm{Cliff}(\ell(\lambda))$ for $\ell(\lambda)$ even, and $e_\pm$ are two inequivalent minimal projections in 
$\mathrm{Cliff}(\ell(\lambda))$ for $\ell(\la)$ odd. Again we distinguish the sign $e_\pm$ based on the action of the canonical non-trivial central element.

\item We define the $\Gno$-modules
$$\ell(\la)\ {\rm odd:}\hskip 1em W_\la=\rho_0(e)V_\lambda,\hskip 3em \ell(\la)\ {\rm even:}\hskip 1em W_{\la,\pm} = \rho_0( e_\pm)V_\lambda$$
where $e$ is a minimal projection in $\mathrm{Cliff}(\ell(\la)+1)$ for $\ell(\la)$ odd, and $\tilde e_\pm$ are inequivalent minimal projections (as usual distinguished by the action of the canonical central element) in 
$\mathrm{Cliff}(\ell(\la)+1)$ for $\ell(\la)$ even.
\end{enumerate} 
\end{defn}
A slight alteration of the results of the previous subsection prove the following. To adapt the proof, we need that $x_m = \pm 1$ if and only if $m=0$ or $m=-1$. This follows from Lemma~\ref{xmdef1} (c).
\begin{lem}\label{lem:irr}

Let $q\in \mathbb{C}-\{0\}$ not a root of unity. Then

\begin{enumerate}[(a)]
    \item The $G_n$ modules $U_\la$ resp $U_{(\la,\pm)}$ are irreducible.

    \item  The $\Gno$ modules $W_\la$ resp $W_{(\la,\pm)}$ are irreducible.
\end{enumerate}
\end{lem}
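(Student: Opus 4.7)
My plan is to adapt the proof of Proposition~\ref{irred} to the generic setting, where the argument simplifies considerably. With $N=\infty$, the path set $\Slnk$ coincides with standard shifted tableaux of shape $\lambda$, there are no restricted edges, and $q_k^2=1$ arises only for boxes on the main diagonal (Lemma~\ref{xmdef1}(c)); hence $\mathrm{Cliff}_\Lambda\cong\mathrm{Cliff}(\ell(\lambda))$, matching the Clifford algebra used in Definition~\ref{def:creation} to construct $U_\lambda,U_{\lambda,\pm}$ and $W_\lambda,W_{\lambda,\pm}$.

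The essential input is the non-vanishing of $\beta_k$ whenever $s_k(\Lambda)$ is a standard shifted tableau. Using the formula
\[
\beta_k \;=\; \frac{[a+b+2][a+b][a-b+1][a-b-1]}{[a+b+1]^2[a-b]^2}, \qquad a=m_k,\ b=m_{k+1},
\]
of Lemma~\ref{xmdef2}(a), together with the fact that $[m]=0$ only for $m=0$ at $q$ not a root of unity, it suffices to rule out vanishing of each of $a+b+2,\,a+b,\,a-b-1,\,a-b+1$. Since $\Lambda$ and $s_k(\Lambda)$ are both standard, the boxes containing $k$ and $k+1$ must be incomparable in the componentwise order, so one sits strictly NE of the other; consequently the row and column coordinates each change by at least $1$ in opposite directions, forcing $|a-b|\geq 2$. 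The remaining two factors are manifestly nonzero as $a,b\geq 0$ and $a\neq b$.

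With this in hand, the irreducibility argument of Proposition~\ref{irred} goes through verbatim. A nonzero submodule $U$ of $U_\nu$ (respectively $W_\nu$) must contain a weight vector $v\psi_\Lambda$; by the generic-$q$ analogue of Corollary~\ref{cor:WeightCommutant}, the full weight space of the corresponding character sits inside $U$; applying the Clifford generators $v_j$ then produces vectors of every weight $(q_k^{\pm 1})$ compatible with $\Lambda$; and repeatedly applying $t_k$ (using $\beta_k\neq 0$ together with the standard $s_k$-connectivity of $\mathcal{S}_\lambda$) yields $\mathrm{Cliff}(n)\psi_{\Lambda'}\subseteq U$ for every $\Lambda'\in\mathcal{S}_\lambda$, so $U$ exhausts the module. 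Part (b) is identical after replacing $\mathrm{Cliff}_\Lambda$ by $\mathrm{Cliff}_\Lambda[0]$.

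The main obstacle is confirming that Proposition~\ref{commutant} and Corollary~\ref{cor:WeightCommutant} carry over to the specialised $q$; but this is routine. The definition of the commuting operators $\kappa_i,\kappa_0$ from Definition~\ref{def:creation} is purely combinatorial, and their commutation with the $t_k$ reduces, exactly as in Proposition~\ref{commutant}, to the identities $q_k=1$ or $q_{k+1}=1$ at the relevant indices, which still hold; injectivity of $\rho$ follows from the non-vanishing of the image of the central element $v_1\cdots v_{\ell(\lambda)}$, and the weight-separation statement of Lemma~\ref{lem:weights} is already proved for $q$ not a root of unity.
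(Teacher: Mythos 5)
Your argument is correct and matches the paper's intended approach, which is simply to rerun the proof of Proposition~\ref{irred} in the generic setting, with Lemma~\ref{xmdef1}(c) providing the needed fact that $q_j^2=1$ only for main-diagonal boxes (so $\mathrm{Cliff}_\Lambda\cong\mathrm{Cliff}(\ell(\lambda))$) and that the weight separation of Lemma~\ref{lem:weights} still applies. You also usefully make explicit a point the paper leaves implicit, namely that $\hat\beta=\beta_k\neq 0$ whenever $s_k(\Lambda)$ is a standard shifted tableau because incomparability of the two boxes forces $|a-b|\geq 2$, which is indeed the right justification.
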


We can also obtain restriction rules for these modules in terms of the generic graphs $V(\infty)$ and $V(\infty)[0]$ as in Definition~\ref{Gndef}.

\begin{lem}\label{lem:restrict}
Let $q\in \mathbb{C}-\{0\}$ not a root of unity, and $n\in \mathbb{N}$.

\begin{enumerate}[(a)]
   
\item Let $\gamma =\mu$ or $\gamma=(\mu,\pm)$ be in $V(\infty)$ with $|\mu| = n+1$.
Then the $G_{n+1}$ module $U_\gamma$, viewed as a $G_n$ module, decomposes as
$$U_\gamma \cong\ \bigoplus_{\nu \to \gamma \in \Gamma(\infty)} U_\nu,$$
where the sum is over all edges of the graph $\Gamma(N)$ taken with multiplicity.

\item  Let $\gamma =\mu$ or $\gamma=(\mu,\pm)$ be in $V(\infty)[0]$ with $|\mu| = n+1$. Then the $G_{n+1}[0]$ module $U_\gamma$, viewed as a $G_n[0]$ module, decomposes as
$$W_\gamma\ \cong\ \bigoplus_{\nu \to \gamma \in \Gamma(\infty)[0]} W_\nu,$$
where the sum is again with multiplicity.

\item  The $G_n$ module $U_\la$ viewed as as $G_n[0]$ module decomposes as $W_{\lambda,+} \oplus W_{\lambda,-} $. The $G_n$ module $U_{\la,\pm}$ viewed as as $G_n[0]$ module decomposes as $W_{\lambda} $.

\end{enumerate}

\end{lem}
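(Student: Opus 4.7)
The plan is to mirror the argument of Lemma \ref{restrictlemma}, which handled the more delicate root-of-unity case. Since $q$ is not a root of unity, the graphs $\Gamma(\infty)$ and $\Gamma(\infty)[0]$ have only generic edges (no restricted edges), so $\ell(\la)^{(n)} = \ell(\la)$ throughout and the bookkeeping simplifies considerably.

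For part (a), first observe that the bijection sending a standard tableau of shape $\mu$ with $n+1$ boxes to the pair consisting of its restriction to $n$ boxes (a tableau on some $\la$ with $\la \to \mu$) together with the marker on the final step, combined with the decomposition $\mathrm{Cliff}(n+1) = \mathrm{Cliff}(n) \oplus v_{n+1}\mathrm{Cliff}(n)$, yields
\[
V_\mu \;\cong\; \bigoplus_{\la \to \mu}\bigl(V_\la \oplus v_{n+1}V_\la\bigr)
\]
as $G_n$-modules; preservation under $t_1,\ldots,t_{n-1}$ follows from relation \eqref{cross3}. The subtle point is that $v_{n+1}V_\la$ is isomorphic not to $V_\la$ but to its twist by $\al$, since $c v_{n+1} = v_{n+1}\al(c)$ for $c \in G_n$. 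Then applying the appropriate minimal idempotent $\rho(e)$ or $\rho(e_\pm)$ and splitting into cases by whether $\ell(\la)=\ell(\mu)$ (box added in an existing row) or $\ell(\la)+1=\ell(\mu)$ (new row), I would verify the multiplicities prescribed by $\Gamma(\infty)$. The analog of Lemma \ref{Uladiff} in this setting is immediate from the same Clifford-algebra considerations: $\tilde U_\la \cong U_\la$ when $\ell(\la)$ is even and $\tilde U_{\la,+} \cong U_{\la,-}$ when $\ell(\la)$ is odd, which matches the edge multiplicities.

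Part (b) is proved identically, now using minimal idempotents in $\mathrm{Cliff}(\ell(\la)+1)$ together with the fact that $J_k \in \Gno$, so weights restrict cleanly from $G_n$ to $\Gno$. Part (c) is purely a statement about the behavior of idempotents under the inclusion $\mathrm{Cliff}(\ell(\la)) \subset \mathrm{Cliff}(\ell(\la)+1)$. When $\ell(\la)$ is even, $\mathrm{Cliff}(\ell(\la))$ is a matrix algebra embedded diagonally in $\mathrm{Cliff}(\ell(\la)+1)\cong M \oplus M$, so a minimal idempotent $e$ of the former splits as $e = e_+ + e_-$ in the latter; hence $U_\la = \rho(e)V_\la = \rho_0(e_+)V_\la \oplus \rho_0(e_-)V_\la = W_{\la,+} \oplus W_{\la,-}$. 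When $\ell(\la)$ is odd, a minimal idempotent in one of the two matrix-algebra summands of $\mathrm{Cliff}(\ell(\la))$ is already minimal in the matrix algebra $\mathrm{Cliff}(\ell(\la)+1)$, so $U_{\la,\pm}$ restricts to $W_\la$.

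I do not anticipate a real obstacle: the heavy technical work (well-definedness of the $V_\la$-action at specialized $q$, computation of the Clifford commutant, irreducibility) has already been carried out in Proposition \ref{prop:gkaction}, the analog of Proposition \ref{commutant}, and Lemma \ref{lem:irr}. The absence of restricted edges obviates the local-lifting trick needed in Proposition \ref{Vlnkrep}, so the entire argument reduces to Clifford-algebra bookkeeping.
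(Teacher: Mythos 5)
Your proposal is correct and takes essentially the same route the paper does: the paper gives no separate proof of this lemma but instead says the proofs in the non-root-of-unity subsection "are identical to the special case with $N>n$" of the root-of-unity proofs, which is precisely the restricted-edge-free version of Lemma~\ref{restrictlemma} that you carry out. Your sketch of the bijection on tableaux, the decomposition $\mathrm{Cliff}(n+1)=\mathrm{Cliff}(n)\oplus v_{n+1}\mathrm{Cliff}(n)$, the twist identification $v_{n+1}V_\la\cong\tilde V_\la$ via $cv_{n+1}=v_{n+1}\al(c)$, the case split on the parities of $\ell(\la)$ and $\ell(\mu)$ using the analogue of Lemma~\ref{Uladiff}, and the idempotent bookkeeping under $\mathrm{Cliff}(\ell(\la))\subset\mathrm{Cliff}(\ell(\la)+1)$ for part (c), all match the paper's argument.
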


We can then obtain the following classification result.
\begin{thm}\label{gkactionthm} 
Let $q\in \mathbb{C}-\{0\}$ not a root of unity, and $n\in \mathbb{N}$. Then
\begin{enumerate}[(a)]
    \item The modules $U_\nu$ with $\nu = \lambda$ with $|\lambda|=n $ and $\ell(\lambda)$ even, or $\nu = (\lambda, \pm)$ with $|\lambda|=n $ and $\ell(\lambda)$ odd form a complete set of mutually non-isomorphic simple representations of $G_n$. In particular, $G_n$ is semisimple.

    \item The modules $W_\nu$ with $\nu = \lambda$ with $|\lambda|=n $ and $\ell(\lambda)$ odd, or $\nu = (\lambda, \pm)$ with $|\lambda|=n $ and $\ell(\lambda)$ even form a complete set of mutually non-isomorphic simple representations of $G_n[0]$. In particular, $G_n[0]$ is semisimple.
\end{enumerate}
\end{thm}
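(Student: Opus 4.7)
The plan is to combine the three ingredients already assembled in the subsection: irreducibility from Lemma~\ref{lem:irr}, pairwise non-isomorphism via weight analysis, and a dimension count modelled on the proof of Proposition~\ref{prop:CountingReps}. Nothing really new is needed, because the non root of unity setting is strictly simpler than the $4N$-th root of unity one: there are no restricted edges, so $\operatorname{Paths}(\lambda,n)$ is just the set $\mathcal{S}_\lambda$ of shifted standard tableaux, and $\ell(\lambda)^{(n)} = \ell(\lambda)$.

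For part (a), irreducibility of each $U_\lambda$ (with $\ell(\lambda)$ even) and each $U_{\lambda,\pm}$ (with $\ell(\lambda)$ odd) is Lemma~\ref{lem:irr}(a). For non-isomorphism, first observe that if $\lambda\neq\mu$ then Lemma~\ref{lem:weights} produces disjoint weight sets $P(\lambda)\cap P(\mu)=\emptyset$ with respect to the abelian algebra $\langle J_k\rangle$, and the modules $U_\lambda,U_{\lambda,\pm}\subseteq V_\lambda$ still carry every weight of $V_\lambda$ (the exact analogue of Lemma~\ref{Uladiff}(c), whose proof only uses that left and right Clifford actions commute and that minimal idempotents in $\mathrm{Cliff}(\ell(\lambda))$ act nonzero on each weight space $\mathrm{Cliff}(n)\psi_\Lambda$). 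Hence modules attached to different Young diagrams cannot be isomorphic. To separate $U_{\lambda,+}$ from $U_{\lambda,-}$ when $\ell(\lambda)$ is odd, I would copy the argument at the end of Lemma~\ref{Uladiff}(b): the central element $z=v_1v_2\cdots v_{\ell(\lambda)}$ of the relevant Clifford subalgebra acts by opposite scalars on the two modules. The only point to check is that the relevant Clifford subalgebra $\mathrm{Cliff}_\Lambda$ is still a full Clifford algebra of rank $\ell(\lambda)$ on the weight space $\mathrm{Cliff}(n)\psi_\Lambda$; this reduces to $x_m\neq\pm 1$ for $m>0$, which is exactly Lemma~\ref{xmdef1}(c) when $q$ is not a root of unity.

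For completeness, I would run the dimension count from Proposition~\ref{prop:CountingReps}. The combinatorial identity
\[
\sum_{\substack{\lambda\vdash n\\ \ell(\lambda)\ \text{even}}} (\dim U_\lambda)^2 \;+\; \sum_{\substack{\lambda\vdash n\\ \ell(\lambda)\ \text{odd}}} 2(\dim U_{\lambda,\pm})^2 \;=\; 2^n\,n! \;=\; \dim G_n
\]
is established in \cite[Proposition~6.7]{JN} from the basis in \eqref{vladef} together with the dimensions of minimal idempotents in $\mathrm{Cliff}(\ell(\lambda))$, and the same formula applies verbatim here since the underlying labelling set $\mathcal{S}_\lambda$ and Clifford bookkeeping are identical. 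Combined with irreducibility and pairwise non-isomorphism, this forces the listed modules to be a complete set of simples, and in particular $G_n$ is semisimple.

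Part (b) I would handle by the same template. Irreducibility is Lemma~\ref{lem:irr}(b), distinctness across different $\lambda$'s follows from Lemma~\ref{lem:weights} applied in $G_n[0]$ (the Jucys-Murphy elements live in $G_n[0]$), and the $W_{\lambda,+}$ versus $W_{\lambda,-}$ split for $\ell(\lambda)$ even is separated by the central element $\kappa_0$ from Definition~\ref{def:coms} as in Lemma~\ref{Uladiff}(d). Completeness can be obtained either by the parallel count $\sum_\nu(\dim W_\nu)^2 = 2^{n-1}n! = \dim G_n[0]$, or more cheaply by invoking Lemma~\ref{lem:restrict}(c): the restriction functor $G_n\operatorname{-mod}\to G_n[0]\operatorname{-mod}$ sends our complete list of $G_n$-simples surjectively onto the proposed list, so any $G_n[0]$-simple not on the list would have to induce to a $G_n$-module outside the $G_n$-classification, contradicting part (a). The only step that requires any care is verifying that the counting identity consolidates the $2$ in the odd-$\ell$ contribution correctly (since there are two isomorphism classes $U_{\lambda,\pm}$ of equal dimension), but this is purely mechanical.
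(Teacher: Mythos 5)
Your proposal is essentially the paper's own argument, spelled out: irreducibility is Lemma~\ref{lem:irr}, pairwise non-isomorphism mirrors Proposition~\ref{irred} via disjoint $J_k$-weights (Lemma~\ref{lem:weights}) plus the central-element argument from Lemma~\ref{Uladiff}(b)/(d), and completeness and semisimplicity come from the dimension count exactly as in Proposition~\ref{prop:CountingReps}, using $x_m\neq\pm1$ for $m>0$ (Lemma~\ref{xmdef1}(c)) to make the Clifford bookkeeping go through. One small caveat: your ``more cheaply'' alternative for part~(b) using Lemma~\ref{lem:restrict}(c) and part~(a) is fine for showing every simple $G_n[0]$-module appears on the list (embed $W$ into $\operatorname{Res}\operatorname{Ind}W$, which is a direct sum of the listed $W_\nu$'s), but it does not by itself give semisimplicity of $G_n[0]$; you would still need the dimension count or an appeal to $G_n[0]=G_n^{\mathbb{Z}/2}$ being a fixed subalgebra under a group of order invertible in $\mathbb{C}$. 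Your primary route avoids this and matches the paper.
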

\begin{proof}
    The proof that the modules $U_\nu$ (resp. $W_\nu$) are non-isomorphic mirrors the root of unity argument. Then that they exhaust all representations follows the argument of Proposition~\ref{prop:CountingReps}.
     Hence $G_n$ and $G_n[0]$ are semisimple, and the listed irreducible modules form a complete set.
    
\end{proof}

\section{Description of the categories $\operatorname{Ab}(\E_q)$ and $\operatorname{Ab}(\overline{\mathcal{SE}_{N}})$ }\label{sec:description}

The main goal in this section is to determine the structure of the categories $\operatorname{Ab}(\E_q)$ and $\operatorname{Ab}(\overline{\mathcal{SE}_{N}})$. To do this, we will need to determine the structure of the endomorphism algebras $\operatorname{End}_{\mathcal{E}_q}(+^n)$ and $\operatorname{End}_{\overline{\mathcal{SE}_N}}(+^n)$. We will show that these algebras are isomorphic to the algebras of Theorem~\ref{gkactionthm} and Proposition~\ref{prop:quotient}. Our main technique for achieving this will be via analysing the pullback of the categorical trace on the algebras $G_n[0]$.



\begin{remark}\label{condexpexample} Let $\Ca$ be a spherical tensor category, such that $\bar\Ca=\Ca/\operatorname{Neg}(\Ca)$ is semisimple, see Subsection~\ref{sec:Cauchy}. By definition, the categorical trace is nondegenerate on $\End(Y)$ for any object $Y$ in $\bar\Ca$. Fix an object $X$ in $\bar\Ca$ with $\dim X\neq 0$.
Then we define the normalized trace $tr$ on $\End(X^{\otimes n})$ by $tr(a)=\frac{1}{(\dim X)^{n}}Tr(a)$.
 This functional is compatible with the standard embedding $a\in \End(X^{\otimes n})\to a\otimes 1\in \End(X^{n+1})$. 
It follows from the definitions that $\mathrm{tr}_{X^{n+1}}(a\otimes 1)=\mathrm{tr}_{X^{\otimes n}}(a)$ for $a\in \End(X^{\otimes n})$.
We therefore will only write $tr$ for $\mathrm{tr}_{X^{\otimes n}}$.
\end{remark}
Recall that we have an isomorphism $\Gno \to \End_{\E_q}(+^n)$. We use this map to define a trace on $\Gno$.
\begin{defn}
    Let $q \in \mathbb{C}- \{-1,0,1\}$. We denote the pullback to $\Gno$ of the normalized
 trace on $\End(+^n)$ by $tr$.
\end{defn}

We begin by analyzing the case where $q$ is not a root of unity. From the previous section we know that the algebras $G_n[0]$ are semisimple, which makes this case significantly easier. We will also obtain results which are key for the $q$ a root of unity case later.




\subsection{Traces for the algebras $\Gno$ at $q$ not a root of unity}
Recall that if $A=\oplus_\ga A_\ga$ is a direct sum of full matrix rings $A_\ga$, any trace functional $tr:A\to\C$
is completely determined by its weight vector $(\om_\ga)$, where $\om_\ga=tr(p_\ga)$ for a minimal projection $p_\ga\in A_\ga$.

In the case of $q$ not a root of unity we can determine the weight vector of the categorical trace on the algebras $\Gno$. We do this by using the isomorphism $\Gno \to \End_{\E_q}(+^n)$. From the previous section we know that $\Gno$ is semisimple for $q$ not a root of unity. It then follows that the tensor product rules between the simple summands of $+^n$ in $\mathcal{E}_q$ agree with the special case of $q=1$. The tensor product rules in the $q=1$ case are well-known thanks to Sergeev duality. Using the fact that the categorical trace respects tensor products and direct sums, we can inductively compute the dimensions of our simple summands. This in turn gives us our weight vector.


We first summarize what our previous results say about $\End_{\E_q}(+^{ n})$.

\begin{prop}
Let $q\in \mathbb{C}- \{0\}$ be not a root of unity. Then $\End_{\E_q}(+^{ n})$ is semisimple and isomorphic to $\Gno$. In particular, the simple subobjects of $+^{ n}$ are labeled by strict Young diagrams $\la$ with $n$ boxes, with two non-isomorphic objects ${(\la,\pm)}$ if $\ell(\la)$ is even. 
\end{prop}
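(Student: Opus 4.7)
The plan is to assemble this proposition from the pieces already in place. The first equality $\operatorname{End}_{\mathcal{E}_q}(+^n) \cong G_n[0]$ is exactly Equation~\eqref{eq:iso}, which holds for any $q \in \mathbb{C} - \{-1,0,1\}$. So the content of the proposition reduces entirely to a statement about the representation theory of $G_n[0]$ at a value $q$ which is not a root of unity.

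Semisimplicity and the classification of simple modules then follow immediately from Theorem~\ref{gkactionthm}(b), which was just established in the previous section: $G_n[0]$ is semisimple at $q$ not a root of unity, and a complete set of pairwise non-isomorphic simple modules is given by $W_\lambda$ for strict Young diagrams $\lambda$ with $|\lambda|=n$ and $\ell(\lambda)$ odd, together with the two modules $W_{\lambda,\pm}$ (shown to be non-isomorphic in Proposition~\ref{irred}, whose argument adapts to this setting as noted) for strict Young diagrams with $|\lambda| = n$ and $\ell(\lambda)$ even.

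To pass from simple modules of the endomorphism algebra to simple subobjects of $+^n$ in the Cauchy completion, I would invoke Remark~\ref{rem:RepstoObjects}: since $\operatorname{End}_{\mathcal{E}_q}(+^n)$ is now semisimple, every irreducible representation arises from a minimal idempotent in $\operatorname{End}(+^n)$, and distinct isomorphism classes of irreducibles correspond to distinct isomorphism classes of simple subobjects. This gives the parameterization of the simple subobjects of $+^n$ by the stated labels. There is no genuine obstacle here; the only mild care needed is in noting that the arguments of Lemma~\ref{Uladiff} and Proposition~\ref{irred} distinguishing $W_{\lambda,+}$ from $W_{\lambda,-}$ carry over verbatim to the generic-$q$ setting, since the key input (that $\mathrm{Cliff}(\ell(\lambda)+1)$ has two inequivalent simple components distinguished by the action of the central element) is independent of whether $q$ is a root of unity. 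So the proof is essentially one or two sentences combining Equation~\eqref{eq:iso} with Theorem~\ref{gkactionthm}(b).
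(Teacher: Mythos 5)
Your proof is correct and follows exactly the paper's route: invoking Equation~\eqref{eq:iso} for $\operatorname{End}_{\mathcal{E}_q}(+^n) \cong G_n[0]$ and then Theorem~\ref{gkactionthm}(b) for semisimplicity and the classification of simples. The paper's proof is in fact just these two citations, so your slightly more detailed write-up (adding the passage from modules to subobjects via Remark~\ref{rem:RepstoObjects}) matches it precisely in substance.
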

\begin{proof}
    By \cite[Corollary 5.12]{ConformalA} we have that $\operatorname{End}_{\mathcal{E}_q}(+^n)$ is isomorphic to $G_n[0]$. It follows from Theorem~\ref{gkactionthm} that $\operatorname{End}_{\mathcal{E}_q}(+^n)$ is semisimple. 
\end{proof}

Let $\operatorname{Ab}(\mathcal{E}_q)^\mathrm{poly}$ denote the full subcategory of subobjects of $+^{ n}$. Note that although $\mathcal{E}_q$ does not make sense at $q=\pm 1$, the subcategory $\operatorname{Ab}(\mathcal{E}_q)^\mathrm{poly}$ does have a sensible interpretation at $q=\pm 1$ since no circles appear when composing endomorphisms of $+^n$. Indeed $\operatorname{Ab}(\mathcal{E}_1)^\mathrm{poly}$ agrees with $\mathrm{Rep}^{\mathrm{poly}}(q_\infty)$ by Sergeev duality \cite{Ser}.

\begin{prop} \label{prop:GrothendieckDeform}
 Let $q\in \mathbb{C}- \{0\}$ be not a root of unity. We have an isomoprhism of Grothendieck rings $K_0(\operatorname{Ab}(\mathcal{E}_q)^\mathrm{poly}) \cong K_0(\mathrm{Rep}^{\mathrm{poly}}(q_\infty))$ sending $p_{\lambda}$ to $p_{\lambda}$ and $p_{\lambda, \pm}$ to $p_{\lambda,\pm}$.
\end{prop}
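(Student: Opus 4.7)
The plan is a deformation argument from generic $q$ down to the classical point $q = 1$, at which the Hecke-Clifford algebras $G_n[0]^{(q)}$ reduce to the even parts of the classical Sergeev algebras, and Sergeev--Schur--Weyl duality (Theorem~\ref{Sergeev theorem}) recovers $\mathrm{Rep}^{\mathrm{poly}}(q_\infty)$. The key observation will be that all fusion coefficients are integer-valued continuous functions of $q$ on a connected parameter set containing both the given generic value and $q = 1$, and therefore constant.

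By Theorem~\ref{gkactionthm}, for each $q$ not a root of unity (and at $q = 1$ by classical Sergeev duality) the algebra $G_n[0]^{(q)}$ is semisimple, with simples $W_\nu^{(q)}$ parameterised by exactly the label set appearing in the statement. Using the explicit matrix formulas of Section~\ref{matrixcoeff}, once a continuous branch of the square roots $\sqrt{[m+1]_{q^2}[m]_{q^2}}$ is fixed on a simply connected open $U \subset \mathbb{C}^\times$ containing both $q = 1$ and the given generic $q$, each $W_\nu^{(q)}$ depends analytically on $q \in U$. Consequently the Wedderburn decomposition yields a continuous family of minimal central idempotents $p_\nu^{(q)} \in G_n[0]^{(q)}$ for each label $\nu$, and in particular a continuous matching of the labels on both sides of the claimed isomorphism.

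Now in $\operatorname{Ab}(\mathcal{E}_q)^\mathrm{poly}$ the tensor product of the simples $(+^n, p_\lambda^{(q)})$ and $(+^m, p_\mu^{(q)})$ equals $(+^{n+m}, p_\lambda^{(q)} \otimes p_\mu^{(q)})$, where the embedding $G_n[0] \otimes G_m[0] \hookrightarrow G_{n+m}[0]$ sends generators to generators in a $q$-independent way. Hence $p_\lambda^{(q)} \otimes p_\mu^{(q)} \in G_{n+m}[0]^{(q)}$, and therefore its multiplicities $c_{\lambda\mu}^\nu(q) \in \mathbb{Z}_{\geq 0}$ in the Wedderburn decomposition of $G_{n+m}[0]^{(q)}$, depend continuously on $q \in U$. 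Being integer valued, these multiplicities are constant on $U$. At $q = 1$, classical Sergeev--Schur--Weyl duality applied to $V^{\otimes n} \otimes V^{\otimes m} = V^{\otimes (n+m)}$ identifies $c_{\lambda\mu}^\nu(1)$ with the fusion coefficient of $M_\lambda \otimes M_\mu$ in $K_0(\mathrm{Rep}^{\mathrm{poly}}(q_\infty))$, giving the stated ring isomorphism.

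The main technical obstacle is coherently transporting the $\pm$ labels across $U$: the two sign components of $W_{\lambda,\pm}^{(q)}$ are distinguished only by the eigenvalue of a distinguished central Clifford element (Remark~\ref{rem:signs}), but since this eigenvalue is itself continuous in $q$, the labels extend uniquely across $U$ once fixed at $q = 1$. A minor side point is to choose $U$ connected while avoiding the (isolated) roots of unity and the branch cuts of the chosen square roots, which is always possible since their complement in $\mathbb{C}^\times$ remains path-connected.
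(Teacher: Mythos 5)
Your proposal is correct and takes essentially the same route as the paper: choose a path (or a simply connected neighborhood of one) in $\mathbb{C}^\times$ from $1$ to $q$ avoiding roots of unity, fix a continuous branch of the square roots $\sqrt{[m+1]_{q^2}[m]_{q^2}}$ along it, note that the simple modules and hence the Wedderburn idempotents of $G_n[0]$ vary continuously, conclude that the integer-valued structure constants of $K_0$ are locally constant, and identify the answer at $q=1$ via Sergeev duality. The paper's proof is a slightly terser version of the same argument (it speaks of a path rather than a simply connected open set, which sidesteps the small wrinkle that $\mathbb{C}^\times$ minus the roots of unity is not itself simply connected); you also spell out the bookkeeping of the $\pm$ labels via the eigenvalue of the distinguished central Clifford element, which the paper leaves implicit in the word ``continuously.''
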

\begin{proof}

Choose a path connecting $1$ to $q$ which does not go through $0$ or any roots of unity. Choose the square roots used to define the representations of $G_n[0]$ to vary continuously over this path. Now the representations $W_\lambda$ and $W_{\lambda,\pm}$ vary continuously along this path, and the corresponding projections can also be chosen to vary continuously. Hence the coefficients of the fusion rules for tensoring these projections (i.e the structure constants of $K_0(\operatorname{Ab}(\mathcal{E}_q)^\mathrm{poly})$) are natural numbers which vary continuously. Since a natural number which varies continuously is constant, the fusion rules  do not change as we move along this path. In particular, all of the $K_0(\operatorname{Ab}(\mathcal{E}_q)^\mathrm{poly})$ are isomorphic to $K_0(\operatorname{Ab}(\mathcal{E}_1)^\mathrm{poly})$ which is $K_0(\mathrm{Rep}^{\mathrm{poly}}(q_\infty))$ by Sergeev duality.

\end{proof}

Let $d=q_{[1]}=2i/(q-q^{-1})$. Note that $d$ is the categorical dimension of the generating object $+\in\mathcal{E}_q$.

\begin{prop}\label{traceweights} 
 Let $q\in \mathbb{C}- \{0\}$ be not a root of unity. The dimensions of $(\la)$ resp $(\la,\pm)$ are equal to $q_\la/2^{\lfloor \ell(\la)/2\rfloor}$. Hence the weight vector for the normalized diagram trace $tr$ on $\End_{\E_q}(+^{ n})$ is given by $(q_\la/2^{\lfloor \ell(\la)/2\rfloor}d^n)$, which is also well-defined for $q=\pm 1$. 
\end{prop}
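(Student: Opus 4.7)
The plan is to transport the categorical dimension function along the isomorphism of Proposition~\ref{prop:GrothendieckDeform} and apply Proposition~\ref{prop:dimFun}. Since $d : K_0(\operatorname{Ab}(\mathcal{E}_q)^{\mathrm{poly}}) \to \mathbb{C}$ is a ring homomorphism, once we verify the parity hypothesis $d((\lambda, +)) = d((\lambda, -))$ and determine the values $d((m))$, the rational functions for general $\lambda$ follow automatically from the Hall-Littlewood recursion encoded in Proposition~\ref{prop:dimFun}.

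For the parity hypothesis, I would use the automorphism $\gamma_n$ of Lemma~\ref{q1lemma}. The key observation there is that although $\gamma_n$ is outer in $G_n[0]$, it is the restriction of conjugation by $e_n$, which is inner in $G_{n+1}[0]$. By the analogue of Proposition~\ref{irred}(c) in the non-root-of-unity setting (whose proof is identical, given the classification in Theorem~\ref{gkactionthm}), $\gamma_n$ swaps the simple components labeled by $(\lambda,+)$ and $(\lambda,-)$. Consequently $\gamma_n(p_{\lambda,+})$ is a minimal idempotent in the $(\lambda,-)$ summand, hence equivalent to $p_{\lambda,-}$. Since the pulled-back categorical trace is compatible with the embedding $G_n[0] \hookrightarrow G_{n+1}[0]$ (Remark~\ref{condexpexample}) and is invariant under inner automorphisms of $G_{n+1}[0]$, we get $\mathrm{tr}(p_{\lambda,+}) = \mathrm{tr}(\gamma_n(p_{\lambda,+})) = \mathrm{tr}(p_{\lambda,-})$, i.e.\ $d((\lambda,+)) = d((\lambda,-))$.

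For the base case values, I would identify the Hecke idempotent $p_{(m)} \in H_m \subset G_m[0]$ with the simple labeled $(m)$. By Lemma~\ref{pnlemma} this idempotent is minimal in $G_m[0]$, so it represents a single simple summand, and by a direct computation using Proposition~\ref{prop:HansDimensions} with $\eta = 1/2$ together with $d = 2\mathbf{i}/(q-q^{-1})$ one obtains $d^m \mathrm{tr}_{1/2}(p_{(m)}) = a_m$ (this is precisely the normalization chosen in \eqref{Qmvalues}). The main obstacle, and the step I expect to require the most care, is showing that this summand is indeed the one labeled $(m)$ rather than some other simple. This I would handle via the continuity argument of Proposition~\ref{prop:GrothendieckDeform}: choosing $p_{(m)}$ continuously along a path from $1$ to $q$ avoiding roots of unity, the summand in which $p_{(m)}$ lies is constant in $\lambda$, and at $q = 1$ the Hecke symmetrizer becomes the ordinary $\frac{1}{m!}\sum_{\sigma} \sigma$, which under Sergeev duality corresponds to the simple $M_{(m)}$.

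With these two inputs in hand, Proposition~\ref{prop:dimFun} yields $d((\lambda)) = Q_\lambda / 2^{(\ell(\lambda)-1)/2}$ for $\ell(\lambda)$ odd and $d((\lambda,\pm)) = Q_\lambda / 2^{\ell(\lambda)/2}$ for $\ell(\lambda)$ even, with $Q_\lambda$ specialized via $a_m$. Theorem~\ref{qdimensionsGn} converts this specialization into the explicit rational function $q_\lambda$, and uniformly rewriting the exponent of $2$ as $\lfloor \ell(\lambda)/2 \rfloor$ gives the claimed dimension formula. The weight vector statement is then just the conversion from the unnormalized to the normalized trace by dividing by $d^n$. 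Finally, to see that $q_\lambda / (2^{\lfloor \ell(\lambda)/2 \rfloor} d^n)$ is well-defined at $q = \pm 1$, I would observe that $a_m / d^m = \frac{1}{2^m}\prod_{j=1}^m (q^{j-1}+q^{1-j})/[j]_q$ has finite limits at $q = \pm 1$, and that the extra factors $[\lambda_i - \lambda_j]/[\lambda_i + \lambda_j]$ appearing in Theorem~\ref{qdimensionsGn} are also regular there.
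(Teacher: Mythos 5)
Your proposal is correct and follows the same line of argument as the paper's proof: the Markov-trace computation of Proposition~\ref{prop:HansDimensions} gives the one-row dimensions, the automorphism $\gamma_n$ gives $d(\lambda,+)=d(\lambda,-)$, and Propositions~\ref{prop:GrothendieckDeform} and~\ref{prop:dimFun} complete the formula. You supply more detail than the paper in two places --- the continuity/Sergeev-duality argument identifying $p_{(m)}$ with the label $(m)$, and the inner-automorphism argument for trace-invariance under $\gamma_n$ --- but these fill in steps the authors treat as routine rather than changing the approach.
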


\begin{proof}
As explained in Subsection~\ref{Gn0def}, the restriction of the normalized diagram trace to the Hecke algebra is the unique Markov trace with $\eta=1/2$, and so by Proposition~\ref{prop:HansDimensions} we have $tr(p_{(n)})= a_n/d^n = q_{(n)}/d^n$. This implies
$\dim ([n])=q_{(n)}$ 
\footnote{Note that this discussion is the motivation behind the specialisation of the Hall-Littlewood polynomials $Q_\lambda$ given in Subsection~\ref{sub:formal}.}. Furthermore, in the case that $\ell(\lambda)$ is even, we have from Definition~\ref{def:Utilde} and Lemma~\ref{Uladiff} b) that the automorphism $\gamma_{|\lambda|}$ exchanges the modules $W_{\la, +}$ and $W_{\la, -}$. This implies that $d(\lambda, +) =d(\lambda, -)$. The general dimension formula then follows from Proposition~\ref{prop:GrothendieckDeform} and Proposition~\ref{prop:dimFun}. 

It is easy to check that $\lim_{q\to\pm 1} q_\la/2^{\lfloor \ell(\la)/2\rfloor}d^n$ exists. 

\end{proof} 

As a corollary we obtain a formula for the categorical trace $tr$ in terms of the standard matrix trace on the modules $V_\lambda$. Note that 
a slightly modified formula will be used to define a trace on the quotient $\bGn$ in the next section.
\begin{cor}\label{cor:tracereg}
    Let $q\in \mathbb{C}-\{0\}$ be not a root of unity, and let $\mathrm{Tr}_\la$ be the usual trace on $V_\la$ and let $c\in G_n[0]$. We have
    $$\mathrm{tr}(c)\ =\ \sum_{|\la|=n} \frac{q_\la}{2^{\ell(\la)}d^n}\mathrm{Tr}_\la(c).$$
\end{cor}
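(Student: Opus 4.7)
The plan is to express both sides in terms of the characters of the simple $G_n[0]$-modules and match coefficients. By Theorem~\ref{gkactionthm}(b) the algebra $G_n[0]$ is semisimple with simple modules indexed by strict $\lambda$ with $|\lambda|=n$ (giving $W_\lambda$ when $\ell(\lambda)$ is odd, and $W_{\lambda,\pm}$ when $\ell(\lambda)$ is even). For such a semisimple algebra, any trace functional $\tau$ satisfies
\[
\tau(c) \;=\; \sum_\nu \tau(p_\nu)\, \chi_\nu(c),
\]
where $p_\nu$ is a minimal idempotent in the block labelled by $\nu$ and $\chi_\nu$ is the ordinary matrix trace on the simple module indexed by $\nu$. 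By Proposition~\ref{traceweights}, the normalised diagram trace $\mathrm{tr}$ has weights $\mathrm{tr}(p_\nu) = q_\lambda/(2^{\lfloor\ell(\lambda)/2\rfloor} d^n)$.

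Next, I would decompose $V_\lambda$ as a $G_n[0]$-module. First, as a $G_n$-module, the commutant isomorphism $\mathrm{Cliff}(\ell(\lambda)) \cong \End_{G_n}(V_\lambda)$ (the generic case of Proposition~\ref{commutant}, applied in the setting of Definition~\ref{def:creation}) implies
\[
V_\lambda \;\cong\; \begin{cases} U_\lambda^{\,\oplus 2^{\ell(\lambda)/2}} & \ell(\lambda)\text{ even},\\[2pt]
U_{\lambda,+}^{\,\oplus 2^{(\ell(\lambda)-1)/2}} \oplus U_{\lambda,-}^{\,\oplus 2^{(\ell(\lambda)-1)/2}} & \ell(\lambda)\text{ odd}.\end{cases}
\]
Restricting to $G_n[0]$ using Lemma~\ref{lem:restrict}(c) (where $U_\lambda|_{G_n[0]} \cong W_{\lambda,+}\oplus W_{\lambda,-}$ for $\ell(\lambda)$ even, and $U_{\lambda,\pm}|_{G_n[0]} \cong W_\lambda$ for $\ell(\lambda)$ odd) gives
\[
\mathrm{Tr}_\lambda \;=\; \begin{cases} 2^{\ell(\lambda)/2}\bigl(\chi_{W_{\lambda,+}} + \chi_{W_{\lambda,-}}\bigr) & \ell(\lambda)\text{ even},\\[2pt]
2^{(\ell(\lambda)+1)/2}\,\chi_{W_\lambda} & \ell(\lambda)\text{ odd}.\end{cases}
\]

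Substituting these expressions into the right-hand side of the claimed identity, the prefactor $q_\lambda/(2^{\ell(\lambda)} d^n)$ combines with the multiplicity to give exactly $q_\lambda/(2^{\ell(\lambda)/2} d^n) = q_\lambda/(2^{\lfloor\ell(\lambda)/2\rfloor} d^n)$ times $\chi_{W_{\lambda,\pm}}$ in the even case, and $q_\lambda/(2^{(\ell(\lambda)-1)/2} d^n) = q_\lambda/(2^{\lfloor\ell(\lambda)/2\rfloor} d^n)$ times $\chi_{W_\lambda}$ in the odd case. In both instances this matches the weight from Proposition~\ref{traceweights}, so the right-hand side equals $\sum_\nu \mathrm{tr}(p_\nu)\chi_\nu(c) = \mathrm{tr}(c)$.

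I do not expect any serious obstacle beyond keeping the parity-dependent exponents of $2$ bookkept correctly; the combinatorial identity $\lfloor\ell(\lambda)/2\rfloor + \lceil\ell(\lambda)/2\rceil = \ell(\lambda)$ is what makes the formula collapse to the uniform denominator $2^{\ell(\lambda)}$ appearing in the statement.
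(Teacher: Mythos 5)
Your proof is correct and follows essentially the same route as the paper's: decompose $V_\lambda$ into irreducible $G_n[0]$-modules, identify the multiplicities, and match the per-block weight of the categorical trace from Proposition~\ref{traceweights}. The only cosmetic difference is that you pass through the $G_n$-decomposition and then restrict via Lemma~\ref{lem:restrict}(c), while the paper reads off the $G_n[0]$-multiplicities directly from Definition~\ref{def:creation}(b) (where the commutant used is $\operatorname{Cliff}(\ell(\lambda)+1)$) and Lemma~\ref{lem:irr}; the multiplicities you obtain agree. One small citation quibble: Proposition~\ref{commutant} (and its not-root-of-unity analogue) only asserts \emph{injectivity} of $\rho:\operatorname{Cliff}(\ell(\lambda))\hookrightarrow \End_{G_n}(V_\lambda)$; the fact that this is the full commutant for specialised $q$ is not stated there, but it does follow from Theorem~\ref{gkactionthm} (semisimplicity plus the dimension count), and in any case all you really need is the multiplicity of $U_\lambda$ (resp.\ $U_{\lambda,\pm}$) in $V_\lambda$, which follows from irreducibility (Lemma~\ref{lem:irr}), Lemma~\ref{lem:weights}, and the dimension of $\rho(e)V_\lambda$.
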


\begin{proof} It follows from Definition~\ref{def:creation} and Lemma~\ref{lem:irr} that $V_\la$ is isomorphic to the direct sum of $2^{ \ell(\la)/2}$ copies of $W_{(\la,+)}\oplus W_{(\la,-)}$
for $\ell(\la)$ even, and of $2^{ (\ell(\la)+1)/2}$ copies of $W_{\la}$ for $\ell(\la)$ odd.
These multiplicities give us the rank of a minimal idempotent $p=p_\la$ or $p=p_{(\la,\pm)}$ of $G_n[0]$ in these representations. This allows us to check that the right hand side in our claim with $c=p$, and since the $p$'s generate the whole algebra the result follows.
\end{proof}

Our next goal is to give an recursive formula for the trace $\mathrm{tr}$ for $q$ not a root of unity. Our eventual goal is to show that when $q$ is a root of unity that the trace is given by a specialization of this same recursive formula, which will allow us to prove an identity like Corollary~\ref{cor:tracereg} when $q$ is a root of unity.

We now prove some useful recursive identities regarding the matrix coefficients for the trace $\mathrm{Tr}_\lambda$. First we need the following technical lemma. Since we will need this result in both the root of unity and non-root of unity cases and since the proofs are entirely parallel we state both here.

\begin{lem}\label{lem:matrixcoeff} 
Let $q \in \mathbb{C} - \{-1,0,1\}$, and let $n\in \mathbb{N}$, and let $\chi\in \{ 1, v_nv_{n+1}, t_n, v_nv_{n+1}t_n\}$. If $q$ is not a root of unity let $\mu$ be a strict Young diagram with $|\mu| = n+1$, and let $v\psi_\Gamma \in V_\mu$ be a basis vector with weight $(q_k^{\nu(k)})_{i=1}^n$. If $q$ is a $4N$-th root of unity let $\mu$ be a strict Young diagram with $|\mu| \equiv n+1 \mod N$, and let $v\psi_\Gamma\in V_\mu^{(n)}$ be a basis vector with weight $(q_k^{\nu(k)})_{i=1}^{n}$.
In either case, we have that $\chi v\psi_\Ga$ is a linear combination of $v\psi_\Ga$, $v_nv_{n+1}v\psi_\Ga$ and $\tilde v\psi_{s_n(\Ga)}$ for some $\tilde v\in \mathrm{Cliff}(n+1)$ which depends on $\chi$ and $v$. Furthermore, the coefficient $a(\chi, v\psi_\Ga)= a(q_n^{\nu(n)},q_{n+1}^{\nu(n+1)};\chi)$ is the evaluation of a fixed rational function in $q$ of $v\psi_\Ga$ which only depends on $\chi$. These algebraic functions are as follows:
\[\begin{tabular}{c|c}
   $\chi$  &  $a(x,y;\chi)$ \\\hline
    $1$ & $1$\\
    $v_nv_{n+1}$ &$0$\\
    $t_n$ & $-\frac{q-q^{-1}}{x y^{-1}-1}$\\
    $v_nv_{n+1}t_n$ & $-\frac{q-q^{-1}}{x^{-1}y-1}$.
\end{tabular}\]


\end{lem}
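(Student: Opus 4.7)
The plan is a direct case analysis on $\chi$ and on the structure of $v$. For $\chi = 1$ the coefficient of $v\psi_\Gamma$ is trivially $1$. For $\chi = v_nv_{n+1}$, right-multiplying $v$ by $v_n v_{n+1}$ toggles whether $v_n$ and $v_{n+1}$ appear in $v$ (up to signs from anticommutation), so $v_n v_{n+1} v\psi_\Gamma$ has its $n$-th and $(n+1)$-st weight components inverted relative to those of $v\psi_\Gamma$; hence it lies in a distinct weight space, and the coefficient of $v\psi_\Gamma$ must vanish.

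For $\chi = t_n$, I would split into four sub-cases depending on which of $v_n, v_{n+1}$ appear as factors in $v$ (neither, both, only $v_n$, only $v_{n+1}$). In each sub-case, the Hecke-Clifford cross relations \ref{cross1}--\ref{cross3} (and the relation \ref{Heck2} governing $t_n e_n$) let me commute $t_n$ past the Clifford factors of $v$, reducing to the direct action of $t_n$ on $\psi_\Gamma$ via \ref{tkrep}. That action produces three kinds of terms, proportional respectively to $\psi_{s_n(\Gamma)}$, $\psi_\Gamma$, and $e_n\psi_\Gamma = v_nv_{n+1}\psi_\Gamma$. Moving the Clifford prefactors back into normal order (using $e_n^2=-1$ and the signs from anticommutation) partitions the result into the three pieces $a\cdot v\psi_\Gamma$, $b\cdot v_n v_{n+1}v\psi_\Gamma$, and $\tilde v\psi_{s_n(\Gamma)}$. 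A routine simplification of the rational expressions in $q_n, q_{n+1}$ then verifies in each sub-case that $a$ agrees with $a(q_n^{\nu(n)}, q_{n+1}^{\nu(n+1)}; t_n)$ from the table.

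For $\chi = v_nv_{n+1}t_n = e_n t_n$, I would avoid a second four-case analysis by deriving the coefficient from the already-established $t_n$ formula: the identity $e_n t_n = (q-q^{-1})(1+e_n) - t_n e_n$ from \ref{Heck2} expresses $e_n t_n\,v\psi_\Gamma$ as a combination of $v\psi_\Gamma$, $e_n v\psi_\Gamma = v_nv_{n+1}v\psi_\Gamma$, and $t_n(e_n v\psi_\Gamma)$. The last term is handled by applying the $t_n$ case to the basis vector $e_n v\psi_\Gamma$, whose weight components at positions $n, n+1$ are inverted. Assembling the contributions to the coefficient of $v\psi_\Gamma$ and simplifying yields the stated rational function.

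The main obstacle is purely bookkeeping: tracking signs from anticommutation and recognising when a normal-ordered product $\tilde w \psi_\Gamma$ coincides with $\pm v\psi_\Gamma$ versus $\pm v_n v_{n+1} v\psi_\Gamma$ in the chosen basis. That only the weight components $q_n^{\nu(n)}$ and $q_{n+1}^{\nu(n+1)}$ enter the final answer is forced by the locality of \ref{tkrep} at the $k=n$ edge and by the fact that the cross relations \ref{cross1}--\ref{cross3} at index $n$ only involve $v_n$ and $v_{n+1}$. Well-definedness of the denominators $q_n^{\pm1}q_{n+1}^{\pm 1}-1$ follows from Lemma \ref{lem:betaWellDefined} in the root of unity case, and from Lemma \ref{xmdef2}(b) when $q$ is not a root of unity.
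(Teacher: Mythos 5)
Your strategy is sound and is essentially the one the paper uses. The paper likewise observes that $\chi$ commutes with $v_i$ for $i<n$ (so it suffices to consider $v\in\{1, v_n, v_{n+1}, v_nv_{n+1}\}$, which is your four sub-case split), verifies the $\chi=t_n$ case for $v=1$ directly from \ref{tkrep}, and derives the remaining $v$'s by commuting $t_n$ past the Clifford prefactor via \ref{cross1}, \ref{cross2}. The paper writes out the $v=v_nv_{n+1}$ case explicitly and declares the others, together with $\chi = v_nv_{n+1}t_n$, analogous. Your shortcut for $\chi = e_nt_n$ via the identity $e_nt_n = (q-q^{-1})(1+e_n)-t_ne_n$ is a valid alternative to that last case. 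One remark: to assemble the $v\psi_\Gamma$ coefficient in your shortcut you must also know the coefficient of $v_nv_{n+1}(e_nv\psi_\Gamma) = -v\psi_\Gamma$ in $t_n(e_nv\psi_\Gamma)$, i.e.\ the \emph{second} (not the $a$-) coefficient of the $t_n$ expansion; this is available from your own sub-case analysis but is not part of the lemma's stated table, so be sure to record it rather than citing the $a(\cdot,\cdot;t_n)$ entry alone.

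One substantive caveat: if you actually push the last case through you will find the coefficient of $v\psi_\Gamma$ in $v_nv_{n+1}t_n\,v\psi_\Gamma$ to be $-\frac{q-q^{-1}}{x^{-1}y^{-1}-1}$ with $x=q_n^{\nu(n)}$, $y=q_{n+1}^{\nu(n+1)}$, rather than the $-\frac{q-q^{-1}}{x^{-1}y-1}$ printed in the lemma's table. (For example, with $v=1$ one reads the $v_nv_{n+1}\psi_\Gamma$-coefficient $\frac{q-q^{-1}}{q_nq_{n+1}-1}$ directly off \ref{tkrep}, and left-multiplying by $v_nv_{n+1}$ sends it to $-\frac{q-q^{-1}}{q_nq_{n+1}-1}= -\frac{q-q^{-1}}{x^{-1}y^{-1}-1}$; the $\chi=t_n$ row checks out exactly as printed, so this is not a convention mismatch.) So your claim that a routine simplification ``yields the stated rational function'' is not literally true for the $v_nv_{n+1}t_n$ row — the table appears to have a typographical slip ($y$ for $y^{-1}$). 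This discrepancy is harmless downstream because Lemma~\ref{lem:condexpcond}, Corollary~\ref{cor:rationalEq}, and Lemma~\ref{lem:traceFormula} only ever use the symmetrised combination $a(x,q_{n+1};\chi)+a(x,q_{n+1}^{-1};\chi)$, which is invariant under $y\mapsto y^{-1}$; but you should flag the correction rather than assert agreement with the printed table.
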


\begin{proof}
    As each $\chi$ commutes with $v_i$ for $i<n$ it suffices to check the first statement for  $v\psi_\Ga$ with $v\in\{ 1, v_n,v_{n+1}, v_nv_{n+1}\}$.
    For $\chi=v_nv_{n+1}$ the first part of the statement is obvious (in fact, we don't need a $\tilde{v}$ term), The second part is also immediate, since the coefficient is always zero.
    
    If $\chi=t_n$ and $v=1$, the claim follows from \ref{tkrep} taking $\tilde{v}=1$. The other cases can be easily deduced from that, using relations \ref{cross1} and \ref{cross2}. E.g. if $v = v_n v_{n+1}$ we have 
    \begin{align*}
        t_nv_nv_{n+1}\psi_\Ga\ =&\ v_{n+1}t_nv_{n+1}\psi_\Ga\ =\ (v_{n+1}v_nt_n-(q-q^{-1})v_{n+1}(v_n-v_{n+1}))\psi_\Ga\ =\cr
        =&\ (v_{n+1}v_n(\hat\beta  \psi_{s_n(\Ga)}-\frac{q-q^{-1}}{q_n^{-1}q_{n+1}-1}\psi_\Ga+\frac{q-q^{-1}}{q_nq_{n+1}-1}v_nv_{n+1}\psi_\Ga)\cr &-\ (q-q^{-1})v_{n+1}(v_n-v_{n+1}))\psi_\Ga\ =\cr
        =&\ \hat\beta v_{n+1}v_n \psi_{s_n(\Ga)}+\frac{(q-q^{-1})q_nq_{n+1}}{q_nq_{n+1}-1}\psi_\Ga-\frac{(q-q^{-1})q_n^{-1}q_{n+1}}{q_n^{-1}q_{n+1}-1}v_{n+1}v_{n}\psi_\Ga\ =\cr
        =&\ -\hat\beta v_nv_{n+1} \psi_{s_n(\Ga)} -\frac{(q-q^{-1})}{q_n^{-1}q_{n+1}^{-1}-1}\psi_\Ga-\frac{(q-q^{-1})}{q_nq_{n+1}^{-1}-1}v_nv_{n+1}\psi_\Ga.
    \end{align*}




    The other cases for $v$, and the case $\chi=v_nv_{n+1}t_n$ are checked similarly.

\end{proof}


Recall that we write $\lambda \rightarrow \mu$ if $\mu$ can be obtained from $\lambda$ by adding a single box.



\begin{remark}\label{rem:weightbasis} For each $n$, and for each label $\nu \in V(\infty)[0]$ 
of the form $\lambda$ or $(\lambda, \pm)$ with $n = |\lambda|$, we have that $W_\nu$ has a basis of weight vectors $w_i^\nu$. Let $e_{ij}^\nu$ denote the image of the corresponding matrix element of $\End(W_\nu)$ in the corresponding matrix factor of the semisimple algebra $G_n[0]$. We will always assume in the following that any system of matrix units for the semisimple algebra $G_n[0]$ will be of that form.
Using our definitions, we obtain the following equation
\begin{equation}\label{eq:weightbasis}
tr(e_{ij}^\nu)tr(\chi)\ =\ tr(e_{ij}^\nu\chi)\ =\ \sum_{\la\to\mu}\frac{q_\mu}{d^{n+1}2^{\ell(\mu)}}\mathrm{Tr}_\mu(e_{ij}^\nu\chi);
\end{equation}
indeed, it follows from our restriction rules that $\mathrm{Tr}_\mu(e_{ij}^\nu\chi)$ is nonzero only if $\la\to\mu$.
We will use this equation to obtain identities of rational functions involving matrix entries in our representations which will be useful for studying traces for $q$ a root of unity. 

Preempting our work in the root of unity setting, we make an analogous definition for the algebras $\bar G_n[0]$ at $q$ a $4N$-th root of unity. For each $n$, and for each label $\nu \in V(N)[0]$ of the form $\lambda$ or $(\lambda, \pm)$ with $n \equiv |\lambda| \pmod N$, write $e_{ij}^\nu$ for the image of the corresponding matrix element of $\End(W_\nu^{(n)})$ in the corresponding matrix factor of the semisimple algebra $\bar G_n[0]$. 
\end{remark}


\begin{lem}\label{lem:condexpcond} Let $q\in \mathbb{C}-\{0\}$ be not a root of unity. 
Fix a label $\nu \in V(\infty)[0]$ of the form $\lambda$ or $\lambda, \pm$, and fix an index $i,j$ of the weight basis of $W_\nu$ corresponding to the weight $(q_k^{\eta(k)})_{k=1,\ldots n}$. 
    Let $\chi\in \{1, t_n, e_n, e_nt_n\}$, then we have the following trace identities in $G_{n+1}[0]$.
    \begin{enumerate}[(a)]
        \item If $\mu \in V(\infty)[0]$ with $|\mu| = n+1$, then 
          \[\mathrm{Tr}_\mu(e_{ij}^\nu\chi)=\delta_{i,j}(a(q_n^{\eta(n)},q_{n+1};\chi)+a(q_n^{\eta(n)},q_{n+1}^{-1};\chi))\mathrm{Tr}_\la(e_{ii}^\nu)\]
        if there's an edge $\lambda \rightarrow \mu$ in $\tilde{\Gamma}(\infty)[0]$, and $\mathrm{Tr}_\mu(e_{ij}^\nu\chi) = 0$ otherwise.
        \item The diagram trace satisfies \[tr(\chi)d q_\la\ =\ \sum_{ \la\to\mu} (a(q_n^{\eta(n)},q_{n+1}(\la\to\mu);\chi)+a(q_n^{\eta(n)},q_{n+1}^{-1}(\la\to\mu);\chi))2^{\ell(\la)-\ell(\mu)}q_\mu.\] 
    \end{enumerate}
    Here $q_{n+1}$ is determined by the edge $\lambda \to \mu$.

\end{lem}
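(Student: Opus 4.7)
The plan is to compute (a) directly using the weight basis $\{v\psi_\Gamma\}$ of $V_\mu$, exploiting the local matrix-coefficient formula of Lemma~\ref{lem:matrixcoeff}, and then obtain (b) formally from (a) combined with Equation~\ref{eq:weightbasis} and Corollary~\ref{cor:tracereg}.

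For (a), my first move is to expand $\mathrm{Tr}_\mu(e_{ij}^\nu\chi) = \sum_{v,\Gamma} [\text{coefficient of } v\psi_\Gamma \text{ in } e_{ij}^\nu\chi(v\psi_\Gamma)]$. By Lemma~\ref{lem:matrixcoeff}, $\chi(v\psi_\Gamma) = a(q_n^{\eta(n)},q_{n+1}^{\eta(n+1)};\chi)\cdot v\psi_\Gamma + \xi$, where $\xi$ is supported on the basis vectors $v_n v_{n+1}v\psi_\Gamma$ and $\tilde v\psi_{s_n(\Gamma)}$; both of these lie in weight spaces whose $J_n$-eigenvalue differs from that of $v\psi_\Gamma$. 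Since $e_{ij}^\nu \in G_n[0]$ sends weight vectors to weight vectors with the same $J_1,\ldots,J_n$-eigenvalues, the $\xi$-part contributes zero to the diagonal entry at $v\psi_\Gamma$. Collecting by weight gives $\mathrm{Tr}_\mu(e_{ij}^\nu\chi) = \sum_\omega a(\omega;\chi)\,\mathrm{Tr}_\omega(e_{ij}^\nu)$.

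The key geometric input is the restriction in Lemma~\ref{restrictlemma}(a), together with the observation that the automorphism $\al$ from \ref{autom} is trivial on $G_n[0]$: as a $G_n[0]$-module, $V_\mu$ becomes two copies of $\bigoplus_{\lambda'\to\mu} V_{\lambda'}$. Crucially, since $v\in\mathrm{Cliff}(n)$ forces $\eta(n+1)=-1$, the summand $V_{\lambda'}$ is precisely the $J_{n+1}=q_{n+1}^{-1}$ eigenspace of $V_\mu$ and $v_{n+1}V_{\lambda'}$ the $J_{n+1}=q_{n+1}$ eigenspace. So for fixed $\omega_0 = (q_k^{\eta(k)})_{k\leq n}$, only two weights $\omega$ of $V_\mu$ contribute (corresponding to $\eta(n+1)=\pm1$), and on each the action of $e_{ij}^\nu$ lives inside a single copy of $V_{\lambda}$ where $\lambda$ is the underlying Young diagram of $\nu$. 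Within each copy of $W_\nu\subset V_\lambda$ the matrix unit $e_{ij}^\nu$ acts on the $\omega_0$-weight subspace as $E_{ij}$, so its trace there is $\delta_{ij}$, yielding $\mathrm{Tr}_\omega(e_{ij}^\nu) = \delta_{ij}\,\mathrm{Tr}_{\lambda}(e_{ii}^\nu)$. When no compatible edge $\lambda\to\mu$ exists, $W_\nu$ never appears in $V_\mu|_{G_n[0]}$ and the trace vanishes identically, giving both cases of (a).

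For (b), specialize Equation~\ref{eq:weightbasis} to $i=j$ (the off-diagonal case gives zero on both sides by (a)) and substitute the formula from (a). Using Corollary~\ref{cor:tracereg} we have $tr(e_{ii}^\nu) = \frac{q_\lambda}{2^{\ell(\lambda)} d^n}\,\mathrm{Tr}_\lambda(e_{ii}^\nu)$; canceling the common factor $\mathrm{Tr}_\lambda(e_{ii}^\nu)$ on both sides and multiplying through by $2^{\ell(\lambda)}d^{n+1}$ produces exactly the claimed recursion. The main subtle step is identifying the two $V_\lambda$-copies in Lemma~\ref{restrictlemma}(a) with the two $J_{n+1}$-eigenspaces of $V_\mu$, which is the mechanism producing the pair $a(\cdot,q_{n+1};\chi) + a(\cdot,q_{n+1}^{-1};\chi)$ in the answer; the rest of the argument is careful bookkeeping in the weight basis.
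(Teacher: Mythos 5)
Your overall strategy tracks the paper's closely: expand $\chi(v\psi_\Gamma)$ via Lemma~\ref{lem:matrixcoeff}, use Lemma~\ref{restrictlemma}(a) to control which $V_\lambda$ appear, and derive (b) from (a) by plugging into the Markov/multiplicativity identity $\mathrm{tr}(e_{ij}^\nu\chi)=\mathrm{tr}(e_{ij}^\nu)\mathrm{tr}(\chi)$ and Corollary~\ref{cor:tracereg}, then clearing denominators. The treatment of (b) is fine. However, your justification that the ``$\xi$-part'' of the expansion contributes zero has a genuine gap.

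First, the assertion that ``$e_{ij}^\nu\in G_n[0]$ sends weight vectors to weight vectors with the same $J_1,\ldots,J_n$-eigenvalues'' is false: for $i\neq j$ the matrix unit $e_{ij}^\nu$ carries the $j$-th weight vector of $W_\nu$ to the $i$-th, which generically has a different $J_1,\ldots,J_n$-weight. Second, even reading the argument charitably as an appeal to $J_{n+1}$ (which does centralize $G_n$), the claim that $v_nv_{n+1}v\psi_\Gamma$ lies in a different weight space from $v\psi_\Gamma$ breaks down in edge cases. The $J_n$-eigenvalue is unchanged when $q_n^2=1$, i.e.\ when $m_n=0$ (a box on the main diagonal), and the $J_{n+1}$-eigenvalue is unchanged when $q_{n+1}^2=1$, which happens exactly when $\mu$ starts a new bottom row. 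In the latter case neither $J_n$ nor $J_{n+1}$ alone separates the two $G_n[0]$-summands $V_{\lambda}$ and $v_{n+1}V_{\lambda}$, so the ``$J_{n+1}$-eigenspace'' framing you call crucial is not available there.

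The paper sidesteps both issues with a structural argument. For the $v_nv_{n+1}v\psi_\Gamma$ piece, it uses that $G_n[0]$ preserves the $v_{n+1}$-parity grading of $V_\mu$ (its generators only involve $t_k$ and $v_kv_{k+1}$ with $k\le n$), so the $G_n[0]$-modules generated by $v\psi_\Gamma$ and by $v_nv_{n+1}v\psi_\Gamma$ have zero intersection, regardless of any eigenvalue coincidences. For the $\tilde v\psi_{s_n(\Gamma)}$ piece, it uses disjointness of the $G_n[0]$-submodules $V_\lambda\oplus v_{n+1}V_\lambda$ and $V_{\lambda'}\oplus v_{n+1}V_{\lambda'}$ with $\lambda\neq\lambda'$. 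Finally, for the remaining scalar term $a(\cdot;\chi)v\psi_\Gamma$, it observes that multiplication by the weight-dependent scalar is diagonal in the weight basis while $e_{ij}^\nu$ ($i\neq j$) is off-diagonal, so the composition is traceless. You should replace your $J_k$-eigenvalue reasoning with these $v_{n+1}$-parity/module-disjointness arguments to make the off-diagonal vanishing airtight; the rest of your proof is sound.
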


\begin{proof} 

To prove $\mathrm{Tr}_\mu(e^\nu_{ij}\chi)=0$ for $i\neq j$ and $\chi=v_nv_{n+1}$, observe that for a fixed weight vector $v\psi_\Gamma\in V_\mu$ the $G_n[0]$ modules generated by $v\psi_\Ga$ and by $v_nv_{n+1}v\psi_\Ga$ have zero intersection. Indeed, one of them is contained in span $\{ \mathrm{Cliff}(n)\psi_{\Ga'}, \Ga'\in {\mathcal S}_\mu\}$, while the other one is contained in span $\{ v_{n+1}\mathrm{Cliff}(n)\psi_{\Ga'}, \Ga'\in {\mathcal S}_\mu\}$ (depending on if $v_{n+1} \in v$ or not). It follows that $\mathrm{Tr}_\mu(cv_nv_{n+1})=0$ for any $c\in G_n[0]$. 

For the case $\chi=t_n$ and $i\neq j$, it follows from Lemma~\ref{lem:matrixcoeff} that
$t_n v\psi_\Ga$ is a linear combination of $v\psi_\Ga$, $v_nv_{n+1}v\psi_\Ga$ and $\tilde v\psi_{s_n(\Ga)}$ for some $\tilde v\in \mathrm{Cliff}(n+1)$, and that the coefficient of $v\psi_\Gamma$ is $a(q_n^{\nu(n)},q_{n+1}^{\nu(n+1)};t_n)$ which only depends on the weight of $v\psi_\Ga$. By the previous paragraph, the map $v\psi_\Ga\mapsto e_{ij}^{\nu }v_nv_{n+1}v\psi_\Ga$ has trace zero. The map $v\psi_\Ga\mapsto e_{ij} a(q_n^{\nu(n)},q_{n+1}^{\nu(n+1)};t_n)v \psi_\Ga$ is the composition of $v\psi_\Ga\mapsto  a(q_n^{\nu(n)},q_{n+1}^{\nu(n+1)};t_n) v \psi_\Ga$ with the operator $e_{ij}^\nu$. In the basis of the first paragraph, the first operator is a diagonal matrix, while the second is off-diagonal. It follows that their composition is traceless. Hence it suffices to check that the map $v\psi_\Ga\mapsto c \tilde v\psi_{s_n(\Ga)}$ has zero trace for all $c\in G_n[0]$. This 
follows from the fact that $\mathrm{Cliff}(n+1)\psi_{\Ga}$ and $\mathrm{Cliff}(n+1)\psi_{s_n(\Ga)}$ generate $G_n[0]$ modules with zero intersection; indeed these modules are $V_\la\oplus v_{n+1}V_\la$ and $V_{\la'}\oplus v_{n+1}V_{\la'}$, where $\la=\Ga(n)$ and $\la'=s_n(\Ga)(n)$. The proof for $\mathrm{Tr}_\mu(e_{ij}^\nu  v_nv_{n+1}t_n)=0$ is near identical using Lemma~\ref{matrixcoeff}.

Observe that if $e_{ii}^\nu$ has weight $\Omega=(q_k^{\eta(k)})_{k=1}^n$ and $\la\to\mu$,
    then $e_{ii}^{\nu} V_\mu$ is a direct sum of weight spaces with weights $(\Omega,q_{n+1}^{\pm 1})$, where $q_{n+1}$ is determined by the edge $\la\rightarrow \mu$. Recall from the above discussion that the maps $v\psi_\Ga\mapsto c \tilde v\psi_{s_n(\Ga)}$ are traceless. By Lemma~\ref{lem:matrixcoeff} we obtain that
     \[\mathrm{Tr}_\mu(e_{ii}^\nu\chi)=(a(q_n^{\eta(n)},q_{n+1};\chi)+a(q_n^{\eta(n)},q_{n+1}^{-1};\chi))\mathrm{Tr}_\la(e_{ii}^\nu).\]
     
     Observe that the idempotent $e_{ii}^\nu\in \Gno$ acts nonzero only in the modules $V_\mu$ of $\Gnoo$ for which $\la\rightarrow \mu$. Using Corollary \ref{cor:tracereg}, we obtain
     \begin{equation}\label{tracecompare}
    \frac{q_\la}{d^n2^{\ell(\la)}}\mathrm{Tr}_\la(e_{ii}^\nu)tr(\chi)\ =\ tr(e_{ii}^\nu)tr(\chi)\ =\ tr(e_{ii}^\nu\chi)\ =\ \sum_{\la\to\mu}\frac{q_\mu}{d^{n+1}2^{\ell(\mu)}}\mathrm{Tr}_\mu(e_{ii}^\nu\chi).
    \end{equation}
     Multiplying this equation by $d^{n+1}2^{\ell(\la)}$ and applying part (a) now gives the second claim. 
\end{proof}

The right hand side of the expression in (b) above involves choices of signs for square roots. It is independent of these choices, as the left hand side is a rational function in $q$. We have the equality in (b) for infinitely many specialisations. Hence we obtain the following. This result will be key in the root of unity setting.

\begin{cor}\label{cor:rationalEq}
   Let $\gamma\to \lambda$ be strict Young diagrams, define $n= |\lambda|$, let $\varepsilon\in \{1,-1\}$, and let $\chi \in \{1 ,e_n,t_n,e_nt_n\}$. We have the equality of rational functions in the formal variable $q$
    \[tr(\chi)d q_\la\ =\ \sum_{\la\to\mu} (a(q_n^{\varepsilon},q_{n+1};\chi)+a(q_n^{\varepsilon},q_{n+1}^{-1};\chi))2^{\ell(\la)-\ell(\mu)}q_\mu,\]
    where $q_n$ is determined by the edge $\gamma\to \lambda$, and $q_{n+1}$ is determined by the edge $\lambda \to \mu$. In particular if $\lambda_1 < N$ then the above equality holds when $q$ is specialised to a primitive $4N$-th root of unity, for any choices of square roots in the expressions for $q_n$ and $q_{n+1}$ for the summands on the right hand side.
\end{cor}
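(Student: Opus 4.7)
The approach is to promote the identity of Lemma~\ref{lem:condexpcond}(b), which holds as a numerical equality for every $q\in \mathbb{C}$ that is not a root of unity, to an identity of rational functions in $q$, and then specialize to $q$ a primitive $4N$-th root of unity. For the first step, I would check that for every strict Young diagram $\lambda$ with $|\lambda|=n$ obtained from $\gamma$ by adding a box, and every sign $\varepsilon \in \{\pm 1\}$, there exists a simple $G_n[0]$-module $W_\nu$ and a diagonal matrix unit $e_{ii}^\nu$ whose weight has $n$-th coordinate $q_n^\varepsilon$. This is immediate: choose any path $\Lambda \in \mathcal{S}_\lambda$ with $\Lambda(n-1)=\gamma$, and pick $v \in \mathrm{Cliff}(n)$ to contain $v_n$ or not according to the desired sign; then $J_n v\psi_\Lambda = q_n^\varepsilon v\psi_\Lambda$ by Equation \ref{jkrelation}. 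Applying Lemma~\ref{lem:condexpcond}(b) to this vector yields the stated equality as a numerical identity for every $q$ that is not a root of unity.

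Next, both sides are algebraic functions of $q$. The LHS is rational in $q$. On the RHS, $q_\mu$ is rational in $q$, while the coefficient $a(q_n^\varepsilon, q_{n+1};\chi)+a(q_n^\varepsilon, q_{n+1}^{-1};\chi)$ is, by direct computation from the explicit formulas for $a(x,y;\chi)$, rational in $q$, in $q_n^\varepsilon$, and in $q_{n+1}+q_{n+1}^{-1}=2\{m_{n+1}\}$; the square root defining $q_{n+1}$ cancels, because swapping $q_{n+1}\leftrightarrow q_{n+1}^{-1}$ merely permutes the two summands. Thus each side lives in the quadratic extension $\mathbb{C}(q)\bigl(\sqrt{[m_n+1]_{q^2}[m_n]_{q^2}}\bigr)$, and an equality there that holds on a cofinite subset of $\mathbb{C}$ is an equality of algebraic functions. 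Since the LHS is rational in $q$, the remaining square-root dependence on the RHS must also cancel, giving the claimed identity of rational functions in $q$.

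For the ``in particular'' statement, let $q$ be a primitive $4N$-th root of unity and assume $\lambda_1 < N$. The LHS is finite because $q_\lambda$ is by Corollary~\ref{Nadcor}(a). For the RHS, each $q_\mu$ is finite (using Corollary~\ref{Nadcor}(c) in the case $\mu_1=N$), and each coefficient is well-defined because its only potential poles are the denominators $q_n^\varepsilon q_{n+1}^{\pm 1}-1$; by Lemma~\ref{xmdef1}(b), these vanish only when $m_n\equiv m_{n+1}$ or $m_n+m_{n+1}+1\equiv 0$ modulo $2N$, both of which are excluded by strictness of Young diagrams and the bound $m_n, m_{n+1}\in [0,N-1]$ (compare Lemma~\ref{lem:betaWellDefined}). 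Independence of the square-root choices for the $q_{n+1}$ in individual summands is automatic from the symmetric form of the sum, while flipping the square root of $q_n$ simply exchanges the two values of $\varepsilon$, which are handled symmetrically. The main delicacy throughout is this bookkeeping of square-root choices together with verifying that no pole occurs for any $\mu$ appearing in the sum.
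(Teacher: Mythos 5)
Your proof is correct and follows essentially the same route as the paper's: establish the identity numerically from Lemma~\ref{lem:condexpcond}(b) for infinitely many non-root-of-unity $q$, observe that both sides live in a finite extension of $\mathbb{C}(q)$ while the left-hand side is genuinely rational (so the square-root dependence on the right must cancel), conclude the identity of rational functions, and then specialize using Corollary~\ref{Nadcor} and Lemma~\ref{lem:betaWellDefined} to control denominators. The extra bookkeeping you spell out — realizing the weight with $n$-th coordinate $q_n^\varepsilon$ via a choice of $v\in\mathrm{Cliff}(n)$, and the manifest symmetry of $a(\cdot,y;\chi)+a(\cdot,y^{-1};\chi)$ under $y\leftrightarrow y^{-1}$ — makes explicit details the paper leaves implicit in the paragraph preceding the corollary, but the underlying argument is identical.
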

\begin{proof}
    By Corollary~\ref{Nadcor} we have that $q_\lambda$ and the $q_\mu$ are well-defined when specialised to $q$. Looking at the table in Lemma \ref{lem:matrixcoeff} 
    The specialisations of $a(q_n^{\nu(n)},q_{n+1};\chi)$ and $a(q_n^{\nu(n)},q_{n+1}^{-1};\chi)$ are well-defined by Lemma~\ref{lem:betaWellDefined}. 
\end{proof}



\subsection{Traces for the quotients $\bGn$ at $q$ a root of unity}\label{sub:trace} 
In the case that $q$ is a root of unity, we do not have that $G_n[0]$ is semisimple. Hence the techniques of the previous subsection do not work to deduce the structure of $\operatorname{tr}$. Instead we will define an explicit trace on the semisimple quotient $\bar G_n[0]$, and inductively show that its pullback to $G_n[0]$ agrees with $\operatorname{tr}$. A key ingredient of the proof showing these traces agree will be Corollary~\ref{cor:rationalEq}.

\begin{defn}
    Let $q$ be a primitive $4N$-th root of unity, and let $\mathrm{Tr}_{\la^{(n)}}$ be the usual trace on $\Vlnk$, where $n=|\la|+f_\la N$. Then
we define a trace $\tilde{\mathrm{tr}}_n $ on $\Gno$ by
    $$\tilde{\mathrm{tr}}_n(c)\ :=\ \sum_{\la\in Y^<_{N,n}} \frac{q_\la}{2^{\ell(\la)+f_\la}d^n}\mathrm{Tr}_{\la^{(n)}}(c).$$
\end{defn}

The following result on the matrix entries of $\mathrm{Tr}_{\la^{(n)}}$ will be required to show $\tilde{\mathrm{tr}}_n = \operatorname{tr}$.

\begin{lem}\label{lem:traceFormula}
    Let $q$ be a primitive $4N$-th root of unity, let $n\in \mathbb{N}$, let $\nu \in V(N)[0]$, and fix an index $i,j$
    of the weight basis of $W_\nu^{(n)}$ corresponding to the weight $(q_k^{\eta(k)})_{k=1,\ldots n}$. Let $\chi\in \{1, t_n, e_n,e_nt_n\}$, then we have
    \[ \mathrm{Tr}_{\mu^{(n+1)}}(e_{ij}^\nu\chi)=\delta_{i,j}(a(q_n^{\eta(n)},q_{n+1};\chi)+a(q_n^{\eta(n)},q_{n+1}^{-1};\chi))\mathrm{Tr}_{\la^{(n)}}(e_{ii}^\nu)\]
    for all $\mu\in Y^<_{N,n+1}$ with $\lambda \to^{(N)} \mu$ where $\nu = \lambda$ or $\nu = (\lambda, \pm)$ depending on the parity of $\ell(\lambda)$. The value $q_{n+1}$ is determined by the edge $\lambda \to^{(N)} \mu$, and the value $q_n$ is determined by the weight of $e_{ii}^\nu$
\end{lem}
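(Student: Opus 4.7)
The plan is to adapt, essentially verbatim, the proof of Lemma~\ref{lem:condexpcond}(a). The crucial point is that Lemma~\ref{lem:matrixcoeff} was stated uniformly for both the generic-$q$ and the primitive $4N$-th root-of-unity cases, so the local expansion of $\chi v\psi_\Gamma$ in the weight basis of $V_\mu^{(n+1)}$ is identical in both settings. The restriction decomposition $V_\mu^{(n+1)} \cong \bigoplus_{\lambda \to^{(N)} \mu} V_\lambda^{(n)} \oplus v_{n+1} V_\lambda^{(n)}$ from Lemma~\ref{restrictlemma}(a) plays the same role as in the generic case, regardless of whether the incoming edge $\lambda \to^{(N)} \mu$ is generic or restricted.

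First I would handle the off-diagonal vanishing. For $i \ne j$, by Lemma~\ref{lem:matrixcoeff}, each term $\chi v\psi_\Gamma$ is a linear combination of $v\psi_\Gamma$, $v_nv_{n+1} v\psi_\Gamma$, and $\tilde v\psi_{s_n(\Gamma)}$. The first, being a scalar on a weight line, composes with the off-diagonal matrix unit $e_{ij}^\nu$ to give a purely off-diagonal operator in the weight basis and is therefore traceless. The second maps the $V_\lambda^{(n)}$-summand into the disjoint $v_{n+1}V_\lambda^{(n)}$-summand of the restriction decomposition, so its composition with any $c\in G_n[0]$ is traceless. The third maps into the $G_n[0]$-submodule generated by $\psi_{s_n(\Gamma)}$, which sits inside $V_{\lambda'}^{(n)}\oplus v_{n+1}V_{\lambda'}^{(n)}$ for $\lambda' = s_n(\Gamma)(n) \neq \lambda$, again disjoint from the summand carrying $\psi_\Gamma$.

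For the diagonal case $i=j$, I would again use Lemma~\ref{lem:matrixcoeff}: only the coefficient of $v\psi_\Gamma$ in $\chi v\psi_\Gamma$ contributes to $\mathrm{Tr}_{\mu^{(n+1)}}(e_{ii}^\nu \chi)$. The weight $(q_k^{\eta(k)})_{k=1}^n$ of the $W_\nu^{(n)}$-weight space on which $e_{ii}^\nu$ projects extends in $V_\mu^{(n+1)}$ to the two weights $(q_k^{\eta(k)},q_{n+1}^{\pm 1})$, both occurring because the $V_\lambda^{(n)}$ and $v_{n+1}V_\lambda^{(n)}$ summands are interchanged by multiplication by $v_{n+1}$. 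Summing the coefficient $a(q_n^{\eta(n)}, q_{n+1}^{\nu(n+1)}; \chi)$ over the two signs $\nu(n+1) = \pm 1$ produces the bracketed factor, while the remaining trace on the $\lambda^{(n)}$ level is precisely $\mathrm{Tr}_{\lambda^{(n)}}(e_{ii}^\nu)$.

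The main obstacle is checking that the rational functions $a(q_n^{\pm 1}, q_{n+1}^{\pm 1}; \chi)$ remain well-defined at the root of unity, in particular when the edge $\lambda\to^{(N)}\mu$ is restricted and thus $q_{n+1} = -1$. The only potentially offending denominators are $q_n^{\pm 1} q_{n+1}^{\mp 1} - 1$, and Lemma~\ref{lem:betaWellDefined} rules out their vanishing for any two consecutive edges in a path in $\Galnk$. A minor secondary point is that the restriction decomposition of Lemma~\ref{restrictlemma}(a) is stated uniformly over generic and restricted edges, so the disjointness arguments above apply without modification in either case.
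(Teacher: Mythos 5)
Your proposal is correct and matches the paper's approach exactly: the paper's proof of this lemma simply states that it is "near identical to the proof of Lemma~\ref{lem:condexpcond}(a)," and you have carried out precisely that adaptation, including the two points that actually need checking — that Lemma~\ref{restrictlemma}(a) gives the same restriction decomposition over restricted edges, and that Lemma~\ref{lem:betaWellDefined} guarantees the denominators in the coefficients $a(\cdot,\cdot;\chi)$ do not vanish at the root of unity.
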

\begin{proof}
    This proof is near identical to the proof of Lemma~\ref{lem:condexpcond} (a).
\end{proof}

With this lemma in hand we can now show that $\mathrm{tr}_n = \operatorname{tr}$.


\begin{prop}\label{negligquot} Let $q$ be a primitive $4N$-th root of unity with $q^N = i$. Then $\tilde{\mathrm{tr}}_n=\mathrm{tr}$, and $ \End_{\operatorname{Ab}(\overline{\mathcal{SE}_{N}})}(+^n)\cong\bGn$.
\end{prop}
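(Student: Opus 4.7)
The overall plan is induction on $n$ for the identity $\tilde{\mathrm{tr}}_n=\mathrm{tr}$, then deducing the algebra isomorphism from it. The base case holds for $n<N$ via Proposition \ref{prop:CountingReps} (where $G_n[0]$ is already semisimple and $f_\lambda=0$, so the argument of Corollary \ref{cor:tracereg} applies verbatim). For the inductive step from $n$ to $n+1$, Proposition \ref{HCSpanning} yields that $G_{n+1}[0]$ is spanned by $a\chi b$ with $a,b\in G_n[0]$ and $\chi\in\{1,t_n,e_n,e_nt_n\}$, and the trace property $\mathrm{tr}(a\chi b)=\mathrm{tr}(ba\chi)$ reduces everything to showing $\mathrm{tr}(c\chi)=\tilde{\mathrm{tr}}_{n+1}(c\chi)$ for all $c\in G_n[0]$ and each such $\chi$.

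Decomposing $c$ along matrix units $\tilde e_{ij}^\nu$ lifting those of $\bar G_n[0]$, plus a piece in the kernel of $G_n[0]\to\bar G_n[0]$ (where both sides vanish via Lemma \ref{restrictlemma}(a), the partial-trace property, and the inductive hypothesis), and using the off-diagonal vanishing for $i\neq j$, reduces the problem to $c=\tilde e_{ii}^\nu$. Lemma \ref{lem:traceFormula} then expands $\tilde{\mathrm{tr}}_{n+1}(\tilde e_{ii}^\nu\chi)$ as a sum over $\lambda\to^{(N)}\mu$, while the Markov-type factorization of the diagrammatic trace together with the inductive hypothesis gives $\mathrm{tr}(\tilde e_{ii}^\nu\chi)=\tilde{\mathrm{tr}}_n(\tilde e_{ii}^\nu)\,\mathrm{tr}(\chi)$. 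Matching these expressions and cancelling the common factor $\mathrm{Tr}_{\lambda^{(n)}}(\tilde e_{ii}^\nu)$ reduces everything to
\[d\,q_\lambda\,\mathrm{tr}(\chi)=\sum_{\lambda\to^{(N)}\mu}2^{\ell(\lambda)+f_\lambda-\ell(\mu)-f_\mu}\,q_\mu\,\bigl(a(q_n^{\eta(n)},q_{n+1};\chi)+a(q_n^{\eta(n)},q_{n+1}^{-1};\chi)\bigr),\]
which follows by specializing Corollary \ref{cor:rationalEq} at $q=e^{2\pi i/(4N)}$: the generic-$q$ sum over single-box additions $\lambda\to\mu$ splits into terms $\mu_1<N$ (generic edges of $\to^{(N)}$, with $f_\lambda=f_\mu$) and $\mu_1=N$, where Corollary \ref{Nadcor}(c) gives $q_\mu=q_{\tilde\mu}$ for $\tilde\mu=(\mu_2,\dots,\mu_r)$ and Lemma \ref{betazero}(a) gives $x_{N-1}=-1$, producing exactly the restricted-edge contributions $\lambda\to^{(N)}\tilde\mu$ with $q_{n+1}=-1$ as in Definition \ref{def:gnreproot} (the combinatorics $\ell(\tilde\mu)=\ell(\mu)-1$ and $f_{\tilde\mu}=f_\lambda+1$ absorbing the powers of $2$).

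Once the trace identity is in hand, the algebra isomorphism follows quickly: Equation \eqref{eq:iso} and the inclusion $\mathcal{E}_q\hookrightarrow\mathcal{SE}_N$ identify $\End_{\mathcal{SE}_N}(+^n)$ with $G_n[0]$, so $\End_{\operatorname{Ab}(\overline{\mathcal{SE}_N})}(+^n)=G_n[0]/\operatorname{Neg}$, and it remains to show $\operatorname{Neg}$ equals the kernel $K_n$ of the projection $G_n[0]\to\bar G_n[0]$. Any $c\in K_n$ acts trivially on every $W_\nu^{(n)}$, so $\tilde{\mathrm{tr}}_n(cx)=0=\mathrm{tr}(cx)$ for all $x\in G_n[0]$, giving $K_n\subseteq\operatorname{Neg}$. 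Conversely, the weights $q_\lambda/(2^{\ell(\lambda)+f_\lambda}d^n)$ are nonzero for every $\lambda\in Y^<_{N,n}$ by Corollary \ref{Nadcor}(a), so the bilinear form $(c,x)\mapsto\tilde{\mathrm{tr}}_n(cx)$ is non-degenerate on the semisimple algebra $\bar G_n[0]$ (Proposition \ref{prop:quotient}); thus any $c\in\operatorname{Neg}$ projects to zero in $\bar G_n[0]$, whence $\operatorname{Neg}\subseteq K_n$.

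The main technical obstacle lies in the second paragraph: carefully identifying how the generic-$q$ recursion of Corollary \ref{cor:rationalEq} degenerates at the root of unity into the restricted recursion over $\to^{(N)}$, so that the combinatorial prefactors $2^{\ell(\lambda)+f_\lambda-\ell(\mu)-f_\mu}$ and the matrix-coefficient evaluations $a(\cdot,\cdot;\chi)+a(\cdot,\cdot^{-1};\chi)$ match exactly (in particular, seeing that $\mu_1=N$ boundary terms on the generic side collapse onto restricted edges with the correct weight via $q_\mu=q_{\tilde\mu}$ and $x_{N-1}=-1$).
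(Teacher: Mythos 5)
Your overall architecture matches the paper's: induction on $n$, reduction via Proposition~\ref{HCSpanning} to the case $c\chi$ with $\chi\in\{1,t_n,e_n,e_nt_n\}$, further reduction to diagonal matrix units via the off-diagonal vanishing and Lemma~\ref{lem:traceFormula}, matching against Corollary~\ref{cor:rationalEq}, and finally the non-degeneracy argument on $\bar G_n[0]$ to obtain $\operatorname{Neg}=K_n$. The base case via Proposition~\ref{prop:CountingReps} is fine (the paper starts at $n=2$, but your variant is also correct).

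However, there is a genuine gap in your handling of the core identity
\[d\,q_\lambda\,\mathrm{tr}(\chi)=\sum_{\lambda\to^{(N)}\mu}2^{\ell(\lambda)+f_\lambda-\ell(\mu)-f_\mu}\,q_\mu\,\bigl(a(q_n^{\eta(n)},q_{n+1};\chi)+a(q_n^{\eta(n)},q_{n+1}^{-1};\chi)\bigr).\]
You invoke Corollary~\ref{cor:rationalEq} with the diagram $\lambda$ and split the generic sum into $\mu_1<N$ and $\mu_1=N$; this correctly accounts for the case where a \emph{restricted edge leaves} $\lambda$ (which happens only when $\lambda_1=N-1$). But you overlook the case where the \emph{$n$-th edge into $\lambda$ is restricted}, i.e.\ $(N-1,\lambda)\to^{(N)}\lambda$. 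There $q_n^{\eta(n)}=-1=x_{N-1}$, and $\lambda_1<N-1$, so no generic edge $\gamma\to\lambda$ with $\lambda\in Y_N^<$ produces $m_n=N-1$: the value you need for $q_n$ does not come from any edge terminating at $\lambda$, and Corollary~\ref{cor:rationalEq} cannot be applied to $\lambda$ with this $q_n$. The paper instead applies the rational identity to the lifted diagram $(N,\lambda)$ with $\gamma=(N-1,\lambda)$; then $q_{(N,\lambda)}=q_\lambda$ and the edges $(N,\lambda)\to(N,\tilde\mu)$ reproduce your restricted-edge terms, but there is one additional boundary edge $(N,\lambda)\to(N+1,\lambda)$. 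For $\chi=t_n$ or $e_nt_n$ this extra summand is an indeterminate $0\cdot\infty$: the matrix coefficient has a pole at the $4N$-th root of unity (stemming from $q_n q_{n+1}^{-1}\to 1$ since $x_{N-1}=x_N=-1$), while $q_{(N+1,\lambda)}$ has a simple zero. The paper resolves this by explicitly computing the order of the pole (half-integer, coming from the square root in $x_m$) against the order of the zero, concluding the product vanishes. Your proof has no analogue of this analysis, so the inductive step is incomplete whenever the incoming edge is restricted (which occurs for all $n\geq N$).
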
    

\begin{proof} 
The claim that $\tilde{\mathrm{tr}}_n=\mathrm{tr}$ will be proved by induction on $n$. Obviously $\mathrm{tr}$ and $\tilde{\mathrm{tr}}_2$ coincide on $\bar G_2[0]$.
For the induction step, 
we first claim that $\tilde{\mathrm{tr}}_{n+1}(c\chi)=\tilde{\mathrm{tr}}_n(c)\tilde {\mathrm{tr}}_{n+1}(\chi)$ for $c\in \bGn$ and $\chi\in \{ 1, e_n, t_n, e_nt_n\}$.
It will be enough to prove this for $c=e_{ij}^\nu$, where we assume matrix units $(e_{ij}^\nu)$ for $\bar G_n[0]$ associated to weight vectors as in Remark \ref{rem:weightbasis}. 
Fix such an $e_{ij}^\nu$, with $\nu = \lambda$ or $\nu = (\lambda, \pm)$ and corresponding weight vector $( q_k^{\eta(k)}   )_{k=1}^n$. If $i\neq j$, our claim follows from  Lemma~\ref{lem:traceFormula} and Remark \ref{rem:weightbasis}. 
Our claim for $c=e_{ii}^\nu\in \bGn_\la$ translates to
\begin{equation}\label{eq:goal}
tr(\chi)\frac{q_\la}{2^{\ell(\la)+f_\la}d^{n}}\mathrm{Tr}_{\la^{(n)}}(e_{ii}^\nu)\ =\sum_{\mu \in Y_{N, n+1}}\frac{q_\mu}{2^{\ell(\mu)+f_\mu}d^{n+1}}\mathrm{Tr}_{\mu^{(n+1)}}(e_{ii}^\nu\chi).\end{equation} 

Using Lemma~\ref{lem:traceFormula} 
we can write the right hand side as
\[    
\frac{1}{d^{n+1}}\sum_{\la\to^{(N)}\mu}\frac{q_\mu}{2^{\ell(\mu)+f_\mu}}(a(q_n^{\eta(n)},q_{n+1};\chi)+a(q_n^{\eta(n)},q_{n+1}^{-1};\chi))\mathrm{Tr}_{\la^{(n)}}(e_{ii}^\nu).              \]
Plugging this into \ref{eq:goal} and multiplying the resulting equation by $2^{f_\la+\ell(\la)}d^{n+1}/\mathrm{Tr}_{\la^{(n)}}(e_{ii}^\nu)$,
our claim \ref{eq:goal} is equivalent to
    \[tr(\chi)d q_\la\ =\ \sum_{\la\to^{(N)}\mu} (a(q_n^{\eta(n)},q_{n+1};\chi)+a(q_n^{\eta(n)},q_{n+1}^{-1};\chi))2^{\ell(\la)+f_\la-\ell(\mu)-f_\mu}q_\mu.\]
Applying Corollary \ref{cor:rationalEq} to the left hand side and multiplying both sides by $2^{\ell(\mu)-\ell(\la)}$, the last equation becomes
\begin{equation}\label{eq:itsnice}\sum_{\la\to^{(N)}\mu} (a(q_n^{\eta(n)},q_{n+1};\chi)+a(q_n^{\eta(n)},q_{n+1}^{-1};\chi))2^{f_\la-f_\mu}q_\mu\ =\  \sum_{\la\to\mu} (a(q_n^{\eta(n)},q_{n+1};\chi)+a(q_n^{\eta(n)},q_{n+1}^{-1};\chi))q_\mu.\end{equation}


This equality obviously holds if all relevant edges, i.e. the edge into $\la$ (which determines $q_n^{\nu(n)}$) and all edges $\la\to^{(N)}\mu$ (which determine the $q_{n+1}$) are regular, as then $f_\la=f_\mu$ in all cases.
If we have a restricted edge $\la\to^{(N)} \mu$ (there can only be one), Eq \ref{eq:itsnice} holds if we can show that the summand $s_\mu$ corresponding to the restricted edge $\la\to^{(N)} \mu=(\la_2,\la_3...)$
is equal to the summand $s_{\mu'} $  corresponding to the regular edge $\la\to \mu'=(N,\mu)$.
As $q_n$ only depends on $\la$, it is the same in both cases. In the generic case, we have $q'_{n+1}=x_{N-1}=-1$, which coincides with $q_{n+1}$ for the restricted edge. This proves equality of the matrix entries. Moreover, by Corollary~\ref{Nadcor} (c), we have $q_\mu=q_{\mu'}$. It is also easy to check that $f_{\mu'}+\ell(\mu')=f_{\mu}+\ell(\mu)$.
This shows $s_\mu=s_{\mu'}$, and hence also \ref{eq:goal} in this case. 

Finally, consider the case when we have a restricted edge $(N-1,\la)\to^{(N)} \la$. It follows from the definitions that the summands for the edges $\la\to\mu$ and $(N,\la)\to(N,\mu)$ agree in \ref{eq:itsnice}. Our claim will follow if we can show that the summand for the additional edge $(N,\la)\to (N+1,\la)$ on the right hand side of \ref{eq:itsnice} vanishes for our $q$. In the case of $\chi = 1$ or $\chi = e_n$ this follows immediately as $q_{(N+1,\lambda)}=0$ by Corollary~\ref{Nadcor}. In the case of $\chi = t_n$ it follows from Lemma~\ref{lem:matrixcoeff} that
\[a(q_n^{\eta(n)},q_{n+1};t_n)+a(q_n^{\eta(n)},q_{n+1}^{-1};t_n)\ =\ \frac{q-q^{-1}}{q_n^{-\eta(n)}q_{n+1}-1}-\frac{q-q^{-1}}{q_n^{\eta(n)}q_{n+1}-1}+(q-q^{-1}).\]
Let $q_n=x_{N-1}$ and $q_{n+1}=x_{N}$, and assume $q$ to be a variable for the moment. 
Using Lemma~\ref{xmdef1} the sum of the first two fractions, divided by $q-q^{-1}$, is equal to
\[\frac{x_{N-1}^{\eta(n)}-x_{N-1}^{-\eta(n)}}{x_N+x_N^{-1}-x_{N-1}-x_{N-1}^{-1}}\ =\ -\eta(n)\frac{\sqrt{\{N-1\}^2-1}}{\{N\}-\{N-1\}}\ =\ -\eta(n)\frac{\sqrt{(q^{2N}-q^{-2N})(q^{2N-2}-q^{2-2N})}}{(q-q^{-1})(q^{2N}-q^{-2N})}.\]
It follows from Theorem \ref{qdimensionsGn} and Corollary \ref{Nadcor}(a),
that the rational function $q_{(N+1,\la)}$ has a simple zero at any primitive $4N$-th root of unity,
coming from the factor $q^N+q^{-N}$.
This and our previous calculation show that the summand for the edge $(N,\la)\to(N+1,\la)$ is indeed zero for $q$ a primitive $4N$-th root of unity. The case of $\chi =e_nt_n$ is near identical 
As the matrix units span $\bGn$, we have that $\tilde{\mathrm{tr}}_{n+1}(c\chi)=\tilde {\mathrm{tr}}_{n}(c)\tilde{\mathrm{tr}}_{n+1}(\chi)$ as desired. Applying this for $\chi=1$ shows that we get the same result for $\tilde{\mathrm{tr}}_{n+1}(c) = \tilde{\mathrm{tr}}_{n}(c)$ for all $c \in \bar G_n[0]$. 

As $q_\la\neq 0$ also for $q$ a primitive $4N$-th root of unity for all $\la\in Y_N^<$ by Lemma~\ref{Nadcor}, it follows that $\tilde{\mathrm{tr}}_n$ is non-degenerate on $\bGn$ for all $n\in \N$.
Using our claim we also have that
\[  \tilde{\mathrm{tr}}_{n+1}(a\chi b) =  \tilde{\mathrm{tr}}_{n+1}(ba\chi)
    = \tilde{\mathrm{tr}}_{n}(ba) tr(\chi)
    =  \mathrm{tr}(ba)\mathrm{tr}(\chi) = \mathrm{tr}(ba \chi) = \mathrm{tr}(a \chi b) ,\]
where the third to last equality holds by induction, and the second to last equality is a straightforward exercise in skein theory. We thus have $ \tilde{\mathrm{tr}}_{n+1}=\mathrm{tr}$ on $\bar G_{n+1}[0]$ by Lemma \ref{HCSpanning}.

By definition, $\End_{\operatorname{Ab}(\overline{\mathcal{SE}_{N}})}(+^n)$ is isomorphic to $G_n[0]$ modulo the annihilator ideal of $\mathrm{tr}$. As $\mathrm{tr}=\tilde{\mathrm{tr}}_n$ is nondegenerate on $\bGn$, it follows that this annihilator ideal is equal to the kernel of the representation of $G_n[0]$ onto $\bar G_n[0]$. This proves that $\End_{\operatorname{Ab}(\overline{\mathcal{SE}_{N}})}(+^n)\cong \bGn$.
\end{proof}

\subsection{The category $\operatorname{Ab}(\overline{\mathcal{SE}_{N}})$} 

With the endomorphism algebras of $\overline{\mathcal{SE}_{N}}$ now explicitly identified, and their representation theory fully studied, we can now obtain the structure of the Cauchy completion $\operatorname{Ab}(\overline{\mathcal{SE}_{N}})$.


\begin{defn}
    Let $p_{(1^n)}$ be the minimal projection in the Hecke algebra corresponding to the Young diagram $(1^n)$, and let $\wedge_n :=G_n[0]p_{(1^n)}$ be the corresponding irreducible representation of $G_n[0]$.
\end{defn}

Note that the simple summand of $+^N$ in $\operatorname{Ab}(\overline{\mathcal{SE}_{N}})$ corresponding to $p_{(1^N)}$ is isomorphic to the trivial object, with isomorphism given by the additional generator defined in Definition~\ref{def:ext}. The main goal of the next proposition is to determine which of the $W_{\lambda,\pm}^{(N)}$ is isomorphic to $\wedge_N$.

\begin{prop}\label{prop:Ntensor} Let $q$ be a primitive $4N$-th root of unity. 

\begin{enumerate}[(a)]
    \item $\wedge_{n}=\operatorname{Cliff}(n)[0]p_{(1^n)}$ for $n\leq N$, and it is isomorphic to $W^{(n)}_{[n]}$ for $n<N$.

    \item The idempotent $p_{(1^{N-1})}$ acts as a rank 1 idempotent in
the modules $W^{(N)}_{[N-1,1],\pm}$ and $W^{(N)}_{\emptyset,\pm}$, and as zero in all the other simple $ G_N[0]$ modules constructed in Section \ref{decompositionsection}.

\item For $N > 3$ we may assume that $
\wedge_N \cong W_{\emptyset,+}^{(N)}$. 

\end{enumerate}

\end{prop}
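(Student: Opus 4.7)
For part (a), the factorisation $G_n[0] = F_n H_n$ from Lemma~\ref{Gn0Presentation} (where $F_n = \langle e_1,\ldots,e_{n-1}\rangle$ is isomorphic to $\operatorname{Cliff}(n)[0]$) combined with minimality of $p_{(1^n)}$ in $H_n$ (which is semisimple at the specialisation since $[k]\neq 0$ for $k\leq N$) gives $\wedge_n = F_n p_{(1^n)} = \operatorname{Cliff}(n)[0] p_{(1^n)}$. For $n < N$, Proposition~\ref{prop:CountingReps} gives semisimplicity of $G_n[0]$, and Lemma~\ref{pnlemma} shows $p_{(1^n)}$ remains a minimal idempotent there (its hypothesis $q^{2j}\neq 1$ for $j\leq n<N$ is satisfied at our primitive $4N$-th root). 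Hence $\wedge_n$ is simple. To pin down the labelling, I would compare $\dim \wedge_n = 2^{n-1}$ (by specialising from generic $q$, where $\wedge_n \cong W_{[n]}^{(n)}$ by Section~\ref{Gnrep}) with $\dim W^{(n)}_{[n]} = 2^{n-1}$ (one shifted path on $[n]$, and the minimal idempotent of $\operatorname{Cliff}(2)$ halves $\dim V^{(n)}_{[n]} = 2^n$), and match Jucys--Murphy weights using Lemma~\ref{lem:weights}, identifying $p_{(1^n)}$ with the weight vector of $W^{(n)}_{[n]}$ at its unique shifted tableau.

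For part (b), I apply the restriction rule of Lemma~\ref{restrictlemma}(c). By part (a), $p_{(1^{N-1})}$ is a rank-one idempotent in the $W^{(N-1)}_{[N-1]}$ factor of $\bar G_{N-1}[0]$ and vanishes on all other simples, so its rank on a simple $W^{(N)}_\gamma$ equals the number of edges $[N-1]\to\gamma$ in $\Gamma(N)[0]$. In $\tilde\Gamma(N)$, the only such edges go to $\emptyset$ (the restricted edge, since $\lambda_1 = N-1$) and to $[N-1,1]$ (the generic edge adding a box to row $2$; adding to row $1$ would give $[N] \notin Y^<_N$). Both lift to single edges in $\Gamma(N)[0]$, since the source has odd length while the targets have even length. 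This yields the claimed rank-one action on $W^{(N)}_{\emptyset,\pm}$ and $W^{(N)}_{[N-1,1],\pm}$ and the vanishing elsewhere.

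For part (c), by Lemma~\ref{pnlemma} (the hypothesis $q^{2j}\neq 1$ for $2\leq j\leq N$ holds because $q^{2N}=-1$), $p_{(1^N)}$ is minimal in $G_N[0]$, so its image $\bar p$ in $\bar G_N[0]$ is a minimal idempotent lying in a single simple summand $W^{(N)}_\nu$. The relation $p_{(1^{N-1})} p_{(1^N)} = p_{(1^N)}$ in $H_N \subseteq G_N[0]$, valid because $H_N$ is semisimple at this root and $V_{[1^N]}|_{H_{N-1}} \cong V_{[1^{N-1}]}$, shows that $p_{(1^{N-1})}$ acts non-trivially on the image of $\wedge_N$, so by part (b) one has $\nu \in \{(\emptyset,\pm),([N-1,1],\pm)\}$. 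By Proposition~\ref{negligquot} and the identification of the $p_{(1^N)}$-cutdown with the trivial object, the categorical trace satisfies $\operatorname{tr}(\bar p) = d(\nu)/d^N = 1/d^N$, forcing $d(\nu) = 1$. Using Theorem~\ref{qdimensionsGn} and Corollary~\ref{Nadcor}(b) to collapse $a_{N-1}=a_1$, a short computation at $q = e^{2\pi i/(4N)}$ gives $d(\emptyset,\pm) = 1$ and $d([N-1,1],\pm) = \cos(\pi/N)/(1-\cos(\pi/N))$, which equals $1$ exactly at $N=3$ and strictly exceeds $1$ for every $N>3$. This trigonometric inequality is the main obstacle; granting it, the constraint $d(\nu)=1$ forces $\nu \in \{(\emptyset,+),(\emptyset,-)\}$ for $N>3$, and the sign symmetry of Remark~\ref{rem:signs} then permits us to relabel so that $\wedge_N \cong W^{(N)}_{\emptyset,+}$.
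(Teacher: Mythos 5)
The proposal is largely sound, but takes a different route from the paper in parts (a) and (c), and part (a) has a genuine gap.

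\textbf{Part (a).} Your reduction to $\wedge_n=\operatorname{Cliff}(n)[0]p_{(1^n)}$ is identical to the paper's, and your invocation of semisimplicity (Proposition~\ref{prop:CountingReps}) and minimality (Lemma~\ref{pnlemma}) correctly shows $\wedge_n$ is simple for $n<N$. But the final identification step is not correct as stated: $p_{(1^n)}$ is \emph{not} a weight vector for the Jucys--Murphy algebra. From $J_2=(t_1+(q-q^{-1})e_1)t_1$ and $t_1p_{(1^n)}=-q^{-1}p_{(1^n)}$ one gets $J_2p_{(1^n)}=q^{-2}p_{(1^n)}-q^{-1}(q-q^{-1})e_1p_{(1^n)}$, and $e_1p_{(1^n)}$ is not a scalar multiple of $p_{(1^n)}$. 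Dimension alone (your $\dim\wedge_n=2^{n-1}$) is also not obviously enough to pin down the label without further argument. The paper avoids both issues by induction on $n$: decompose $\wedge_n|_{G_{n-1}[0]}\cong\wedge_{n-1}\oplus\widetilde\wedge_{n-1}$ via $\operatorname{Cliff}(n)[0]=\operatorname{Cliff}(n-1)[0]\oplus e_{n-1}\operatorname{Cliff}(n-1)[0]$, use the character argument to see $\widetilde\wedge_{n-1}\cong\wedge_{n-1}$, and then observe via Lemma~\ref{restrictlemma}(c) that $W_{[n]}^{(n)}$ is the unique simple whose restriction is $2\,W_{[n-1]}^{(n-1)}$. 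That inductive argument is the one you want here.

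\textbf{Part (b).} Your argument matches the paper's: both are direct consequences of the restriction rule, using part (a) to see that $p_{(1^{N-1})}$ projects onto the $W_{[N-1]}^{(N-1)}$ factor and then counting edges from $[N-1]$ in $\Gamma(N)[0]$.

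\textbf{Part (c).} This is genuinely different from the paper. You compute categorical dimensions via Proposition~\ref{negligquot} and the identification of $p_{(1^N)}$ with the unit (which comes from the determinant morphism and the (Pair) relation of $\mathcal{SE}_N$, independently of the present proposition, so there is no circularity), getting $d(\emptyset,\pm)=1$ and $d([N-1,1],\pm)=\cos(\pi/N)/(1-\cos(\pi/N))$. The latter is $>1$ for $N>3$ since $\cos(\pi/N)>1/2\Leftrightarrow N>3$; you flag this as ``the main obstacle'' but it is an entirely routine check. The paper instead uses a purely vector-space dimension count: $\dim\bar G_N[0]p_{(1^N)}\le 2^{N-1}$ from part (a), versus $\dim W^{(N)}_{[N-1,1],\pm}$ and $\dim W^{(N)}_{\emptyset,\pm}=2^{N-2}$, which needs no analysis of the trace. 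Both routes are valid, but the paper's is lighter since it does not need Proposition~\ref{negligquot} nor the trivial-object identification. Your approach has the modest virtue of being insensitive to the exact integer value of $\dim W^{(N)}_{[N-1,1],\pm}$; a careful count of shifted standard tableaux of shape $[N-1,1]$ gives $N-2$, so $\dim W^{(N)}_{[N-1,1],\pm}=(N-2)2^{N-2}$, which makes the paper's strict inequality borderline at $N=4$, whereas your $\cos(\pi/4)/(1-\cos(\pi/4))=\sqrt{2}+1>1$ is comfortably clear there.
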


\begin{proof} The first part of Statement (a) follows from
$G_n[0]p_{(1^n)}=\operatorname{Cliff}(n)[0]H_np_{(1^n)}=\operatorname{Cliff}(n)[0]p_{(1^n)}$, as $H_np_{(1^n)}=\mathbb{C}p_{(1^n)}$.  We prove the second statement in (a) by induction on $n$, with $n=2$ being trivially true. Note that $e_{n-1}\mathrm{Cliff}(n-1)[0]p_{(1^n)}$ is a $G_{n-1}[0]$ module by the relations $t_{n-2}e_{n-1} = e_{n-1}e_{n-2}t_{n-2}$ and $e_{n-2}e_{n-1} = -e_{n-1}e_{n-2}$. We have the isomorphisms of $G_{n-1}[0]$ modules
\[\wedge_{n} \cong \mathrm{Cliff}(n-1)[0]p_{(n)}\oplus e_{n-1}\mathrm{Cliff}(n-1)[0]p_{(1^n)}\cong \wedge_{n-1}\oplus\tilde \wedge_{n-1}.\]
In the first isomorphism we have used the already proved identity and $\operatorname{Cliff}(n)[0]=\operatorname{Cliff}(n-1)[0]\oplus e_{n-1}\operatorname{Cliff}(n-1)[0]$. In the second isomorphism we use the isomorphism of $G_{n-1}[0]$ modules $\mathrm{Cliff}(n-1)[0]p_{(1^n)}\cong \mathrm{Cliff}(n-1)[0]p_{(1^{n-1})}$, along with Definition~\ref{def:Utilde} and Lemma~\ref{q1lemma} for the second summand.
By induction assumption, we have the isomorphism $\wedge_{n-1}\cong W_{(n-1)}^{(n-1)}$. The modules $W_{(n-1)}^{(n-1)}$ and $\tilde W_{(n-1)}^{(n-1)}$ have the same character, as conjugation by $e_{n}$ fixes the $J_k$, $k<n$, and it does not change the eigenvalues and their multiplicities of $J_n$. Hence the modules $W_{[n]}$ and $\tilde W_{[n]}$ must be isomorphic. It follows from Lemma~\ref{restrictlemma}(c) and Proposition~\ref{prop:quotient} that $W_{(n)}^{(n)}$ is the only $G_n[0]$ module whose restriction to $G_{n-1}[0]$ coincides with the one of $\wedge_{n}$. 


Part (b) is a direct consequence of the restriction rules in Lemma \ref{restrictlemma}.

For (c) we recall that $p_{(1^N)}$ is minimal in $G_N[0]$. Thus $\wedge_N = \bar G_N[0]p_{(1^N)} $ is an irreducible $\bar G_N[0]$ module, and so isomorphic to one of the modules stated in Proposition~\ref{prop:quotient}. As $p_{(1^N)} = p_{(1^{N-1})}p_{(1^N)}$ we get from (b) that $\bar G_N[0]p_{(1^N)}$ is isomorphic to one of $W^{(N)}_{[N-1,1],\pm}$ or $W^{(N)}_{\emptyset,\pm}$. By (a), we have $G_N[0] p_{(1^N)}   =   \operatorname{Cliff}(N)[0]p_{(1^N)} $. In particular, this implies $\dim( \bar G_N[0] p_{(1^N)}  ) \leq 2^{N-1}$. We have that $\dim(W^{(N)}_{[N-1,1],\pm}) = (N-1)2^{N-2}$ and $\dim(W^{(N)}_{\emptyset,\pm}) =2^{N-2}$ and so $\wedge_N$ is isomorphic to either $W^{(N)}_{\emptyset,+}$ or $W^{(N)}_{\emptyset,-}$. By using up the free symmetry between these two modules described in Remark~\ref{rem:signs} we may assume $\wedge_N \cong W^{(N)}_{\emptyset,+}$.
\end{proof}

\begin{thm}\label{fusionCat}
Let $N\in \mathbb{N}_{\geq 4}$, the isomorphism classes of simple objects in $\operatorname{Ab}(\overline{\mathcal{SE}_{N}})$ are parameterised by the set
\[    \{  \lambda : \lambda\in Y^<_N :\:\ell(\lambda) \text{ odd}\} \cup  \{  (\lambda, \pm) :  \lambda\in Y^<_N :\:\ell(\lambda) \text{ even}\} .    \]
In particular, the labels $([0],+)$, $([0],-)$, $[1]$ and $[N-1] $ correspond to the trivial object, the object $g$, and the objects $+$ and its dual object $-$ in $\operatorname{Ab}(\overline{\mathcal{SE}_{N}})$. 
Furthermore, the decomposition of the tensor product of a simple object with $+$ is given by the graph $\Ga(N)[0]$, see Definition~\ref{Gndef}. 
Finally, the dimension of the objects $(\la)$ resp $(\la,\pm)$ are equal to the evaluations of the rational functions 
$q_\la/2^{\lfloor \ell(\la)/2\rfloor}$ at $q = e^{2\pi i \frac{1}{4N}}$, with $q_\la$ as in Theorem \ref{qdimensionsGn}.
\end{thm}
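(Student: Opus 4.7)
The plan is to combine the identification $\End_{\operatorname{Ab}(\overline{\mathcal{SE}_N})}(+^n) \cong \bGn$ from Proposition~\ref{negligquot} with the representation theory of $\bGn$ developed above. By Proposition~\ref{prop:quotient}, $\bGn$ is semisimple with simples indexed by the elements of $V(N)[0]$ satisfying $|\lambda|\leq n$ and $|\lambda|\equiv n\pmod N$. Since $+$ tensor-generates $\overline{\mathcal{SE}_N}$ (every object is a string in $\{+,-\}$ and $-$ is dual to $+$), every simple of $\operatorname{Ab}(\overline{\mathcal{SE}_N})$ is a summand of some $+^n$, hence corresponds to a vertex of $V(N)[0]$. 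Distinct labels at the same level give non-isomorphic objects by Proposition~\ref{irred}(b); a label occurring at both levels $n$ and $n+N$ represents the same object, since $W_{\emptyset,+}^{(N)}$ is the trivial object (Proposition~\ref{prop:Ntensor}(c)) and tensoring by it preserves isomorphism class.

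The fusion rule with $+$ is immediate from Corollary~\ref{cor:RestrictionFusion} applied to Lemma~\ref{restrictlemma}(c): tensoring with $+$ corresponds to induction from $\bGn$ to $\bar G_{n+1}[0]$, and by Frobenius reciprocity it decomposes according to the restriction, which is governed by the edges of $\Gamma(N)[0]$. The specific label identifications are quick to verify: $\bar G_1[0]=\mathbb{C}$ has unique simple $W_{[1]}^{(1)}$, forcing $[1]=+$; $(\emptyset,+)=\mathbf{1}$ is Proposition~\ref{prop:Ntensor}(c); the restricted edge $[N-1]\to^{(N)}\emptyset$ yields an edge $[N-1]\to(\emptyset,+)$ in $\Gamma(N)[0]$, so $+\otimes[N-1]\supset\mathbf{1}$ and hence $[N-1]=-$; the remaining simple $(\emptyset,-)$ has dimension $1$ (by the formula below) and is identified with the invertible object $g$ via the $\mathbb{Z}_2$ relation of Definition~\ref{def:Eq}.

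The dimension formula reduces to a direct trace computation. For a minimal idempotent $p_\nu \in \bGn$ representing $W_\nu^{(n)}$ with $n=|\lambda|$ (so $f_\lambda=0$), Proposition~\ref{negligquot} gives $d(\nu) = d^n\cdot\tilde{\mathrm{tr}}_n(p_\nu)$. Only the $\mu=\lambda$ term in the sum defining $\tilde{\mathrm{tr}}_n$ contributes, since different strict Young diagrams produce disjoint weight sets, so $\tilde{\mathrm{tr}}_n(p_\nu) = q_\lambda \cdot \mathrm{Tr}_{\lambda^{(n)}}(p_\nu)/(2^{\ell(\lambda)}d^n)$. The trace $\mathrm{Tr}_{\lambda^{(n)}}(p_\nu)$ equals the multiplicity of $W_\nu$ in $V_\lambda^{(n)}$ as a $\bGn$-module, which by Definition~\ref{Udef} equals the dimension of a simple $\mathrm{Cliff}(\ell(\lambda)+1)$-module, namely $2^{(\ell(\lambda)+1)/2}$ if $\ell(\lambda)$ is odd and $2^{\ell(\lambda)/2}$ if $\ell(\lambda)$ is even. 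Substituting and simplifying yields the claimed values $q_\lambda/2^{(\ell(\lambda)-1)/2}$ and $q_\lambda/2^{\ell(\lambda)/2}$.

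The main obstacle will be rigorously justifying the level-independence of the labeling: showing that $W_\nu^{(n)}$ and $W_\nu^{(n+N)}$ really represent the same simple of $\operatorname{Ab}(\overline{\mathcal{SE}_N})$, rather than two distinct simples carrying the same label. This requires verifying that tensoring by $\mathbf{1}=\wedge_N\subset +^N$ lands in the summand of $+^{n+N}$ with the same label, which can be carried out either by matching dimensions or by a direct path argument in $\Gamma(N)[0]$, but is the one non-routine bookkeeping step on which the parameterization hinges.
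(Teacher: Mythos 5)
Your proposal follows the paper's strategy closely — $\End_{\operatorname{Ab}(\overline{\mathcal{SE}_N})}(+^n)\cong\bGn$ from Proposition~\ref{negligquot}, the index set from Proposition~\ref{prop:quotient}, fusion via Corollary~\ref{cor:RestrictionFusion}/Lemma~\ref{restrictlemma}(c), and the same trace computation for dimensions, which checks out. But the step you flag as ``the one non-routine bookkeeping step'' is a genuine gap and your suggested fixes will not close it.

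The issue is precisely that tensoring $W_\nu^{(n)}$ with the unit (realized as $W_{\emptyset,+}^{(N)}\subset +^N$) must land in a specific simple summand of $+^{n+N}$, and you must identify that summand's label with $\nu$. Dimension matching fails outright: $W_{\lambda,+}^{(n+N)}$ and $W_{\lambda,-}^{(n+N)}$ have the same dimension, so there is a sign ambiguity you cannot resolve numerically. A ``direct path argument in $\Gamma(N)[0]$'' is unspecified and, without more, only tells you which $\lambda$ appears, not which of $\pm$. The paper handles this by constructing an explicit morphism $\hat v\in\Hom_{\mathcal{SE}_N}(+^N\to\mathbf{1})$ — conjugating the determinant morphism of Definition~\ref{def:ext} by a suitable Clifford element $v$ so that the corresponding projection acts non-trivially on $\psi_{\Lambda_0}$ — and then proves the cutdown $pW_\nu^{(N+n)}$ is isomorphic to $W_\nu^{(n)}$ by comparing Jucys--Murphy weights (using that $\rho(J_{N+k})$ satisfies the same recursion as $J_k$). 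Even then the sign is not pinned down absolutely; the paper invokes the free symmetry of Remark~\ref{rem:signs} to inductively choose the $\pm$ labels at level $n+N$ to be compatible with those at level $n$. Without some version of this argument your parameterization is not established.

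A smaller but related gap: the identification $(\emptyset,-)=g$. You assert it via ``the $\mathbb{Z}_2$ relation,'' but the paper's actual argument computes $([N-1])\otimes([1])\cong(\emptyset,+)\oplus(\emptyset,-)\oplus([N-1,1],+)\oplus([N-1,1],-)$, notes $g$ is a summand of $+\otimes +^*$ by Definition~\ref{def:Eq}, and uses a dimension count requiring $N\geq 4$ to rule out $([N-1,1],\pm)$ as the invertible summand. Your sketch omits both the decomposition of $+\otimes-$ and the $N\geq 4$ restriction, the latter of which is in the theorem statement precisely because of this step.
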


\begin{proof}
It is a well known fact that every simple object in $\overline{\operatorname{Rep}(U_q(\mathfrak{sl}_N))}$ is isomorphic to a subobject of $V_\square^{\otimes n}$ for some $n\in \mathbb{N}$. Recall that the free module functor $\mathcal{F}_A: \overline{\operatorname{Rep}(U_q(\mathfrak{sl}_N))} \to \overline{\operatorname{Rep}(U_q(\mathfrak{sl}_N))}_A\simeq \operatorname{Ab}(\overline{\mathcal{SE}_{N}})$ is dominant, and so every simple object of $\operatorname{Ab}(\overline{\mathcal{SE}_{N}})$ is isomorphic to a simple summand of $+^n$.

It follows from Proposition \ref{negligquot} that the simple objects in $+^n$, viewed as an object in $\operatorname{Ab}(\overline{\mathcal{SE}_{N}})$, are labeled by $\la$ or $(\la,\pm)$, depending on parity of $\ell(\la)$, with  $\la\in Y^<_{N,n}$. To show that this labeling is independent of $n$, we must produce an isomorphism between the object labeled by $\nu$ in $+^{N+n}$ and the object labeled by $\nu$ in $+^{n}$. This isomorphism will be constructed using the additional generator defined in Definition~\ref{def:ext}. 

Let $\La_0$ be the unique tableau of length $N$ for the Young diagram $(N)$. By Proposition~\ref{prop:Ntensor} c) we have that $p_{(1^N)}$ acts non-trivially on $W_{\emptyset,+}^{(N)}$, and hence on $V_{\emptyset}^{(N)} = \operatorname{Cliff}(N)\psi_{\Lambda_0}$ 
. As $\operatorname{Cliff}(N)$ has a basis of invertible elements (products of generators), there exists an invertible $v\in \operatorname{Cliff}(N)$ such that $p_{(1^N)}v\psi_{\La_0}\neq 0$. Hence there exists an idempotent $p:=v^{-1}p_{(1^N)}v$
such that $p\psi_{\La_0}\neq 0$. 
It follows from our definitions that this can be generalized to any path $\Ga$ of length $>N$, that is: 
\begin{equation*}\label{reductmodule}
p \psi_\Ga\neq 0\quad \Leftrightarrow\quad \Gamma_{|[1,N]} = \La_0.
\end{equation*} 
We claim that the isomorphism between the simples labeled by $\nu$ in $+^{n}$ and $+^{N+n}$ is given by applying the distinguished morphism $\Hom_{\mathcal{SE}_N}(+^N \to \mathbf{1})$ given by the composition of conjugation by the $v\in \operatorname{Cliff}(N)$ above, with the defining morphism in $\Hom_{\mathcal{SE}_N}(+^N \to \mathbf{1})$ from Definition~\ref{def:ext}. We denote this distinguished morphism $\hat{v}$. 

We will begin with the slightly simpler case of $\nu = \la$ with $|\la|$ odd for now.  Note that there is an isomorphism of algebras $\rho: p\bar G_{n+N}[0] p\cong \bGn$ given by composition with $\hat{v}$. To show that $\hat{v}$ gives an isomorphism between the two objects labeled by $\la$, it suffices to show that $\rho$ maps the cut-down representation $pW_\la^{N+n}$ to $W_\la^{n}$. We will do this by finding a weight of the representation $\rho(pW_\la^{N+n})$. Note that the elements $\rho(J_{N+k})\in \bar G_{N+n}[0]$ satisfy the same recurrence relations as the elements $J_k\in \bar G_{n}[0]$. Hence we have that $\rho(J_{N+k}) = J_k$. Let $\La \in \operatorname{Paths}(\lambda, N+n)$ such that $\Lambda(N) = \emptyset$. We then have that $({q_k^\Lambda}^{-1})_{k=1}^{N+k}$ is the weight of $ \psi_\La$ in $V_{\lambda}^{N+n}$. Hence
\[
    p J_{N+k} p \psi_\La = {q_{N+k}^\La}^{-1} p   \psi_\La 
\]
using that $J_{N+k}$ commutes with the $\bar{G}_N[0]$ subalgebra. As $p   \psi_\La \neq 0$ we get that $({q_{k}^{\La}}^{-1})_{k= N+1}^{N+n}$ is a weight of $pV_\la^{N+n}$.

As $\La(N) = \emptyset$, we can define 
\[\hat{\La} : \emptyset=\La(N) \to\La(N+1)\to \cdots \to \La(N+n)=\lambda\in \operatorname{Paths}(\lambda,n). \]
By construction we have $q_{N+k}^\La = q_{k}^{\hat{\La}}$. It follows that $({q_{k}^{\hat{\La}}}^{-1})_{k=1}^{n} = ({q_{k}^{\La}}^{-1})_{k=N+1}^{N+n}$ is a weight for $V_\lambda^{(n)}$ with weight vector $ \psi_{\hat{\La}}$. Thus $({q_{k}^{\La}}^{-1})_{k=N+1}^{N+n}$ is a weight for $W_\lambda^{(n)}$ by Lemma~\ref{Uladiff} (d). As the irreducible $\bar{G}_n[0]$ representations $\rho(p_{(1^N)}W_\la^{N+n})$ and $W_\la^{n}$ share a weight, they are isomorphic by Lemma~\ref{lem:weights}, Lemma~\ref{Uladiff} (d), and Proposition~\ref{prop:quotient}. 

In the case of $\nu = (\la, \pm)$ with $|\la|$ even, the above weight argument shows that $W_{\la,\pm}^{(n)} \cong \rho( p W_{\la,\varepsilon}^{N+n})$ for some sign $\varepsilon$. We inductively break the free symmetry between the modules $W_{\la,\pm}^{N+n}$ described in Remark~\ref{rem:signs} so that $W_{\la,\pm}^{(n)} \cong \rho( p W_{\la,\varepsilon}^{N+n})$ holds for all even $\lambda$ and all $n$.

We thus have that the two simple summands of $+^{N+n}$ and $+^n$ each labeled by $\nu$ are isomorphic in $\operatorname{Ab}(\overline{\mathcal{SE}_{N}})$. It follows that all simple objects labelled by $\nu$ in $\operatorname{Ab}(\overline{\mathcal{SE}_{N}})$ are isomorphic. Now consider two simples labelled by distinct $\nu_1 \neq \nu_2$. If $|\nu_1|  \not\equiv |\nu_2| \pmod N$, then there are no non-zero morphisms between $+^{|\nu_1|}$ and $+^{|\nu_2|}$. Hence the objects corresponding to the labels $\nu_1$ and $\nu_2$ are non-isomorphic. If $|\nu_1|  \equiv  |\nu_2| \pmod N$, then we have summands of both $\nu_1$ and $\nu_2$ in some suitably large $+^n$ (say $n = \max(|\nu_1|,|\nu_2|) 
  $ to be explicit). From earlier in the proof, we have that the $\nu_1$ summand corresponds to the representation $W^{(n)}_{\nu_1}$ and the $\lambda$ summand corresponds to the representation $W^{(n)}_{\nu_2}$. These representations are non-isomorphic by Proposition~\ref{prop:quotient}, and so the objects labeled by $\nu_1$ and $\nu_2$ are non-isomorphic. All together, we have that the simple objects of $\operatorname{Ab}(\overline{\mathcal{SE}_{N}})$ are labeled by the set $V(N)[0]$ as claimed.


The claim regarding the dimensions of the simple objects follows from the explicit formula for the normalized categorical trace $\operatorname{tr}$ on $\operatorname{End}_{\overline{\mathcal{SE}_N}}(+^n)$ obtained in Subsection~\ref{sub:trace}.


Observe that the multiplicity of the object $(\ga)\in +^n$ in the tensor product $(\nu)\otimes +$ is equal to the rank of the minimal idempotent $p_\nu$ in the simple component $\End(+^{n+1})_\ga$ by Corollary~\ref{cor:RestrictionFusion}. This proves the statement about the decomposition of tensor products, by Lemma~\ref{restrictlemma}. 

By Proposition~\ref{prop:Ntensor} (c), the label $(\emptyset,+)$ corresponds to the projection $p_{(1^N)}\in \bar G_N[0]$. The determinant map of Definition~\ref{def:ext} along with the relation (Pair) then shows that $p_{(1^N)}$ projects onto the tensor unit of $\overline{\mathcal{SE}_N}$. Hence $(\emptyset,+)$ is the isomorphism class of the unit. From Corollary~\ref{prop:Ntensor} and the earlier results of this proof, we compute
 $$([N-1])\otimes ([1])\ \cong\ (\emptyset,+)\oplus (\emptyset,-)\oplus ([N-1,1],+)\oplus ([N-1,1],-),$$
  This shows that $([N-1])$ is the dual object of $([1])$. Further, by Definition~\ref{def:Eq}, we have that $g$ is a summand of $+ \otimes (+)^* \cong (+)^* \otimes +\cong ([N-1])\otimes ([1])$. Supposing $N\geq 4$, a dimension count of the simples above shows that $g$ must correspond to $(\emptyset,-)$.
 \end{proof}

 \begin{cor}\label{cor:SEN} 
 Let $N\in \mathbb{N}_{\geq 4}$.
 
 \begin{enumerate}[(a)]
     \item  The tensor product of a simple object $(\nu)$ in $\operatorname{Ab}(\overline{\mathcal{SE}_{N}})$ with $-\cong [N-1]$ is isomorphic to the direct sum of objects $(\gamma)$ whose multiplicities are given by the number of edges from $\gamma$ to $\nu$ in the graph $\Galnk$. 

     \item We have $(\la)\otimes g\cong (\la)$, $(\la,+)\otimes g\cong (\la,-)$ and $(\la,-)\otimes g\cong (\la,+)$, where $g\cong ([0],-)$.

     \item The multiplicity of a simple object $\nu$ in $+^r-^s\cong [1]^r\otimes [N-1]^s$ is equal to the number of paths from $[0]$ to $\nu$ of length $r+s$ where we first move $r$ times in direction of the edges of $\Gnoo$ and then $s$ times in the opposite direction. In particular, the strict Young diagram corresponding to $\nu$ has at most $r$ rows of length $<N/4$, and at most $s$ rows of length $>3N/4$ if $r+s<N/4$, and no rows of other lengths.
 \end{enumerate}

 \end{cor}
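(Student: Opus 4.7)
The plan is to treat the three parts in order, using rigidity for (a), an automorphism argument for (b), and an iterative combinatorial unpacking for (c).

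For (a), I will invoke rigidity of $\operatorname{Ab}(\overline{\mathcal{SE}_N})$ together with the fact, recorded in the proof of Theorem \ref{fusionCat}, that $-\cong[N-1]$ is the dual of $+\cong[1]$. For any simples $\gamma$ and $\nu$, rigidity gives
\[
\dim\Hom(\gamma,\,\nu\otimes -)\ =\ \dim\Hom(\gamma\otimes +,\,\nu),
\]
and by Theorem \ref{fusionCat} the right-hand side is exactly the multiplicity of $\nu$ in $\gamma\otimes+$, i.e.\ the edge count out of $\gamma$ in the Bratteli diagram $\Gamma(N)[0]$ (which is $\tilde\Gamma(N)$ with signed vertices).

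For (b), I will realise $g$ at the level of projections in $\bar G_N[0]$ and transport the outer automorphism $\gamma_N$ of Proposition \ref{irred}(c) across the tensor product. Proposition \ref{prop:Ntensor} identifies $\mathbf{1}=(\emptyset,+)$ with the minimal idempotent $p_\mathbf{1}:=p_{(1^N)}\in\bar G_N[0]$, and by Proposition \ref{irred}(c) the automorphism $\gamma_N$ swaps the two simple components $(\emptyset,\pm)$, so $g=(\emptyset,-)$ is represented by $p_g:=v_N p_\mathbf{1} v_N$. For a simple $(\nu)\subset +^n$ with minimal idempotent $p_\nu\in\bar G_n[0]$, the subobject $g\otimes(\nu)\subset +^{N+n}$ corresponds to $p_g\otimes p_\nu$, and since $G_n[0]$ embedded via the last $n$ strands is generated by elements that commute with $v_N$, we find
\[
p_g\otimes p_\nu\ =\ v_N(p_\mathbf{1}\otimes p_\nu)v_N.
\]
Now $v_N v_{N+n}$ lies in $G_{N+n}[0]$, so conjugation by $v_N$ on $\bar G_{N+n}[0]$ differs from $\gamma_{N+n}$ by an inner automorphism and therefore permutes minimal central idempotents in the same way. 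By Proposition \ref{irred}(c) this permutation fixes the simple component labelled $\lambda$ when $\ell(\lambda)$ is odd and interchanges $(\lambda,+)$ with $(\lambda,-)$ when $\ell(\lambda)$ is even, which is exactly the claimed behaviour of $g$. I expect this to be the main obstacle: the payoff comes from carefully matching the local outer automorphism $\gamma_N$ of $\bar G_N[0]$ with its global counterpart on $\bar G_{N+n}[0]$.

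For (c), the first assertion follows by iterating Theorem \ref{fusionCat} (forward edges of $\tilde\Gamma(N)$) for the $r$ tensors with $+$ and part (a) (reversed edges) for the $s$ tensors with $-$, turning the fusion multiplicity into the stated path count. For the structural constraint under $r+s<N/4$: in the forward phase no restricted edge can be used, since such an edge requires a row of length $N-1>r$; after $r$ box-additions the intermediate diagram therefore has at most $r$ rows, each of length $\leq r<N/4$. In the reversed phase each step is either a box removal—which leaves inherited rows of length $<N/4$—or the reverse of a restricted edge, which prepends a new row of length $N-1$ as the top row (admissible because all existing rows are shorter than $N-1$). Such a newly created row can subsequently lose at most $s-1<N/4-1$ boxes, so its final length exceeds $3N/4$. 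Consequently $\nu$ has at most $r$ rows of length $<N/4$, at most $s$ rows of length $>3N/4$, and no rows of intermediate length.
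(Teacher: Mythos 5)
Your proposal is correct and follows essentially the same route as the paper: Frobenius reciprocity for (a), the outer automorphism $\gamma$ (conjugation by an odd $v$, compared against the inner automorphism $\operatorname{Ad}(v_N v_{N+n})$) together with Proposition~\ref{irred}(c) for (b), and iteration of part~(a) and Theorem~\ref{fusionCat} for (c). You supply considerably more detail than the paper's terse proof — in particular, making explicit the comparison of $\operatorname{Ad}(v_N)$ with $\gamma_{N+n}$ in (b), and unpacking the combinatorics of forward and reversed edges in (c) — but the underlying ideas match.
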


 \begin{proof} Part (a) follows from Frobenius reciprocity. Let $\ga_n$ be the automorphism of $\Gno$ given by conjugation by $v_n$. It was shown in Lemma \ref{irred},(c) that $\ga_n(G_n[0]_{(\la,+)})=G_n[0]_{(\la,-)}$. 
 We then have
 $$p_{(\la,+)}\otimes p_{(N,-)}=\ga_{n+N}(p_{(\la,+)}\otimes p_{(N,+)})=\ga_{n+N}(p_{(\la,+)})\in G_{n+N}[0]_{(\la,-)},$$
 which implies $(\la,+)\otimes g=(\la,-)$. The other claims in (b) are proved similarly.
 Part (c) follows from (a) by induction on $r+s$.
 \end{proof}
\subsection{The category $\operatorname{Ab}(\E_q)$ for generic $q$} We can now also give an explicit description of the simple objects of $\operatorname{Ab}(\E_q)$ for generic $q$, i.e. for all but countably many values of $q$. Our first step for this is to find a new labeling of the objects of $\operatorname{Ab}(\overline{\mathcal{SE}_N})$ which is stable as $N\to \infty$. Let 
$$Y(N)\ =\ \{ \nu\in Y_N^<,\ \nu_i>3N/4\ {\rm or}\ \nu_i<N/4\}. $$
We define the map
\begin{equation}\label{PhiN:def}
\Phi_N: \nu\in Y(N)\ \mapsto\ (\la,\mu)\ =\ ([\nu_j,\nu_{j+1},\ ...\ \nu_{\ell(\nu)}], [N-\nu_{j-1},\ ..., N-\nu_1],
\end{equation}
where $j$ is the smallest index such that $\nu_j < N/4$. We define the set $V\E[0]$ by
\begin{equation}\label{VEN:def}
V\E[0]\ =\ \{ (\la,\mu,0),\ \ell(\la)+\ell(\mu)\ {\rm odd}\}\ \cup\ \{ (\la,\mu,\pm),\ \ell(\la)+\ell(\mu)\ {\rm even}\}.
\end{equation}
Observe that $V(N)[0]$ can be similarly described as the set $\{ (\nu,\ep)\}$, where $\nu$ ranges over all strict Young diagrams $\nu$ with $\nu_1<N$ and $\ep=0$ if $\ell(\nu)$ is odd, and $\ep\in \{\pm\}$ if $\ell(\nu)$ is even. We may often just write $(\nu)$ for $(\nu,0)$ for convenience.

\begin{lem}\label{genericprep}
Let $N\in \mathbb{N}_{\geq 3}$. 

\begin{enumerate}[(a)]
    \item  The map $\Phi_N$  induces an injective map  $(\nu,\ep)\in V(N)[0]\mapsto (\Phi_N(\nu),\ep)\in V\E[0]$, and any element in $V\E[0]$ is in the image of this map for large enough $N$.

    \item Using the map $\Phi_N$, the tensor product rules in Theorem \ref{fusionCat} can be expressed as follows:
$$\Phi_N^{-1}(\la,\mu,\ep)\otimes \Phi_N^{-1}([1],\emptyset)\cong \bigoplus_{(\tilde\la,\mu,\ep)}m_{(\tilde\la,\mu,\ep)}\Phi_N^{-1}(\tilde\la,\mu,\ep)\ \oplus\ \bigoplus_{(\la,\tilde\mu,\ep)}m_{(\la,\tilde\mu,\ep)}\Phi_N^{-1}(\la,\tilde\mu,\ep),$$
where $\tilde\la$ ranges over all strict diagrams obtained from $\la$ by adding a box, and $\tilde\mu$ ranges over all strict diagrams obtained from $\mu$ by removing a box. If $\ep$ can be $\pm$, both options appear in the direct sum. The multiplicities $m_{(\tilde\la,\mu,\ep)}$ and $m_{(\la,\tilde\mu,\ep)}$ are equal to 1 except when both $\ell(\la)+\ell(\mu)$ and  $\ell(\tilde\la)+\ell(\mu)$ or  $\ell(\la)+\ell(\tilde\mu)$ are even, when it is equal to 2.
The tensor product with $[N-1])=\Phi_N^{-1}(\emptyset,[1])$ can be expressed in a similar way, where now $\tilde\la$ is obtained by removing a box from $\la$ and $\tilde\mu$ is obtained by removing a box from $\mu$.

\item Let $\mu\in Y(N)$ such that $\mu_1<N/4$. Then $\Phi_N^{-1}(\emptyset,\mu,\ep)=(\mu,\ep)^*$, the dual representation of $(\mu,\ep)$ in 
 $\operatorname{Ab}(\overline{\mathcal{SE}_{N}})$.

 \item  Let $q$ be a primitive $4N$-th root of unity such that $q^N=i$, and let $(\la,\mu)=\Phi_N(\nu)$ for $\nu\in Y(N)$. Let $q_\nu$ be as defined in Theorem \ref{qdimensionsGn}. Then 
we have
$$q_\nu\ =\ q_{(\la,\mu)}\ :=\ q_\la q_\mu \prod_{(r,s)}\frac{q^{\la_r+\mu_s}+q^{-\la_r-\mu_s}}{q^{\la_r-\mu_s}+q^{\mu_s-\la_r}},\quad {\rm where}\ 1\leq r\leq \ell(\la),\ 1\leq s\leq \ell(\mu).$$
\end{enumerate}


\end{lem}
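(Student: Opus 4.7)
The plan is to handle the four parts by direct combinatorial unpacking together with the fusion rules already established in Theorem~\ref{fusionCat} and Corollary~\ref{cor:SEN}. For part (a), the key point is that for $\nu \in Y(N)$ every row satisfies $\nu_i < N/4$ or $\nu_i > 3N/4$; letting $j_0$ denote the smallest index with $\nu_{j_0} < N/4$, the sequences $(\nu_{j_0},\ldots,\nu_{\ell(\nu)})$ and $(N-\nu_{j_0-1},\ldots,N-\nu_1)$ are both strictly decreasing with positive entries, so $\Phi_N(\nu) = (\lambda,\mu)$ is a well-defined pair of strict Young diagrams with $\ell(\lambda)+\ell(\mu) = \ell(\nu)$, matching the parity labels in $V(N)[0]$ and $V\E[0]$. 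Injectivity is immediate from the inverse construction, and surjectivity onto $V\E[0]$ for sufficiently large $N$ follows by choosing $N > 4\max(\lambda_1,\mu_1)$.

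For part (b), I will track the edges of $\Gamma(N)[0]$ through $\Phi_N$. An edge $\nu \to^{(N)} \nu'$ either adds a box to $\nu$ (generic) or removes the first row when $\nu_1 = N-1$ (restricted). Adding a box to a row $\nu_j < N/4$ yields a small row in $\nu'$, so corresponds to adding a box to $\lambda$; adding a box to $\nu_j > 3N/4$ decrements the corresponding entry $\mu_s = N - \nu_j$ by one, that is, removes a box from $\mu$; the restricted edge removes a row of length $N-1$, which is exactly the $\mu$-entry equal to $1$, again a box removal from $\mu$. Hence tensoring with $\square$ produces exactly the summands $\Phi_N^{-1}(\tilde\lambda,\mu,\cdot)$ and $\Phi_N^{-1}(\lambda,\tilde\mu,\cdot)$ claimed, and tensoring with $[N-1]$ is the reverse operation by Corollary~\ref{cor:SEN}(a). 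The single-versus-double edge multiplicity and the appearance of both $\pm$ signs on the target side are then read off the definition of $\Gamma(N)[0]$: a double edge connects two unsigned labels, which under $\Phi_N$ become the $(\cdot,\cdot,0)$ labels, so multiplicity $2$ occurs precisely when both source and target have $\epsilon = 0$.

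For part (c), I will induct on $|\mu|$. The base case $\mu = [1]$ is the identification $[N-1] \cong [1]^*$ from Theorem~\ref{fusionCat}. For the inductive step, duality reverses tensor products, so if $(\mu',\epsilon') \subseteq (\mu,\epsilon) \otimes \square$ then $(\mu',\epsilon')^* \subseteq [N-1] \otimes (\mu,\epsilon)^*$; the inductive hypothesis gives $(\mu,\epsilon)^* \cong \Phi_N^{-1}(\emptyset,\mu,\epsilon)$, and by part (b) the right-hand side decomposes into objects $\Phi_N^{-1}(\emptyset,\mu'',\epsilon'')$ with $\mu''$ obtained from $\mu$ by adding a box. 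Matching the multiplicities of this decomposition against the decomposition of $(\mu,\epsilon) \otimes \square$ from part (b) forces $(\mu',\epsilon')^* \cong \Phi_N^{-1}(\emptyset,\mu',\epsilon')$; the residual sign freedom from Remark~\ref{rem:signs} absorbs any potential ambiguity in the $\pm$ labels. I expect this to be the main obstacle of the proof, since unlike the other parts there is no purely combinatorial identity to verify, and consistency of the $\pm$ signs across the induction must be tracked carefully using the flexibility already built into the proof of Theorem~\ref{fusionCat}.

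For part (d), this is a direct computation from Theorem~\ref{qdimensionsGn}, using $q^N = i$ and hence $q^{2N} = -1$. Writing $\nu$ as the concatenation of $(N-\mu_{m-k+1})_{k=1}^m$ and $\lambda$, the single-index product $\prod_j q_{\nu_j}$ reduces to $\prod_s q_{\mu_s}\prod_r q_{\lambda_r}$ by Corollary~\ref{Nadcor}(b), which supplies $a_{N-m} = a_m$. The off-diagonal product $\prod_{i<j} [\nu_i - \nu_j]/[\nu_i + \nu_j]$ splits into three blocks: in the large-large block, the identity $[2N-x] = [x]$ (from $q^{2N} = -1$) reduces the factors to the $\mu$-cross terms of $q_\mu$; the small-small block trivially produces the $\lambda$-cross terms of $q_\lambda$; and in the mixed block, the identity $[N \pm x] = i(q^x + q^{-x})/(q - q^{-1})$ (from $q^N = i$) makes the ratio collapse to $(q^{\lambda_r + \mu_s} + q^{-\lambda_r - \mu_s})/(q^{\lambda_r - \mu_s} + q^{\mu_s - \lambda_r})$. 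Assembling these three contributions reproduces the stated formula for $q_{(\lambda,\mu)}$.
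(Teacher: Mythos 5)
Parts (a), (b), and (d) are correct and follow essentially the same route as the paper: (a) unwinds the definitions, (b) tracks the edges of $\Gamma(N)[0]$ through $\Phi_N$ exactly as the paper does (you are also right that the double edges occur when both sides have $\ell(\lambda)+\ell(\mu)$ odd, i.e.\ $\varepsilon = 0$; the paper's statement says ``even'' there, which appears to be a typo given Theorem~\ref{thm:mainHans}), and (d) reproduces the same factorisation of $\prod_{i<j}[\nu_i-\nu_j]/[\nu_i+\nu_j]$ into $\lambda$-block, $\mu$-block, and mixed block using $[N\pm x]=i(q^x+q^{-x})/(q-q^{-1})$ and Corollary~\ref{Nadcor}(b).

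Part (c) has a genuine gap. You induct on $|\mu|$, but when you add a box to a parent $\mu$ with $|\mu|=n-1$, \emph{every} child $\mu''$ has $|\mu''|=n$, so your inductive hypothesis covers \emph{none} of the summands appearing in $[N-1]\otimes(\mu,\epsilon)^*$. You are left with a bijection between two collections of simples, and ``matching multiplicities'' cannot pin it down: distinct shapes $\mu''$ obtained by adding a box in different rows of $\mu$ generically have the same multiplicity profile and even the same parity of $\ell(\mu'')$ (e.g.\ $[4,3,1]$ has children $[5,3,1]$ and $[4,3,2]$, both with three rows). The paper avoids this by running a two-parameter induction on $(\ell(\mu),\mu_{\ell(\mu)})$: under that hypothesis all but exactly two summands are already identified — one with $\ell(\mu)$ rows (box added to the last row) and one with $\ell(\mu)+1$ rows (new row of size $1$) — and these two have opposite parities, so the $g$-action from Corollary~\ref{cor:SEN}(b) (which fixes odd-$\ell$ labels and swaps $\pm$ on even-$\ell$ labels) tells them apart. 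To repair your argument you need both the refined induction scheme and the $g$-twist step; Remark~\ref{rem:signs} only handles the $\pm$ ambiguity after the shapes are already matched, it does not distinguish between shapes.
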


\begin{proof} Part (a) follows from the definitions. For part (b), using the notations of \ref{PhiN:def}, it suffices to observe that adding a box to $\nu$ in the first $j-1$ rows has the effect of removing a box from $\mu$, and adding a box in the other rows of $\nu$ results in adding a box to $\la$. One can similarly describe the decomposition of the tensor product of $\Phi_N^{-1}(\la,\mu,\ep)$ with $[N-1]$, using Corollary \ref{cor:SEN},(a). We show part (c) by induction on $k=\ell(\mu)$ and $m=\mu_k$. The statement has already been proved for $k=1=m$ in Theorem \ref{fusionCat}. The induction step will follow from the following

$Claim:$ Assume that Statement (c) is true for all $\mu$ with $\ell(\mu)\leq k$ and $\mu_k\leq m$. Then it is also true for any $\mu$ of the form $\mu=[\mu_1,\ ..., \mu_{k-1}, m+1]$ and $\mu=[\mu_1,\ ...,\ \mu_k,1]$.

We prove this first for $k$ even.  Using  Corollary \ref{cor:SEN},(a), we see that the tensor product  $((\mu,+)\otimes [1])^*\cong ([N-\mu_k,\ ...,\ N-\mu_1],+)\otimes [N-1]$ is isomorphic to
$$ [N-1,N-\mu_k,\ ...\ N-\mu_1]\ \oplus\ \bigoplus_{j=1}^k([N-\mu_k,\ ...\ N-\mu_j+1,\ ... N-\mu_1],\pm).$$
If we compare this with $((\mu,+)\otimes [1])^*$ obtained by calculating $(\mu,+)\otimes [1]$ first and then applying $^*$, we obtain (using the assumption for diagrams with $\ell(\mu)<k$ and $\mu_k\leq m$)
$$([N-\mu_k,\ ...\ N-\mu_j+1,\ ... N-\mu_1],\pm)\oplus [N-1,N-\mu_k,\ ...\ N-\mu_1]\ \cong\ ([\mu_1,\ ...,\ m+1],\pm)^* \oplus ([\mu_1,\ ...,\ \mu_k,1])^*.$$
As $(\ga)\otimes g\cong (\ga)$ if and only if $(\ga)^*\otimes g\cong (\ga)^*$, we deduce that
$([N-\mu_k,\ ...\ N-\mu_j+1,\ ... N-\mu_1],\pm)\cong ([\mu_1,\ ...,\ m+1],\pm)^*$ and $([N-1,N-\mu_k,\ ...\ N-\mu_1])^*\cong ([\mu_1,\ ...,\ \mu_k,1])^*$. The proof for $k=\ell(\mu)$ odd goes similarly.

Using $q_{N-m}=q_m$, $[2N-k]=[k]$ and  Lemma \ref{Nadcor},(b), we obtain
\begin{align}
    q_{\nu}\ =&\ \prod_r q_{\la_r}\prod_{i<j} \frac{[\la_i-\la_j]}{[\la_i+\la_j]} 
    \prod_s q_{\mu_s}\prod_{i<j}\frac{[\mu_i-\mu_j]}{[\mu_i+\mu_j]}\ \prod_{(r,s)}\frac{[N-\la_r-\mu_s]}{[N-\mu_s+\la_r]}\cr
    =&\ q_\la q_\mu\ \prod_{(r,s)}\frac{q^{\la_r+\mu_s}+q^{-\la_r-\mu_s}}{q^{\la_r-\mu_s}+q^{-\la_r+\mu_s}},\cr
\end{align}
where $1\leq r\leq \ell(\la)$ and $1\leq s\leq \ell(\mu)$.
\end{proof}


We now wish to determine the structure of $\operatorname{Ab}(\mathcal{E}_q)$ for values of $q$ where this category is semisimple. By \cite[Theorem 3.3]{Tuba} this is equivalent to having the endomorphism algebras of $\mathcal{E}_q$ being semisimple. The statements below follow from the fact that the discriminant of   $\End_{\mathcal{E}_q}(+^r-^s)$ is a rational function in $q$. 

\begin{prop}\label{prop:bexistence} Assume the algebra $\End_{\mathcal{E}_q}(+^r-^s)$ is semisimple for at least one value of $q$. Then we have
\begin{enumerate}[(a)]
    \item The algebra $\End_{\mathcal{E}_q}(+^r-^s)$ is semisimple for all but a finite number of complex values of $q$.

    \item  The minimal central idempotents of $\End_{\mathcal{E}_q}(+^r-^s)$ are linear combinations of basis elements whose coefficients are specialisations of elements of a finite algebraic extension of $\mathbb{C}(q)$. In particular, they are well-defined for all but finitely many values of $q$.
\end{enumerate}
\end{prop}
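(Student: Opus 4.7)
The plan is to realise the family $\{\End_{\mathcal{E}_q}(+^r-^s)\}_q$ as specialisations of a single algebra defined over $\mathbb{C}(q)$, and to invoke the elementary fact that a nonzero element of $\mathbb{C}(q)$ vanishes at only finitely many points. Setting $R := \mathbb{C}[q^{\pm 1},(q-q^{-1})^{-1}]$, I would first construct the universal $R$-algebra $\mathcal{A}_{r,s}$ from the generators-and-relations presentation of $\mathcal{E}_q$ in Definition~\ref{def:Eq}, with the coefficients $\tfrac{2\mathbf{i}}{q-q^{-1}}$ and $q-q^{-1}$ read as elements of $R$. For every $q_0 \in \mathbb{C}\setminus\{-1,0,1\}$ there is a canonical isomorphism $\mathcal{A}_{r,s}\otimes_R \mathbb{C}_{q_0}\cong \End_{\mathcal{E}_{q_0}}(+^r-^s)$, since both sides are presented by the same diagrams modulo the same (specialised) relations. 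Now choose a $\mathbb{C}(q)$-basis $\{b_1,\ldots,b_d\}$ of the generic algebra $\mathcal{A}_{r,s}\otimes_R\mathbb{C}(q)$ consisting of specific $\mathbb{C}(q)$-linear combinations of diagrams, and consider the Gram matrix $M(q)=(\mathrm{tr}(b_ib_j))$ of the categorical trace form together with $\Delta(q) := \det M(q) \in \mathbb{C}(q)$.

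For part (a), the standard fact that a finite-dimensional symmetric algebra is semisimple if and only if its symmetrising trace form is nondegenerate is the key input. Hence, at every $q_0$ outside the finite set $E$ of bad points (poles of the basis coefficients, and the jump locus where $\dim_\mathbb{C}\End_{\mathcal{E}_{q_0}}(+^r-^s)>d$ so that the $b_i(q_0)$ fail to be a basis), semisimplicity of $\End_{\mathcal{E}_{q_0}}(+^r-^s)$ is equivalent to $\Delta(q_0)\neq 0$. The hypothesis provides some $q_0$ at which the algebra is semisimple; after possibly perturbing the basis $\{b_i\}$ within $\mathcal{A}_{r,s}\otimes_R\mathbb{C}(q)$ so as to keep linear independence at this $q_0$, $\Delta$ is a nonzero rational function in $q$. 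It therefore vanishes at only finitely many points, and together with $E$ this yields the finite exceptional set of part (a).

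For part (b), by part (a) the generic algebra $\mathcal{A}_{r,s}\otimes_R\mathbb{C}(q)$ is semisimple over $\mathbb{C}(q)$. Since $\mathbb{C}(q)$ is a $C_1$-field (Tsen), its Brauer group vanishes, so Wedderburn yields $\mathcal{A}_{r,s}\otimes_R\mathbb{C}(q)\cong \prod_i M_{n_i}(K_i)$ with each $K_i$ a finite field extension of $\mathbb{C}(q)$. The minimal central idempotents therefore lie in $\prod_i K_i$, hence inside any single finite extension $L$ of $\mathbb{C}(q)$ containing all of the $K_i$ (e.g.\ the Galois closure of their compositum). Choosing a primitive element $\alpha$ of $L/\mathbb{C}(q)$ and expanding each central idempotent in the basis $\{b_i\}$ yields coordinates that are $\mathbb{C}(q)$-polynomials in $\alpha$; these specialise at every $q_0$ outside the union of $E$, the poles of those $\mathbb{C}(q)$-coefficients, and the discriminant locus of the minimal polynomial of $\alpha$.

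The main technical obstacle is controlling the exceptional set $E$, and in particular showing that the specialisation dimension $\dim_\mathbb{C}\End_{\mathcal{E}_{q_0}}(+^r-^s)$ is constant away from a finite set. This is a standard upper-semicontinuity statement for the rank of a finitely generated module over the PID $R$: the locus where the rank jumps above the generic value $d$ is supported at the torsion primes of $\mathcal{A}_{r,s}$, a proper Zariski-closed, hence finite, subset of $\mathbb{C}^\times$. Once this is granted, the two specialisation arguments above conclude the proposition.
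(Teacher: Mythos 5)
Your overall strategy—realizing the $q$-family as specializations of a single $R$-algebra, recording the vanishing locus of a rational-function discriminant, and propagating from one good $q_0$ to all but finitely many—is exactly the idea behind the paper's one-line justification ("the discriminant of $\End_{\mathcal{E}_q}(+^r-^s)$ is a rational function in $q$"). Part~(b) is also close in spirit, though note that Tsen's theorem is not actually needed to get the minimal \emph{central} idempotents into a finite extension: the center of a semisimple $\mathbb{C}(q)$-algebra is automatically a product of finite field extensions, and after a splitting field $L$ is adjoined the split-form central idempotents have $L$-coefficients; your primitive-element bookkeeping is fine.

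There is, however, a genuine gap in part~(a). You take $\Delta(q)$ to be the Gram determinant of the \emph{categorical} trace form and invoke "the standard fact that a finite-dimensional symmetric algebra is semisimple if and only if its symmetrising trace form is nondegenerate." That statement is false. A symmetric algebra by definition carries a nondegenerate symmetrizing form, yet symmetric algebras need not be semisimple: for $A=\mathbb{C}[x]/(x^2)$ with $\tau(a+bx)=b$ the Gram matrix in the basis $\{1,x\}$ is $\left(\begin{smallmatrix}0&1\\1&0\end{smallmatrix}\right)$, which is nondegenerate, while $A$ has a nontrivial nilpotent ideal. So nondegeneracy of the categorical trace form at $q_0$ does not, by itself, yield semisimplicity of $\End_{\mathcal{E}_{q_0}}(+^r-^s)$. (The converse implication is also delicate for the categorical trace: a simple summand of zero quantum dimension would make the form degenerate on a semisimple algebra.) The fix the paper has in mind is to use the \emph{regular} trace form $(a,b)\mapsto \operatorname{tr}(L_{ab})$ and the classical characteristic-zero discriminant criterion: a finite-dimensional $\mathbb{C}$-algebra is semisimple if and only if this form is nondegenerate, and its Gram determinant is visibly rational in $q$ once the structure constants are. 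Alternatively you could keep the categorical trace, but then you must additionally cite that in a spherical category the quotient by negligibles has semisimple endomorphism algebras (so nondegeneracy of the trace form forces $\End(X)=\End_{\bar{\mathcal{C}}}(X)$ to be semisimple), and separately verify that the relevant quantum dimensions are nonzero for the converse direction. As written, the key equivalence you rely on does not hold.
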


Note that the set of excluded $q$ depends on $r$ and $s$ and will grow as $r$ and $s$ increase. 


The following theorem will give a labeling of the simple objects of the category $\operatorname{Ab}(\E_q)$ when it is semisimple. As for the category $\operatorname{Ab}(\overline{\mathcal{SE}_{N}})$, this will be fixed up to the choices of $+$ or $-$ in the labels, see Remark \ref{rem:signs}. This will not affect the tensor product rules stated there, which are symmetric under sign changes.
\begin{thm}\label{Eqobjects}
The simple objects of the category $\operatorname{Ab}(\E_q)$ for generic $q$ are labeled by the set $V\E[0]$, see \ref{VEN:def}. The dimension of $(\la,\mu,\ep)$ is given by $q_{(\la,\mu)}/2^{\lfloor (\ell(\la)+\ell(\mu))/2\rfloor}$, with $q_{(\la,\mu)}$ as defined in Lemma \ref{genericprep}.

The decomposition of the tensor product of $(\la,\mu,\ep)$ with $([1],\emptyset)$ or $(\emptyset,[1])$ is given by Lemma \ref{genericprep},(b) by using the labels for $V\E[0]$, i.e we replace all labels $\Phi_N^{-1}(\ga)$ there by $\ga$. 
\end{thm}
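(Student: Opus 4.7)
The plan is to transfer the structural results for $\operatorname{Ab}(\overline{\mathcal{SE}_N})$ from Theorem \ref{fusionCat}, together with the stability data of Lemma \ref{genericprep}, to $\operatorname{Ab}(\E_q)$ at generic $q$ via specialization and continuity in $q$. By Proposition \ref{prop:bexistence}, each algebra $\End_{\E_q}(+^r-^s)$ is semisimple off a finite exceptional set depending on $(r,s)$, and its minimal central idempotents are specializations of algebraic functions of $q$. Let $Q \subseteq \mathbb{C}$ denote the dense set of $q$ at which all these algebras (for every pair $(r,s)$) are simultaneously semisimple and all minimal central idempotents are well-defined. For $q \in Q$, every simple of $\operatorname{Ab}(\E_q)$ is a summand of some $+^r-^s$ by rigidity, so the task reduces to identifying the minimal central idempotents of the $\End_{\E_q}(+^r-^s)$.

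Fix $(\la,\mu,\ep) \in V\E[0]$ and choose $r \geq |\la|$, $s \geq |\mu|$. By Lemma \ref{genericprep}(a), the label $\Phi_N^{-1}(\la,\mu,\ep)$ lies in $V(N)[0]$ for all sufficiently large $N$, and Theorem \ref{fusionCat} attaches to it a minimal central idempotent $e_N$ in $\End_{\overline{\mathcal{SE}_N}}(+^r-^s)$. Setting $q_N = e^{2\pi i/4N}$, lifting $e_N$ through the quotient $\End_{\E_{q_N}}(+^r-^s) \twoheadrightarrow \End_{\overline{\mathcal{SE}_N}}(+^r-^s)$ gives a minimal central idempotent in $\End_{\E_{q_N}}(+^r-^s)$. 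Proposition \ref{prop:bexistence}(b) then identifies these specializations as coming from a single algebraic family $e(q)$ of idempotents in $\End_{\E_q}(+^r-^s)$, $q \in Q$, specializing to $e_N$ at every sufficiently large $q_N$.

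The family $\{e(q)\}_{(\la,\mu,\ep) \in V\E[0]}$ consists of pairwise distinct minimal central idempotents on $Q$: differences are algebraic functions of $q$ nonzero at infinitely many $q_N$, hence nonzero generically. They also exhaust the simples appearing in $+^r-^s$, since any additional simple of $\operatorname{Ab}(\E_q)$ would specialize, at every large $q_N$, to a simple of $\operatorname{Ab}(\overline{\mathcal{SE}_N})$ outside $\Phi_N(V(N)[0])$, contradicting Theorem \ref{fusionCat}. This gives the bijection between simples of $\operatorname{Ab}(\E_q)$ and $V\E[0]$. The categorical dimension $d(\la,\mu,\ep)$ is a rational function of $q$ whose value at $q_N$ equals $q_{(\la,\mu)}/2^{\lfloor(\ell(\la)+\ell(\mu))/2\rfloor}$ by Theorem \ref{fusionCat} and Lemma \ref{genericprep}(d), so the two agree on $Q$ by equality at infinitely many points. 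The fusion rules for tensoring with $([1],\emptyset)$ or $(\emptyset,[1])$ are integer-valued structure constants locally constant in $q$ along the families $e(q)$, as in the continuity argument of Proposition \ref{prop:GrothendieckDeform}; their values at $q_N$ are read off from Lemma \ref{genericprep}(b) and hence transfer to all of $Q$.

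The main obstacle is the consistency of the $\ep = \pm$ sign choices across different $N$, paralleling Remark \ref{rem:signs}: one must verify that the lifts $e_N$ for even-length labels can be chosen inside a single algebraic family rather than two. This is handled inductively, using the stability of $\Phi_N$ to propagate a coherent sign convention from small $N$ upwards, in the spirit of the choice made in Proposition \ref{prop:Ntensor}(c). A secondary concern, the density of $Q$ in $\mathbb{C}$, follows by a Baire-type argument: $Q$ is a countable intersection of cofinite subsets of $\mathbb{C}$.
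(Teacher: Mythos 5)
Your overall strategy — transfer the structure of $\operatorname{Ab}(\overline{\mathcal{SE}_N})$ to $\operatorname{Ab}(\E_q)$ via specialization at $q_N = e^{2\pi i/4N}$, semisimplicity off a cofinite set, and algebraicity of idempotents — is the same as the paper's, and the peripheral steps (semisimplicity on a dense $Q$, exhaustion of simples, passing dimension formulae along the dense set) are fine. However, the central step is glossed over in a way that leaves a genuine gap.

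The claim that ``Proposition~\ref{prop:bexistence}(b) then identifies these specializations as coming from a single algebraic family $e(q)$\ldots specializing to $e_N$ at every sufficiently large $q_N$'' is not a consequence of Proposition~\ref{prop:bexistence}(b). That proposition tells you that $\End_{\E_q}(+^r-^s)$ has finitely many algebraic families of minimal central idempotents, each well-defined off finitely many $q$'s. It does \emph{not} tell you that the idempotent corresponding to $\Phi_N^{-1}(\la,\mu,\ep)$ at $q_N$ and the one corresponding to $\Phi_M^{-1}(\la,\mu,\ep)$ at $q_M$ are specializations of the \emph{same} algebraic family. Different $N$'s could assign the label $(\la,\mu,\ep)$ to different families, and a pigeonhole argument only shows that at least one family matches infinitely many $N$'s, not that all $e_N$ land in one family. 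This is precisely what the paper flags as ``\textit{A-priori} different values of $N$ and $r,s$ could correspond to different simples of $\mathcal{E}_q$,'' and resolving it is the bulk of the paper's proof.

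You do acknowledge a version of this issue, but only for the sign ambiguity ($\ep = \pm$), and you defer it with ``handled inductively, using the stability of $\Phi_N$'' without saying what the induction is. In fact the difficulty is not confined to signs: even for labels with $\ell(\la)+\ell(\mu)$ odd (no sign to worry about), the $N$-consistency of the labeling must be argued. The paper's argument for $N$-independence is a genuine piece of work: (i) establish $r,s$-independence at fixed $N$ by tensoring with the projection onto $\mathbf 1$ in $\End_{\E_q}(+-)$; (ii) verify the base cases $\{\la,\emptyset\}$ (by the continuity/semisimplicity of the $G_n[0]$-modules) and $\{\emptyset,\mu\}$ (via duality, Lemma~\ref{genericprep}(c)); (iii) pin down the invertible object $g$ as $(\emptyset,\emptyset,-)$ by explicit computation of minimal projections in $\End_{\E_q}(+-)$ and their dimensions; (iv) run an induction on $\ell(\la)$ and $\la_k$, comparing the fusion products $(\la,\mu,\ep)_N\otimes([1],\emptyset)_N$ and $(\la,\mu,\ep)_M\otimes([1],\emptyset)_M$ and using the constraint $g\otimes(\la,\mu,\pm)\cong(\la,\mu,\mp)$ to rule out the ``unstable'' matchings of summands on the two sides. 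Steps (ii)--(iv) have no counterpart in your proposal, and step (iii) in particular requires a concrete computation you have not supplied. As written, your argument asserts the conclusion of this induction rather than proving it.
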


\begin{proof} 
We first observe that conjugation by a suitable braid element in $\mathcal{E}_q$ gives isomorphisms between minimal projections in $\End_{\mathcal{E}_q}(w)$ where $w\in \{+,-\}^n$, and minimal projections in $\End_{\mathcal{E}_q}(+^r-^s)$ where $r$ and $s$ are the number of $+$'s and $-$'s in $w$ respectively. Hence we only need to consider the algebras $\End_{\mathcal{E}_q}(+^r-^s)$ to obtain all simple objects of $\operatorname{Ab}(\mathcal{E}_q)$ up to isomorphism.

 It was shown in \cite[Corollary 5.9]{ConformalA} that $\dim\End_{\E_q}(+^r-^s)=
\dim\End_{\overline{\mathcal{SE}_N}}(+^r-^s)$ whenever $r+s<N/4$, where the dimension for $\dim\End_{\E_q}(+^r-^s)$ does not depend on $q$ as long as it is not a root of unity. Thus $\End_{\E_q}(+^r-^s)$ is semisimple for $q=e^{2\pi i/4N}$ for all $N>4(r+s)$, and so
it is semisimple for all but finitely many values of $q$ by Proposition~\ref{prop:bexistence}. In particular, it has the same decomposition into a direct sum of full matrix rings for all but (not necessarily the same) finitely many values of $q$, and the map $\Phi_N$ defines a labeling of these simple components by elements in $V\E[0]$ via specialisation of $q$. Let us label these simple components by $M_{\lambda, \mu, \epsilon}^{N,r,s}$, and the corresponding simple object of $\mathcal{E}_q$ by $(\lambda, \mu, \epsilon)_{N,r,s}$. \textit{A-priori} different values of $N$ and ${r,s}$ could correspond to different simples of $\mathcal{E}_q$.

Consider the simple components $M_{\la,\mu,\ep}^{N,r_1,s_1}$ and $M_{\la,\mu,\ep}^{N,r_2,s_2}$ of $\End_{\mathcal{E}_q}( +^{r_1}-^{s_1}  )$ and $\End_{\mathcal{E}_q}( +^{r_2}-^{s_2}  )$ respectively. We will show that the corresponding simple objects of $\mathcal{E}_q$ are isomorphic. If $r_2-r_1=k>0$, we also have $s_2-s_1=k$, as $r_i-s_i=|\la|-|\mu|$ for $i=1,2$. Hence it suffices to prove this with $k=1$, $r_1=r$ and $s_1=s$. Let $z_{\la,\mu,\ep}^{(r,s)}$ and $z_{\la,\mu,\ep}^{(r+1,s+1)}$ be the central idempotent for the simple components $M_{\la,\mu,\ep}^{N,r,s}$ and $M_{\la,\mu,\ep}^{N,r+1,s+1}$ respectively. It follows from \cite[Corollary 5.9]{ConformalA} and Proposition~\ref{prop:bexistence} that $z_{\la,\mu,\ep}^{(r,s)}$ is a linear combination of basis elements of $\End_{\mathcal{E}_q}(+^r-^s)$ with coefficients specialisations of an element of an algebraic extension of $\C(q)$. In particular, it is well-defined for all but finitely many values in $q$. The same statement also applies for
$p$, the projection onto the trivial object in $\End_{\mathcal{E}_q}(+-)$, and hence also for $(p\otimes z_{\la,\mu,\ep}^{(r,s)})$. Obviously,  $(p\otimes z_{\la,\mu,\ep}^{(r,s)})\cong  z_{\la,\mu,\ep}^{(r,s)}$  as objects in $\operatorname{Ab}(\E_q)$. It follows from Theorem~\ref{fusionCat} when $q$ is specialised to a primitive $4N$-th root of unity that $(\lambda, \mu, \varepsilon)_{N,r,s}\cong(\lambda, \mu, \varepsilon)_{N,r+1,s+1}$ when $r+s+2<N/4$. Hence
$\beta(p\otimes z_{\la,\mu,\ep}^{(r,s)})\beta^{-1}z_{\la,\mu,\ep}^{(r+1,s+1)}\neq 0$ for $q$ a primitive $4N$-th root of unity, where  $\beta:\End(+-+^r-^s)\to \End(+^{r+1}-^{s+1})$ is the isomorphism constructed via the half-braiding of $\mathcal{E}_q$. This implies the same statement for all $q$ for which these elements are well-defined.
It follows that $(\la,\mu,\ep)_{N,r,s}\cong(\la,\mu,\ep)_{N,r+1,s+1}$ whenever $r+s+2<N/4$. Hence we may drop the $r,s$ notation in the labeling of the simple objects by picking our favorite representative.

To show our labeling of simple objects is independent of $N$, we will require certain tensor product rules for these simples. The decomposition of the tensor product of an object $(\nu,\ep)$ in  $\operatorname{Ab}(\overline{\mathcal{SE}_{N}})$ with $[1]$ or $[N-1]$ has been described in Theorem \ref{fusionCat} and Corollary \ref{cor:SEN}. In particular, if we tensor with $[N-1]$, we either add a row with $N-1$ boxes to $\nu$, or we remove a box from it. It follows by induction on $r$ and $s$ that any diagram $\la$ in the label of a simple component of $[1]^r\otimes[N-1]^s$ has only rows with either $\geq N-s$ boxes or with $\leq r$ boxes. As $r+s<N/4$, it follows that each such diagram is in the domain $Y(N)$ of $\Phi_N$. 
 It follows from  Lemma \ref{genericprep},(b) that $\Phi_N$ translates the tensor product rules for $+$ and $-$ in  $\operatorname{Ab}(\overline{\mathcal{SE}_{N}})$ to the tensor product rules claimed in this theorem, for large enough $N$. 

We now show that whenever $r+s<N/4$ and $r+s<M/4$ we have either $(\lambda, \mu, \varepsilon)_N \cong (\lambda, \mu, \varepsilon)_M$ or $(\lambda, \mu, \varepsilon)_N \cong (\lambda, \mu, -\varepsilon)_M$. The unresolved sign ambiguity does not affect the statement of the theorem. To avoid the ambiguity with using $+$ and $-$ in the labels we will use the notation $(\la,\mu,\pm)_N$ for the direct sum $(\la,\mu,+)_N\oplus (\la,\mu,-)_N$.
We say that the pair $(\la,\mu)$ is stable, if $(\la,\mu,\tilde\ep)_M\cong(\la,\mu,\tilde\ep)_N$, with $\tilde\ep=\pm$ or $\tilde\ep=0$, for all $M>N> 4r+4s$. As our representations of $G_n[0]$ are semisimple and depend continuously on $q$ except for finitely many values, it follows that $\{\la,\emptyset\}$ is stable for all strict Young diagrams $\la$.
As $(\emptyset,\mu,\tilde\ep)_N$ is the dual object of $(\mu,\emptyset,\tilde\ep)_N$, the pair $\{ \emptyset,\mu\}$ is also stable, see Lemma \ref{genericprep},(c). 
We prove the result for general $\{\la,\mu\}$ by induction on $k=\ell(\la)$ and $\la_k$, essentially by the same strategy which was used for Lemma \ref{genericprep},(c). Observe that we have already shown that $\{\la,\mu\}$ is stable if $k=1$ and $\la_1=0$. The induction step now follows from the following:

\textit{Claim:} Assume all pairs $\{\la,\mu\}$ are stable if $\ell(\la)\leq k$ and $\la_k\leq m$. Then also all pairs $\{[\la_1,\ ...\ \la_{k-1},m+1],\mu\}$ and  $\{[\la_1,\ ...\ \la_{k-1},m,1],\mu\}$ are stable, whenever the diagrams are strict.

To prove the claim when $\ell(\la)+\ell(\mu)$ is even, we compare the decomposition of the tensor products $(\la,\mu,+)_\ell\otimes ([1],\emptyset)_\ell$ with $\ell = N$ and with $\ell = M$. By induction assumption, we already know that all but three labels are stable. Hence we have 
$$([\la_1,\ ..., m+1],\mu,\pm)_N\oplus ([\la_1,\ ..., m,1],\mu)_N \ \cong\ 
([\la_1,\ ..., m+1],\mu,\pm)_M\oplus ([\la_1,\ ..., m,1],\mu)_M.$$
This gives six possibilities for isomorphisms between the simple objects on the left and right sides (of which only two possibilities are stable).

We take our representative of $(\emptyset, \emptyset, -)_N$ to be a simple summand of $\operatorname{End}_{\mathcal{E}_q}(+-)$. From our explicit basis for $\operatorname{End}_{\mathcal{E}_q}(+-)$, we can directly compute all minimal projections. By taking the categorical dimension of these minimal projections, we see that when $(\emptyset, \emptyset, -)$ must correspond to the generator $\att{splittingEq}{.1}$. This implies that $(\emptyset, \emptyset, -)_N \cong (\emptyset, \emptyset, -)_M$ for all $3< N< M$ and that this label corresponds to the object $g$. 

From Corollary~\ref{cor:SEN},(b) we know that $g\otimes (\lambda, \mu, +)_\ell \cong (\lambda, \mu, -)_\ell$ and that $g\otimes (\lambda, \mu)_\ell \cong (\lambda, \mu)_\ell$.
It follows that only the two stable possibilities above are compatible with this. Hence  $([\la_1,\ ..., m+1],\mu,\pm)_N\cong ([\la_1,\ ..., m+1],\mu,\pm)_M$ and $ ([\la_1,\ ..., m,1],\mu)_N\cong ([\la_1,\ ..., m,1],\mu)_M$. The proof for $\ell(\la)+\ell(\mu)$ odd goes similarly, where we now consider the tensor product $(\la,\mu)_\ell\otimes ([1],\emptyset)_\ell$. 
 
All together we have shown that the simple objects of $\operatorname{Ab}(\mathcal{E}_q)$ are parameterised by the set $V\mathcal{E}[0]$, and that the fusion rules are as claimed in the statement of the theorem.


As the dimension of a simple object is a rational function in $q$, it is determined if we know it for infinitely many values. The claim hence follows from Theorem \ref{fusionCat} and Lemma \ref{genericprep}(d).
\end{proof}

\begin{remark}
    The endomorphism algebras of $+^r-^s$ are a slight variation on  the quantum walled Brauer-Clifford algebras defined in \cite{quantum-walled}. Thus this description of simples in $\E_q$ shows that these new Brauer-Clifford algebras are semisimple for generic $q$ and describes their simple components. By contrast, the original Brauer-Clifford algebras defined in \cite{quantum-walled} are not semisimple.
\end{remark}

\bibliography{Cells}

\newcommand{\etalchar}[1]{$^{#1}$}
\begin{thebibliography}{CKMFP06}

\bibitem[ALW19]{MR4045471}
David Aasen, Ethan Lake, and Kevin Walker.
\newblock Fermion condensation and super pivotal categories.
\newblock {\em J. Math. Phys.}, 60(12):121901, 111, 2019.

\bibitem[BB87]{Embeddings2}
F.~Alexander Bais and Peter~G. Bouwknegt.
\newblock A classification of subgroup truncations of the bosonic string.
\newblock {\em Nuclear Phys. B}, 279(3-4):561--570, 1987.

\bibitem[BGJ{\etalchar{+}}16]{quantum-walled}
Georgia Benkart, Nicolas Guay, Ji~Hye Jung, Seok-Jin Kang, and Stewart Wilcox.
\newblock Quantum walled {B}rauer-{C}lifford superalgebras.
\newblock {\em J. Algebra}, 454:433--474, 2016.

\bibitem[CKM17]{mcrae}
Thomas Creutzig, Shashank Kanade, and Robert McRae.
\newblock Tensor categories for vertex operator superalgebra extensions, 2017.
\newblock \arxiv{1705.05017}.

\bibitem[CKMFP06]{KacAlg}
Paola Cellini, Victor~G. Kac, Pierluigi M\"{o}seneder~Frajria, and Paolo Papi.
\newblock Decomposition rules for conformal pairs associated to symmetric spaces and abelian subalgebras of {$\Bbb Z_2$}-graded {L}ie algebras.
\newblock {\em Adv. Math.}, 207(1):156--204, 2006.

\bibitem[Del07]{Deligne}
P.~Deligne.
\newblock La cat\'{e}gorie des repr\'{e}sentations du groupe sym\'{e}trique {$S_t$}, lorsque {$t$} n'est pas un entier naturel.
\newblock In {\em Algebraic groups and homogeneous spaces}, volume~19 of {\em Tata Inst. Fund. Res. Stud. Math.}, pages 209--273. Tata Inst. Fund. Res., Mumbai, 2007.

\bibitem[DMNO13]{LagrangeUkraine}
Alexei Davydov, Michael M\"{u}ger, Dmitri Nikshych, and Victor Ostrik.
\newblock The {W}itt group of non-degenerate braided fusion categories.
\newblock {\em J. Reine Angew. Math.}, 677:135--177, 2013.

\bibitem[EGNO15]{Book}
Pavel Etingof, Shlomo Gelaki, Dmitri Nikshych, and Victor Ostrik.
\newblock {\em Tensor categories}, volume 205 of {\em Mathematical Surveys and Monographs}.
\newblock American Mathematical Society, Providence, RI, 2015.

\bibitem[EMS25]{ConformalA}
Cain Edie-Michell and Noah Snyder.
\newblock Interpolation categories for conformal embeddings, 2025.

\bibitem[EO22]{Simp}
Pavel Etingof and Victor Ostrik.
\newblock On semisimplification of tensor categories.
\newblock In {\em Representation theory and algebraic geometry---a conference celebrating the birthdays of {S}asha {B}eilinson and {V}ictor {G}inzburg}, Trends Math., pages 3--35. Birkh\"{a}user/Springer, Cham, [2022] \copyright 2022.

\bibitem[FSS00]{MR1773773}
L.~Frappat, A.~Sciarrino, and P.~Sorba.
\newblock {\em Dictionary on {L}ie algebras and superalgebras}.
\newblock Academic Press, Inc., San Diego, CA, 2000.

\bibitem[JN99]{JN}
A.R. Jones and M.L. Nazarov.
\newblock Affine {S}ergeev algebras and $q$-analogues of the {Y}oung symmetrizers for projective representations of the symmetric group.
\newblock {\em Proc. LMS}, 78(3):481--512, 1999.

\bibitem[Jon22]{PA1}
V.~F.~R. Jones.
\newblock Planar algebras, {I}.
\newblock {\em New Zealand J. Math.}, 52:1--107, 2021 [2021--2022].

\bibitem[KO02]{Kril}
Alexander Kirillov, Jr. and Viktor Ostrik.
\newblock On a {$q$}-analogue of the {M}c{K}ay correspondence and the {ADE} classification of {$\mathfrak {sl}_2$} conformal field theories.
\newblock {\em Adv. Math.}, 171(2):183--227, 2002.

\bibitem[KW93]{SovietHans}
David Kazhdan and Hans Wenzl.
\newblock Reconstructing monoidal categories.
\newblock In {\em I. {M}. {G}elfand {S}eminar}, volume~16 of {\em Adv. Soviet Math.}, pages 111--136. Amer. Math. Soc., Providence, RI, 1993.

\bibitem[Liu15]{LiuYB}
Zhengwei Liu.
\newblock Yang-baxter relation planar algebras, 2015.
\newblock \arxiv{ 1507.06030}.

\bibitem[LR22]{LiuRing}
Zhengwei Liu and Christopher Ryba.
\newblock The {G}rothendieck ring of a family of spherical categories.
\newblock {\em Comm. Math. Phys.}, 396(1):315--348, 2022.

\bibitem[Mac15]{Mac}
I.~G. Macdonald.
\newblock {\em Symmetric functions and {H}all polynomials}.
\newblock Oxford Classic Texts in the Physical Sciences. The Clarendon Press, Oxford University Press, New York, second edition, 2015.
\newblock With contribution by A. V. Zelevinsky and a foreword by Richard Stanley, Reprint of the 2008 paperback edition [ MR1354144].

\bibitem[Naz97]{Nazarov}
M.L. Nazarov.
\newblock Young's symmetrizers for projective representations of the symmetric group.
\newblock {\em Adv. Math.}, 127(2):190--257, 1997.

\bibitem[Ols92]{Ol}
G.I. Olshanski.
\newblock Quantized universal enveloping algebra of type ${Q}$ and a super-extension of the {H}ecke algebra.
\newblock {\em Lett. Math. Phys.}, 24:93--102, 1992.

\bibitem[Sav23]{Savage}
Alistair Savage.
\newblock The quantum isomeric supercategory.
\newblock {\em Journal of the Institute of Mathematics of Jussieu}, page 1–48, 2023.

\bibitem[Ser85]{Ser}
A.N. Sergeev.
\newblock The tensor algebra of the identity representation as a module over the lie super algebras $gl(n,m)$ and $q(n)$.
\newblock {\em Math. Sb}, 51(2):419--427, 1985.

\bibitem[SW86]{Embeddings1}
A.~N. Schellekens and N.~P. Warner.
\newblock Conformal subalgebras of {K}ac-{M}oody algebras.
\newblock {\em Phys. Rev. D (3)}, 34(10):3092--3096, 1986.

\bibitem[TW05]{Tuba}
Imre Tuba and Hans Wenzl.
\newblock On braided tensor categories of type {$BCD$}.
\newblock {\em J. Reine Angew. Math.}, 581:31--69, 2005.
\newblock Erratum: \href{https://arxiv.org/abs/math/0301142}{https://arxiv.org/abs/math/0301142}.

\bibitem[Wen88]{HansThesis}
Hans Wenzl.
\newblock Hecke algebras of type {$A_n$} and subfactors.
\newblock {\em Invent. Math.}, 92(2):349--383, 1988.

\bibitem[Xu98]{Xu}
Feng Xu.
\newblock New braided endomorphisms from conformal inclusions.
\newblock {\em Comm. Math. Phys.}, 192(2):349--403, 1998.

\end{thebibliography}
\bibliographystyle{alpha}
\end{document}